\newcommand{\R}{\mathbb{R}} 
\newcommand{\Z}{\mathbb{Z}}   
\newcommand{\C}{\mathbb{C}}   
\newcommand{\Q}{\mathbb{Q}}
\newcommand{\N}{\mathbb{N}}
\newcommand{\He}{\mathbb{H}}
\newcommand{\ch}{\text{ch}}
\newcommand{\mf}{\mathfrak}
\DeclareMathOperator{\spn}{span}
\DeclareMathOperator{\aut}{Aut}
\DeclareMathOperator{\ed}{End}
\DeclareMathOperator{\res}{Res}
\DeclareMathOperator{\id}{id}
\DeclareMathOperator{\im}{Im}
\DeclareMathOperator{\re}{Re}
\DeclareMathOperator{\rk}{rk}
\DeclareMathOperator{\sgn}{sgn}
\DeclareMathOperator{\ad}{ad}
\DeclareMathOperator{\wt}{wt}
\theoremstyle{plain}
\newtheorem{theorem}{Theorem}[section]
\newtheorem{theorem*}{Theorem}
\newtheorem{lemma}[theorem]{Lemma}
\newtheorem{corollary}[theorem]{Corollary}
\newtheorem{proposition}[theorem]{Proposition}
\theoremstyle{definition}
\newtheorem{definition}[theorem]{Definition}
\newtheorem{example}[theorem]{Example}
\newtheorem{remark}[theorem]{Remark}
\title{Placeholder title}
\begin{document}

\pagenumbering{gobble}

\begin{singlespace}
\begin{center}
Three Families of Lie Algebras \break
of Exponential Growth from \break
Vertex Operator Algebras
\end{center}

\vspace{5.5cm}

\begin{center}
A Dissertation \break
Presented to the Faculty of the Graduate School \break
Of \break
Yale University \break
In Candidacy for the Degree of \break
Doctor of Philosophy
\end{center}

\vspace{6cm}

\begin{center}
By \break
Gabriel B. Legros
\end{center}

\vspace{1cm}

\begin{center}
Dissertation Director : Igor. B. Frenkel
\end{center}

\vspace{1cm}

\begin{center}
June 2021
\end{center}

\end{singlespace}

\newpage
\pagenumbering{roman}
\setcounter{page}{1}

\tableofcontents

\newpage

\pagenumbering{arabic}
\setcounter{page}{1}

\section*{Acknowledgements}

\hspace{\parindent}I would like to thank my advisor professor Frenkel.  Without his help and patience, this dissertation would not have been possible.  I have learned so much from professor Frenkel and will be eternally grateful.

Secondly, I would also like to thank the staff of the Yale department of mathematics who supported me and were always friendly and available.  I would also like to thank Yale University and the McKenna foundation for providing my fellowship.

Thirdly, I would like to thank my readers professor Andrew Linshaw and professor Andrew Neitzke for taking the time to read this dissertation.

\newpage
\section{Introduction}

\paragraph{ } In contrast from the theory of finite dimensional Lie algebras, infinite dimensional Lie algebras are poorly understood.  Extensions of semisimple Lie algebras lead to the so-called affine Lie algebras, which themselves can be generalized as Kac-Moody algebras, which again can be generalized as Borcherds-Kac-Moody (BKM) algebras.  As we progress through this sequence, we retain a general structure :  all of these algebras can be described using generators and relations.  The Weyl denominator formula of the finite case becomes the Weyl-Kac-Borcherds denominator formula for BKM-algebras.  On the other hand, we lose explicit realizations of these Lie algebras.  All finite dimensional Lie algebras can be realized as subalgebras of $\mf{gl}(n)$ and the affine Lie algebras can be realized as current algebras.  Beyond that point, finding realizations is significantly more difficult.

Following earlier work from \cite{frenkelkac} (see also \cite{segal}), which introduced vertex operators representations for affine Lie algebras, I.B. Frenkel in \cite{frenkel} constructed representations for any Kac-Moody algebra $g(A)$ with a symmetric matrix $A$ and for double affine Lie algebras.  This particular representation is almost a realization;  the Lie brackets between different elements of $g(A)$ can be explicitly computed hence we no longer need to rely on relations, however we are still left with generators and no explicit vector space.  This representation was achieved in the following way.
Given a lattice $L$, let $h = L \otimes \C$ and define
$$V_L = S(h \otimes t^{-1} \C[t^{-1}]) \otimes \C\{L\}$$
where $\C\{L\}$ is a twisted version of the group algebra $\C[L]$.  Then $V_L$ can be given a $g(A)$-structure via its generators.  For a lattice element $\alpha$, \cite{frenkel} uses the classical vertex operators
$$\alpha \to Y(\alpha,z) \in \ed[V_L][[z,z^{-1}]]$$
which decomposes as $Y(\alpha,z) = \sum\limits_{n \in \Z} Y_n(\alpha) z^{-n-1}$.  If we write $h(-n) = h \otimes t^{-n}$ then operators $Y(h(-n),z)$ were also introduced as well as for products of elements of the form $h(-n)$.  In particular, for generators of $g(A)$, it was shown that
\begin{align*}
e_{\alpha_i} &\rightarrow Y_0(\alpha_i) \\
f_{\alpha_i} &\rightarrow -Y_0(-\alpha_i) \\
h_i &\rightarrow Y_0(h_i(-1)) = h_i \otimes 1
\end{align*}
define a representation of $g(A)$ on $V_L$.  This representation had many advantages over an abstract highest weight module : both real roots and isotropic roots were fully realized, i.e. it is possible to explicitly write down the operators corresponding to these two types of roots using vertex operators.  Furthermore, the no-ghost theorem from \cite{noghost} was used to prove some upper bounds on the multiplicities of Kac-Moody algebras of hyperbolic type of rank $26$, and led to a conjecture on the multiplicities of Kac-Moody algebras of hypebrolic type of any rank.

Later R. E. Borcherds \cite{borcherds0}, \cite{borcherds1} extended the operators defined in \cite{frenkel} to all of $V_L$, with an added identification $\alpha \to e^{\alpha}$, turning $V_L$ from a $g(A)$-module into a structure of its own, now known as a vertex algebra.  Given $u \in V_L$ he defines $Y(u,z) \in \ed[V_L][[z,z^{-1}]]$ where again, 
$$Y(u,z) = \sum\limits_{n \in \Z} u_n z^{-n-1}.$$
Given a second element $v \in V_L$ he then shows that
$$[u,v] = u_0 v$$
is almost a Lie bracket, only missing antisymmetry.  This problem can be resolved with the added structure of a Vertex Operator Algebra (VOA), which adds a Virasoro-structure on $V_L$.  Indeed, if we define
$$L_n = \frac{1}{2} \sum\limits_{i=1}^d \sum\limits_{k \in \Z} : h_i(n-k)h_i(k) :$$
where $d$ is the rank of the lattice $L$, then the operators $L_n$ satisfy the relations of the Virasoro Lie algebra with central charge $d$.  Now the product $[u,v]$ becomes a Lie bracket in the quotient $V/L_{-1}V$.  Furthermore we may identify $g(A)$ as a subalgebra of $V_L$ via the map
\begin{align*}
e_{\alpha_i} &\rightarrow e^{\alpha_i} \\
f_{\alpha_i} &\rightarrow -e^{-\alpha_i} \\
h_i &\rightarrow h_i(-1).
\end{align*}

Since $V_L$ is a module over the Virasoro Lie algebra, we may consider a subspace
$$P^i = \{ v \in V, L_0 v = i v, L_n v = 0 \text{ for } n > 0 \}.$$
The quotient $P^1/L_{-1}P^0$ is then a Lie algebra known as the Lie algebra of physical states, which also contains $g(A)$.

The Fock space $V_L$ plays a particular role in string theory, where it is thought of as a compactified bosonic string.  There is a bilinear form $(.,.)$ on $V_L$ which can be constructed in the following way :
given two elements $\alpha,\beta \in L$ define $(e^\alpha,e^\beta) = \delta_{\alpha+\beta,0}$.  Furthermore, given $\alpha(-n)$ for $n>0$, we define an adjoint operator $\alpha(n)$ which acts on $S(h \otimes t^{-1} \C[t^{-1}])$ via
$$\alpha(n) \cdot \beta(-m) = (\alpha,\beta) \delta_{m+n}$$
$$\alpha(n) \cdot e^{\beta} = 0$$
and extended to all of $V_L$ by derivation.  In this setting, the operators $\alpha(n)$ are known as annihilation operators and the operators $\alpha(-n)$, which act by left multiplication, are known as creation operators.  Then these definitions inductively define a bilinear form $(.,.)$ on all of $V_L$.

It turns out that $(.,.)$ is in some sense an invariant form with respect to $[.,.]$ that contains $L_{-1}P^0$ hence we may factor $P^1$ further by the radical of this bilinear form to obtain a Lie algebra $g_L = P^1/ (.,.)$.  

If $V_L=V_{II_{25,1}}$ is constructed from the unique Lorentzian unimodular even lattice of rank 26, we may use the no-ghost theorem \cite{noghost} to describe this Lie algebra explicitly via the spectrum-generating algebra (see also \cite{Brower}).  More precisely, given $\alpha \neq 0 \in L$ and an isotropic element $c \in L \otimes \R$ satisfying $(\alpha,c) = 1$, let $a$ be an element of $L \otimes \R$ orthogonal to both $\alpha$ and $c$.  Then we may consider the operators
$$A^{a}_m = \res_z Y(a_i(-1) e^{mc},z),$$
which define the spaces
$$T(\alpha,c) = \spn \{ A^{a_1}_{-m_1} A^{a_2}_{-m_2} ... A^{a_k}_{-m_k} e^{\alpha + Mc} \}$$
where $M = m_1 + m_2 + ... + m_k = 1 - \frac{(\alpha,\alpha)}{2}$.  The no-ghost theorem then implies that
$$(g_L)_{\alpha} \simeq T(\alpha,c).$$
In particular, because there are $24$ linearly independent such choices of $a$, we find that the multiplicities of $(g_L)_{\alpha}$ are $p_{24} (1-\frac{(\alpha,\alpha)}{2})$, where $p_{24}(n)$ is the coefficient of $q^n$ in
$$\prod\limits_{n = 1}^\infty \frac{1}{(1 - q^n)^{24}}.$$  This is a very special result among infinite-dimensional Lie algebras.

Borcherds proves that $g_{II_{25,1}}$ is a BKM-algebra, describes all of its roots and provides a denominator formula.  He initially named it the monster Lie algebra, but it is now known as the \emph{fake monster Lie algebra} (the true monster Lie algebra was constructed later using the moonshine module $V^{\natural}$ from \cite{voa}).  Regardless, its definition is straightforward and $II_{25,1}$ possesses many properties that makes it an interesting case study.
 
Indeed, given an isotropic element $\rho \in II_{25,1}$ we may construct a lattice $\rho^{\perp}/\rho$.  This new lattice will be a unimodular, positive-definite lattice of rank $24$.  There are $24$ such lattices : the Niemeier lattices, and all of them are obtained from $II_{25,1}$ in this way.  Borcherds used the Leech lattice specifically in \cite{borcherds1} but we may also attempt to study $g_{II_{25,1}}$ from the perspective of the other $23$ lattices.  Although the denominator formula remains the same, it can be expanded in different ways using the theory of Borcherds products as in \cite{gritsenko}.  In this work we will show how we may find a structure on $g_{II_{25,1}}$ with respect to each Niemeier lattices with the help of the no-ghost theorem, culminating in the following theorem :

\begin{theorem*} (Section \ref{subsectioncharacter1})
Suppose $N$ is a Niemeier lattice other than the Leech lattice and $\rho$ an isotropic vector of $II_{25,1}$ such that $\rho^\perp / \rho = N$.  Let $L_n = \{ \alpha \in II_{25,1} | (\alpha,\rho) = n\}$ and $g_{L_n} = \bigoplus\limits_{\alpha \in L_n} (g_L)_{\alpha}$.  If $\widehat{g}(N) = \widehat{g}(R(N))$ where $R(N)$ are the real roots of $N$, then 
$$g_{L_0} \simeq \widehat{g}(N),$$
as Lie algebras, $g_L = \bigoplus\limits_{n \in \Z} g_{L_n}$ is a graded $\widehat{g}(N)$-module and
$$g_{L_1} \simeq g_{L_{-1}} \simeq V_N$$
linearly.  Furthermore, $(g_{L_{-1}})^* = g_{L_1}$ with respect to the bilinear form $(.,.)$ on $V_L$.  
\end{theorem*}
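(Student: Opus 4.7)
The plan hinges on the $\Z$-grading $g_L = \bigoplus_n g_{L_n}$ by $(\cdot, \rho)$. Because the Lie bracket adds roots and $(\cdot, \rho)$ is linear, $[g_{L_m}, g_{L_n}] \subset g_{L_{m+n}}$; this automatically makes $g_{L_0}$ a subalgebra and each $g_{L_n}$ a $g_{L_0}$-module, so once the isomorphism $g_{L_0} \simeq \widehat{g}(R(N))$ is established the module claim follows for free. The duality $(g_{L_{-1}})^* = g_{L_1}$ likewise reduces to the already-known fact that the bilinear form pairs $(g_L)_\alpha$ perfectly with $(g_L)_{-\alpha}$, combined with $\alpha \in L_1 \iff -\alpha \in L_{-1}$.

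To identify $g_{L_0}$ with $\widehat{g}(R(N))$, I first match roots and multiplicities via the no-ghost theorem. Every $\alpha \in L_0 = \rho^\perp$ projects to some $\bar\alpha \in N$; when $(\alpha,\alpha) = 2$ the projection is a root of $N$, and the fiber $\{\hat\alpha + k\rho\}_{k \in \Z}$ over a fixed $\bar\alpha$ matches the affine real roots $\bar\alpha + k\delta$, each with multiplicity $p_{24}(0) = 1$. For $\alpha = k\rho$ with $k \neq 0$ the multiplicity is $p_{24}(1) = 24 = \dim(\text{Cartan of }g(R(N)))$, matching $h \otimes t^k$. The weight-zero Cartan of $g_L$, of rank $26 = 24 + 2$, supplies $h \oplus \C K \oplus \C d$. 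I then construct the isomorphism explicitly via the vertex-operator dictionary: $e_{\bar\alpha} \otimes t^k \mapsto e^{\hat\alpha + k\rho}$, $h \otimes t^k \mapsto h(-1) e^{k\rho}$, $K \mapsto \rho(-1)$, and $d \mapsto \rho'(-1)$ for a chosen dual isotropic $\rho'$ with $(\rho,\rho') = 1$. Verifying the affine Kac--Moody relations reduces to standard OPE computations among these operators, tracking the Frenkel--Kac cocycle on $\C\{L\}$ and the normal-ordering constants.

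For the linear isomorphism $g_{L_1} \simeq V_N$, I apply the spectrum-generating construction with $c = \rho$: for $\alpha \in L_1$ the no-ghost theorem gives $(g_L)_\alpha \simeq T(\alpha, \rho)$, and the $24$-dimensional space $\{\rho,\alpha\}^\perp$ projects isomorphically onto $N \otimes \C$ because the kernel $\rho \cap \{\alpha\}^\perp$ is trivial. The key observation is that $e^{\alpha + M\rho}$ with $M = 1 - (\alpha,\alpha)/2$ is invariant under the shift $\alpha \mapsto \alpha + k\rho$ (which sends $M \mapsto M-k$), so summing $T(\alpha,\rho)$ over all $\Z$-lifts of a fixed $\bar\alpha$ recovers precisely $S(N \otimes t^{-1}\C[t^{-1}]) \otimes e^{\bar\alpha}$. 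Assembling over $\bar\alpha \in N$ yields $g_{L_1} \simeq V_N$ under $A^{a}_{-m} \leftrightarrow a(-m)$ and $e^{\alpha + M\rho} \leftrightarrow e^{\bar\alpha}$, and the argument for $g_{L_{-1}}$ is symmetric.

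The main obstacle is in the middle step: verifying that the proposed vertex-operator realization produces exactly the affine Kac--Moody relations, with the correct normalization of $K$ and the correct cocycle, rather than a twisted extension or a rescaling. This requires careful OPE bookkeeping for $[h(-1)e^{k\rho},\, e^{\hat\alpha + l\rho}]$ and for $[e^{\hat\alpha + k\rho},\, e^{\hat\beta + l\rho}]$, and in particular pinning down that $\rho'(-1)$ genuinely plays the role of the derivation on every graded piece. The remaining steps (the $\Z$-grading, multiplicity matching via no-ghost, and the linear identification of $g_{L_1}$) are essentially bookkeeping once the affine realization inside $g_L$ is in place.
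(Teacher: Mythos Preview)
Your proposal is correct and follows essentially the same route as the paper: identify $g_{L_0}$ via the Frenkel--Kac vertex realization of $\widehat{g}(R(N))$, use the DDF construction with $c=\pm\rho$ to read off $g_{L_{\pm 1}}\simeq V_N$, and let the $\Z$-grading and the pairing $(g_L)_\alpha \times (g_L)_{-\alpha}\to\C$ handle the module and duality statements automatically. The step you flag as the main obstacle --- verifying that the assignments $e_{\bar\alpha}\otimes t^k\mapsto e^{\hat\alpha+k\rho}$, $h\otimes t^k\mapsto h(-1)e^{k\rho}$ satisfy the affine relations with the correct central term and cocycle --- is exactly the content of Theorem~\ref{vertexrepresentation} and its affine variant stated immediately afterward, so the paper simply invokes that result (applied to the extended simple-root set $\widehat{\Pi}=\{\rho-\theta\}\cup\Pi$) rather than re-deriving the OPEs.
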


The construction of the fake monster Lie algebra is easy to generalize to other versions of the no-ghost theorem.  In \cite{borcherds0}, Borcherds briefly explained how we can obtain a Lie bracket from a Fock space constructed from an odd lattice rather than an even lattice.  This seemingly small change requires that we replace the theory of vertex operator algebras with superalgebras (SVOA).  We may also construct SVOAs by starting with the Fock space $V_L$ and tensoring it with an anticommuting algebra $F_d$ of "fermions" (see \cite{friedan}, \cite{feingoldfrenkelries},\cite{tsukada}), where $d$ is the rank of $L$.  From there the Virasoro structure on $V_L$ extends to a super-Virasoro structure on $V_L \otimes F_d$.   We end up with additional operators $G_{n + \frac{1}{2}}$, and we may then modify our definition of $P^i$ to include these operators, i.e.

$$P^i = \{v \in V_L \otimes F_d, L_nv = G_{n-\frac{1}{2}} v= 0, n > 0, L_0v = i v \}.$$

We replace the no-ghost theorem in dimension 26 with the $N=1$ Neveu-Schwarz no-ghost theorem for central charge $15$, in our case a Lorentzian lattice of rank 10 with 10 fermions attached. $P^1$ should be replaced with $P^{1/2}$ and then we may again define a Lie algebra structure.  This was done in \cite{scheit2}, where it is also shown that $g_{NS} = P^{1/2}/(.,.)$ is a BKM-algebra.  Just as in the $N=0$ case, we can find an explicit basis via the spectrum-generating algebra.  This is well-known to physicists (see \cite{browerfriedman}) but in mathematics involve peculiar operators which require special treatment.  In this work, we define operators $A^a_m$ and $B^a_r$ carefully and prove the following theorem,

\begin{theorem*} (Section \ref{subsectionDDFN=1})
Let $\alpha$ be a root of $g_{NS}$.  Then
$$(g_{NS})_{\alpha} = \spn \{ A^{a_{i_1}}_{-m_1} ... A^{a_{i_k}}_{-m_k} B^{a_{j_1}}_{-r_1}...B^{a_{j_l}}_{-r_l} e^{\alpha + Nc} \}$$
for $N = m_1 + ...  m_k + r_1 + ... + r_l = \frac{1}{2} - \frac{(\alpha,\alpha)}{2}$.
\end{theorem*}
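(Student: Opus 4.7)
The plan is to mimic the $N=0$ spectrum-generating construction reviewed above for the fake monster Lie algebra $g_{II_{25,1}}$, replacing the no-ghost theorem at central charge $26$ with its $N=1$ Neveu-Schwarz analogue at central charge $15$ and the bosonic DDF operators $A^a_m$ with their super analogues $A^a_m$ and $B^a_r$ defined earlier in the section. Fix a root $\alpha$, an isotropic partner $c$ with $(\alpha,c) = 1$, and the $8$-dimensional transverse space of vectors $a \in L \otimes \R$ orthogonal to both. The $N=1$ Neveu-Schwarz no-ghost theorem identifies $(g_{NS})_\alpha$ with the $L_0$-weight $\tfrac{1}{2}$ subspace of the transverse super-Fock module, that is, the free module over the super-Heisenberg algebra generated by the $8$ transverse bosonic modes and their $8$ fermionic partners acting on a vacuum $e^{\alpha + Nc}$ with $N = \tfrac{1}{2} - \tfrac{(\alpha,\alpha)}{2}$. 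The theorem is therefore equivalent to the assertion that the operators $A^a_{-m}$ and $B^a_{-r}$ realize this transverse super-Heisenberg action on $e^{\alpha + Nc}$ inside $(g_{NS})_\alpha$.

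I would check three things in order. First, that $e^{\alpha + Nc}$ itself lies in $P^{1/2}$ for $N = \tfrac{1}{2} - \tfrac{(\alpha,\alpha)}{2}$; a direct computation of $L_0$, $L_n$ for $n > 0$, and $G_s$ for $s > 0$ on this vacuum reduces the claim to the isotropy of $c$ and the chosen value of $N$. Second, that $A^a_{-m}$ and $B^a_{-r}$ preserve $P^{1/2}$, equivalently that $A^a_m$ and $B^a_r$ commute with $L_n$ for $n \geq 0$ and with $G_s$ for $s > 0$; via the super-associativity formula for vertex operators, this reduces to showing that the fields defining $A^a_m$ and $B^a_r$ are Neveu-Schwarz primary of conformal weights $1$ and $\tfrac{1}{2}$ respectively, and both statements follow from $(c,c) = 0$ and $(a,c) = 0$. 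Third, that the DDF operators satisfy the transverse super-Heisenberg relations
\[
[A^a_m, A^b_n] = m(a,b)\delta_{m+n,0}, \qquad \{B^a_r, B^b_s\} = (a,b)\delta_{r+s,0}, \qquad [A^a_m, B^b_r] = 0,
\]
which I would derive from the super-Borcherds identity applied to the defining fields.

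Granted these three properties, the $A^a_{-m}$ and $B^a_{-r}$ generate on $e^{\alpha+Nc}$ a free super-Fock module whose graded character matches, level by level, the character of the transverse super-Fock module produced by the no-ghost theorem; since both sit inside $(g_{NS})_\alpha$ and the latter has exactly this character, the inclusion must be an equality and the DDF vectors span. The main obstacle is the fermionic content of the third step, specifically the anticommutator $\{B^a_r, B^b_s\}$ and the mixed bracket $[A^a_m, B^b_r]$: the field defining $B^a_r$ is built from the Neveu-Schwarz fermion $\psi^a$, whose half-integer mode expansion forces careful tracking of residue contours, of the cocycle signs from the twisted group algebra $\C\{L\}$, and of the super-commutation conventions in the Borcherds identity. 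This is precisely why the statement stresses that the operators are defined \emph{carefully}, and it is where the bulk of the technical labor of the section lives.
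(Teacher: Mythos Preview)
Your overall three-step strategy matches the paper's: verify the DDF operators commute with super-Virasoro, establish the super-Heisenberg relations, and compare dimensions against the known multiplicities (Theorem \ref{multiplicitiesN=1}). The paper also records the adjoints $(A^a_m)^*=A^a_{-m}$ and $(B^a_r)^*=B^a_{-r}$, which together with the commutation relations give linear independence and ensure the DDF states survive in $g_{NS}$; you leave this implicit but it is needed.

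However, your step 2 contains a genuine gap. You claim that $[G_s,B^a_r]=0$ ``reduces to showing that the fields defining $A^a_m$ and $B^a_r$ are Neveu-Schwarz primary of conformal weights $1$ and $\tfrac12$ respectively, and both statements follow from $(c,c)=0$ and $(a,c)=0$.'' This is incorrect on two counts. First, both underlying states (before $G_{-1/2}$) have weight $\tfrac12$, and both fields (after $G_{-1/2}$) have weight $1$. Second, and more seriously, the naive candidate $\tilde c(-\tfrac12)\wedge\tilde a(-\tfrac12)\,e^{rc}$ has weight $1$, not $\tfrac12$, and the orthogonality conditions alone do not repair this. The paper's definition is
\[
B^a_r = \res_z Y\bigl(G_{-1/2}\,\tilde c(-\tfrac12)\wedge\tilde a(-\tfrac12)\,c(-1)^{-1/2}\,e^{rc},\,z\bigr),
\]
and the factor $c(-1)^{-1/2}$ does not live in $S(\widehat h^-)$ at all. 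The real technical labor is therefore not cocycle signs or half-integer bookkeeping in the super-Borcherds identity, but: enlarging the Fock space to an algebra $R(\widehat h^-)$ admitting fractional powers; defining $(zc(z))^{-1/2}$ via the logarithm $\log(1+c^\times(z))$ introduced in Section \ref{sectionlongitudinal}; and proving a fractional-power extension $[G_r,x(-n)^s]=ns\,x(-n)^{s-1}\tilde x(-n+r)$ of Lemma \ref{bosonfermionbridge}. Only with these in hand can one check that the state is a weight-$\tfrac12$ super-Virasoro primary and hence that $[G_s,B^a_r]=0$. This is precisely the ``special treatment'' the introduction refers to, and your proposal does not anticipate it.
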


Finally, we may then consider what happens if we add a $N=2$ structure to this vertex operator superalgebra.  Is it also possible to define two Lie algebras, with the help of the equivalent no-ghost theorem in this setting?  In this work we give a positive answer to these questions and construct a Lie algebra of physical states $\widetilde{P}^{0,0}$ based on the lattice $II_{2,2}$ and a smaller Lie algebra $g^{(2)}_{NS}$ parametrized by the light-cone
$$\{ \alpha \in II_{2,2}, (\alpha,\alpha) = 0\}.$$
The Lie algebra $g^{(2)}_{NS}$ was first found in \cite{kugel}, with techniques that rely on heavy use of BRST cohomology.  In this work we instead take a more traditional approach and rely more heavily on the equivalent of the no-ghost theorem in $N=2$.  Decompose $II_{2,2}$ as $L^+ \oplus L^-$ and an element $\lambda \in II_{2,2}$ as $\lambda^+ + \lambda^-$.  We describe the structure of this Lie algebra precisely, which results in the following theorem :

\begin{theorem*} (Section \ref{subsectionN=2})
If $L = II_{2,2} = L^+ \oplus L^-$, the structure of $g^{(2)}_{NS}$ is given by
$$g^{(2)}_{NS} = \C e^0 \oplus A_0 \oplus A_{\infty} \oplus \bigoplus\limits_{r \in \Q^\times} A_r$$
where $A_r = \spn\{e^{m u_r + n v_r}\}$ for $r =p/q \in \Q^{\times}$ is the Lie algebra generated by some elements $e^{\pm u_r}, e^{\pm v_r}$ with Lie bracket 
$$[e^{m u_r + n v_r}, e^{k u_r + l v_r}] = (ml - nk) pq e^{(m+k) u_q + (n+l) v_q},$$
each of these subalgebras commute with each other and
$$A_0 = \spn \{ e^\lambda , \lambda^+ = 0, \lambda^- \neq 0 \}, A_{\infty} = \spn \{ e^{\lambda}, \lambda^- = 0, \lambda^+ \neq 0 \},$$
are abelian.
\end{theorem*}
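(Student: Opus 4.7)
The strategy is to combine the $N=2$ Neveu--Schwarz no-ghost theorem with direct vertex-operator computations. I would first apply the no-ghost theorem at the critical central charge achieved by $II_{2,2}$ (which has rank $4$). The transverse Hilbert space is then trivial in every nontrivial charge sector, so the physical-state space $(g^{(2)}_{NS})_\alpha$ is at most one-dimensional, and is nontrivial precisely when $\alpha$ lies on the light-cone. Picking a canonical generator (possibly after fermion dressing) and still denoting it $e^\alpha$, together with the vacuum $e^0$, one obtains the announced vector-space decomposition.

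Next, I would stratify the light-cone using $II_{2,2} = L^+ \oplus L^-$ (with both summands totally isotropic). Writing $\alpha = \alpha^+ + \alpha^-$, the condition $(\alpha,\alpha) = 2(\alpha^+, \alpha^-) = 0$ splits the light-cone into three types: $\alpha^+ = 0$ giving $A_0$, $\alpha^- = 0$ giving $A_\infty$, and a mixed case in which both parts are nonzero. In the mixed case, a primitive $\alpha^+ \in L^+$ is determined by a slope $r = p/q \in \p^1(\Q)$, and the orthogonality condition forces $\alpha^-$ to lie on the unique rational line in $L^- \otimes \Q$ dual to $\alpha^+$. Fixing primitive generators $u_r \in L^+, v_r \in L^-$ of these two lines parametrizes $A_r = \spn\{e^{m u_r + n v_r}\}$.

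The Lie brackets are then evaluated via $[u,v] = \res_z Y(u,z) v$ and projection to the physical-state quotient. The bosonic part of the residue contributes only when $(\alpha,\beta) \leq -1$; in particular, whenever $\alpha, \beta$ and $\alpha+\beta$ all lie on the light-cone, a short computation forces $(\alpha,\beta) = 0$, so the bosonic contribution vanishes. All nontrivial brackets thus come from the fermionic contractions between $N=2$ superpartners. Within a single $A_r$, the fermionic pairing between the superpartners of $u_r$ and $v_r$ produces the Poisson-bracket antisymmetry $(ml-nk)$, and the rescaling needed to pass from a standard lattice basis to the primitive slope-$r$ generators $(u_r, v_r)$ contributes the prefactor $pq$, yielding the formula $[e^{m u_r + n v_r}, e^{k u_r + l v_r}] = (ml-nk) pq \, e^{(m+k)u_r + (n+l)v_r}$. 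For pairs of charges whose sum leaves the light-cone, the residue is killed by the weight condition; in the residual exceptional cases where $\alpha+\beta$ happens to land on the light-cone but the pair sits in distinct strata, one verifies that the residue vector fails one of the positive-mode $L_n$ or $G^\pm_{n+1/2}$ conditions and is projected out.

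The main obstacle is this last fermionic computation within a single $A_r$. One must track how the physical-state representative $e^\alpha$ is dressed by fermion modes in the $N=2$ NS sector, bookkeep the lattice cocycle $\epsilon(\alpha,\beta)$ against the Grassmann anticommutation of the fermion modes, and verify that all spurious contributions cancel to leave exactly $(ml-nk)pq$. A secondary difficulty is ruling out the exceptional cross-stratum brackets: this requires a detailed analysis of the image of the residue map in the pre-physical Hilbert space to exhibit the oscillator obstruction case by case.
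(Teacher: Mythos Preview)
Your proposal has two genuine errors that would prevent the argument from going through, and one strategic misstep that makes the remaining work much harder than necessary.

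\textbf{The Lie bracket is not $u_0v$.} In the $N=2$ setting the physical states sit at $L_0$-weight $0$, not $1$, and the bracket on $\widetilde P^{0,0}$ is
\[
[u,v] \;=\; (G^+_{-\frac12}G^-_{-\frac12}u)_0\,v,
\]
not the bare residue $\res_z Y(u,z)v$. The paper spends an entire subsection establishing that this modified product is closed, antisymmetric, and satisfies Jacobi on $P^{0,0}/(G^+_{-1/2}V_{-1/2,-1}+G^-_{-1/2}V_{-1/2,1})$. With your bracket $e^\alpha_0e^\beta$, for $(\alpha,\alpha)=(\beta,\beta)=(\alpha,\beta)=0$, you get only a multiple of $e^{\alpha+\beta}$ coming from the cocycle, with no $(\alpha^+,\beta^-)$ factor; the asymmetric structure constant $(\alpha^+,\beta^-)$ arises precisely because $G^+_{-1/2}G^-_{-1/2}e^\alpha$ contains the bosonic term $\alpha^+(-1)e^\alpha$, whose zeroth mode then picks up $(\alpha^+,\beta)=( \alpha^+,\beta^-)$. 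So the formula $(ml-nk)pq$ is not coming from ``fermionic contractions between $N=2$ superpartners'' but from the boson created by $G^+G^-$.

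\textbf{The vectors $u_r,v_r$ are not in $L^+$ and $L^-$.} You set $u_r\in L^+$ and $v_r\in L^-$; but then $e^{u_r}$ would lie in $A_\infty$, contradicting $e^{u_r}\in A_r$. In the paper $u_r=(p,0,0,q)=p\rho_1+q\rho_2'$ and $v_r=(0,-p,q,0)=-p\rho_2+q\rho_1'$ each have nonzero $L^+$ \emph{and} $L^-$ components; it is the pairing $(u_r^+,v_r^-)=pq$ between these mixed pieces that produces the factor $pq$.

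\textbf{The cross-stratum vanishing is much simpler than you suggest.} Once one has the uniform formula (this is the paper's Corollary~\ref{bracketn=2})
\[
[e^\alpha,e^\beta]=\begin{cases}(\alpha^+,\beta^-)\,\epsilon(\alpha,\beta)\,e^{\alpha+\beta}&(\alpha,\beta)=0,\\ 0&(\alpha,\beta)\neq 0,\end{cases}
\]
the case $(\alpha,\beta)\neq 0$ is handled entirely by the no-ghost theorem ($\alpha+\beta$ is off the light-cone, so the target space is null), and the remaining case $r\neq r'$, $(\alpha,\beta)=0$ is a two-line lattice computation showing $(\alpha^+,\beta^-)=0$. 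There is no ``oscillator obstruction'' to exhibit and no case-by-case analysis of positive $L_n$ or $G^\pm_{n+1/2}$ modes: the bracket is literally zero in $P^{0,0}$, not merely projected out. Your anticipated ``main obstacle'' and ``secondary difficulty'' both evaporate once the correct bracket and the correct $u_r,v_r$ are in place.
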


Furthermore, if we think of elements of $II_{2,2}$ as $2 \times 2$ matrices, there is a natural $SL(2,\Z)$ action on the light-cone which can be lifted :
\begin{theorem*} (Section \ref{subsectionN=2})
Each left action of $SL(2,\Z)$ on $II_{2,2}$ lifts to a Lie algebra isomorphism in $g^{(2)}_{NS}$ which preserves each $A_r, r \in \Q \cup \infty$.  Each right action of $SL(2,\Z)$ on $II_{2,2}$ induces vector space isomorphisms $A_r \simeq A_{r'}$ for $r,r' \in \Q$.
\end{theorem*}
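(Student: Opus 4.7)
Both left and right multiplication by $g \in SL(2,\Z)$ are lattice isometries of $II_{2,2}$ viewed with the form $(M,M) = -2\det M$, since $\det(gM) = \det(Mg) = \det M$. By functoriality of the Fock space construction, any isometry $\phi$ of $L = II_{2,2}$ extends to an automorphism of $V_L \otimes F_4$ compatible with the $N=1$ super-Virasoro structure (up to signs from the $2$-cocycle defining $\C\{L\}$), and descends to a Lie algebra automorphism of $g_{NS}$. The additional data needed to restrict to $g^{(2)}_{NS}$ is the $N=2$ structure, which in the $2 \times 2$ matrix model is encoded by the Lagrangian decomposition $II_{2,2} = L^+ \oplus L^-$ with $L^+ = \spn(E_{11}, E_{21})$ and $L^- = \spn(E_{12}, E_{22})$ (the first-column and second-column matrices). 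A direct check shows that left multiplication preserves this decomposition, while right multiplication does not; this is the geometric origin of the asymmetry in the statement.

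For the left action, write $r = p/q$ with $(p,q)$ a primitive pair, and take $u_r = p E_{11} + q E_{12}$, $v_r = p E_{21} + q E_{22}$, so that $A_r = \spn\{e^{m u_r + n v_r}\}$ consists of exponentials of rank-$1$ lattice vectors with row-span $\Q(p,q)$. A direct computation gives $g u_r = a u_r + c v_r$ and $g v_r = b u_r + d v_r$ for $g = \begin{pmatrix} a & b \\ c & d \end{pmatrix}$, so left multiplication restricts to a bijection of the sublattice $\Z u_r + \Z v_r$; the lifted automorphism thereby preserves $A_r$. The degenerate values $r = 0, \infty$ correspond to $A_0$ (exponentials of elements of $L^- \setminus \{0\}$) and $A_\infty$ (exponentials of elements of $L^+ \setminus \{0\}$), which are preserved because left multiplication preserves $L^\pm$.

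For the right action, $\lambda \mapsto \lambda g$ sends a rank-$1$ matrix with row-span $(p,q)$ to one with row-span $(p,q) g = (pa + qc, pb + qd)$, which remains primitive since $\det g = 1$. Setting $r' = (pa + qc)/(pb + qd)$, one checks $u_r g = u_{r'}$ and $v_r g = v_{r'}$, so the basis map $e^{m u_r + n v_r} \mapsto \pm e^{m u_{r'} + n v_{r'}}$ is a bijection between the bases of $A_r$ and $A_{r'}$. Transitivity of the $SL(2,\Z)$-action on primitive vectors of $\Z^2$ (a standard consequence of Bezout's identity) ensures every pair $r, r' \in \Q \cup \infty$ is related in this way, yielding the claimed vector space isomorphism $A_r \simeq A_{r'}$. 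This map fails to be a Lie algebra isomorphism in general, consistently with the scalar $pq$ in $[e^{m u_r + n v_r}, e^{k u_r + l v_r}] = (ml - nk) p q\, e^{(m+k) u_r + (n+l) v_r}$ depending on $r$.

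The main technical obstacle is making precise the lifting of a lattice isometry to a super-VOA automorphism compatible with the $N=2$ currents $J$ and $G^\pm$. One must (i) track the cocycle signs in $\C\{L\}$, amounting to showing that the $SL(2,\Z)$-action preserves the chosen $2$-cocycle up to a coboundary absorbable into signs in the lift, and (ii) verify that the left-multiplication lift commutes with $J$ and $G^\pm$, which reduces to the compatibility of left multiplication with the polarization $L^+ \oplus L^-$ established above, together with the analogous compatibility for the fermion modes. Once these standard but delicate checks are in place, both structural statements of the theorem follow from the lattice-level bijections exhibited above.
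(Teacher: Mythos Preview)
Your proposal is correct, but it takes a more circuitous route than the paper. The paper never lifts anything to an SVOA automorphism. Instead it works entirely inside the explicit description of $g^{(2)}_{NS}$ given in Corollary~\ref{bracketn=2}: since $[e^{\alpha},e^{\beta}] = (\alpha^+,\beta^-)\,\epsilon(\alpha,\beta)\,e^{\alpha+\beta}$, it suffices to check that left multiplication preserves the scalar $(\alpha^+,\beta^-)$. This is a two-line matrix computation (writing out $(M\lambda_1)^+$ and $(M\lambda_2)^-$ and using $ad-bc=1$), and that is the whole argument. The cocycle issue you flag turns out to be empty here: within each $A_r$ the paper's chosen $\epsilon$ is identically $1$, and between distinct $A_r$'s the brackets vanish, so no sign adjustment is needed. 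Your route through functoriality of the Fock space, extension to the fermion sector, and compatibility with $\tau^\pm,j$ is valid in principle and conceptually pleasing (the observation that left multiplication preserves the column decomposition $L^+\oplus L^-$ is exactly the geometric content), but it imports machinery the proof does not need and leaves the heaviest verifications --- your points (i) and (ii) --- unfinished.

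For the preservation of $A_r$ under the left action and the permutation $A_r \to A_{r'}$ under the right action, your computations $g u_r = a u_r + c v_r$, $g v_r = b u_r + d v_r$, $u_r g = u_{r'}$ match the paper's (the paper uses $\lambda M^t$ rather than $\lambda g$, a cosmetic difference, and checks the left-invariance via the generators $S,T$ rather than a general $g$). Both of you identify the right action with the M\"obius action on $\Q$ and invoke transitivity. Your observation that the right action fails to be a Lie isomorphism because the structure constant $pq$ depends on $r$ is exactly the paper's remark following the counterexample with $S$ and $r=1$.
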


This dissertation is organized as follows.

In chapter 2, we review the basic definitions of BKM algebras as well as the Borcherds-Weyl-Kac denominator formula and how it applies to untwisted affine Lie algebras.

In chapter 3, we review the abstract notion of Vertex Operator algebras, including $N=1$ and $N=2$ SVOAs as well as basic properties such as tensor products and modules.  The dual module from \cite{frenkelhuanglepowsky} which provides important properties for $(.,.)$ is also reviewed.

In chapter 4, we summarize the notions of BRST cohomology introduced in \cite{feigin},\cite{frenkelgarlandzuckerman}, but using the modern adaptations found in \cite{lianzuckerman1}, \cite{lianzuckerman2}.  These notions establish alternate methods to study Lie algebra of physical states.  We do not need these techniques for our new results, but they are included due to their significance for the theory as a whole.  These techniques also provide an additional structure in $N=1$ which extends the Lie algebra to a Lie superalgebra as done in \cite{scheit} and in an unpublished paper by Lian, Moore and Zuckerman \cite{lianmoorezuckerman}.

In chapter 5, we define vertex operator algebras $V_L$ based on an even lattice $L$ and the resulting Lie algebras $P^1/L_{-1} P^0$ and $g_L = P^1 / (.,.)$ from the no-ghost theorem.  We also describe the spectrum-generating algebra completely.

In chapter 6, we tensor $V_L$ with the SVOA $F_d$ on $d$ fermions constructed in \cite{friedan}, \cite{feingoldfrenkelries} and \cite{tsukada}.  We recover the Lie algebra of physical states $P^{1/2}/G_{-1/2} P^0$ of \cite{scheit2} as well as its quotient $g_{NS}$, and we describe the spectrum-generating algebra for the quotient, yielding an explicit basis.

In chapter 7, we define the Lie algebra of physical states $\widetilde{P}^{0,0}$ in $N=2$ and describe the structure of its quotient $g^{(2)}_{NS}$ completely.  We lift the $SL(2,\Z)$ action on $II_{2,2}$ to a left and right action of $g^{(2)}_{NS}$ and prove a lower bound for the multiplicities of $\widetilde{P}^{0,0}$.

In chapter 8, we review the theory of Borcherds products from \cite{borcherds3} and important theorems from the theory of Jacobi forms.  These results are important for chapter 9.

In chapter 9, we study the structure of $g_L$ for $L=II_{25,1}$ further, first reviewing the results from \cite{borcherds1} then extending to any decomposition $II_{25,1} = N \oplus II_{1,1}$ where $N$ is a Niemeier lattice.  Finally, we compare this algebraic structure to the expansion of the Weyl-Kac-Borcherds denominator formula of $g_L$ with respect to $N$ as described in \cite{gritsenko}. 

\section{Kac-Moody algebras and Borcherds-Kac-Moody algebras} Most of the information in this section can be found in \cite{kac} and \cite{wakimoto}.

Let $A = (a_{ij})_{1 \leq i,j \leq n}$ be an $n \times n$ real matrix.  We say $A$ is a BKM-matrix if $A$ satisfies the following conditions :
\begin{enumerate}
\item $a_{ii} = 2$ or $a_{ii} \leq 0$.
\item $i \neq j \Rightarrow a_{ij} \leq 0$.
\item $a_{ij} = 0 \Leftrightarrow a_{ji} = 0$.
\item If $a_{ii} = 2$, then $a_{ij} \in \Z$ for all $j$.
\end{enumerate}
Write $I^{re} = \{ 1 \leq i \leq n | a_{ii} = 2 \}$ and $I^{im} = \{ 1 \leq i \leq n | a_{ii} \leq 0 \}$.  If $I^{re} = \{ 1 ,..., n \}$, then $A$ is known as a generalized Cartan matrix.  We may associate to a BKM-matrix a diagram, known as \emph{Dynkin diagram} in the following way :
\begin{enumerate}
\item For each row of the matrix is a vertex, which we denote by $\alpha_i$.
\item If $i \neq j$ and $a_{ij} \neq 0$, there is an edge between $\alpha_i$ and $\alpha_j$ of multiplicity $(-a_{ij},-a_{ji})$.
\begin{center}
  \begin{tikzpicture}[scale=.4]
    \foreach \x in {0,...,1}
    \draw[xshift=\x cm,thick] (4*\x cm,0) circle (.5cm);
    \draw[thick] (0.5 cm,0) -- +(4 cm,0);
    \draw (2.5,-0.25) node[anchor=north]  {\small{$(-a_{ij},-a_{ji})$}};
  \end{tikzpicture}
\end{center}
\item Multiplicities of the type $(2,1)$, $(1,2)$, $(3,1)$ and $(1,3)$ may be replaced with double or triple edges and arrows pointing in the direction of the smaller of the two numbers $a_{ij}$, $a_{ji}$.
\end{enumerate}

If the Dynkin diagram of $A$ is the Dynkin diagram of a finite-dimensional semisimple Lie algebra (resp. an affine Lie algebra), we say it is of \emph{finite} (resp. \emph{affine}) type.

We say that $A$ is \emph{indecomposable} if the Dynkin diagram of $A$ is connected.  Equivalently, $A$ is \emph{decomposable} if there exists a permutation of the rows and columns of $A$ such that $A$ can be written as a block matrix
$$\begin{bmatrix}
A_1 & &  & 0 \\
0 & A_2 &  & 0 \\
& & \ddots \\
0 & &  & A_k
\end{bmatrix}$$
for some $k \geq 2$.

We say that $A$ is \emph{symmetrizable} if there exists a diagonal $n \times n$ matrix $D$ such that $B = DA$ is a symmetric matrix.  All of the BKM matrices we consider in this work will be symmetric.

The Generalized Kac-Moody algebra $g(A)$ associated to $A$, or Borcherds-Kac-Moody (BKM) algebra, is the Lie algebra generated by elements $h_i$, for $1 \leq i \leq 2n - \rk A = l$ , and $e_i, f_i$, for $1 \leq i \leq n$ with the following properties :
\begin{enumerate}
\item $h = \bigoplus\limits_{i=1}^l \C h_i$ is an abelian Lie algebra.
\item $[h_i,e_j] = a_{ij} e_j$ and $[h_i,f_j] = - a_{ij} f_j$.
\item $[e_i, f_j] = \delta_{ij} h_i$
\item If $a_{ii} = 2$, $(\ad e_i)^{1 - a_{ij}} e_j = 0$ and $(\ad f_i)^{1 - a_{ij}} f_j = 0$.
\item If $a_{ii},a_{jj} \leq 0$ and $a_{ij} = 0$ then $[e_i,e_j] = [f_i,f_j] = 0$.
\end{enumerate}

If $A$ is a Cartan matrix of finite (respectively affine) type, then $g(A)$ is a finite dimensional simple Lie algebra (respectively an affine Lie algebra).  

Let $\Pi = \{ \alpha_i | i = 1,...,n \}$ and $Q = \spn_\Z \Pi$.  We write
$$\Pi^{re} = \{ \alpha_i \in \Pi | a_{ii} = 2 \}$$
and
$$\Pi^{im} = \{ \alpha_i \in \Pi | a_{ii} \leq 0 \}.$$
Elements of the former are known as \emph{real simple roots} while elements of the latter are known as \emph{imaginary simple roots}.

If $A$ is symmetrizable, then we may define a bilinear form on $Q$ by
$$(\alpha_i,\alpha_j) = b_{ij}$$
for $B = (b_{ij})_{1 \leq i,j \leq n}$.

The space $Q$ has an important group action.  For $i \in I^{\re}$ and $\lambda \in Q$, write
$$r_i(\lambda) = \lambda - (\alpha_i,\lambda) \alpha_i.$$
It is easy to check that $r_i(\lambda)^2 = \id$.  We form the group
$$W = \langle r_i | i \in I^{re} \rangle$$
and we refer to this group as the \emph{Weyl group}.  The Weyl group also induces isomorphisms in $g(A)$.

The Lie algebra $g(A)$ has a very special isomorphism which we refer to as the \emph{Cartan involution}.  It is defined by
$$\theta(e_i) = -f_i, \theta(f_i) = -e_i, \theta(h_i) = -h_i$$
and extended to all of $g(A)$ in the obvious way.

The Lie algebra $g(A)$ also has a decomposition
$$g(A) = \bigoplus\limits_{\alpha \in Q} g_\alpha$$
where $g_{\alpha} = \{x \in g(A) | [h,x] = \alpha(h)x \text { for all } h \in h \}$.

Let $\Delta = \{ \alpha \in Q | g_{\alpha} \neq 0 \} \setminus \{ 0 \}$.  Then it turns out that
$$\Delta = \Delta^{+} \cup \Delta^{-}$$
where
$$\Delta^{+} = \left\lbrace \sum\limits_{i=1}^n a_i \alpha_i | a_i \geq 0 \right\rbrace \setminus \{ 0 \}$$
and $\Delta^{-} = - \Delta^{+}$.  Then we may rewrite our decomposition as
$$g(A) = \bigoplus\limits_{\alpha \in \Delta^{+}} g_{-\alpha} \oplus g_0 \oplus \bigoplus\limits_{\alpha \in \Delta^{+}} g_{\alpha}.$$

As a result, we also define the Lie subalgebras
$$n^{-} = \bigoplus\limits_{\alpha \in \Delta^{+}} g_{-\alpha}$$
and
$$n^{+} = \bigoplus\limits_{\alpha \in \Delta^+} g_{\alpha}.$$

Let $g$ be a semi-simple Lie algebra which is a direct sum of Lie algebras of type $A_n, D_n$ or $E_n$.  Let $\Delta_{fin}$ be the root system of $g$.  We will consider the affine Lie algebra 
$$\widehat{g} = g \otimes \C [t, t^{-1}] \oplus \C c \oplus \C d.$$
defined in such a way that
\begin{align*}
[x \otimes t^m,y \otimes t^n] &= [x,y] \otimes t^{m+n} + (x,y) m \delta_{m,-n} c \\
[c, x \otimes t^m] &= [c,d] = 0 \\
[d,x \otimes t^m] &= m x \otimes t^{m}
\end{align*}

Define $\delta \in h^*$ by $\delta(d) = 1$.  Then clearly,
$$[d,x \otimes t^m] = m\delta(d) x \otimes t^m.$$
In particular, the set
$$\Delta = \{ n\delta + \alpha | \alpha \in \Delta_{fin} \cup \{0\}\}$$
describes the roots of $\widehat{g}$.  If $\gamma$ is a highest root for $g$, then $\Delta$ can be rewritten as
$$\Delta = \{n\alpha_0 + n\gamma + \alpha | \alpha \in \Delta_{fin} \cup \{0\}\}$$
where $\alpha_0 = \delta - \gamma$.
This description has the advantage that $n\delta + \alpha$ is a positive root whenever $n > 1$ and a negative root whenever $n < 1$.  Choose $e_{\gamma} \in g_{-\gamma}$ and $f_{\gamma} \in g_{\gamma}$ such that $e_{\gamma},f_{\gamma},[e_{\gamma}, f_{\gamma}]$ form an $sl_2$-triple.  Set
$$e_0 = t \otimes e_{\gamma}$$
$$f_0 = t^{-1} \otimes f_{\gamma}$$
Then we can compute that $h_0 := [e_0,f_0] = \frac{2}{(\gamma,\gamma)}c + [e_\gamma,f_\gamma]$.  Write $\alpha_1,...,\alpha_n$ and $h_i,...,h_n$ for the simple roots and coroots of $g$.  Then the sets
$$\Pi = \{ \delta - \gamma, \alpha_1,...,\alpha_n\}$$
and
$$\Pi^{\vee} = \{ \frac{2}{(\gamma,\gamma)}c + [e_{\gamma},f_{\gamma}], h_1,...,h_n\}$$
define an $n+1 \times n+1$ matrix $A = (\alpha_j(h_i))_{i,j=0}^n$.  In particular,

\begin{theorem}
The Lie algebra $\widehat{g}$ is isomorphic to the Kac-Moody algebra defined by $g(A)$.
\end{theorem}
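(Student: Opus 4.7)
The plan is to construct an explicit isomorphism $\varphi : g(A) \to \widehat{g}$ sending the abstract Chevalley generators of $g(A)$ to the explicit elements of $\widehat{g}$ displayed above, then show this map is both well-defined and bijective.

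First I would verify that the proposed matrix $A = (\alpha_j(h_i))_{i,j=0}^n$ really is a generalized Cartan matrix, i.e. has $2$ on the diagonal, non-positive integers off-diagonal, and the correct symmetrizability coming from the invariant form on $\widehat{g}$. The only entries that differ from the Cartan matrix of $g$ are those in row/column $0$, so I only need to compute $\alpha_j(h_0)$ and $\alpha_0(h_i)$ using the fact that $\gamma$ is the highest root of a simply-laced algebra, which forces $(\gamma,\gamma) = 2$ and hence $h_0 = c + [e_\gamma,f_\gamma]$; the entries $\alpha_0(h_i) = -\gamma(h_i)$ and $\alpha_i(h_0) = -(\gamma,\alpha_i)$ are then the usual highest-root data. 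I would then define $\varphi$ on generators by $e_i, f_i, h_i \mapsto$ themselves for $i \geq 1$ and $e_0, f_0, h_0$ as in the statement, and check the defining relations of $g(A)$ from Section~2: the Cartan relations and $[e_i,f_j]=\delta_{ij}h_i$ are immediate from the bracket formulas in $\widehat{g}$, while the Serre relations $(\ad e_0)^{1-a_{0j}}e_j = 0$ reduce, since $\gamma$ is the highest root, to the standard fact that $(\ad e_\gamma)^{1-a_{0j}}$ kills $e_{\alpha_j}$ inside $g$. This shows $\varphi$ extends to a Lie algebra homomorphism.

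Next I would verify surjectivity. The image contains $g \otimes 1$ (generated by the $e_i, f_i$ for $i\geq 1$), $e_0 = e_\gamma \otimes t$ and $f_0 = f_\gamma \otimes t^{-1}$, and $c, d$ (the latter must be obtained from $h_0$ together with extra Cartan generators in $g(A)$; recall that $g(A)$ is defined with an $l = 2n - \rk A$ dimensional Cartan, and for affine type one has $\rk A = n$, giving the extra degree $d$). Brackets of $e_0$ with $\ad$-action of $g \otimes 1$ sweep out $g \otimes t$ because the $g$-module generated by $e_\gamma$ is all of $g$ (as $\gamma$ is highest); iterating with $e_0, f_0$ produces $g \otimes t^n$ for every $n \in \Z$.

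For injectivity, I would appeal to the standard fact (Kac, Proposition~1.7 / Chapter 1) that for a symmetrizable Cartan matrix $A$, the kernel of any homomorphism from $g(A)$ onto a Lie algebra with the same triangular decomposition and trivial intersection with the Cartan must vanish. Concretely: $\varphi$ is injective on the Cartan $h$ by construction, and the triangular decompositions $g(A) = n^- \oplus h \oplus n^+$ and $\widehat{g} = \widehat{n}^- \oplus \widehat{h} \oplus \widehat{n}^+$ are respected. The kernel of $\varphi$ is then a graded ideal that intersects $h$ trivially, and by the Gabber--Kac theorem in the symmetrizable case this forces $\ker\varphi = 0$.

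The main obstacle I expect is the injectivity step: producing a clean proof without assuming the full Gabber--Kac theorem would require a direct check that the Serre relations together with the affine structure already match the dimensions of root spaces of $\widehat{g}$ (which are $\dim g_\alpha = 1$ for $\alpha$ a real root and $\dim \widehat{g}_{n\delta} = n = \rk g$ for imaginary roots). The surjectivity onto the imaginary root spaces $h \otimes t^n$ is straightforward from the iterated bracket computation, but matching multiplicities on the $g(A)$ side is the substantive content, and I would either invoke the theorem from \cite{kac} directly or carry out the dimension comparison explicitly using the known graded structure of both sides.
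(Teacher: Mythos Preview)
The paper does not actually supply a proof of this theorem; it is stated as a standard fact, with the chapter's opening line pointing to \cite{kac} and \cite{wakimoto} for the material. So there is nothing in the paper to compare your argument against.

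That said, your outline is the standard textbook argument (essentially Kac, Chapter~7). A few small points to tighten. First, in the paper's conventions $e_\gamma \in g_{-\gamma}$ and $f_\gamma \in g_\gamma$, so when you verify the Serre relation $(\ad e_0)^{1-a_{0j}} e_j = 0$ you are computing $(\ad e_\gamma)^{1-a_{0j}} e_{\alpha_j}$ with $e_\gamma$ a \emph{lowest} root vector; the vanishing then follows from the $sl_2$-string through $\alpha_j$ in the $-\gamma$ direction, not from $\gamma$ being highest. Second, the paper allows $g$ to be a direct sum of simply-laced simple pieces, in which case there is one highest root and one affine node per component; your argument as written implicitly treats $g$ as simple, so either restrict to that case or note that the general statement follows componentwise. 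Third, your remark about recovering $d$ is correct: the paper's Cartan for $g(A)$ has dimension $2(n+1) - \rk A = n+2$, matching $\dim(h \oplus \C c \oplus \C d)$, and one of the extra generators must be sent to (a combination involving) $d$ to make $\varphi$ surjective onto the full $\widehat{g}$ including the derivation.

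Your identification of injectivity as the substantive step is exactly right, and invoking Gabber--Kac (or the simpler fact in Kac, Proposition~1.7, that any ideal of $g(A)$ meeting $h$ trivially is zero in the symmetrizable case) is the cleanest route.
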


Let $\lambda \in h^*$.  A highest weight module $V_{\lambda}$ for $g(A)$ is a $g(A)$-module if and only there exists $v \in V_{\lambda}$ such that the following conditions are satisfied :
\begin{enumerate}
\item $U(n^+) v = 0$,
\item $h \cdot v = \lambda(h)v$, for all $h \in h$.
\item $U(n^-)v = V$.
\end{enumerate}

We can study the highest weight modules for a Kac-Moody algebra in a similar way to semisimple Lie algebras.  In particular, we have an analogous character formula.

\begin{theorem}\emph{(Weyl-Kac character formula)}
Suppose $A$ is a generalized Cartan matrix.  Let $\rho \in h^*$ such that $\rho(h_i) = 1$ for all $i = 1,...,n$ and $V(\lambda)$ an irreducible highest weight representation for $g(A)$ of dominant integral highest weight $\lambda$.  Then
$$\ch V(\lambda) = \frac{ \sum\limits_{w \in W} (-1)^{l(w)} e^{w(\lambda + \rho)} }{e^\rho \prod\limits_{\alpha \in \Delta^+} (1 - e^{-\alpha})^{\dim g_\alpha}}$$
\end{theorem}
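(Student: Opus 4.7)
The plan is to follow the classical Kac strategy via the Casimir operator, which avoids the technical construction of a BGG-type resolution. First I would construct the Casimir operator $\Omega$ for $g(A)$. Using the symmetrizing form, one picks dual bases of the root spaces $g_\alpha$ and $g_{-\alpha}$ (each of dimension $\dim g_\alpha$) and forms $\Omega = 2 \nu^{-1}(\rho) + \sum_i h_i h^i + 2 \sum_{\alpha \in \Delta^+} \sum_j e_\alpha^{(j)} e_{-\alpha}^{(j)}$, where $\nu$ denotes the isomorphism $h \to h^*$ induced by the form. A direct bracket computation shows $\Omega$ commutes with the action on any module in the category $\mathcal{O}$ of weight modules with finite-dimensional weight spaces and weights bounded above. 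On a highest weight module with highest weight $\mu$, $\Omega$ acts by the scalar $(\mu + \rho, \mu + \rho) - (\rho, \rho)$.

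Next I would record the Verma module character, which is immediate from the PBW theorem applied to $U(n^-)$:
$$\ch M(\mu) = \frac{e^\mu}{\prod_{\alpha \in \Delta^+} (1 - e^{-\alpha})^{\dim g_\alpha}}.$$
In $\mathcal{O}$, every module has a generalized composition series with subquotients of the form $V(\nu)$, so by induction on the height of $\lambda - \mu$ one writes
$$\ch V(\lambda) = \sum_{\mu \leq \lambda} c_{\lambda,\mu} \ch M(\mu),$$
with $c_{\lambda,\lambda} = 1$. Because $\Omega$ is central, only weights $\mu$ satisfying the level equation $(\mu + \rho, \mu + \rho) = (\lambda + \rho, \lambda + \rho)$ can appear with nonzero coefficient. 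Multiplying through by the Weyl denominator $R = e^\rho \prod_{\alpha \in \Delta^+}(1 - e^{-\alpha})^{\dim g_\alpha}$, it suffices to prove
$$R \cdot \ch V(\lambda) = \sum_{w \in W} (-1)^{l(w)} e^{w(\lambda + \rho)}.$$

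For the final step I would exploit the action of the simple reflections on both sides. A direct calculation using the integrability of $V(\lambda)$ with respect to each $sl_2$-triple $\{e_i, f_i, h_i\}$ with $i \in I^{re}$ shows that $\ch V(\lambda)$ is $W$-invariant, while $R$ is $W$-skew-invariant, so $R \cdot \ch V(\lambda)$ is skew-invariant. Combined with the level constraint, which forces $\mu + \rho = w(\lambda + \rho)$ for some $w \in W$, and with the normalization $c_{\lambda,\lambda} = 1$, the skew-invariance pins down the coefficients: the term $e^{w(\lambda+\rho)}$ appears with coefficient $(-1)^{l(w)}$, and no other $\mu$ contributes.

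The main obstacle is the control of the infinite sums. In the Kac-Moody setting $W$ and $\Delta^+$ are typically infinite, so one must work in a suitable completion $\mathcal{E}$ of the group algebra of the weight lattice (formal sums supported on unions of cones of the form $\lambda - Q^+$) and verify that both the denominator product and the Weyl group sum lie in this completion; the $W$-skew-invariance argument then has to be executed inside $\mathcal{E}$. The assumption that $A$ is a generalized Cartan matrix (rather than a BKM matrix with imaginary simple roots) is essential here, since otherwise one also needs Borcherds' correction terms coming from sums over sets of mutually orthogonal imaginary simple roots, which is a separate story.
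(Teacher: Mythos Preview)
The paper does not give its own proof of this theorem; it is stated as background material with the remark at the start of the section that ``most of the information in this section can be found in \cite{kac} and \cite{wakimoto}.'' Your proposal is precisely the Casimir-operator argument from Kac's book, which is the standard reference being cited, so there is nothing to compare against here and your sketch is the expected one.

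One small point worth tightening: the sentence ``the level constraint, which forces $\mu + \rho = w(\lambda + \rho)$ for some $w \in W$'' is slightly compressed. The equation $(\mu+\rho,\mu+\rho) = (\lambda+\rho,\lambda+\rho)$ alone does not put $\mu+\rho$ in the $W$-orbit of $\lambda+\rho$; what actually happens is that after multiplying by $R$ you have a $W$-skew-invariant sum of terms $e^{\nu}$ with $\nu \le \lambda+\rho$ and $(\nu,\nu)=(\lambda+\rho,\lambda+\rho)$, and then one argues that the only \emph{dominant} such $\nu$ is $\lambda+\rho$ itself (using that $\lambda+\rho$ is strictly dominant, so $(\lambda+\rho+\nu,\alpha_i)>0$ for each simple $\alpha_i$). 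Skew-invariance then propagates the coefficient $1$ at $e^{\lambda+\rho}$ to $(-1)^{l(w)}$ at $e^{w(\lambda+\rho)}$ and kills anything fixed by a simple reflection. In the Kac--Moody setting you should also note that the relevant exponents lie in the Tits cone, so that the ``move to the dominant chamber'' step is available; this is where integrability of $V(\lambda)$ is used beyond just the $W$-invariance of $\ch V(\lambda)$.
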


\begin{corollary}\emph{(Weyl-Kac denominator formula)}
If $\lambda = 0$, the character formula simplifies to
$$e^{\rho} \prod\limits_{\alpha \in \Delta^+}(1 - e^{\alpha})^{\dim g_\alpha} = \sum\limits_{w \in W} (-1)^{l(w)} e^{w(\rho)} $$
\end{corollary}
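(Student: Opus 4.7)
The plan is to apply the Weyl-Kac character formula of the preceding theorem to the trivial highest weight $\lambda = 0$. Since $\rho(h_i) = 1$ for all $i$, the weight $0$ is certainly dominant integral, so the theorem applies. The key observation is that the irreducible highest weight module $V(0)$ is one-dimensional with $\ch V(0) = 1$: a highest weight vector $v$ satisfies $e_i \cdot v = 0$ and $h \cdot v = 0$ for all $h \in \mf h$, while integrability together with $\lambda(h_i) = 0$ forces $f_i \cdot v = 0$ in the irreducible quotient, so the corresponding $\mathfrak{sl}_2$-triples act trivially on $v$. Hence $U(n^-) v = \C v$, and therefore $\ch V(0) = e^0 = 1$.

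Substituting $\lambda = 0$ and $\ch V(0) = 1$ into the character formula yields
$$1 = \frac{\sum_{w \in W}(-1)^{l(w)} e^{w(\rho)}}{e^\rho \prod_{\alpha \in \Delta^+}(1 - e^{-\alpha})^{\dim g_\alpha}},$$
and clearing denominators produces the stated identity. There is no substantive obstacle here: the only nontrivial input is the triviality of $V(0)$, which is standard in both the finite-dimensional and the Kac-Moody settings, and the formal convergence of the infinite product on the left-hand side is already implicit in the statement of the character formula itself, as both sides are interpreted as elements of an appropriate completion of the group algebra $\C[Q]$.
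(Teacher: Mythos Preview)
Your argument is correct and is exactly the intended one: the paper states this as an immediate corollary of the character formula with $\lambda=0$, and the only content is that $\ch V(0)=1$, which you justify properly. Note that your displayed denominator $(1-e^{-\alpha})$ matches the character formula as stated in the paper, whereas the corollary as printed has $(1-e^{\alpha})$; this is a typographical slip in the paper and your version is the consistent one.
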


There is also a denominator formula for BKM algebras :

\begin{theorem}\emph{(Weyl-Kac-Borcherds denominator formula)}
Suppose $A$ is a BKM matrix.   Set
$$S_0:= \sum\limits_{\substack{k \in \Z_{\geq 0} \\ i_1,...,i_k \in I^{im} }} (-1)^{k} e^{\rho - (\alpha_{i_1} + ... + \alpha_{i_k})}$$
where the sum is only over $i_1,...,i_k \in I^{im}$ satisfying $\alpha_1,...,\alpha_k$ pairwise orthogonal and distinct.  Then
$$e^{\rho} \prod\limits_{\alpha \in \Delta^{+}} (1 - e^{\alpha})^{\dim g_{\alpha}} = \sum\limits_{w \in W} \epsilon(w) w(S_0)$$
\end{theorem}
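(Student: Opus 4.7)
The plan is to derive the denominator formula from a BKM enhancement of the Weyl-Kac character formula, applied to the trivial representation $V(0) = \C$. Specifically, the goal is to establish
$$\ch V(\lambda) \cdot e^\rho \prod_{\alpha \in \Delta^+}(1 - e^{-\alpha})^{\dim g_\alpha} = \sum_{w \in W} \epsilon(w) w(S_\lambda),$$
where $S_\lambda$ is the sum analogous to $S_0$ but restricted to $k$-tuples of distinct pairwise orthogonal imaginary simple roots each orthogonal to $\lambda$. Setting $\lambda = 0$ reduces the left side to $e^\rho \prod(1-e^{-\alpha})^{\dim g_\alpha}$ (since $\ch V(0) = 1$) and $S_\lambda$ to $S_0$ (every $\alpha_i \in I^{im}$ is trivially orthogonal to $0$), after which a formal sign rearrangement between $(1-e^{-\alpha})$ and $(1-e^\alpha)$ brings the identity into the stated form.

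The step that distinguishes the BKM case from the Kac-Moody case is the presence of additional singular vectors in the Verma module $M(\lambda)$. For each $\alpha_i \in I^{im}$ with $(\lambda, \alpha_i) = 0$, relation (3) yields $e_j f_i v_\lambda = 0$ for $j \neq i$, while $e_i f_i v_\lambda = h_i v_\lambda = 0$, so $f_i v_\lambda$ is a singular vector of weight $\lambda - \alpha_i$. For real simple roots this phenomenon is absorbed into the Weyl group action, but for imaginary simple roots there is no associated reflection and so the contribution must be added separately. Relation (5) ensures $[f_i, f_j] = 0$ whenever $a_{ij} = 0$ and both roots are imaginary, so for any $k$-tuple $\{\alpha_{i_1}, \ldots, \alpha_{i_k}\}$ of distinct pairwise orthogonal imaginary simple roots in $\lambda^\perp$ the product $f_{i_1} \cdots f_{i_k} v_\lambda$ is singular of weight $\lambda - (\alpha_{i_1} + \cdots + \alpha_{i_k})$, contributing the sign $(-1)^k$ to the corresponding term of $S_\lambda$.

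Combining these with the classical reflection singular vectors at $w \cdot \lambda$, $w \in W$, one then constructs a BGG-type resolution of the irreducible quotient $L(\lambda)$ whose terms are direct sums of Verma modules indexed by pairs $(w, \{\alpha_{i_1}, \ldots, \alpha_{i_k}\})$ weighted by $\epsilon(w)(-1)^k$. Taking Euler characteristics and dividing by $\ch M(0) = \prod_{\alpha \in \Delta^+}(1 - e^{-\alpha})^{-\dim g_\alpha}$ yields the enhanced character formula. The hard part will be proving exactness of this resolution, equivalently, ruling out any singular vectors beyond those enumerated above. Following the Kac-Moody template of Kac and Kumar, this is controlled by the Casimir operator attached to the symmetric bilinear form $(\alpha_i, \alpha_j) = b_{ij}$ on $Q$: any singular weight $\mu \leq \lambda$ in $M(\lambda)$ must satisfy $(\mu + \rho, \mu + \rho) = (\lambda + \rho, \lambda + \rho)$, and the genuinely new work is to classify such $\mu$ and verify they are precisely the weights $w(\lambda + \rho) - \rho - \sum_\ell \alpha_{i_\ell}$ for admissible pairs $(w, \{\alpha_{i_\ell}\})$.
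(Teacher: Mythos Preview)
The paper does not prove this theorem: it is stated without proof in the background section on Kac-Moody and BKM algebras, with the surrounding material attributed to \cite{kac} and \cite{wakimoto} (and ultimately to Borcherds). So there is no in-paper argument to compare your proposal against.

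That said, your outline is the standard route and is correct in spirit. A few remarks. First, Borcherds' original argument is phrased in terms of the Lie algebra homology $H_*(n_-, L(\lambda))$ rather than an explicit BGG-type resolution: one computes $\sum_i (-1)^i \ch H_i(n_-, L(\lambda))$ via Euler--Poincar\'e to get the product side, and then identifies the homology as a sum of one-dimensional $h$-modules indexed exactly by the pairs $(w,\{\alpha_{i_\ell}\})$ you describe. Your resolution picture and the homology picture are equivalent, but the homology formulation sidesteps having to build the resolution explicitly. Second, your identification of the crux is right: the Casimir constraint $(\mu+\rho,\mu+\rho)=(\lambda+\rho,\lambda+\rho)$ cuts down the possible singular weights, and the remaining combinatorial work is to show that the only solutions in $\lambda - Q_+$ are of the form $w(\lambda+\rho - \sum \alpha_{i_\ell}) - \rho$ for admissible $(w,\{\alpha_{i_\ell}\})$; this step uses that $(\rho,\alpha_i) \geq 0$ for imaginary simple $\alpha_i$ and the positivity properties of the form on the real root lattice. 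Third, the ``formal sign rearrangement'' you mention between $(1-e^{-\alpha})$ and $(1-e^\alpha)$ is not quite innocent as stated, but this is almost certainly a typo in the paper's display rather than a genuine issue with your argument.
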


Next, we apply the denominator formula to untwisted affine Lie algebras.  In this case, we have
$$\Delta^+ =  \{n\delta + \alpha | \alpha \in \Delta^+_{fin}, n \geq 0\} \cup\{n\delta - \alpha | \alpha \in \Delta^+_{fin}, n \geq 1\} \cup \{ n \delta | n \geq 1 \}.$$ We also know that $\dim g_{\alpha} = 1$ if $(\alpha,\alpha) = 2$ and $d$ if $(\alpha,\alpha) = 0$.  Therefore, the left side of the denominator formula for an untwisted affine Lie algebra can be rewritten as
$$e^\rho \left( \prod\limits_{m \in \Z_{\geq 1}}(1 - e^{m \delta})^d \right) \left( \prod\limits_{n \in \Z_{\geq 1}} \prod\limits_{\alpha \in \Delta^+_{fin}} (1 - e^{n \delta - \alpha})\right) \left( \prod\limits_{n \in \Z_{\geq 1}} \prod\limits_{\alpha \in \Delta^+_{fin}} (1 - e^{n \delta + \alpha})\right).$$
We set $q = e^{\delta} = q^{2 \pi i \tau}$ and $p = e^{(\alpha,z)}$ and specialize the denominator formula.  After simplification, we obtain
\begin{equation}
e^\rho \prod\limits_{m \geq 1}(1 - q^m)^d \prod\limits_{\alpha \in \Delta_{fin}^+} \prod\limits_{n \geq 1} (1 - q^{n-1} p) (1 - q^n p^{-1}).
\end{equation}

Generalized Kac-Moody algebras also have an alternate description which is satisfied by some of the Lie algebras which appear in this document (see \cite{borcherds1a}).
\begin{theorem} \label{generalizedkacmoodycharacterization}
A Lie algebra $G$ is the quotient of a generalized Kac-Moody algebra by a subspace of its center if and only if it satisfies the following three conditions :
\begin{enumerate}
\item $G$ can be $\Z-$graded as $G = \bigoplus\limits_{i \in \Z} G_i$ where each $G_i$ is a sum of finite-dimensional $G_0$-modules.
\item $G_0 \subseteq [G,G]$.
\item $G$ has an involution $w : G_i \to G_{-i}$ which acts as $-1$ on $G_0$.
\item $G$ has an invariant bilinear form $(.,.)$, invariant under $w$ such that $G_i$ is orthogonal to $G_j$ whenever $i \neq -j$, and such that $-(g,w(g)) > 0$ if $g \neq 0 \in G_i$ such that $i \neq 0$.
\end{enumerate}

\end{theorem}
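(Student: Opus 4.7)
The plan is to prove the two directions separately. For the forward direction, begin with a BKM algebra $g(A)$, choose a degree functional on $h$ taking positive integer values on every simple root, and quotient by an appropriate central subspace so that the image of $\spn\{[e_i, f_i]\}$ exhausts $G_0$. The resulting $\Z$-grading then has finite-dimensional graded pieces decomposing into $G_0$-weight spaces, giving (1); condition (2) holds by construction; the Cartan involution $\theta$ descends to the involution $w$ in (3); and the standard contravariant bilinear form on $g(A)$, which pairs $g_\alpha$ non-degenerately with $g_{-\alpha}$ and satisfies the required positivity after twisting by $\theta$, gives (4).

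For the converse, the first structural consequence of the axioms is that $G_0$ is abelian: for $x,y \in G_0$, one has $w[x,y] = [wx,wy] = [x,y]$ since $w$ acts as $-1$ on $G_0$, while $[x,y] \in G_0$ forces $w[x,y] = -[x,y]$, hence $[x,y] = 0$. Consequently each $G_i$ decomposes into finite-dimensional weight spaces under $G_0$, and the pairing $\langle u, v \rangle := -(u, w(v))$ is a positive-definite inner product on every $G_i$ with $i \neq 0$.

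Next, I would construct simple root data inductively in the grading. Let $N_1 \subseteq G_1$ be an orthogonal weight basis, and for $i \geq 2$ let $N_i \subseteq G_i$ be an orthogonal weight-stable complement inside $G_i$ to the image of $[G_1 \oplus \cdots \oplus G_{i-1}, G_1 \oplus \cdots \oplus G_{i-1}] \cap G_i$. Each vector $e_\alpha \in \bigcup_i N_i$ plays the role of a simple root generator, with weight $\alpha \in G_0^*$; set $f_\alpha := -w(e_\alpha)$ and $h_\alpha := [e_\alpha, f_\alpha] \in G_0$. After normalizing $\langle e_\alpha, e_\alpha \rangle = 1$, invariance of $(.,.)$ together with the weight decomposition yields $[e_\alpha, f_\beta] = \delta_{\alpha\beta} h_\alpha$ and $[h_\alpha, e_\beta] = (\alpha, \beta)\, e_\beta$, where the inner product on weights is read off from $(.,.)$. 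The Gram matrix $A = ((\alpha_i, \alpha_j))$ is then the candidate BKM matrix; the required sign constraints ($a_{ii} \leq 0$ or $a_{ii} = 2$, and $a_{ij} \leq 0$ for $i \neq j$) follow from positivity of $\langle \cdot, \cdot \rangle$ applied respectively to $e_\alpha$ itself and to the bracket $[e_\alpha, e_\beta]$.

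The main obstacle is verifying the Serre-type relations so that the tautological map $\varphi : g(A) \to G$ is well-defined. When $(\alpha, \alpha) = 2$, the triple $(e_\alpha, f_\alpha, h_\alpha)$ acts as a locally finite $\mathfrak{sl}_2$ on $G$; standard weight arguments combined with $\mathfrak{sl}_2$ representation theory show that $v := (\ad e_\alpha)^{1 - a_{\alpha\beta}} e_\beta$ is annihilated by every $f_\gamma$, so $\langle v, v \rangle = 0$ and positivity forces $v = 0$. For two orthogonal imaginary simples, a parallel calculation gives $\langle [e_\alpha, e_\beta], [e_\alpha, e_\beta] \rangle = 0$, yielding the required commutation. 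Surjectivity of $\varphi$ follows by induction on the grading, using (2) to reach $G_0$ and the construction of the $N_i$ to reach every $G_i$; the kernel commutes with every generator and therefore lies in the center of $g(A)$. The most delicate points are handling arbitrarily many simple roots, which necessitates the iterative construction, and verifying carefully that the Gram matrix entries satisfy exactly the BKM-matrix sign conventions rather than some weaker set of inequalities.
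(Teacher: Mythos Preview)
The paper does not actually prove this theorem; it simply states it with a citation to Borcherds' paper \cite{borcherds1a}, where the result originates. Your sketch is essentially the argument Borcherds gives there, so in that sense you have reconstructed the intended proof rather than diverged from it.

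A couple of points in your outline deserve tightening. First, the condition $a_{ii}=2$ or $a_{ii}\le 0$ does not follow from positivity of $\langle e_\alpha,e_\alpha\rangle$; that inner product is always positive by hypothesis and says nothing about the sign of $(\alpha,\alpha)$. What one actually does is use the freedom to rescale each generator $e_\alpha$ (which rescales the corresponding row of the matrix) to force $a_{ii}=2$ whenever $(\alpha,\alpha)>0$. Second, you omit the integrality requirement $a_{ij}\in\Z$ when $a_{ii}=2$; this comes from the fact that the triple $(e_\alpha,f_\alpha,h_\alpha)$ is then a genuine $\mathfrak{sl}_2$, and the $h_\alpha$-eigenvalues on any locally finite module are integers. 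Third, the vanishing $[e_\alpha,f_\beta]=0$ for $\alpha\ne\beta$ is not just ``weight decomposition'': it uses that $e_\alpha$ was chosen orthogonal to $[G_+,G_+]$, so that $(e_\alpha,[f_\beta,v])=\langle e_\alpha,[e_\beta,w(v)]\rangle=0$, whence $[e_\alpha,f_\beta]$ lies in the radical and is killed by positive-definiteness (or is central when it lands in $G_0$). With these refinements your argument matches Borcherds' proof.
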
 

\section{Vertex Operator Algebras}

Vertex Operator algebras as appear here were developed in detail in \cite{voa} based on earlier works referenced within, and were generalized into the more abstract notion of vertex algebra by Borcherds in \cite{borcherds0}.  We will rely on these notions extensively in the modern form of \cite{lepowsky}.

\subsection{Definitions}
In this section, we define vertex operator algebras as described in \cite{lepowsky} and describe some of their basic properties.  First, define the $\delta$-function
$$\delta(z) = \sum\limits_{n \in \Z} z^n.$$
Its most important property is that for any $f(z) \in \C[z,z^{-1}]$, we have
$$f(z)\delta(z) = f(1)\delta(z).$$

\begin{definition}
A \emph{vertex algebra} is a vector space $V$ along with a map
\begin{align*}
Y(.,z)  : V &\to (\ed V)[[z,z^{-1}]] \\
v &\to Y(v,z) = \sum\limits_{n \in \Z} v_n z^{-n-1}
\end{align*}
and a distinguished element $1 \in V$ (known as \emph{vacuum vector}) such that
$$u_n v = 0$$ for $n$ sufficiently large,
$$Y(1,z) = \id,$$
$$Y(v,z)1 \in V[[z]] \text{ and } \lim\limits_{z \to 0} Y(v,z)1 = v.$$  Furthermore we also have the \emph{Jacobi identity}
\begin{align*}
& z_0^{-1} \delta\left( \frac{z_1 - z_2}{z_0} \right) Y(u,z_1)Y(v,z_2) - z_0^{-1} \delta \left(\frac{z_2 - z_1}{-z_0} \right) Y(v,z_2)Y(u,z_1)\\
&= z_2^{-1} \delta \left( \frac{z_1 - z_0}{z_2} \right) Y(Y(u,z_0)v,z_2)
\end{align*}
where all binomial expansions of the form $(x_1 + x_2)^k$ for $k \in \Z$ are understood to be in nonnegative powers of $x_2$.
\end{definition}

\begin{definition}
A \emph{vertex operator algebra} (VOA for short) is a vertex algebra $(V,Y,1)$ along with a $\Z$-grading $V = \coprod\limits_{n \in \Z} V_n$ and a distinguished vector $\omega \in V_2$ such that $1 \in V_0$ and
$$Y(\omega,z) = \sum\limits_{n \in \Z} L_n z^{-n-2},$$
where the operators $L_n$ satisfy the Virasoro commutation relations
$$[L_m,L_n] = (m-n) L_{m+n} + \frac{m^3-m}{12} \delta_{m+n,0} c_V$$ for some $c_V \in \Q$ which we will usually denote only by $c$.  Furthermore,
$$L_0 v = nv \text{ if } v \in V_n$$
and
$$\frac{d}{dz} Y(v,z) = Y(L_{-1} v,z).$$
\end{definition}
We are also interested in an analogous definition in the superalgebra case which can be found in \cite{voas} :
\begin{definition}
A vertex operator superalgebra (SVOA for short) is a $\frac{1}{2} \Z$-graded vector space $V$ with analogous axioms to that of a vertex operator algebra with the exception that the Jacobi identity is replaced by
\begin{align*}
& z_0^{-1} \delta\left( \frac{z_1 - z_2}{z_0} \right) Y(u,z_1)Y(v,z_2) - (-1)^{\overline{u} \overline{v}}z_0^{-1} \delta \left(\frac{z_2 - z_1}{-z_0} \right) Y(v,z_2)Y(u,z_1)\\
&= z_2^{-1} \delta \left( \frac{z_1 - z_0}{z_2} \right) Y(Y(u,z_0)v,z_2)
\end{align*}
where $\overline{u}, \overline{v}$ are the degrees (odd or even) of $u,v$.  More precisely, $V = V_{\overline{0}} \oplus V_{\overline{1}}$ with
$$V_{\overline{0}} = \bigoplus\limits_{n \in \Z} V_n \text{ and }V_{\overline{1}} = \bigoplus\limits_{n \in \frac{1}{2} + \Z} V_n.$$
\end{definition}
Secondly, there is often a third distinguished element $\tau \in V_{3/2}$ which gives rise to the definition of $N=1$ SVOA.
\begin{definition}
An $N=1$ SVOA is a SVOA along with a distinguished element $\tau \in V_{3/2}$, known as Neveu-Schwarz element, such that
$$Y(\tau,z) = \sum\limits_{n \in \Z} \tau(n) z^{-n-1} = \sum\limits_{n \in \Z} G_{n+ \frac{1}{2}} z^{-n-2}.$$
Furthermore,
$$\omega = \frac{1}{2}G_{-\frac{1}{2}} \tau$$ and we have the supercommutation relations
$$[G_{m},L_n] = (m - \frac{n}{2}) G_{m+n},$$
$$\{ G_m, G_n \} = 2 L_{m+n} + \frac{1}{3}(m^2 - \frac{1}{4}) \delta_{m+n} c$$
for the same number $c$.  The Lie superalgebra generated by these elements and relations is known as the Neveu-Schwarz \emph{Super Virasoro Algebra.}
\end{definition}

Finally, there is a third notion of interest to us.

\begin{definition}
An $N = 2$ SVOA is a SVOA along with distinguished elements $\tau^+, \tau^-, j$ such that
$$Y(\tau^{\pm},z) = \sum\limits_{n \in \Z} \tau^{\pm} (n) z^{-n-1} = \sum\limits_{n \in \Z} G^{\pm}_{n+\frac{1}{2}} z^{-n-2},$$
$$Y(j,z) = \sum\limits_{n \in \Z} j(n) z^{-n-1} = \sum\limits_{n \in \Z} J_n z^{-n-2}$$
where $\tau = \tau^+ + \tau^-$ is a Neveu-Schwarz element and we have the additional commutation relations :
\begin{align*}
& [L_m,J_n] = -nJ_{m+n} \\
& \{ G_r^+, G_s^- \} = L_{r+s} + \frac{1}{2} (r-s) J_{r+s} + \frac{c}{6} (r^2 - \frac{1}{4}) \delta_{r+s}  \\
& \{ G_r^+, G_s^+ \} = \{ G_r^-, G_s^- \} = 0 \\
&[L_m, G_r^{\pm}] = (\frac{m}{2} - r) G_{r+m}^{\pm} \\
& [J_m,G_r^{\pm}] = \pm G_{m+r}^{\pm}
\end{align*}
The Lie superalgebra generated by these elements with these relations is known as the \emph{N = 2 superconformal algebra.}
\end{definition}

VOAs and SVOAs share many properties.  We do not list them all, and instead only list the ones that we will need.  To begin, we have :
$$Y(u,z) = e^{z L_{-1}} u \text{ (Creation property)},$$
$$Y(u,z)v = (-1)^{\overline{u} \overline{v}} e^{z L_{-1}} Y(v,-z)u \text{ (Skew-symmetry)}.$$

From the Jacobi identity, taking residues at $z_0$ on both sides of the equation, we obtain
$$\{ Y(u,z_1),Y(v,z_2) \} = \res_{z_0} z_2^{-1} \delta \left( \frac{z_1 - z_0}{z_2} \right) Y(Y(u,z_0)v,z_2).$$
Observe that because $(z_1-z_0)^k$ expands in nonnegative powers of $z_0$, we do not need to consider the regular part of $Y(\omega,z_0)$ in any computation involving the right hand side of this formula.  We rewrite this bracket as
\begin{equation} \label{commutatorformula}
\{Y(u,z_1),Y(v,z_2)\} = \res_{z_0} z_2^{-1} \left( e^{-z_0 \frac{\partial}{\partial z_1}} \delta(z_1/z_2) \right) Y(Y(u,z_0)v,z_2),
\end{equation}
and we refer to this expression as the \emph{commutator formula}.  An important corollary of the commutator formula is the following :

\begin{corollary} \label{commutatorcorollary}
Suppose $u,v \in V$.  Then
$$[u_0,Y(v,z)] = Y(u_0 v,z)$$
\end{corollary}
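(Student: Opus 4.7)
The plan is to derive the identity by taking $\res_{z_1}$ of both sides of the commutator formula \eqref{commutatorformula}. This is purely a formal-calculus manipulation; no additional input beyond that formula is needed.

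First, since $Y(u,z_1) = \sum_{n \in \Z} u_n z_1^{-n-1}$, we have $\res_{z_1} Y(u,z_1) = u_0$, so applying $\res_{z_1}$ to the left-hand side of \eqref{commutatorformula} produces $[u_0, Y(v,z_2)]$ (understood as the super-bracket in the SVOA setting). This is exactly the left-hand side of the desired identity after renaming $z_2$ to $z$.

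The main computational step is to evaluate $\res_{z_1}$ of the delta-function factor on the right-hand side. The operator $e^{-z_0 \partial_{z_1}}$ is formal Taylor shift, so $e^{-z_0 \partial_{z_1}} \delta(z_1/z_2)$ may be written as $\sum_{n \in \Z} (z_1-z_0)^n z_2^{-n}$ with each $(z_1-z_0)^n$ expanded in nonnegative powers of $z_0$. I would then read off the coefficient of $z_1^{-1}$ term by term: for $n \geq 0$ the factor $(z_1 - z_0)^n$ is a polynomial in $z_1$ and contributes nothing, while for $n \leq -2$ the expansion $\sum_{k \geq 0} \binom{n}{k}(-z_0)^k z_1^{n-k}$ never produces a $z_1^{-1}$ term (it would require $k = n+1 < 0$). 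Only $n = -1$ contributes, and it gives $z_2$. Hence the right-hand side of \eqref{commutatorformula} collapses under $\res_{z_1}$ to $\res_{z_0} Y(Y(u,z_0)v, z_2)$, which in turn equals $Y(u_0 v, z_2)$ by the expansion $Y(u,z_0)v = \sum_n (u_n v) z_0^{-n-1}$.

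The only real obstacle is keeping careful track of the formal-calculus conventions, namely that $(z_1 - z_0)^n$ is always expanded in nonnegative powers of $z_0$, so that precisely one term contributes to the $z_1^{-1}$ coefficient; beyond that the derivation is mechanical.
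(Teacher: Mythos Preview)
Your proof is correct and follows essentially the same approach as the paper: both take $\res_{z_1}$ of the commutator formula \eqref{commutatorformula} and argue that only a single term survives. The paper phrases the key observation as ``the only way $z_1^{-1}$ can appear is via the constant term of the exponential $e^{-z_0\partial/\partial z_1}$,'' while you reach the same conclusion by expanding $\sum_n (z_1-z_0)^n z_2^{-n}$ and checking each $n$; these are two views of the same formal-calculus step.
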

\begin{proof}
Replace $z_2$ with $z$ in the commutator formula.  We must compute
$$\res_{z_0} \res_{z_1} z^{-1}\left( e^{-z_0 \frac{\partial}{\partial z_1}} \delta(z_1/z) \right) Y(Y(u,z_0)v,z).$$
It is enough to see that the only way $z_1^{-1}$ can appear in this expression is via the constant term of the exponential $e^{-z_0 \frac{\partial}{\partial z_1}}$.  In other words, this expression simplifies to
$$\res_{z_0} Y(Y(u,z_0)v,z) = Y(u_0 v,z)$$
as desired.
\end{proof}

As a result of this corollary, we see that $N = 1$ SVOA's have the extra property that :
$$\{ G_{- \frac{1}{2}}, Y(u,z) \} = Y(G_{- \frac{1}{2}}u,z).$$

We may also use the commutator formula to obtain an additional important property.

\begin{proposition}
Suppose $u \in V_{(m)}$ and $v \in V_{(n)}$.  Then $u_k v \in V_{(m+n-k-1)}$.
\end{proposition}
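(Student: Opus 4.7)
The plan is to first derive the commutation relation between $L_0$ and an arbitrary vertex operator $Y(v,z)$, and then extract degrees by matching eigenvalues of $L_0$.

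To carry this out, I would specialize the commutator formula \eqref{commutatorformula} to $u = \omega$ and extract the coefficient of $z_1^{-2}$ on both sides, which isolates $L_0$ from $Y(\omega,z_1) = \sum_n L_n z_1^{-n-2}$. On the right-hand side this reduces to computing $\res_{z_1} z_1 \cdot e^{-z_0 \partial/\partial z_1} \delta(z_1/z_2)$; expanding $\delta((z_1-z_0)/z_2)$ in nonnegative powers of $z_0$ as the axioms require and collecting the $z_1^{-2}$ terms, only the $n=-1$ and $n=-2$ summands contribute, giving $z_2^2 + z_0 z_2$. Combined with $\omega_0 = L_{-1}$ and $\omega_1 = L_0$, the residue in $z_0$ then produces
\begin{equation*}
[L_0, Y(v,z)] = Y(L_0 v, z) + z \frac{d}{dz} Y(v,z).
\end{equation*}

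With this identity in hand, suppose $u \in V_{(m)}$ so that $L_0 u = m u$. Extracting the coefficient of $z^{-k-1}$ on both sides gives
\begin{equation*}
[L_0, u_k] = m u_k + (-k-1) u_k = (m - k - 1) u_k,
\end{equation*}
where I use $Y(u,z) = \sum_k u_k z^{-k-1}$ so that $z Y'(u,z) = \sum_k (-k-1) u_k z^{-k-1}$. Applying this to $v \in V_{(n)}$ and using $L_0 v = nv$ yields
\begin{equation*}
L_0 (u_k v) = [L_0, u_k]\, v + u_k L_0 v = (m - k - 1) u_k v + n u_k v = (m + n - k - 1) u_k v,
\end{equation*}
which is precisely the statement that $u_k v \in V_{(m+n-k-1)}$.

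The only nontrivial step is the $\delta$-function manipulation used to produce the formula for $[L_0, Y(v,z)]$, but this is a routine residue computation entirely analogous to the proof of Corollary \ref{commutatorcorollary} just given. An alternative route would be to derive the general commutation $[L_p, Y(v,z)] = \sum_{j \geq 0} \binom{p+1}{j} z^{p+1-j} Y(L_{j-1}v, z)$ by the same method and then specialize at $p=0$, though this level of generality is not required for the proposition as stated.
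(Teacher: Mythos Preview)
Your proof is correct and is precisely the standard argument: derive $[L_0, Y(u,z)] = Y(L_0 u, z) + z\,\frac{d}{dz}Y(u,z)$ from the commutator formula and read off the component identity $[L_0, u_k] = (m-k-1)u_k$. The paper does not give its own proof here but simply defers to \cite{lepowsky}, where the same computation is carried out; so your argument is essentially the one being cited.
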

\begin{proof}
See paragraph before Remark 3.1.25 in \cite{lepowsky}.
\end{proof}

\subsection{Tensor products, modules and homomorphisms}

In general it is not so simple to construct $N=1$ SVOAs directly.  In fact, one may argue it is not so simple to construct VOAs themselves.  We will need to spend a great deal of time and ink to define the VOAs and SVOAs which interest us.  As a result, it will be useful to know how to create new examples from old ones.  Therefore, we describe here some elementary knowledge of tensor product of SVOAs.  Suppose $V_1,...,V_k$ are SVOAs.  Then consider
$$V = V_1 \otimes ... \otimes V_k.$$
We define $Y(v,z)$ for $v  = v_1 \otimes ... \otimes v_k \in V$ by
$$Y(v_1 \otimes ... \otimes v_k,z) = Y(v_1,z) \otimes ... Y(v_k,z).$$
The vacuum vector is then
$$1 = 1_1 \otimes ... \otimes 1_k.$$
The $\Z$-grading on $V$ is defined by
$$(V)_n = \sum\limits_{n_1 + ... n_k = n} (V_1)_{n_1} \otimes ... \otimes ({V_k})_{n_k}$$
This grading also separates $V$ into odd and even parts in the obvious way. If $\omega_1,...,\omega_k$ are conformal vectors for $V_1,...,V_k$, we define
$$\omega = \omega_1 \otimes 1 \otimes ... \otimes 1 + 1 \otimes \omega_2 \otimes ... \otimes 1 + ... + 1 \otimes 1 \otimes ... \otimes \omega_k.$$
With respect to this new $\omega$, we have
$$L_n = L_n \otimes ... \otimes 1 + ... + 1 \otimes ... \otimes L_n$$
for $n \in \Z$.  It is easy to see that $L_0$ is compatible with the new $\Z$-grading.  We have now established all the ingredients necessary to state the following proposition :

\begin{proposition}
The vector space $V$ with vacuum vector $1$ and conformal vector $\omega$ described above is an SVOA.
\end{proposition}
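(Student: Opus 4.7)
The plan is to verify each SVOA axiom for $(V, Y, 1, \omega)$ in order of difficulty, dispatching the routine ones first and concentrating effort on the super Jacobi identity. The vacuum axiom $Y(1, z) = \id$ is immediate from $Y(1_i, z) = \id$ on each factor. The creation property $Y(v, z) 1 \in V[[z]]$ with $\lim_{z \to 0} Y(v, z) 1 = v$ holds on pure tensors factor-by-factor and extends by linearity. Truncation holds because any mode of $Y(v_1, z) \otimes \cdots \otimes Y(v_k, z)$ is a finite sum of tensor products of individual modes, each of which vanishes once its index is sufficiently large.

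For the conformal structure, the key observation is that the vertex operators $Y_i(\omega_i, z)$ and $Y_j(\omega_j, z)$ act on disjoint tensor factors for $i \ne j$ and hence commute. Writing $L_n^{(i)}$ for the Virasoro operator coming from the $i$-th factor, the total operator $L_n = \sum_i L_n^{(i)}$ satisfies
$$[L_m, L_n] = \sum_i [L_m^{(i)}, L_n^{(i)}] = (m-n) L_{m+n} + \frac{m^3 - m}{12} \delta_{m+n, 0} (c_{V_1} + \cdots + c_{V_k}),$$
giving $c_V = c_{V_1} + \cdots + c_{V_k}$. The $L_0$-eigenspace property matches the prescribed grading because $L_0$ acts on $(V_1)_{n_1} \otimes \cdots \otimes (V_k)_{n_k}$ by $n_1 + \cdots + n_k$. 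The $L_{-1}$-derivative property $\frac{d}{dz} Y(v, z) = Y(L_{-1} v, z)$ on pure tensors then follows from the Leibniz rule applied to the tensor product of vertex operators, together with the $L_{-1}^{(i)}$-property in each factor.

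The main obstacle is the super Jacobi identity, and here I would reduce by induction on $k$ to the case $k = 2$. For $u = u_1 \otimes u_2$ and $v = v_1 \otimes v_2$, the super-tensor sign convention gives
$$Y(u, z_1) Y(v, z_2) = (-1)^{\overline{u_2}\, \overline{v_1}} \bigl(Y_1(u_1, z_1) Y_1(v_1, z_2)\bigr) \otimes \bigl(Y_2(u_2, z_1) Y_2(v_2, z_2)\bigr),$$
with a parallel formula for the opposite ordering and for $Y(Y(u, z_0) v, z_2)$ up to an overall Koszul sign. I would apply the Jacobi identity of each $V_i$ separately to obtain two identities in $(\End V_i)[[z_0^{\pm 1}, z_1^{\pm 1}, z_2^{\pm 1}]]$, then combine them by multiplying the tensor factors and collecting the shared delta-function kernels $z_0^{-1} \delta((z_1 - z_2)/z_0)$, $z_0^{-1} \delta((z_2 - z_1)/(-z_0))$, and $z_2^{-1} \delta((z_1 - z_0)/z_2)$ that appear in each individual identity. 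The chief bookkeeping obstacle is the sign reconciliation: the global factor $(-1)^{\overline{u}\, \overline{v}} = (-1)^{(\overline{u_1} + \overline{u_2})(\overline{v_1} + \overline{v_2})}$ demanded by the super Jacobi identity on $V$ must match the product of the signs arising from applying Jacobi on each factor together with the Koszul signs already sitting in the tensor product formula above. Once this identity of signs is checked, the three terms of the super Jacobi identity on $V$ assemble correctly, and the inductive step from $k - 1$ to $k$ is then routine.
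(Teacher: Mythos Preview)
Your sketch is correct in outline and in fact goes well beyond the paper's own proof, which consists entirely of the sentence ``See Proposition 3.12.5 and Proposition 3.12.8 in \cite{lepowsky} and adapt it to super vertex operator algebras.'' What you have written is essentially that adaptation.

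One comment on the Jacobi step: the phrase ``multiplying the tensor factors and collecting the shared delta-function kernels'' is a bit loose. If you literally multiply the two three-term Jacobi identities for $V_1$ and $V_2$ you get nine terms, not three, and the cross terms do not collapse for free. The standard route (and what \cite{lepowsky} actually does in the ordinary case) is to pass through the equivalent formulation of the Jacobi identity as weak commutativity plus weak associativity; each of these tensors cleanly, and the Koszul sign bookkeeping you describe is then exactly what is needed to upgrade to the super setting. Your identification of the sign reconciliation $(-1)^{(\overline{u_1}+\overline{u_2})(\overline{v_1}+\overline{v_2})}$ against the product of the factor signs and the interchange sign $(-1)^{\overline{u_2}\,\overline{v_1}}$ is the correct crux.
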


\begin{proof}
See Proposition 3.12.5 and Proposition 3.12.8 in \cite{lepowsky} and adapt it to super vertex operator algebras.
\end{proof}

We will need to consider as well the notion of modules and homomorphisms of SVOAs.  We define briefly these notions here for SVOAs.  Observe that the definitions for VOAs can be obtained by restricting to the even part of an SVOA.

\begin{definition}
Suppose $V_1,V_2$ are SVOAs.  We say $f : V_1 \to V_2$ is a homomorphism of SVOAs if $f$ is a linear map such that
$$f(Y(u,z)v) = Y(f(u,z))f(v)$$
and
$$f(1) = 1.$$
\end{definition}

As usual, given a definition of homomorphism, the definition of isomorphism is obvious.

\begin{definition}
Let $V$ be an SVOA and suppose $W$ is a $\frac{1}{2} \Z$-graded vector space.  We say $W$ equipped with a map
$$Y_W(.,z) : V \to (\ed W)[[z,z^{-1}]]$$
$$v \to Y_W(v,z) = \sum\limits_{n \in \Z} v_n z^{-n-1}$$
is a $V$-module if 
$$u_n w = 0$$
for $n$ sufficiently large,
$$Y_W(1,z) = \id$$ and
\begin{align*}
& z_0^{-1} \delta\left( \frac{z_1 - z_2}{z_0} \right) Y_W(u,z_1)Y_W(v,z_2) - \\ & (-1)^{\overline{u} \overline{v}}z_0^{-1} \delta \left(\frac{z_2 - z_1}{-z_0} \right) Y_W(v,z_2)Y_W(u,z_1)\\
&= z_2^{-1} \delta \left( \frac{z_1 - z_0}{z_2} \right) Y_W(Y(u,z_0)v,z_2)
\end{align*}
for $u,v \in V$.  Observe that both $Y_W$ and $Y$ appear on the right hand side.  One writes
$$W = \coprod\limits_{n \in \frac{1}{2}\Z} W_{(n)}.$$
\end{definition}

\begin{remark}
Any SVOA $V$ is also itself a $V$-module with the obvious action.
\end{remark}

\begin{remark}
Given a $V$-module $W$ and an isomorphism $f$ of $V$, we may twist the action of $V$ on $W$ by this isomorphism, i.e.
$$(Y_W)_{f}(v,z)w = Y_W(f(v),z)w$$
for $v \in V$, $w \in W$, and we obtain a new $V$-module structure on $W$.  We will use this notion.
\end{remark}

\begin{definition}
Suppose $W_1, W_2$ are $V$-modules.  We say $f : W_1 \to W_2$ is a homomorphism of $V$-modules if $f$ is linear and 
$$f(Y_{W_1}(u,z)w) = Y_{W_2}(u,z)f(w).$$
for $u \in V, w \in W_2$.
\end{definition}

\subsection{Dual module and adjoint vertex operators} \label{dualmodule}
This section is heavily based on \cite{frenkelhuanglepowsky}, but modified for SVOAs.  The proofs are essentially the same, but because the formula is slightly different computations where applicable should be done again.

Suppose $W$ is a $V$-module, where $V$ is an SVOA.  Then we have
$$W = \coprod\limits_{n \in \frac{1}{2}\Z} W_{(n)}.$$
In this work, we will assume that each $W_{(n)}$ decomposes further as a direct sum of finite-dimensional spaces, i.e.
$$W_{(n)} = \coprod\limits_{i \in I_n} W_{(n),i}$$
  We define the graded dual of $W$ as
$$W' = \coprod\limits_{n \in \frac{1}{2}\Z} \coprod\limits_{i \in I_n} W^*_{(n),i}.$$
Because the dimensions of each $W_{(n),i}$ is finite, we have
$$\dim W^*_{(n),i} = \dim W_{(n),i} < \infty.$$
The main objective of this section is to define a $V$-module structure on $W'$.  We define \emph{adjoint vertex operators} $Y'(v,z)$ for $v \in V_{(n + \frac{1}{2})} \oplus V_{(n)}$ by
$$\langle Y'(v,z)w',w \rangle = (-1)^n \langle w', Y(e^{z L_1} z^{-2 L_0}v,z^{-1})w \rangle. $$
This formula differs from \cite{frenkelhuanglepowsky}, but can be found in \cite{duncan}.  Write
$$\overline{L_0}(v) = n \text{ for } v \in V_{(n + \frac{1}{2})} \oplus V_{(n)}.$$

The main theorem is the following :

\begin{theorem} (\emph{\cite{frenkelhuanglepowsky}, Theorem 5.2.1}, \cite{duncan}, Proposition 2.5)
The module $W'$ with operators $Y'$ is a $V$-module.
\end{theorem}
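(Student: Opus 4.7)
The plan is to verify the three defining conditions of a $V$-module on $W'$: the truncation condition, the vacuum axiom $Y'(\mathbf{1},z)=\mathrm{id}$, and the super Jacobi identity. The truncation condition $v_n w' = 0$ for $n$ large follows from the grading hypothesis: for homogeneous $v \in V_{(k)}$ and $w' \in W^*_{(m),i}$, the operator $v_n$ shifts $W'$-grading by $k-n-1$, so only finitely many $n$ land in the finite-dimensional piece $W^*_{(m+k-n-1),j}$. The vacuum axiom is immediate since $L_0 \mathbf{1} = L_1 \mathbf{1} = 0$ gives $e^{zL_1}z^{-2L_0}\mathbf{1} = \mathbf{1}$, hence $Y'(\mathbf{1},z) = Y(\mathbf{1},z^{-1}) = \mathrm{id}$.

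The heart of the proof is the super Jacobi identity for $Y'$. My strategy is to pair both sides with a test vector $w \in W$ on the right and a dual vector $w' \in W'$ on the left, unfold the definition of $Y'$, and reduce to the super Jacobi identity known to hold for $Y_W$ on $W$. Writing out $\langle Y'(u,z_1)Y'(v,z_2)w',w\rangle$ by applying the definition twice reverses the order of the $Y$ operators acting on $W$ and produces conjugating factors $e^{z_i L_1}z_i^{-2L_0}$ inside the arguments. After substituting $z_i \to z_i^{-1}$ in the Jacobi identity on $W$ and applying the FHL-style conjugation formula
\[
e^{zL_1} Y(w,z_0) e^{-zL_1} = Y\bigl(e^{z(1-zz_0)L_1}(1-zz_0)^{-2L_0}w,\; z_0/(1-zz_0)\bigr),
\]
the factors on both sides align, and standard $\delta$-function identities (particularly $z_2^{-1}\delta((z_1-z_0)/z_2)$ under $z_i \to z_i^{-1}$) convert the transformed identity on $W$ into the desired super Jacobi identity for $Y'$ on $W'$.

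The main obstacle is the sign bookkeeping. Each application of the definition contributes a factor $(-1)^{\overline{L_0}(u)}$ or $(-1)^{\overline{L_0}(v)}$, while swapping the order of $Y_W$-operators in the super Jacobi on $W$ introduces $(-1)^{\overline{u}\overline{v}}$. These must combine to give exactly $(-1)^{\overline{L_0}(Y(u,z_0)v)}$ on the right-hand side after accounting for the parity of $Y(u,z_0)v$, which equals $\overline{u}+\overline{v}$ modulo $2$. Because $Y(u,z_0)v$ is a formal series with infinitely many homogeneous components, this check must be done coefficient-by-coefficient in $z_0$; the linearity of $Y'$ and the fact that $\overline{L_0}$ is constant modulo $2$ on each parity component make this tractable but tedious. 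A secondary subtlety is that the exponent of $-1$ in the definition depends on $v$ via its $L_0$-weight, which can be half-integral, so one must consistently interpret $(-1)^n$ where $v \in V_{(n+1/2)} \oplus V_{(n)}$; this is precisely what ensures the sign matches the $(-1)^{\overline{u}\overline{v}}$ appearing on the left-hand side of the super Jacobi.
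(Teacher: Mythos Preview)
Your approach is correct and essentially the same as the paper's: both reduce to the super Jacobi identity, unfold the definition against test vectors, and reduce to a conjugation formula of FHL type. The paper likewise defers the $e^{zL_1}$ conjugation to \cite{frenkelhuanglepowsky}, Lemma~5.2.3, but it packages your ``sign bookkeeping'' into a single clean lemma,
\[
(-1)^{\overline{L_0}}\, Y(v,z_0)\, (-1)^{\overline{L_0}} w \;=\; (-1)^{\overline{v}\,\overline{w}}\, Y\bigl((-1)^{\overline{L_0}}v,\,-z_0\bigr) w,
\]
which is proved by observing that $\overline{L_0}(u_k w) \equiv \overline{L_0}(u)+\overline{L_0}(w)+1 \pmod 2$ precisely when both $u,w$ are odd, producing the factor $(-1)^{\overline{v}\,\overline{w}}(-1)^{-k-1}$ on components. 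This isolates the super correction from the classical FHL computation and makes the parity check you flagged (that $(-1)^{\overline{L_0}(Y(u,z_0)v)}$ matches the product of individual signs times $(-1)^{\overline{u}\,\overline{v}}$) a one-line consequence rather than a coefficient-by-coefficient verification.
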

In \cite{frenkelhuanglepowsky}, much effort is spent on proving that
$$\frac{d}{dz} Y'(v,z) = Y'(L_{-1}v,z),$$
however this follows from the Jacobi identity and the other axioms (see Proposition 4.1.3 in \cite{lepowsky}).  Therefore, it is enough to prove Jacobi identity.  In other words, we must prove
\begin{align*}
& \langle z_0^{-1} \delta\left( \frac{z_1 - z_2}{z_0} \right) Y'(v_1,z_1)Y'(v_2,z_2) w',w \rangle \\
- & (-1)^{\overline{v_1}\overline{v_2}} \langle z_0^{-1} \delta\left( \frac{z_2 - z_1}{-z_0} \right) Y'(v_2,z_2)Y'(v_1,z_1) w', w\rangle \\
&= \langle z_2^{-1} \delta\left( \frac{z_1 - z_0}{z_2} \right) Y'(Y(v_1,z_0)v_2,z_2) w', w \rangle.
\end{align*}
From there, one follows the computations on page 49 of \cite{frenkelhuanglepowsky}, inserting the term $(-1)^{\overline{v_1}\overline{v_2}}$ where appropriate and replacing $L_0$ with $\overline{L_0}$.  The end result is that one must show
\begin{align*}
& (-1)^{\overline{v_1}\overline{v_2}}z_1^{-1} \delta \left(\frac{z_2+z_0}{z_1} \right) Y(e^{z_2 L_1} (-1)^{\overline{L_0}} z_2^{-2L_0})Y(v_1,z_0)v_2,z_2^{-1})
\\ &= z_1^{-1} \delta \left(\frac{z_2+z_0}{z_1} \right) Y(Y(e^{z_1 L_1} (-1)^{\overline{L_0}} z_2^{-2L_0}) v_1, -z_0/z_1 z_2)  \cdot \\ & \cdot e^{z_2 L_1} (-1)^{\overline{L_0}} z_2^{-2L_0}) v_2,z_2^{-1}).
\end{align*}
or equivalently, we can show the conjugation formula
\begin{align*}
& e^{z L_1} (-1)^{\overline{L_0}} z^{2L_0} Y(v,z_0) (-1)^{\overline{L_0}} z_2^{-2L_0} e^{-z L_1} \\
&= (-1)^{\overline{v_1}\overline{v_2}} Y(e^{(z+z_0)L_1} (-1)^{\overline{L_0}}(z+z_0)^{-2L_0}v, -z_0/(z+z_0)z).
\end{align*}
We prove this conjugation formula using Lemma 5.2.3 in \cite{frenkelhuanglepowsky} and the following additional lemma :
\begin{lemma}
Suppose $v \in V_{(m)}$ and $w \in V_{(n)}$ are homogeneous elements.  Then the following conjugation formula holds :
$$(-1)^{\overline{L_0}} Y(v,z_0) (-1)^{\overline{L_0}} w = (-1)^{\overline{v}\overline{w}} Y((-1)^{\overline{L_0}}v,-z_0) w$$
\end{lemma}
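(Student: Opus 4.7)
The plan is to prove this identity by a direct mode-by-mode expansion, with the whole content being a careful tracking of signs. The key input from the surrounding theory is the grading proposition already cited, $v_k w \in V_{(m+n-k-1)}$, which lets us read off how $(-1)^{\overline{L_0}}$ acts on each mode of $Y(v,z_0)w$. Once this is in place, the identity reduces to an elementary arithmetic fact about floors and parities.

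Write $\sigma_x := (-1)^{\overline{L_0}(x)} = (-1)^{\lfloor \deg x \rfloor}$ on a homogeneous element $x$. Expanding $Y(v,z_0)w = \sum_{k \in \Z} v_k w\, z_0^{-k-1}$ and applying the two copies of $(-1)^{\overline{L_0}}$ termwise (using that $w$ and each $v_k w$ are homogeneous), the left-hand side becomes
$$\sigma_w \sum_{k \in \Z} \sigma_{v_k w}\, v_k w\, z_0^{-k-1}.$$
By the grading proposition, $\overline{L_0}(v_k w) = \lfloor m+n \rfloor - k - 1$, so $\sigma_{v_k w} = (-1)^{\lfloor m+n\rfloor - k - 1}$. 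The right-hand side, written out, is $(-1)^{\overline{v}\overline{w}} \sigma_v \sum_k v_k w\,(-z_0)^{-k-1}$, and $(-z_0)^{-k-1} = (-1)^{-k-1} z_0^{-k-1}$; so matching modes reduces the entire claim to the congruence
$$\lfloor m+n \rfloor \equiv \lfloor m \rfloor + \lfloor n \rfloor + \overline{v}\,\overline{w} \pmod 2.$$

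This last identity is a trivial case check according to whether each of $m,n$ is an integer or a half-integer: the only case in which adding the two fractional parts contributes a carry is $m,n \in \tfrac12 + \Z$, which is precisely the case $\overline{v} = \overline{w} = 1$. Once this is observed, the sign on each mode of the left-hand side is $\sigma_w \cdot \sigma_v \sigma_w (-1)^{\overline{v}\overline{w}} (-1)^{-k-1} = \sigma_v (-1)^{\overline{v}\overline{w}} (-1)^{-k-1}$, since $\sigma_w^2 = 1$; this matches the right-hand side termwise after absorbing $(-1)^{-k-1}$ into $z_0^{-k-1}$ and pulling $\sigma_v$ inside $Y$ as $(-1)^{\overline{L_0}}v$.

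There is no real obstacle: the argument is purely bookkeeping. The only point requiring mild care is that one must respect the convention $\overline{L_0}(v) = \lfloor m \rfloor$ (rather than $m$ itself) from the definition preceding the lemma, since it is precisely the floor operation that produces the $\overline{v}\,\overline{w}$ factor, giving the characteristic SVOA sign in the statement.
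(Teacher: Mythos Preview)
Your proof is correct and follows essentially the same approach as the paper: both arguments expand $Y(v,z_0)w$ mode-by-mode, invoke the grading fact $v_k w \in V_{(m+n-k-1)}$, and reduce the identity to the observation that $\lfloor m+n\rfloor$ differs from $\lfloor m\rfloor + \lfloor n\rfloor$ by exactly $\overline{v}\,\overline{w}$ modulo $2$. Your write-up is in fact more explicit about the sign bookkeeping than the paper's, which simply asserts the key mode identity and remarks that the extra $(-1)^{\overline{v}\overline{w}}$ appears precisely when both $m,n \in \Z + \tfrac12$.
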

\begin{proof}
Observe that $v_k w \in V_{n + m - k - 1}$ hence by a direct computation,
$$(-1)^{\overline{L_0}} v_k (-1)^{\overline{L_0}} w = (-1)^{\overline{v}\overline{w}} (-1)^{-k-1} ((-1)^{\overline{L_0}}v)_k w$$
where the term $(-1)^{\overline{v} \overline{w}}$ appears because
$$\overline{L_0}v + \overline{L_0}w + 1 = \overline{L_0}(v+w)$$
if and only if $m,n \in \Z + \frac{1}{2}$.  But then, the result of the Lemma follows from computing the coefficients of the formal power series in the assertion.
\end{proof}

\section{Semi-infinite cohomology} \label{sectioncohomology}
The results of this section were first developed in \cite{feigin},\cite{frenkelgarlandzuckerman} and expanded in \cite{lian} and \cite{lianzuckerman1}, \cite{lianzuckerman2}.  We do not need the theory of semi-infinite cohomology for any of our new results, however it offers a different perspective on the different Lie algebras that will be presented.

\subsection{Definitions}

In this section, we describe the classical case as it first appeared in \cite{frenkelgarlandzuckerman}, but using the definitions from \cite{lian}.  Let $\mf{g}$ be a $\frac{1}{2} \Z$-graded Lie superalgebra.  Then
$\mf{g} = \bigoplus\limits_{i \in \frac{1}{2} \Z} \mf{g}_i.$ and assume that each $\mf{g}_i$ is finite-dimensional.  We let $q_B, q_F \in \frac{1}{2} \Z$, but to be consistent with \cite{lianzuckerman2}, we will use $-q_B, -q_F$ instead of $q_B,q_F$ themselves in the following definition :  
$$\mf{n}_{+} = \bigoplus\limits_{i > -q_F} (\mf{g}_i)_{even} \oplus \bigoplus_{i > -q_B} (\mf{g}_i)_{odd} \text{ and } \mf{n}_{-} = \bigoplus\limits_{i \leq -q_F} (\mf{g}_i)_{even} \oplus \bigoplus_{i \leq -q_B} (\mf{g}_i)_{odd}.$$  
Choose a basis $\{ e_{i,j} \}_{i \in \frac{1}{2}\Z}$, $\{f_{i,j} \}_{i \in \frac{1}{2}\Z}$ of $\mf{g}_{even}$ and $\mf{g}_{odd}$, where for each index $i$, the index $j$ runs over a finite set.  We define a dual basis $\{ e'_{i,j} \}_{i \in \frac{1}{2}\Z}$, $\{ f'_{i,j} \}_{i \in \frac{1}{2}\Z}$ such that $\langle e'_{i,j} ,e_{k,l} \rangle = \langle f'_{i,j} f_{k,l} \rangle = \delta_{i,k} \delta_{j,l}$.  Define 
$$\mf{g'} = \bigoplus\limits_{i,j \in \frac{1}{2}\Z} \C e'_{i,j} \oplus \C f'_{i,j} \text{ and } \mf{g'}_i = \bigoplus\limits_{j \in \frac{1}{2}\Z} \C e'_{i,j} \oplus \C f'_{i,j}.$$ Then we let 
$$\mf{n'}_{+} = \bigoplus\limits_{i > -q_F} (\mf{g'}_i)_{even} \oplus \bigoplus_{i > -q_B} (\mf{g'}_i)_{odd} \text { and } \mf{n'}_{-} = \bigoplus\limits_{i \leq -q_F} (\mf{g'}_i)_{even} \oplus \bigoplus_{i \leq -q_B} (\mf{g'}_i)_{odd}.$$ 
We consider
$$\wedge^{\frac{\infty}{2}} \mf{g} = \bigwedge (n'_+ \oplus n_-)_{even},$$
$$\vee^{\frac{\infty}{2}} \mf{g} = \bigvee(n'_+ \oplus n_-)_{odd}.$$
These spaces are essentially polynomial algebras of commuting and anticommuting variables in the basis elements.  Explicitly, $\wedge^{\frac{\infty}{2}} \mf{g}$ is the linear span of the exterior products
$$\omega_0 = e'_{i_1} \wedge ... \wedge e'_{i_n} \wedge e_{j_1} \wedge ... \wedge e_{j_m},$$
and $\vee^{\frac{\infty}{2}} \mf{g}$ is the linear span of the symmetric products
$$\omega_1 = f'_{i_1} \vee ... \vee f'_{i_n} \vee f_{j_1} \vee ... \vee e_{j_m}.$$
Let
$$\Omega^{\frac{\infty}{2}} \mf{g} = \wedge^{\frac{\infty}{2}} \mf{g} \otimes \vee^{\frac{\infty}{2}} \mf{g}.$$
Write an element
$$\omega = x_1 \wedge ... \wedge x_n \otimes y_1 \vee ... \vee y_m$$
where $x_i \in \mf{g}_{even} \oplus \mf{g'}_{even}$ and $y_j \in \mf{g}_{odd} \oplus \mf{g'}_{odd}$.  Then we define $\langle .,. \rangle$ on $\mf{g} \oplus \mf{g'}$ by
$\langle \mf{g}, \mf{g} \rangle = \langle \mf{g'}, \mf{g'} \rangle = 0$ and $\langle a, x \rangle = \langle x, a \rangle$ for $a \in \mf{g}$ and $x \in \mf{g'}$.

We now define on the element $\omega$, for $u \in (\mf{n'}_+)_{even}, x \in (\mf{n'}_-)_{even}, y \in (\mf{n'}_+)_{odd}, z \in (\mf{n'}_-)_{odd}, a \in (\mf{n}_+)_{even} , b \in (\mf{n}_{-})_{even}, c \in (\mf{n}_+)_{odd} , d \in (\mf{n}_{-})_{odd}$,
\begin{align*}
\epsilon(u) \omega &= u \wedge \omega \\
\epsilon(x) \omega &= \sum\limits_{k=1}^{m} (-1)^{k-1} \langle x, x_k \rangle x_1 \wedge ... \hat{x_k} ... \wedge x_n \otimes y_1 \vee ... \vee y_m \\
\epsilon(y) \omega &= \omega \vee y \\
\epsilon(z) \omega &= \sum\limits_{k=1}^m \langle z,y_k \rangle x_1 \wedge ... \wedge x_n \otimes y_1 \vee ... \hat{y}_k ... \vee y_m
\end{align*}
\begin{align*}
\iota(a) \omega &= \sum\limits_{k=1}^n (-1)^{k-1} \langle a, x_k \rangle x_1 \wedge ... \hat{x_k} ... \wedge x_n \otimes y_1 \vee ... \vee y_m \\
\iota(b) \omega &= b \wedge \omega \\
\iota(c) \omega &= -\sum\limits_{k=1}^m \langle c,y_k \rangle x_1 \wedge ... \wedge x_n \otimes y_1 \vee ... \hat{y}_k ... \wedge y_m \\
\iota(d) \omega &= \omega \vee d.
\end{align*}

There is a normal ordering
   $$:\epsilon(e'_i) \iota(e_j) : = \left\{
     \begin{array}{lr}
       - \iota(e_j) \epsilon(e'_i) & , i > -q_F \\
       \epsilon(e'_i) \iota(e_j) & , i \leq -q_F
     \end{array}
   \right.$$
   $$:\epsilon(f'_i) \iota(f_j) : = \left\{
     \begin{array}{lr}
       \iota(f_j) \epsilon(f'_i) & , i > -q_B \\
       \epsilon(f'_i) \iota(f_j) & , i \leq -q_B
     \end{array}
   \right.$$
Then we may consider for $x \in \mf{g}$,
$$\tilde{\rho}(x) = \sum\limits_{i \in \Z} : \epsilon(e'_i) \iota([e_i,x]): - \sum\limits_{i \in \Z} : \epsilon(f'_i) \iota([f_i,x]) :$$
It turns out that $\tilde{\rho}(x)$ is not a representation of $\mf{g}$, but rather lifts to a representation of a central extension of $\mf{g}$.  In our applications, $\mf{g}$ will have no non-trivial central extensions and if we write
$$\gamma(x,y) = [\tilde{\rho}(x),\tilde{\rho}(y)] - \tilde{\rho}[x,y]$$
then there will be an element $\beta \in \mf{g}'$ such that
$$\gamma(x,y) = \langle \beta, [x,y] \rangle,$$
a trivial cocycle and then we may replace $\tilde{\rho}(x)$ with 
$$\rho(x) = \tilde{\rho}(x) + \langle \beta,x \rangle,$$
which now defines a representation of $\mf{g}$.

Next, we consider the \emph{ghost number operator},
$$U = \sum\limits_{i \in \Z} : \epsilon(e'_i) \iota(e_i) : - \sum\limits_{i \in \Z} : \epsilon(f'_i) \iota(f_i) :$$
an operator which is diagonalizable on $\Omega^{\frac{\infty}{2}} \mf{g}$ with integer eigenvalues, which are commonly known as \emph{ghost numbers}.  In fact, we have the commutation relations
$$[U,\epsilon(x)] = \epsilon(x),$$
$$[U,\iota(y)] = - \iota(y).$$
Therefore, we may decompose $\Omega^{\frac{\infty}{2}} \mf{g}$ as a direct sum of its eigenspaces for the operator $U$, which we denote by $\Omega^{\frac{\infty}{2}+n} \mf{g}$.

Now let $(V,\pi)$ be a $\Z$-graded $\mf{g}$-module such that $\pi(\mf{g}_m) \cdot V_n \subseteq V_{m+n}$ and $\pi(\mf{n}_+) \cdot v$ is finite-dimensional for any vector $v \in V$.  Then we let
$$C^{\frac{\infty}{2}+*}(\mf{g},V) = V \otimes \Omega^{\frac{\infty}{2}+*} \mf{g}$$

We define an operator $d : C^{\frac{\infty}{2}+n}(\mf{g},V) \to C^{\frac{\infty}{2}+n+1}(\mf{g},V)$ by
\begin{align*}
d = &\epsilon(\beta) + \sum\limits_{i \in \Z} \pi(e_i) \epsilon(e'_i) + \sum\limits_{i \in \Z} \pi(f_i) \epsilon(f'_i) - \frac{1}{2} \sum\limits_{i,j \in \Z} : \iota([e_i,e_j]) \epsilon(e'_i) \epsilon(e'_j): \\
& + \sum\limits_{i,j \in \Z} \iota([e_i,f_j]) \epsilon(e'_i) \epsilon(f'_j) - \frac{1}{2} \sum\limits_{i,j \in \Z} : \iota(\{ f_i,f_j \}) \epsilon(f'_i) \epsilon(f'_j):
\end{align*}
with the following very important property :
\begin{lemma} (\cite{lian}, Lemma 2.1)
$d^2 = 0$.
\end{lemma}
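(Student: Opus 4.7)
The plan is to decompose $d = d_\pi + d_g + \epsilon(\beta)$, where $d_\pi = \sum_i \pi(e_i)\epsilon(e'_i) + \sum_i \pi(f_i)\epsilon(f'_i)$ is the matter part, $d_g$ collects the three purely-ghost bilinear terms, and $\epsilon(\beta)$ is the anomaly correction. Since $d$ has ghost number $+1$ and is treated as odd, $d^2 = 0$ is equivalent to $\tfrac12\{d,d\} = 0$, and I would prove it by computing each piece of $\{d,d\}$ and grouping by matter and ghost content.

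First I would compute $\tfrac12\{d_\pi, d_\pi\}$. Because $\pi$ commutes with the ghost operators $\epsilon, \iota$ and is a genuine representation of $\mf g$ on $V$, antisymmetrizing under the fermionic swap of $\epsilon$'s collapses this to $\tfrac12\sum_{i,j}\pi([e_i,e_j])\epsilon(e'_i)\epsilon(e'_j)$ plus its mixed and odd analogues. The cross bracket $\{d_\pi, d_g\}$ is evaluated by anticommuting each $\iota$ in $d_g$ past the $\epsilon$ in $d_\pi$, which through the duality $\langle e'_i, [e_k,e_l]\rangle$ produces exactly these same expressions with opposite sign. Hence $\{d_\pi, d_\pi\} + 2\{d_\pi, d_g\} = 0$, leaving $\tfrac12\{d_g, d_g\} + \{d, \epsilon(\beta)\} + \epsilon(\beta)^2$ to handle. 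The last term vanishes because $\beta \in \mf g'$ is a single fermionic-ghost element.

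The crux is $\tfrac12\{d_g, d_g\}$. Moving $\iota$'s past $\epsilon$'s inside the normal orderings produces (a) terms trilinear in ghosts proportional to iterated brackets such as $[e_i,[e_j,e_k]]$, which sum to zero by the Jacobi identity for $\mf g$, and (b) a c-number contribution arising from reordering operators across the cutoffs $-q_F$ and $-q_B$. This c-number is precisely the $2$-cocycle $\gamma(x,y)$ of the projective representation $\tilde\rho$, paired against an appropriate ghost bilinear. The remaining cross term $\{d_\pi + d_g, \epsilon(\beta)\}$ reduces similarly, via the same pairing $\langle e'_i, \cdot\rangle$, to terms of the form $\langle\beta, [e_i,e_j]\rangle\epsilon(e'_i)\epsilon(e'_j)$, which by the defining identity $\gamma(x,y) = \langle\beta,[x,y]\rangle$ cancels the anomaly from $\{d_g,d_g\}$ exactly.

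The main obstacle is the bookkeeping of the normal-ordering reshuffles and the signs in the super setting: the $\epsilon/\iota$ pairs for even elements of $\mf g$ anticommute (fermionic ghosts) while those for odd elements commute (bosonic ghosts), and accordingly they appear with opposite signs in $U$, $\tilde\rho$, and $d$. One needs to verify that after all the sign choices the two c-number contributions — the anomaly from $\tfrac12\{d_g,d_g\}$ and the $\langle\beta,\cdot\rangle$ terms from $\{d, \epsilon(\beta)\}$ — indeed cancel term by term, using the precise cutoff data in the definition of $\tilde\rho$ and of the normal orderings $:{\cdot}:$ to identify the anomaly with $\gamma$.
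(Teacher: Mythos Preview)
The paper does not actually prove this lemma; it simply cites \cite{lian}, Lemma 2.1, and moves on. Your sketch is the standard argument for $d^2=0$ in semi-infinite cohomology and is essentially what appears in that reference (and in the original \cite{frenkelgarlandzuckerman} for the purely even case): decompose $d$ into its matter piece, its pure-ghost piece, and the anomaly correction $\epsilon(\beta)$; use the representation property of $\pi$ and the duality $\{\iota(x),\epsilon(y')\}=\langle y',x\rangle$ to cancel the matter-ghost cross terms; use the Jacobi identity in $\mf g$ to kill the trilinear ghost terms; and identify the leftover c-number from the normal-ordering reshuffles with the cocycle $\gamma$, which is then cancelled by the $\epsilon(\beta)$ term via $\gamma(x,y)=\langle\beta,[x,y]\rangle$. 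So your approach is correct and matches the cited source.

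One small point worth tightening: you assert $\epsilon(\beta)^2=0$ because $\beta$ is ``a single fermionic-ghost element.'' In the general super setting $\beta\in\mf g'$ could in principle have a component dual to $\mf g_{odd}$, for which $\epsilon$ is a \emph{bosonic} operator and does not square to zero. In the applications at hand (Virasoro, super-Virasoro) the anomaly lives in degree zero of the even part, so $\beta$ is purely even-dual and your claim holds---but it is worth stating that hypothesis explicitly rather than leaving it implicit.
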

Therefore, we have a chain complex $(C^{\frac{\infty}{2}+*}(\mf{g},V), d)$ and we denote its cohomology groups by
$$H^{\frac{\infty}{2}+n}(\mf{g},V).$$
Next let $\theta(x) = \pi(x) + \rho(x)$.  It is clear that $\theta(x)$ is a representation of $\mf{g}$.  Also, let $\mf{h} \subseteq \mf{g}$ be a subalgebra of $\mf{g}_0$.  Then we define the relative subcomplex
$$C^{\frac{\infty}{2}+*}(\mf{g},\mf{h},V) = \{ v \in C^{\frac{\infty}{2}+*}(\mf{g},V), \iota(x)v = \theta(x)v = 0, x \in \mf{h} \}.$$
It can be checked that $d$ restricts to a map $$d : C^{\frac{\infty}{2}+n}(\mf{g},\mf{h},V) \to C^{\frac{\infty}{2}+n+1}(\mf{g},\mf{h},V),$$ and therefore that $C^{\frac{\infty}{2}+*}(\mf{g},\mf{h},V)$ is a subcomplex with relative cohomology groups
$$H^{\frac{\infty}{2}+*}(\mf{g},\mf{h},V).$$

\subsection{The Virasoro Lie algebra} \label{BRSTVir}

Let $\mf{g}$ be the Virasoro Lie algebra.  Then the odd part of $\mf{g}$ is zero and much of the definitions simplify.  We fix $q_F = 2$.

On the space $\Lambda^{\frac{\infty}{2}} \mf{g}$ we may then define $c(n) = \epsilon(L'_{-n})$ and $b(n) = \iota(L_n)$.  Observe that on the vacuum vector $1$, we have
$$b(n)1 = 0, n > -2$$
$$c(n)1 = 0, n \geq 2$$ 
More generally, we write the fields
$$c(z) = \sum\limits_{n \in \Z} c(n) z^{-n+1},\ b(z) = \sum\limits_{n \in \Z} b(n) z^{-n-2}.$$
Under this description, we then have on the vacuum, creation operators for non-positive powers of $z$ and annihilation operators for positive powers of $z$.
Then define
$$L^{\Lambda}(z)=\sum\limits_{n \in \Z} \rho(L_n) z^{-n-2}.$$
There are many results from \cite{frenkelgarlandzuckerman} adapted in this setting :
\begin{proposition}
On $\Omega^{\frac{\infty}{2}} \mf{g}$, $\rho(c) = -26$ and $\rho(L_m) = \sum\limits_{n \in \Z} (n-m) :c_{-n}b_{m+n}:$.  Furthermore on $(C^{\frac{\infty}{2}+*}(\mf{g},V), d)$, we have
$$L^{\Lambda}(z) = :2 \partial b(z) c(z) + b(z) \partial c(z):$$
\end{proposition}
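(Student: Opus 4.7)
My plan is to address the three assertions of the proposition in the order: the mode formula for $\rho(L_m)$, then the field formula for $L^\Lambda(z)$, and finally the value of the central charge.

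For the mode formula, I would start directly from the definition
$\tilde{\rho}(x) = \sum_{i \in \Z} :\epsilon(e'_i) \iota([e_i, x]):$
given in Section \ref{sectioncohomology}, noting that the Virasoro algebra has trivial odd part, so the $f$-summation vanishes identically. Taking $e_i = L_i$ with dual basis $e'_i = L'_i$ and using $[L_i, L_m] = (i - m) L_{i+m}$ yields
$\tilde{\rho}(L_m) = \sum_{i \in \Z} (i - m) :\epsilon(L'_i) \iota(L_{i+m}):$.
Applying the identifications $c(n) = \epsilon(L'_{-n})$ and $b(n) = \iota(L_n)$ and reindexing by $n = i$ gives the desired expression. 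The cocycle correction $\langle \beta, L_m \rangle$ is a scalar; it contributes only at $m = 0$ and can be absorbed into the normal-ordering convention for $L_0$.

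The field formula is then largely a bookkeeping exercise. Substituting the mode formula into $L^\Lambda(z) = \sum_m \rho(L_m) z^{-m-2}$, changing summation variables to align with the expansions of $b(z)$, $c(z)$, $\partial b(z)$, $\partial c(z)$, and invoking the anticommutation $:c_l b_k: = -:b_k c_l:$ where necessary, I would compare coefficients term-by-term with $:2 \partial b(z) c(z) + b(z) \partial c(z):$. The coefficients match because this is precisely the stress-energy tensor of the fermionic $(b,c)$ ghost system of weights $(2,-1)$ in the paper's conventions.

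For the central charge, I would compute $[\rho(L_m), \rho(L_n)]$ directly from the mode expression. The piece $(m-n) \rho(L_{m+n})$ arises automatically from $[L_i, L_j] = (i-j) L_{i+j}$; the central anomaly emerges when reordering normal-ordered products using the contraction $\{b_p, c_q\} = \delta_{p+q, 0}$. The cleanest route to isolate the constant is to evaluate the commutator on the vacuum $1 \in \Omega^{\frac{\infty}{2}} \mf{g}$, where only finitely many modes survive by the conditions $b(n) 1 = 0$ for $n > -2$ and $c(n) 1 = 0$ for $n \geq 2$. Matching the resulting cubic polynomial in $m$ against $\frac{m^3 - m}{12} \rho(c) \, \delta_{m+n,0}$ then pins down $\rho(c)$. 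The main obstacle is keeping the normal-ordering anomaly tidy through the double sum; as a sanity check the standard closed form $c = -2(6\lambda^2 - 6\lambda + 1)$ for a fermionic $(b,c)$ system at $\lambda = 2$ gives $\rho(c) = -26$.
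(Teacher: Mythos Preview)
Your plan is sound and is exactly the standard computation for the fermionic $bc$ ghost system; the paper itself supplies no argument here, merely recording the formulas with a reference to \cite{frenkelgarlandzuckerman}, so your direct verification is essentially the argument one finds in that source. One small sharpening of your remark on the cocycle correction: since $\tilde\rho(c)=0$ automatically (the center commutes with everything), the entire correction sits in $\rho(c)=\langle\beta,c\rangle$, and with the vacuum choice $q_F=2$ the anomaly polynomial one extracts from $[\tilde\rho(L_m),\tilde\rho(L_{-m})]$ is already proportional to $m^3-m$ with no residual linear term, so in fact $\langle\beta,L_m\rangle=0$ for every $m$ and no shift of $L_0$ is needed.
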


Therefore, let $V$ be a VOA with central charge $26$ and conformal element $\omega^L$.  Then let $L^V(z) = Y(\omega^L,z) = \sum\limits_{n \in \Z} L^V_n z^{-n-2}$.  
The operator
$$L(z) = L^V(z) + L^{\Lambda}(z)$$
acts in an obvious way on $(C^{\frac{\infty}{2}+*}(\mf{g},V), d)$ with central charge $c=0$.  Finally, if we write
$$J(z) = (L^V(z) + \frac{1}{2} L^{\Lambda}(z))c(z).$$
Then $J_0$ recovers the operator $d$ (\cite{frenkelgarlandzuckerman}, Proposition 2.2)
We may also write $d$ as
\begin{equation}\label{BRSTformula}
d = \sum\limits_{i \in \Z} \pi(L_n) c_{-n} + \sum\limits_{m < n} (m-n) : b_{m+n} c_{-m} c_{-n}:
\end{equation}

\subsection{The super Virasoro algebra} \label{BRSTSVir}
This section is a review of \cite{lianzuckerman1}, which we begin here and finish later after we have discussed how to construct the module $V$.  Let $\mf{g}_0$ and $\mf{g}_{1/2}$ be the Ramond and Neveu-Schwarz super-Virasoro algebras, respectively.  Let $\kappa = 0, 1/2$, depending on which of the two we will be considering, i.e. the commutation relations
\begin{align*}
[L_m,L_n] &= (m-n)L_{m+n} + \frac{c}{12} (m^3-2 \kappa m) \delta_{m+n} \\
[L_m,G_{n+\kappa}] &= (\frac{1}{2}m - n - \kappa) G_{m+n+\kappa} \\
\{G_{m+\kappa},G_{n-\kappa} \} &= 2 L_{m+n} + \frac{c}{3} (m^2 + 2 \kappa m) \delta_{m+n}
\end{align*}
where $c$ is a central element.  We choose $q_F = 2$ again, preferring to be consistent with \cite{lianzuckerman2} rather than \cite{lianzuckerman1}, where they chose $q_F = 0$.  The choice of $q_B$ is more complicated.  For $\kappa = \frac{1}{2}$ we choose $q_B = \frac{1}{2}$ but for $\kappa = 0$ we will consider both $q_B = 0$ and $q_B = 1$.  We denote the corresponding space $\Omega^{\frac{\infty}{2}} \mf{g}$ by $\Omega_{q_B}^{\frac{\infty}{2}} \mf{g}$ to account for this additional dependence on $q_B$.

We define as before, $b(n) = \iota(L_n)$, $c(n) = \epsilon(L'_{-n})$ but now also $\beta(r) = \iota(G_r)$ and $\gamma(r) = \epsilon(G'_{-r})$.  In this case we observe that
$$\beta(r)1 = 0, r + \kappa > - q_B$$
$$\gamma(r)1 = 0, r + \kappa \geq q_B .$$
More generally, we obtain the fields $b(z),c(z)$ as before and the fields
$$\beta(z) = \sum\limits_{n \in \Z} \beta(n+\kappa) z^{-n - \kappa - \frac{3}{2}}, \gamma(z) = \sum\limits_{n \in \Z} \gamma(n + \kappa) z^{-n - \kappa + \frac{1}{2}}.$$
The representation $\rho$ is now very different, because of the extra terms.  Indeed,
\begin{align*}
\rho(L_m) = &\sum\limits_{n \in \Z} (n-m) : c(-n)b(m+n) : + \\ &\sum\limits_{n \in \Z} (\frac{1}{2}m - n - \kappa) : \gamma(-n-\kappa) \beta(m+n+\kappa): - \kappa \delta_m,
\end{align*}
$$$$
$$\rho(G_{m+\kappa}) = -2 \sum\limits_{n \in \Z} b(-n) \gamma(m+n-\kappa) : + \sum\limits_{n \in \Z} (\frac{1}{2} - \kappa - m) c(-n) \beta(m+n+\kappa),$$
and $\rho(c) = -15$.

Let $V_\kappa$ be a module for the super Virasoro algebra of central charge 15 for each of $\kappa = 0, \frac{1}{2}.$  Define
\begin{align*}
C_{\kappa}^{\frac{\infty}{2}+*}(\mf{g_\kappa},V_\kappa) &= V_{0} \otimes (\Omega_{0}^{\frac{\infty}{2}} \mf{g}_{0} \oplus \Omega_{1}^{\frac{\infty}{2}} \mf{g}_0) & \text{ for } \kappa &= 0  \\
&= V_\frac{1}{2} \otimes \Omega_{\frac{1}{2}}^{\frac{\infty}{2}} \mf{g}_0 & \text{ for } \kappa &= \frac{1}{2}
\end{align*}

It can be checked that $(C^*_{q_B},d)$ is a chain complex with $d : C^{r,s}_{q_B} \to C^{r+1,s}_{q_B}$.  We will return to our study of semi-infinite cohomology once we know more about these modules $V_0, V_{\frac{1}{2}}$.

\section{The N=0 Lie algebra of physical states} \label{sectionoghost}

\subsection{Vertex operator algebras on even lattices} \label{subsectionphysicalstate}
Let $L$ be an even Lorentzian lattice of rank $d \geq 2$, i.e. a lattice of signature $(d-1,1)$ with bilinear form $(.,.)$.  Write $h = L \otimes \C$, $\widehat{h} = h \otimes \C[t,t^{-1}]$ and $\widehat{h}^{-} = h \otimes t^{-1} \C[t^{-1}]$.

We extend $L$ by a central extension
$$1 \rightarrow \Z_2 \rightarrow \hat{L} \rightarrow L \rightarrow 1.$$
and with group operation in $\hat{L}$ given by
$$(\alpha,a) + (\beta,b) = (\alpha+\beta, \epsilon(\alpha,\beta)ab)$$
where $\alpha,\beta \in L$, $a,b \in \{ \pm 1 \}$ and $\epsilon : L \times L \rightarrow \{ \pm 1 \}$ is a cocycle with the following properties :
\begin{align*}
\epsilon(\alpha,\beta) \epsilon(\alpha+\beta,\gamma) & = \epsilon(\alpha,\beta+\gamma) \epsilon(\beta,\gamma) \\
\epsilon(\alpha,\beta) &= (-1)^{(\alpha,\beta)} \epsilon(\beta,\alpha) \\
\epsilon(\alpha,-\alpha) &= (-1)^{\frac{1}{2}(\alpha,\alpha)} \\
\epsilon(0,\alpha) &= \epsilon(\alpha,0) =  1
\end{align*}
The construction of such a cocycle is described in \cite{voa}.  We fix a section
$$e : L \to \hat{L},$$
$$\alpha \to e_{\alpha}$$
satisfying $e_0 = 1$.  Denote by $\kappa$ the image of a generator for $\Z_2$ in $\hat{L}$.

Next we consider the Fock space
$$V_L = S(\widehat{h}^{-}) \otimes \C\{L\}$$
where $\C\{L\}$ is the induced module
$$\C\{L\} = \C[\hat{L}]/(1 + e^{\kappa})\C[\hat{L}]$$
from the group algebra of $\hat{L}$.  Note that although $\C\{L\}$ and $\C[L]$ are not isomorphic as group algebras, they are still isomorphic linearly via the isomorphism
$$e^{\alpha} \to e^{e_{\alpha}},$$ which allows us to abuse notation and write an element of $\C\{L\}$ as $e^\alpha$ for $\alpha \in L$ (and not $\hat{L}$). 

In this sense, $\C\{L\}$ is the "twisted" group algebra of $L$ with product
$$e^{\alpha}e^{\beta} = \epsilon(\alpha,\beta)e^{\alpha+\beta}.$$
Then using the cocycle conditions, we have
\begin{align*}
e^{\alpha} \cdot e^{\beta} \cdot e^{\gamma} &= \epsilon(\alpha,\beta) e^{\alpha + \beta} \cdot e^{\gamma} \\
&= (-1)^{(\alpha,\beta)} \epsilon(\beta,\alpha) e^{\alpha + \beta} \cdot e^{\gamma} \\
&= (-1)^{(\alpha,\beta)} e^{\beta} \cdot e^{\alpha} \cdot e^{\gamma}
\end{align*}
or in other words,
$$e^{\alpha} e^{\beta} = (-1)^{(\alpha,\beta)} e^{\beta} e^{\alpha}$$
as operators on $V_L$.

We will write $\alpha(n) \in S(\widehat{h}^{-})$ as a shorthand for $\alpha \otimes t^n$, where $\alpha \in h$.  Then, for $n \in \Z$, we define operators on $V_L$ by
\begin{align*}
\alpha(n) \cdot v \otimes e^{\beta} = n \frac{\partial v}{\partial \alpha(-n)} \otimes e^{\beta} & \text{ for } n > 0 \\
\alpha(n) \cdot v \otimes e^{\beta} = \alpha(n) v \otimes e^{\beta} & \text{ for } n < 0 \\
\alpha(0) \cdot v \otimes e^{\beta} = (\alpha, \beta) v \otimes e^{\beta}
\end{align*}
where $\frac{\partial x(m)}{\partial y(n)} = (x,y) \delta_{m,n}$ and is extended to all of $S(\widehat{h}^-)$ linearly and by derivation.

For a product of elements $\alpha_i(m_i)$ where $m_i \in \Z$ we denote by $: \prod \alpha_i(m_i) :$ the same product but in ascending order of the $m_i$'s.  This is known as \emph{normal ordering}.

It will be useful to consider power series of the form
$$\alpha(z) = \sum\limits_{n \in \Z} \alpha(n) z^{-n-1},$$
and their commutators, which is an easy computation.

\begin{lemma} \label{commutatorbosonicfields}
Suppose $\alpha,\beta \in h$.  Then
$$[\alpha(z_1),\beta(z_2)] = -(\alpha,\beta) z_2^{-1} \frac{\partial}{\partial z_1} \delta(z_1/z_2).$$
\end{lemma}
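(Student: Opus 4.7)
The plan is to reduce the identity to the standard Heisenberg commutation relations between the modes $\alpha(m)$ and $\beta(n)$, and then recognize the resulting generating series as the right-hand side by expanding the $\delta$-function explicitly.

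First I would compute the basic commutator $[\alpha(m),\beta(n)]$ using the definitions of the operators given just above the lemma. When $m,n>0$ both act as derivations with respect to independent variables $\alpha(-m), \beta(-n)$, so they commute; similarly when $m,n<0$ they act by multiplication on $S(\widehat{h}^{-})$ and commute. When $m>0, n<0$, applying the product rule to $\alpha(m)\cdot\beta(n)v \otimes e^{\gamma} = m\,\partial(\beta(n)v)/\partial \alpha(-m) \otimes e^{\gamma}$ and using $\partial\beta(n)/\partial\alpha(-m) = (\alpha,\beta)\delta_{m+n,0}$ gives $[\alpha(m),\beta(n)] = m(\alpha,\beta)\delta_{m+n,0}$. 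For $m=0$ or $n=0$ the operator $\alpha(0)$ acts as a scalar on each $v\otimes e^{\gamma}$, hence commutes with every $\beta(n)$, $n \neq 0$, which is consistent with the formula $[\alpha(m),\beta(n)] = m(\alpha,\beta)\delta_{m+n,0}$.

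Next I would assemble the generating series:
\begin{align*}
[\alpha(z_1),\beta(z_2)] &= \sum_{m,n \in \Z} [\alpha(m),\beta(n)] z_1^{-m-1} z_2^{-n-1} \\
&= (\alpha,\beta) \sum_{m \in \Z} m\, z_1^{-m-1} z_2^{m-1}.
\end{align*}
On the other side, using $\delta(z_1/z_2) = \sum_{n \in \Z} z_1^n z_2^{-n}$, I compute
$$-\,(\alpha,\beta)\,z_2^{-1}\frac{\partial}{\partial z_1}\delta(z_1/z_2) = -(\alpha,\beta)\sum_{n \in \Z} n\, z_1^{n-1} z_2^{-n-1},$$
and the substitution $n \mapsto -m$ turns this into $(\alpha,\beta)\sum_{m}\,m\, z_1^{-m-1} z_2^{m-1}$, matching the previous expression.

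There is no real obstacle here: the content is entirely the Heisenberg relation plus a bookkeeping reindexing against the explicit expansion of $\delta$. The only place to be careful is the sign and index conventions in the derivative of $\delta(z_1/z_2)$, which is why I would write out both sides and compare coefficients of $z_1^{-m-1} z_2^{m-1}$ directly rather than manipulate the $\delta$-function formally.
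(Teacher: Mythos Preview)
Your proof is correct. The paper does not actually write out a proof of this lemma; it simply calls the commutator ``an easy computation'' and states the result, so your mode-by-mode verification of the Heisenberg relation $[\alpha(m),\beta(n)] = m(\alpha,\beta)\delta_{m+n,0}$ followed by matching coefficients against the expansion of $\partial_{z_1}\delta(z_1/z_2)$ is exactly the intended argument.
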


If $h_i$, $i=1,...,d$ is an orthonormal basis for $L \otimes \C$, we have a distinguished element
$$\omega = \frac{1}{2} \sum\limits_{i=1}^{d} h_i(-1)^2 \in V_L.$$

For a given element $e^\alpha$ of $V_L$, we may define vertex operators in \break $\ed(V_L)[[z,z^{-1}]]$ by
$$Y(e^\alpha,z) = \exp\left( \sum\limits_{n > 0} \frac{\alpha(-n)}{n} z^n \right) e^\alpha z^{\alpha(0)} \exp\left( - \sum\limits_{n > 0} \frac{\alpha(n)}{n} z^{-n} \right).$$

For an arbitrary element $v = \prod\limits_{i=1}^N s_i(-n_i) \otimes e^\alpha$, we define, again in \break $\ed(V_L)[[z,z^{-1}]]$, the operator
$$Y(v,z) = : Y(e^\alpha,z) \prod\limits_{i=1}^N \frac{1}{(n_i-1)!} \left(\frac{d}{dz}\right)^{n_i-1} s_i(z):$$

If we write $Y(\omega,z) = \sum\limits_{n \in \Z} L_n z^{-n-2} = \sum\limits_{n \in Z} \omega_n z^{-n-1}$, we have
$$L_n = \frac{1}{2} \sum\limits_{j \in \Z} \sum\limits_{i=1}^d : h_i (-j) h_i (j+n) :$$
and we can easily check that for an element $v = \prod\limits_{i=1}^N s_i(-n_i) \otimes e^\alpha \in V_L$,

$$L_0(v) = \left( \sum\limits_{i=1}^N n_i + \frac{(\alpha,\alpha)}{2} \right) v.$$
We will refer to the $L_0$-eigenvalue of an homogeneous element $v$ as \emph{weight}.

\begin{theorem} \label{latticevertextheorem}
The vector space $V_L$ along with operators $Y(.,z)$ and conformal element $\omega$ is a Vertex Operator Algebra.
\end{theorem}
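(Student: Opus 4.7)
The plan is to verify the VOA axioms in order of increasing difficulty: truncation and vacuum properties, then the Jacobi identity (the main obstacle), and finally the Virasoro and grading compatibilities. Throughout I would follow the classical approach of \cite{voa} and \cite{lepowsky}, carried out here for the lattice setting.

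First I would dispose of the easier axioms. Truncation follows from the form of $Y(e^\alpha, z)$: the rightmost factor $\exp(-\sum_{n>0} \alpha(n)/n \cdot z^{-n})$ involves only annihilation operators, so when applied to any $v \in V_L$ it contributes only finitely many terms lowering a fixed weight; the normal-ordered product with the $s_i(z)$'s preserves this property. The vacuum axioms are immediate: setting $\alpha = 0$ and removing all $s_i(-n_i)$ factors gives $Y(1,z) = \mathrm{id}$, and expanding $Y(v,z)1$ directly shows it lies in $V_L[[z]]$ with constant term $v$ (since the annihilation exponential kills $1$ and $z^{\alpha(0)}$ acts as the identity on $e^0$).

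The main work is the Jacobi identity, which I would first establish for pairs of lattice generators $(e^\alpha, e^\beta)$. Using Lemma \ref{commutatorbosonicfields} and the Baker--Campbell--Hausdorff formula, one computes
\[
Y(e^\alpha, z_1) Y(e^\beta, z_2) = \iota_{z_1, z_2}(z_1 - z_2)^{(\alpha,\beta)} \,{:}Y(e^\alpha, z_1) Y(e^\beta, z_2){:},
\]
expanded in nonnegative powers of $z_2/z_1$. Reversing the order produces the same normal-ordered expression expanded in nonnegative powers of $z_1/z_2$, multiplied by the cocycle ratio $\epsilon(\alpha,\beta)/\epsilon(\beta,\alpha) = (-1)^{(\alpha,\beta)}$, which exactly cancels the sign arising from switching the binomial expansion convention. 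Applying the standard formal delta-function identity $z_0^{-1}\delta((z_1-z_2)/z_0) - z_0^{-1}\delta((z_2-z_1)/(-z_0)) = z_2^{-1}\delta((z_1-z_0)/z_2)$ then produces Jacobi for lattice generators. This is the hardest step, since it is where the cocycle conditions listed in Section \ref{subsectionphysicalstate} and the formal calculus of binomial expansions must be handled with care.

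To extend Jacobi to arbitrary $v = \prod s_i(-n_i) \otimes e^\alpha$, I would proceed by induction on the number of Heisenberg modes. The Heisenberg fields $\alpha(z)$ themselves satisfy a Jacobi-type identity by Lemma \ref{commutatorbosonicfields}, and an application of the associator/iterate formalism (Proposition 3.6.6 of \cite{lepowsky} in this form) shows that once Jacobi is known for a generating set closed under the operators $v \mapsto v_n w$, it holds on all of $V_L$. Finally, I would verify that $L_n = \frac{1}{2} \sum_{j,i} {:}h_i(-j) h_i(j+n){:}$ satisfies the Virasoro relations with central charge $c = d$ by a direct normal-ordering computation, the $L_0$-grading was verified before the theorem statement, and $\frac{d}{dz} Y(v,z) = Y(L_{-1}v, z)$ reduces, via the normal-ordered product, to the easy identities $L_{-1} e^\alpha = \alpha(-1) e^\alpha$ and $[L_{-1}, \alpha(-n)] = n \alpha(-n-1)$, which together imply that $L_{-1}$ acts as the formal derivative $\partial_z$ on vertex operators.
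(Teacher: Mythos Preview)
Your outline is correct and follows exactly the classical route of \cite{voa} and \cite{lepowsky} that the paper itself defers to; in fact the paper does \emph{not} prove the theorem in full, it simply cites those references. So your proposal is more complete than what the paper provides.

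The one place worth comparing is the Virasoro verification. You propose a direct normal-ordering computation of $[L_m,L_n]$. The paper instead \emph{assumes} the Jacobi identity (from the references) and then uses the commutator formula (\ref{commutatorformula}) together with the table $\omega_0\omega = L_{-1}\omega$, $\omega_1\omega = 2\omega$, $\omega_2\omega = 0$, $\omega_3\omega = d/2$, $\omega_n\omega = 0$ for $n\geq 4$ to read off $[Y(\omega,z_1),Y(\omega,z_2)]$. This is not merely stylistic: the paper explicitly does this computation ``as preparation for similar computations in a later section,'' and indeed the same commutator-formula-plus-OPE-table technique is reused repeatedly (Proposition~\ref{n=1structure}, the $N=2$ structure, Lemma~\ref{highestweightcommutation}, and the bracket lemmas in Section~\ref{subsectionN=2}). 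Your direct approach is perfectly valid and arguably more self-contained, but the paper's method is the one whose pattern the reader is expected to internalize for everything that follows.
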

\begin{proof}
We do not prove this theorem.  For a proof, see \cite{voa} or \cite{lepowsky}.  However, we do compute the commutator $[Y(\omega,z_1), Y(\omega,z_2)]$ as preparation for similar computations in a later section, because we will be using the same techniques.

Using the commutator formula, equation (\ref{commutatorformula}), we obtain
\begin{equation} \label{equation1}
[Y(\omega,z_1),Y(\omega,z_2)] = \res_{z_0} z_2^{-1} \left( e^{-z_0 \frac{\partial}{\partial z_1}} \delta(z_1/z_2) \right) Y(Y(\omega,z_0)\omega,z_2).
\end{equation}
It is easy to compute
\begin{align*}
\omega_0 \omega &= L_{-1} \omega \\
\omega_1 \omega &= L_0 \omega = 2 \omega \\
\omega_2 \omega &= L_1 \omega = 0 \\
\omega_3 \omega &= L_2 \omega = \frac{d}{2} \\
\omega_n \omega &= L_n \omega = 0 \text{ for } n \geq 4
\end{align*}
Therefore the singular part of $Y(\omega,z_0) \omega$ is
$$Y^{-}(\omega,z_0)\omega := L_{-1} \omega z_0^{-1} + 2 \omega z_0^{-2} + \frac{d}{2} z_0^{-4}$$
hence computing residues in equation (\ref{equation1}) and using $Y(L_{-1} \omega,z_2) = \frac{d}{dz_2} Y(\omega,z)$, we find
\begin{align*}
[Y(\omega,z_1),Y(\omega,z_2)] &= \left( \frac{d}{d z_{2}} Y(\omega,z_2) \right) z_2^{-1} \delta(z_1/z_2) - 2 Y(\omega,z_2) z_2^{-1} \frac{\partial}{\partial z_1} \delta(z_1/z_2) \\ &-\frac{d}{12} z_2^{-1} \left( \frac{\partial}{\partial z_1} \right)^3 \delta(z_1/z_2).
\end{align*}
From there, computing the commutation relations of $L_m,L_n$ is a matter of equating coefficients in this result. \end{proof}
It will be useful to know the dimension of graded spaces $V_L = \bigoplus\limits_{\alpha \in L} (V_L)_{\alpha}$.  These are infinite dimensional vector spaces, but if we grade again by weight we obtain the following :

\begin{proposition} \label{voadimension}
Let $((V_L)_{\alpha})_n$ be the subspace of $(V_L)_{\alpha}$ of elements of weight $n$.  Then
$$\dim ((V_L)_{\alpha})_n = p_l\left(n - \frac{(\alpha,\alpha)}{2} \right)$$
where $p_l(m)$ is the number of integer partitions of $m$ in $l$ colors.
\end{proposition}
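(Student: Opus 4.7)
The plan is to exploit the tensor decomposition $V_L = S(\widehat{h}^{-}) \otimes \C\{L\}$, isolate the $\alpha$-graded piece, and recognize its weight Poincaré series as the generating function for $p_l$.

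First I would observe that since $\alpha(0)$ acts on $\C e^{\beta}$ by the scalar $(\alpha,\beta)$, the charge-$\alpha$ subspace is just
\[
(V_L)_{\alpha} = S(\widehat{h}^{-}) \otimes \C e^{\alpha}.
\]
From the explicit formula for $L_0$ stated just before the proposition, on a pure tensor $s_1(-n_1)\cdots s_N(-n_N) \otimes e^{\alpha}$ with $s_i \in h$, the eigenvalue is $\sum_i n_i + (\alpha,\alpha)/2$. Hence an element of $((V_L)_{\alpha})_n$ corresponds to an element of $S(\widehat{h}^{-})$ of internal degree $m = n - (\alpha,\alpha)/2$, and the assertion reduces to computing $\dim S(\widehat{h}^{-})_m$.

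Next, fix an orthonormal basis $h_1,\ldots,h_l$ of $h = L \otimes \C$, which produces the basis $\{h_i(-n) : 1 \leq i \leq l,\ n \geq 1\}$ of $\widehat{h}^{-}$. Since $S(\widehat{h}^{-})$ is the symmetric algebra on this space, the monomials
\[
\prod_{i,n} h_i(-n)^{k_{i,n}}, \qquad k_{i,n} \in \Z_{\geq 0},
\]
form a linear basis, and such a monomial has $L_0$-weight $\sum_{i,n} n\, k_{i,n}$. Consequently the Poincaré series of $S(\widehat{h}^{-})$ is
\[
\sum_{m \geq 0} \dim S(\widehat{h}^{-})_m\, q^m \;=\; \prod_{n \geq 1} \prod_{i=1}^{l} \frac{1}{1-q^n} \;=\; \prod_{n \geq 1} \frac{1}{(1-q^n)^{l}},
\]
which is by definition $\sum_{m} p_l(m) q^m$. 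Extracting the coefficient of $q^{n-(\alpha,\alpha)/2}$ yields the claim.

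There is essentially no obstacle here beyond bookkeeping: the only subtle point is that $\C\{L\}$ is a twisted group algebra, but because the twisting cocycle $\epsilon$ only affects multiplication and not the linear structure, it is irrelevant for the dimension count. One should just be careful to note at the outset that each graded piece $(V_L)_{\alpha}$ is a single free copy of $S(\widehat{h}^{-})$ under the identification $e^{\alpha} \leftrightarrow e^{e_{\alpha}}$ made earlier.
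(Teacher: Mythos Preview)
Your proof is correct and follows the same approach as the paper, which simply says ``write an explicit basis for $((V_L)_{\alpha})_n$ and the result is obvious.'' You have merely supplied the details the paper omits, via the Poincar\'e series of $S(\widehat{h}^{-})$.
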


\begin{proof}
It is enough to write an explicit basis for $((V_L)_{\alpha})_n$ and the result is obvious.
\end{proof}

There is quite a bit of structure that can be found in $V_L$.  Indeed, write $Y(v,z) = \sum\limits_{n \in \Z} Y_n(v) z^{-n-1}$. 
For $\alpha, \beta \in L$ and $h \in L \otimes \C$, we have
\begin{align*}
[h(m), Y_n(e^{\alpha})] &= (h,\alpha) Y_{n+m}(e^{\alpha}) \\
[Y_n(e^{\alpha}), Y_m(e^{\beta})] &= 0 \text{ if }(\alpha,\beta) \geq 0 \\
[Y_n(e^{\alpha}), Y_m(e^{\beta})] &= \epsilon(\alpha,\beta) Y_{m+n}(e^{\alpha+\beta}) \text{ if }(\alpha,\beta) = -1 \\
[Y_n(e^{\alpha}),Y_m(e^{-\alpha})] &= -(\alpha(m+n) + n \delta_{m,-n}) \text{ if } (\alpha,\alpha) = 2
\end{align*}
which leads to the following theorem from \cite{frenkel}, based on earlier work in \cite{frenkelkac} and \cite{segal}  :
\begin{theorem} \label{vertexrepresentation}
Suppose $A = (a_{ij})_{1 \leq i,j \leq n}$ is a generalized Cartan matrix such that for $i \neq j$, we have $a_{ij} = 0$ or $a_{ij} = -1$.  Then the map $\pi$ defined by 
\begin{align*}
e_{\alpha_i} &\rightarrow Y_0(e^{\alpha_i}) \\
f_{\alpha_i} &\rightarrow Y_0(-e^{-\alpha_i}) \\
h_i &\rightarrow Y_0(h_i(-1)) = h_i(0)
\end{align*}
defines a representation $\pi$ of $g(A)$.  Furthermore, if $\alpha$ is a real root of $g(A)$ then if we write $x_{\alpha} \in g_{\alpha}$, $\pi(x_\alpha)$ is a multiple of $Y_0(e^{\alpha})$.
\end{theorem}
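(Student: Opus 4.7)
The plan is to verify the defining relations (1)--(5) of $g(A)$ by plugging in the proposed images and applying the four commutator identities listed just before the statement. The simply-laced hypothesis $a_{ij}\in\{0,-1\}$ for $i\neq j$, together with $a_{ii}=2$, ensures that every inner product appearing in the verification lands in $\{-2,-1,0,1,2\}$, which is exactly the range covered by those formulas.

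First I would check the Cartan-type relations. Abelianness of $\pi(h)$ follows from Lemma \ref{commutatorbosonicfields} at $n=m=0$. The weight relations $[\pi(h_i),\pi(e_j)]=a_{ij}\pi(e_j)$ and $[\pi(h_i),\pi(f_j)]=-a_{ij}\pi(f_j)$ come from the first identity with $(\alpha_i,\pm\alpha_j)=\pm a_{ij}$. For $[\pi(e_i),\pi(f_j)]=\delta_{ij}\pi(h_i)$: if $i\neq j$ then $(\alpha_i,-\alpha_j)=-a_{ij}\geq 0$ and the second identity gives zero; if $i=j$ the fourth identity at $n=m=0$ yields $-\alpha_i(0)$, and the sign is cancelled by $\pi(f_i)=-Y_0(e^{-\alpha_i})$, leaving $h_i(0)=\pi(h_i)$.

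Next I would verify the Serre relations. If $a_{ij}=0$, then $[\pi(e_i),\pi(e_j)]=0$ by the second identity with $(\alpha_i,\alpha_j)=0$. If $a_{ij}=-1$, the inner bracket equals $\epsilon(\alpha_i,\alpha_j)Y_0(e^{\alpha_i+\alpha_j})$ by the third identity, and the outer bracket with $\pi(e_i)$ vanishes since $(\alpha_i,\alpha_i+\alpha_j)=1\geq 0$. The $f$-side is symmetric. Condition (5) is vacuous because $a_{ii}=2$ for every $i$.

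For the furthermore claim, I would induct on the height of a positive real root $\alpha$. The case $\alpha=\alpha_i$ is immediate by construction. For height $>1$, the simply-laced structure furnishes a simple $\alpha_i$ with $(\alpha,\alpha_i)=1$, so that $\beta:=\alpha-\alpha_i=r_i\alpha$ is a positive real root of smaller height and $(\alpha_i,\beta)=-1$. Since $g_\alpha$ is one-dimensional for a real root, $x_\alpha$ is proportional to $[e_{\alpha_i},x_\beta]$; applying $\pi$ and invoking the inductive hypothesis $\pi(x_\beta)\in\C^{\times}Y_0(e^\beta)$, the third commutator identity yields $\pi(x_\alpha)\in\C^{\times}Y_0(e^\alpha)$. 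Negative real roots follow by applying the Cartan involution. The main obstacle is really only this last inductive step: one must justify that at every positive real root of height $>1$ there exists a simple $\alpha_i$ with $(\alpha,\alpha_i)=1$ exactly (not merely $>0$), so that both the height drops by one and $(\alpha_i,\beta)=-1$ lands in the range of the third identity. This is the single point in the argument that uses combinatorial information about real roots in a simply-laced Kac-Moody algebra rather than a direct vertex-operator manipulation.
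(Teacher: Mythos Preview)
Your verification of the Kac--Moody relations is correct and matches the paper's argument: both simply feed the proposed images into the four commutator identities preceding the theorem.

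The gap is exactly where you flagged it. The claim that every positive real root $\alpha$ of height $>1$ admits a simple $\alpha_i$ with $(\alpha,\alpha_i)=1$ is false under the hypothesis $a_{ij}\in\{0,-1\}$. Take $\widehat{A}_2$, whose Cartan matrix has all off-diagonal entries $-1$, and let $\alpha=\delta+\alpha_1=\alpha_0+2\alpha_1+\alpha_2$. Then $(\alpha,\alpha)=2$, the height is $4$, but $(\alpha,\alpha_0)=(\alpha,\alpha_2)=-1$ and $(\alpha,\alpha_1)=2$; no simple root gives inner product $1$. Worse, $\alpha-\alpha_i$ is never a real root (it is $\delta$ for $i=1$ and has norm $6$ otherwise), so the induction cannot even be rerouted through a different simple root while staying in the range of the third commutator identity. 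Expressing $x_\alpha$ as an iterated bracket will force you through an imaginary root space such as $g_\delta$, where the third identity no longer applies and the intermediate image is not of the form $Y_0(e^\beta)$.

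The paper avoids this combinatorics entirely. It defers the ``furthermore'' to Remark~\ref{vertexrepresentationinclusion}: since each generator $e^{\pm\alpha_i}$, $h_i(-1)$ lies in $P^1$ and the bracket $u_0v$ preserves $P^1$ (Lemma~\ref{highestweightcommutation} and the proof of Theorem~\ref{liealgebraofphysicalstates}), the homomorphism $\pi$ factors through $P^1$. For a real root $\alpha$ one has $(\alpha,\alpha)=2$, so the weight-one condition forces $P^1(\alpha)=\C\, e^\alpha$, and hence $\pi(x_\alpha)\in\C\, Y_0(e^\alpha)$. This argument is uniform in $\alpha$ and needs no height induction.
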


\begin{proof}
The previous formulas shows that the elements $Y_0(e^{\alpha_i}), Y_0(e^{-\alpha_i})$ and $h_i(0)$ form a Lie algebra.  Furthermore, this Lie algebra satisfies the same relations as that of a Kac-Moody algebra.  Therefore $\pi$ defines an homomorphism of Lie algebras, hence a representation.

The second statement of the theorem can be proven directly, but we will provide a simple proof later.
\end{proof}
\begin{remark}
Observe that in the representation defined above, the only elements $v \in g(A)$ such that $\pi(v) = 0$ originate from elements $c \in h$ which are central in $g(A)$.  To see this, decompose $A$ into its indecomposable components $A_i$ and observe that $g(A_i)/Z(g(A_i))$ is simple. 
\end{remark}

There is a special version of this theorem for affine Lie algebras which can also be generalized to double affine Lie algebras, and the proof is essentially the same, also from \cite{frenkel} :
\begin{theorem}
Suppose $g$ is a finite-dimensional semisimple Lie algebra $g$ which is a direct sum of Lie algebras of type $A_n, D_n$ or $E_n$.  Then the map $\pi$ defined by
\begin{align*}
e_{\alpha_i} \otimes t^n &\rightarrow Y_n(e^{\alpha_i}) \\
f_{\alpha_i} \otimes t^n &\rightarrow Y_n(-e^{-\alpha_i}) \\
h_i \otimes t^n &\rightarrow h_i(n)
\end{align*}
defines a level 1 representation of the affine Lie algebra $\tilde{g}$.
\end{theorem}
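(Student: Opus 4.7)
The plan is to verify that the assignment $\pi$ extends to a Lie algebra homomorphism from $\widehat{g}$ into $\ed(V_L)$ by checking the affine Lie algebra relations on the generating set $\{e_{\alpha_i} \otimes t^n, f_{\alpha_i} \otimes t^n, h_i \otimes t^n\}$. The previous theorem already provides the zero-mode case: the images $Y_0(e^{\alpha_i})$, $-Y_0(e^{-\alpha_i})$, $h_i(0)$ generate a copy of the finite-dimensional simply-laced $g$ inside $\ed(V_L)$. So the new content is really the interaction of these operators with the grading $t^n$ and the appearance of the central term.

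First I would invoke the four commutator identities displayed just before Theorem \ref{vertexrepresentation}, together with Lemma \ref{commutatorbosonicfields}, and check them against the affine relations
\begin{align*}
[x \otimes t^m, y \otimes t^n] &= [x,y] \otimes t^{m+n} + (x,y)\, m\, \delta_{m,-n}\, c, \\
[c, x \otimes t^m] &= 0.
\end{align*}
Concretely: Lemma \ref{commutatorbosonicfields} yields $[h_i(m), h_j(n)] = (h_i,h_j)\, m\, \delta_{m,-n}$, matching $[h_i \otimes t^m, h_j \otimes t^n]$ provided $c$ acts as the scalar $1$; the formula $[h_i(m), Y_n(e^{\alpha_j})] = (h_i,\alpha_j) Y_{m+n}(e^{\alpha_j})$ matches $[h_i \otimes t^m, e_{\alpha_j} \otimes t^n]$; for $i \neq j$ the commutator $[Y_n(e^{\alpha_i}),Y_m(e^{\alpha_j})]$ vanishes when $(\alpha_i,\alpha_j) = 0$ (agreeing with $[e_{\alpha_i},e_{\alpha_j}] = 0$) and equals $\epsilon(\alpha_i,\alpha_j) Y_{n+m}(e^{\alpha_i+\alpha_j})$ when $(\alpha_i,\alpha_j) = -1$, which matches the bracket in $g$ up to the usual cocycle-induced normalization of Chevalley generators.

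The crucial new ingredient is the appearance of a central term. Since the lattice $L$ is constructed so that $(\alpha_i,\alpha_i) = 2$ for all simple roots (the simply-laced hypothesis), the last identity gives
$$[Y_n(e^{\alpha_i}), -Y_m(e^{-\alpha_i})] = \alpha_i(m+n) + n\,\delta_{m,-n},$$
which is exactly $h_i \otimes t^{m+n} + n\,\delta_{m,-n}\cdot 1$, with the coefficient of $\delta_{m,-n}$ equal to $n$ (not some larger multiple). This simultaneously identifies $\pi(c) = \id$ and certifies the representation as level $1$. The fact that this central term commutes with everything else is automatic since it is a scalar operator on $V_L$.

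The main obstacle I expect is bookkeeping around the $2$-cocycle $\epsilon$ and the proper normalization of the $e_{\alpha_i}$'s: the Chevalley structure constants of $g$ must be matched exactly with the $\pm 1$ values produced by $\epsilon(\alpha_i,\alpha_j)$. Once that is settled (by choosing a section of $\widehat{L} \to L$ compatible with the Chevalley basis), the remaining Serre relations for $\widehat{g}$ follow from those for $g$ at level zero (already handled by the preceding theorem) together with the grading-compatibility of the commutator formulas above, so $\pi$ extends from the generators to a bona fide representation of $\widehat{g}$.
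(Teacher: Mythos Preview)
Your proposal is correct and follows exactly the approach the paper intends: the paper's proof consists of the single sentence ``the proof is essentially the same'' (as Theorem \ref{vertexrepresentation}), meaning one verifies the affine relations directly from the listed commutator formulas and Lemma \ref{commutatorbosonicfields}. Your write-up simply makes this explicit, including the level-$1$ identification from $[Y_n(e^{\alpha_i}), -Y_m(e^{-\alpha_i})] = \alpha_i(m+n) + n\,\delta_{m,-n}$.
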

As a corollary, the theorem was applied to hyperbolic Kac-Moody algebras \cite{frenkel} : 
\begin{corollary}
Suppose $g$ is a finite-dimensional simple Lie algebra of type $A_n, D_n$ or $E_n$ or a hyperbolic Kac-Moody algebra of type $A_n^{++}, D_n^{++}, E_n^{++}$, then $\pi$ defines a faithful representation of $g$. 
\end{corollary}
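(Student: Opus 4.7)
The plan is to combine Theorem \ref{vertexrepresentation} with the remark immediately following it. By that theorem applied to the appropriate even Lorentzian (or positive-definite) root lattice $L$ of $g$, the map $\pi$ defines a representation of $g$. Because all the algebras in the statement are simply-laced, their generalized Cartan matrices have off-diagonal entries in $\{0,-1\}$, so the hypothesis of the previous theorem is genuinely satisfied. It therefore suffices to show that $\ker \pi = 0$, and by the remark this reduces to proving that the center $Z(g)$ is trivial for each of the Lie algebras in the statement.

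For the finite-dimensional case, $g$ of type $A_n, D_n$, or $E_n$ is a simple Lie algebra, and every simple Lie algebra has trivial center, so there is nothing more to do. For the hyperbolic case, I would invoke the general fact that a Kac-Moody algebra $g(A)$ with a non-degenerate generalized Cartan matrix $A$ has trivial center. The argument is standard: any central element $z$ commutes with $h$, which forces $z \in g_0 = h$ via the root space decomposition; writing $z = \sum c_i h_i$, the relation $[z, e_j] = 0$ gives $\sum_i c_i a_{ij} = 0$ for all $j$, and non-degeneracy of $A$ then yields $c_i = 0$. Since by definition a hyperbolic Kac-Moody algebra has non-degenerate Cartan matrix (the bilinear form on $h^*$ has Lorentzian signature, in particular is non-degenerate), the argument applies to $A_n^{++}, D_n^{++}, E_n^{++}$.

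The only real subtlety to check is that Theorem \ref{vertexrepresentation} genuinely applies in the hyperbolic setting, i.e.\ that the lattice $L$ on which the Fock space $V_L$ is constructed can be chosen so that the simple roots $\alpha_i$ of $g(A)$ lie in $L$ with the correct inner products, and that $L$ is even. For the simply-laced hyperbolic extensions, the root lattice itself is an even Lorentzian lattice of the appropriate rank (for instance $E_{10}$ sits inside $II_{9,1}$), so this is automatic. Hence the assignment $e_{\alpha_i} \mapsto Y_0(e^{\alpha_i})$, $f_{\alpha_i} \mapsto -Y_0(e^{-\alpha_i})$, $h_i \mapsto h_i(0)$ indeed defines a representation in each case.

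The main potential obstacle is the non-degeneracy step for the hyperbolic algebras: one must be careful that the defining convention of ``hyperbolic'' being used is the one guaranteeing a non-degenerate $A$ (some authors distinguish ``Lorentzian'' from strictly ``hyperbolic''). Once that convention is fixed, the center-triviality argument is completely routine, and the corollary follows immediately from the remark after Theorem \ref{vertexrepresentation}.
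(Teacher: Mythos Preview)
Your proposal is correct and follows essentially the same route as the paper: the corollary is stated without an explicit proof and is meant to follow directly from Theorem \ref{vertexrepresentation} together with the remark that $\ker\pi$ consists only of central elements, so it remains only to observe that the algebras in question have trivial center. Your added justification of center-triviality via non-degeneracy of the Cartan matrix in the hyperbolic case is exactly the standard argument the paper leaves implicit, and your check that the simply-laced hyperbolic root lattices are even Lorentzian (so that $V_L$ is available) is a reasonable point of care that the paper also takes for granted.
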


\subsection{The bilinear form $(.,.)$} \label{subsectionbilinearform}

The goal of this section is to construct a bilinear form $(.,.)$ which is compatible with the definition of vertex operators.  First, the vertex operator algebra $V_L$ has an important involution.  For $\alpha \in L$, define
$$\theta(\alpha) = - \alpha.$$
Then $\theta$ can be lifted to be an involution of $\hat{L}$ such that $\overline{\theta} = \theta$.  We lift $\theta$ to an involution of $\C\{L\}$.  The section $\alpha \to e_{\alpha}$ can be chosen in such a way that
$$\theta(e_{\alpha}) = e_{-\alpha}$$
is satisfied.  Therefore, $\theta$ also lifts to an involution of $V_L$ where
$$\theta(e^{\alpha}) = e^{-\alpha} = \epsilon(\alpha,-\alpha)(e^{\alpha})^{-1}$$
and
$$\theta(\alpha(n)) = -\alpha(n).$$
Perhaps most importantly is the following proposition :
\begin{proposition}
The involution $\theta$ on $V_L$ is an automorphism of $V_L$ as a Vertex Operator Algebra.
\end{proposition}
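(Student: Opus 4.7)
The plan is to verify the three defining conditions for $\theta$ to be a VOA automorphism: (i) $\theta(\mathbf{1}) = \mathbf{1}$, (ii) $\theta(\omega) = \omega$, and (iii) $\theta \circ Y(u,z) = Y(\theta(u),z) \circ \theta$ for every $u \in V_L$. The bijectivity of $\theta$ is automatic since $\theta^2 = \mathrm{id}$. Condition (i) is immediate because $\mathbf{1} = e^0$ and $\theta(e^0) = e^{0} = \mathbf{1}$. Condition (ii) follows because $\theta(h_i(-1)) = -h_i(-1)$, so each factor $h_i(-1)^2$ is preserved, giving $\theta(\omega) = \omega$.

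For condition (iii), I would first verify the identity for $u = e^\alpha$ by conjugating the explicit formula
$$Y(e^\alpha,z) = \exp\!\left(\sum_{n>0} \frac{\alpha(-n)}{n} z^n\right) e^\alpha z^{\alpha(0)} \exp\!\left(-\sum_{n>0} \frac{\alpha(n)}{n} z^{-n}\right)$$
factor-by-factor. Using $\theta \alpha(n) \theta^{-1} = -\alpha(n) = (-\alpha)(n)$, both exponentials become the corresponding exponentials with $\alpha$ replaced by $-\alpha$, and $z^{\alpha(0)}$ becomes $z^{(-\alpha)(0)}$. For the middle factor, conjugation of the left-multiplication operator $e^\alpha$ by $\theta$ gives left multiplication by $\theta(e^\alpha) = e^{-\alpha}$, provided that $\theta$ is a well-defined involution of the twisted group algebra $\mathbb{C}\{L\}$. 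Putting the pieces together yields $\theta Y(e^\alpha,z) \theta^{-1} = Y(e^{-\alpha},z) = Y(\theta(e^\alpha),z)$, as required.

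For a general element $v = s_1(-n_1)\cdots s_N(-n_N) \otimes e^\alpha$, the normal-ordered formula
$$Y(v,z) = {:}\, Y(e^\alpha,z) \prod_{i=1}^N \frac{1}{(n_i-1)!}\Bigl(\tfrac{d}{dz}\Bigr)^{n_i-1} s_i(z) \,{:}$$
combined with $\theta s_i(z) \theta^{-1} = -s_i(z)$ produces an overall sign $(-1)^N$ in $\theta Y(v,z)\theta^{-1}$. This sign is precisely the one appearing in $\theta(v) = (-1)^N s_1(-n_1)\cdots s_N(-n_N) \otimes e^{-\alpha}$, so the two sides match and (iii) extends to all of $V_L$ by linearity.

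The main obstacle is the middle factor in the $u = e^\alpha$ calculation: verifying that $\theta$ genuinely conjugates left multiplication by $e^\alpha$ to left multiplication by $e^{-\alpha}$ on the twisted group algebra. This reduces to showing $\epsilon(-\alpha,-\beta) = \epsilon(\alpha,\beta)$, which is exactly the compatibility condition guaranteed by the setup's choice of section $\alpha \mapsto e_\alpha$ satisfying $\theta(e_\alpha) = e_{-\alpha}$. Once this consistency is invoked, the rest of the verification is a clean computation and the proposition follows.
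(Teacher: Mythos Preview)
Your argument is correct. The paper itself states this proposition without proof (it is a standard fact going back to \cite{voa}), so there is nothing to compare against directly. Your verification of the three axioms is the expected one: the only nontrivial point is indeed the conjugation of the operator $e^\alpha$, and you correctly trace this to the identity $\epsilon(-\alpha,-\beta)=\epsilon(\alpha,\beta)$, which is precisely what is guaranteed by the paper's hypothesis that $\theta$ lifts to a group involution of $\hat{L}$ with $\theta(e_\alpha)=e_{-\alpha}$. One small remark: it might be worth saying explicitly that $\theta\,\alpha(n)\,\theta^{-1}=-\alpha(n)$ holds as an identity of \emph{operators} on $V_L$ (for all $n\in\Z$, including $n\geq 0$), not just as an identity of vectors; this is what makes the factor-by-factor conjugation of $Y(e^\alpha,z)$ and of the normal-ordered product legitimate. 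But this is an easy check from the definitions, and once noted, your proof is complete.
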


Suppose $u \in V_L$.  There is a way to think of $u$ as an element of $(V_L)'$, the graded dual of $V_L$.  To do so, define as operators on $V_L$,
$$(e^{\alpha})^* = \epsilon(-\alpha,\alpha)e^{-\alpha},$$
$$\alpha(n)^* = -\alpha(n)$$
$$(xy)^* = y^* x^* \text{ for } x,y \in V_L.$$
Then in general, we define
$$\langle u, v \rangle = \text{Coefficient of } e^0 \text{ in } \res_{t} t^{-1} u^* v \in \C.$$
This defines a map $\phi : V_L \to (V_L)'$, although one needs to show that $\phi$ is well-defined.  Indeed, we have
\begin{proposition}
The map $\phi : V_L \to (V_L)'$ is a well-defined linear map.
\end{proposition}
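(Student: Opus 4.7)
The plan is to establish the well-definedness of $\phi$ in three structural steps: extending $u\mapsto u^*$ from generators to all of $V_L$, interpreting $\res_t\, t^{-1}u^*v$ unambiguously, and verifying that the $e^0$-coefficient yields a finite scalar.

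First I would fix a Poincar\'e--Birkhoff--Witt basis for $V_L = S(\widehat{h}^{-})\otimes \C\{L\}$ consisting of monomials $\alpha_{i_1}(-n_1)\cdots\alpha_{i_k}(-n_k)\otimes e^\beta$ with $n_1\ge\cdots\ge n_k>0$, and define $u^*$ on such a monomial by the literal reversal of the string with the prescribed substitutions, extending linearly. The principal compatibility check is that the rule $(xy)^*=y^*x^*$ is consistent with the twisted product $e^\alpha e^\beta = \epsilon(\alpha,\beta)e^{\alpha+\beta}$; expanding both $(e^\alpha e^\beta)^*$ and $(e^\beta)^*(e^\alpha)^*$ reduces the equality to the identity
$$\epsilon(\alpha,\beta)\,\epsilon(-(\alpha+\beta),\alpha+\beta) = \epsilon(-\beta,\beta)\,\epsilon(-\alpha,\alpha)\,\epsilon(-\beta,-\alpha),$$
which, after applying the normalizations $\epsilon(-\gamma,\gamma)=(-1)^{(\gamma,\gamma)/2}$ and $\epsilon(\alpha,\beta) = (-1)^{(\alpha,\beta)}\epsilon(\beta,\alpha)$ from the cocycle axioms listed earlier, simplifies to $\epsilon(\beta,\alpha)=\epsilon(-\beta,-\alpha)$, a property satisfied by the standard cocycle constructed in \cite{voa}. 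Compatibility of $*$ with the trivial creation-mode commutativity $\alpha(-m)\beta(-n) = \beta(-n)\alpha(-m)$ is automatic from $\alpha(n)^* = -\alpha(n)$.

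For the second step, $u^*$ viewed as an element of $V_L$ packages into the vertex operator $Y(u^*,t) = \sum_n (u^*)_n\, t^{-n-1}$, and the extraction $\res_t\, t^{-1}\,u^*v$ selects the mode $(u^*)_{-1}v\in V_L$, which is a well-defined vector by the axioms of the vertex operator algebra structure established in Theorem \ref{latticevertextheorem}. For the third step, since $u$ and $v$ are finite sums of Fock monomials, the $\C\{L\}$-support of $(u^*)_{-1}v$ is a finite subset of $L$; projecting onto the $e^0$-summand lands in a finite-dimensional graded piece of $S(\widehat{h}^{-})$, and taking the scalar coefficient in front of $1$ delivers a complex number depending linearly on $u$ and $v$.

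The main obstacle is the cocycle compatibility in step one; once the identity $\epsilon(\beta,\alpha)=\epsilon(-\beta,-\alpha)$ is verified for the chosen section $\alpha\mapsto e_\alpha$, the remaining verifications are routine bookkeeping in the natural $L$- and weight-gradings of $V_L$, and the linearity of $\phi$ in both arguments follows immediately from the linearity of each ingredient.
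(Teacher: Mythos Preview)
Your Step 1 is essentially the paper's proof: the only nontrivial compatibility is the relation $e^\alpha e^\beta = \epsilon(\alpha,\beta)e^{\alpha+\beta}$, and the resulting cocycle identity is what must be checked. The paper is slightly more economical: rather than comparing the operators $(e^{\alpha+\beta})^*$ and $(e^\beta)^*(e^\alpha)^*$ directly, it evaluates both against the single test vector $e^{\alpha+\beta}$, which reduces the check to $\epsilon(\alpha,\beta)=\epsilon(-\alpha,\alpha)\,\epsilon(-\alpha,\alpha+\beta)$ and then invokes only the associativity cocycle condition with $(x,y,z)=(-\alpha,\alpha,\beta)$ together with $\epsilon(0,\beta)=1$. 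Your route via $\epsilon(\beta,\alpha)=\epsilon(-\beta,-\alpha)$ is equivalent---that identity is itself a consequence of the four listed cocycle properties, so your appeal to the explicit construction in \cite{voa} is unnecessary (though not wrong).

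There is, however, a genuine misreading in your Step 2. The object $u^*$ is \emph{not} an element of $V_L$, so writing $Y(u^*,t)$ and extracting the mode $(u^*)_{-1}$ makes no sense. The star takes a word in creation operators $\alpha(-n)$ and shift operators $e^\gamma$ (applied to the vacuum to produce $u$) to the reversed word in the adjoints $\alpha(n)$ and $\epsilon(-\gamma,\gamma)e^{-\gamma}$; this is an \emph{operator} on $V_L$ containing annihilation modes, not a state. For instance, if $u=\alpha(-1)$ then $u^*=\alpha(1)$, which annihilates the vacuum, so under your reading the form would vanish identically. The expression $u^*v$ simply means this operator applied to $v$, and the pairing is the projection of the resulting vector onto the vacuum $1\otimes e^0$. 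Once this is corrected your Steps 2 and 3 become routine, and the well-definedness argument reduces entirely to the cocycle check you carried out in Step 1.
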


\begin{proof}
The only non-trivial part is to verify that
$$\phi(\epsilon (\alpha,\beta) e^{\alpha + \beta}) = \phi(e^{\alpha} e^{\beta}) = \phi(e^{\beta}) \phi(e^{\alpha}).$$
Observe that $\phi(e^{\gamma})e^{\delta}$ is zero unless $\gamma = \delta$.  Therefore, it is enough to check equality when applied to $e^{\alpha+\beta}$.
On the left-hand side, we have
$$\phi(\epsilon(\alpha,\beta) e^{\alpha+\beta})e^{\alpha+\beta} = \epsilon(\alpha,\beta)$$
On the right-hand side, we have
\begin{align*}
\phi(e^{\beta})\phi(e^{\alpha})e^{\alpha+\beta} &= \epsilon(-\alpha,\alpha) \epsilon(-\beta,\beta) \epsilon(-\alpha,\alpha+\beta) \epsilon(-\beta,\beta) \\ &= \epsilon(-\alpha,\alpha) \epsilon(-\alpha,\alpha+\beta)
\end{align*}
However, by the cocycle conditions, we have for $x,y,z \in L$,
$$\epsilon(x,y)\epsilon(x+y,z) = \epsilon(x,y+z)\epsilon(y,z).$$
Replacing $x$ with $-\alpha$, $y$ with $\alpha$ and $z$ with $\beta$ we find
$$\epsilon(-\alpha,\alpha)\epsilon(0,\beta) = \epsilon(-\alpha,\alpha+\beta)\epsilon(\alpha,\beta).$$
which proves the proposition.
\end{proof}

We know from Section \ref{dualmodule} that $(V_L)'$ is also a $V_L$-module.  We twist the action of $V_L$ on $(V_L)'$ by $\theta$, i.e.
$$\langle Y_{\theta}'(v,z)u, w \rangle = \langle Y'(\theta v,z) u, w \rangle.$$
The following lemmas will be necessary to see that the map $\phi$ and $Y'(\theta v,z)$ are essentially the same.

\begin{lemma} \label{adjointlemma1} Suppose $\alpha \in L \otimes \C$.  Then
$\alpha(z)^* = z^{-2} \alpha(z^{-1})$.
\end{lemma}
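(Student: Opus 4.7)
The plan is to prove this by a direct series manipulation: extend the $*$-operation from individual modes to the operator-valued formal Laurent series $\alpha(z)$, then compare coefficients of powers of $z$ on the two sides. The statement is really just a generating-series repackaging of the mode-level prescription for the adjoint, so the proof should fit on a couple of lines.

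First I would expand $\alpha(z) = \sum_{n \in \Z} \alpha(n) z^{-n-1}$. Since $*$ acts on operators on $V_L$ and the formal variable $z$ is a scalar with respect to $*$, applying $*$ coefficient-wise yields
$$\alpha(z)^* = \sum_{n \in \Z} \alpha(n)^* z^{-n-1}.$$
Using the definition of $*$ on the modes $\alpha(n)$ introduced just before the lemma (interpreted so that adjunction sends a creation mode $\alpha(-n)$ to the corresponding annihilation mode $\alpha(n)$, i.e.\ $\alpha(n)^* = \alpha(-n)$, which is what makes adjunction compatible with the pairing $\langle \cdot,\cdot\rangle$ being set up in this subsection), this becomes $\sum_{n \in \Z} \alpha(-n) z^{-n-1}$.

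Second, I would expand the right-hand side directly: $z^{-2}\alpha(z^{-1}) = z^{-2} \sum_{n\in\Z} \alpha(n)(z^{-1})^{-n-1} = \sum_{n\in\Z} \alpha(n) z^{n-1}$. Reindexing via $m = -n$ rewrites this as $\sum_{m\in\Z} \alpha(-m) z^{-m-1}$, which matches the series computed for $\alpha(z)^*$ term-by-term. Comparing coefficients of $z^{-n-1}$ on both sides gives the identity.

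The calculation is purely formal and there is no substantive obstacle; the only care required is in tracking the index substitution $n \leftrightarrow -n$ and in remembering that $*$ acts only on the operator coefficients, not on the formal variable $z$. The usefulness of the lemma is that it converts the mode-by-mode adjoint relation into a single identity of formal power series, which will be the convenient form needed to pass adjunction through products of vertex operators in the computations that follow.
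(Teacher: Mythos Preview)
Your proof is correct and is essentially identical to the paper's: both expand $\alpha(z)^* = \sum_n \alpha(-n) z^{-n-1}$ using $\alpha(n)^* = \alpha(-n)$ and then reindex to recognize this as $z^{-2}\alpha(z^{-1})$. Your parenthetical reading of the adjoint as $\alpha(n)^* = \alpha(-n)$ is exactly what the paper intends and uses (the earlier displayed ``$\alpha(n)^* = -\alpha(n)$'' is a typo, as confirmed a few paragraphs later where the paper restates the adjoint correctly).
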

\begin{proof}
Write
\begin{align*}
\alpha(z)^* &= \left( \sum\limits_{n \in Z} \alpha(n) z^{-n-1} \right)^* \\
&= \sum\limits_{n \in \Z} \alpha(-n) z^{-n-1} \\
&= \sum\limits_{n \in \Z} \alpha(n) z^{n-1} \\
&= z^{-2} \sum\limits_{n \in \Z} \alpha(n) z^{n+1} \\
&= z^{-2} \sum\limits_{n \in \Z} \alpha(n) (z^{-1})^{-n-1}
\end{align*}
as desired.
\end{proof}

\begin{lemma}Suppose $1 \otimes e^{\alpha} \in V_L$ such that $(\alpha,\alpha) = k \in 2\Z$.  Then as operators on $V_L[z,z^{-1}]$,
$Y(e^{\alpha},z)^* = (-1)^{k/2}(z)^{-k} Y(e^{\alpha},z^{-1})$.  In particular, $$(e^{\alpha})_n^* = (-1)^{k/2}(e^{-\alpha})_{-n-2+k}.$$
\end{lemma}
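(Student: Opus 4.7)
The plan is to compute $Y(e^\alpha,z)^*$ by direct expansion. Writing
\[
Y(e^\alpha,z) = E^-(z)\, e^\alpha\, z^{\alpha(0)}\, E^+(z),
\]
where $E^-(z) = \exp(\sum_{n>0} \alpha(-n) z^n / n)$ is the creation part and $E^+(z) = \exp(-\sum_{n>0} \alpha(n) z^{-n} / n)$ is the annihilation part, I would apply the anti-homomorphism $(xy)^* = y^* x^*$ factor by factor. The key inputs are the previous lemma's identity $\alpha(n)^* = \alpha(-n)$, the cocycle consequence $(e^\alpha)^* = \epsilon(-\alpha,\alpha)\, e^{-\alpha} = (-1)^{k/2} e^{-\alpha}$, and $(z^{\alpha(0)})^* = z^{\alpha(0)}$.

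Because the modes appearing inside each exponential mutually commute, no Baker--Campbell--Hausdorff manipulation is required: each exponential is converted termwise by $*$, yielding
\[
E^+(z)^* = \exp\!\left(-\sum_{n>0} \frac{\alpha(-n)}{n} z^{-n}\right), \qquad E^-(z)^* = \exp\!\left(\sum_{n>0} \frac{\alpha(n)}{n} z^{n}\right).
\]
The only nontrivial manipulation is in the middle factor. Since $\alpha(0) e^{-\alpha} = e^{-\alpha}(\alpha(0) - k)$, because $e^{-\alpha}$ shifts the $h$-weight by $-\alpha$ and $(\alpha,\alpha) = k$, we obtain the commutation $z^{\alpha(0)} e^{-\alpha} = z^{-k} e^{-\alpha} z^{\alpha(0)}$, which produces the $z^{-k}$ in the final answer. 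Reassembling the three factors gives
\[
Y(e^\alpha,z)^* = (-1)^{k/2} z^{-k} \exp\!\left(-\sum_{n>0} \frac{\alpha(-n)}{n} z^{-n}\right) e^{-\alpha} z^{\alpha(0)} \exp\!\left(\sum_{n>0} \frac{\alpha(n)}{n} z^{n}\right),
\]
and one reads off that the trailing product is exactly $Y(e^{-\alpha},z^{-1})$, since it is the original formula for $Y(\cdot,z)$ with $\alpha \to -\alpha$ and $z \to z^{-1}$ substituted in.

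The mode identity then follows by equating coefficients of $z^{-n-1}$ on the two sides, using $Y(e^{-\alpha},z^{-1}) = \sum_m (e^{-\alpha})_m z^{m+1}$, which forces $m = -n-2+k$. The main obstacle is not any real computation but bookkeeping: one must be careful with the order reversal built into $*$, the sign $(-1)^{k/2}$ arising from the cocycle, and the $z^{-k}$ emerging from the commutation of $z^{\alpha(0)}$ past $e^{-\alpha}$. Once the three adjoint factors are lined up correctly, the argument reduces to matching like with like in the exponentials, with creation modes of $e^\alpha$ pairing with creation modes of $e^{-\alpha}$ (and likewise for annihilation), so no exponentials ever need to be commuted past each other.
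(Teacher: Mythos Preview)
Your proposal is correct and follows essentially the same approach as the paper: both compute the adjoint of $Y(e^\alpha,z)$ factor by factor using $\alpha(n)^* = \alpha(-n)$ and $(e^\alpha)^* = \epsilon(\alpha,-\alpha)e^{-\alpha} = (-1)^{k/2}e^{-\alpha}$, extract the $z^{-k}$ from commuting $z^{\alpha(0)}$ past $e^{-\alpha}$, and then recognize the reassembled product as $Y(e^{-\alpha},z^{-1})$. The only cosmetic difference is that the paper treats $(e^\alpha z^{\alpha(0)})^*$ as a single unit while you separate the two factors and commute explicitly; the content is identical.
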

\begin{proof}
Assuming we show that $$Y(1 \otimes e^{\alpha},z)^* = z^{-k} Y(1 \otimes e^{-\alpha},z^{-1})$$ as operators in $V_L[z,z^{-1}]$, then we will have
$$Y(1 \otimes e^{\alpha},z)^* = \sum\limits_{n \in \Z} (1 \otimes e^{\alpha})_n^* z^{-n-1} = \sum\limits_{n \in \Z} (1 \otimes e^{-\alpha})_n z^{n+1-k}$$
and then $(1 \otimes e^{\alpha})_n^* = (1 \otimes e^{-\alpha})_{-n}$ by comparing coefficients as in the previous lemma.

To prove the assertion, note that clearly
$$\exp\left( \sum\limits_{n > 0} \frac{\alpha(-n)}{n} z^n \right)^* = \exp\left( \sum\limits_{n > 0} \frac{\alpha(n)}{n} (z^{-1})^{-n} \right),$$
$$\exp\left( - \sum\limits_{n > 0} \frac{\alpha(n)}{n} z^{-n} \right)^*=\exp\left( - \sum\limits_{n > 0} \frac{\alpha(n)}{n} (z^{-1})^{n} \right)$$
and
$$(e^\alpha z^{\alpha(0)})^* = \epsilon(\alpha,-\alpha) z^{\alpha(0)} e^{-\alpha} = (-1)^{\frac{1}{2}(\alpha,\alpha)} z^{-k} e^{-\alpha} z^{\alpha(0)}.$$
Therefore,
\begin{align*}
Y(e^\alpha,z)^* &= \exp\left( - \sum\limits_{n > 0} \frac{\alpha(n)}{n} (z^{-1})^{-n} \right) z^{-k} e^{-\alpha} z^{\alpha(0)} \exp\left( \sum\limits_{n > 0} \frac{\alpha(n)}{n} (z^{-1})^n \right) \\ &= (-1)^{k/2}(z)^{-k} Y(e^{-\alpha},z^{-1}) 
\end{align*}
as desired.
\end{proof}

From these two lemmas we now prove :

\begin{theorem}
The map $\phi : V_L \to (V_L)'$ agrees with $Y'(\theta v,z)$, i.e.
$$\phi(Y(v,z)u)w = \langle Y(v,z)u,w \rangle = \langle u, Y'(\theta v,z)w \rangle$$
for $u,v,w \in V_L$.
\end{theorem}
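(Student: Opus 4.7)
The first equality $\phi(Y(v,z)u)w = \langle Y(v,z)u, w\rangle$ is immediate from the construction of $\phi$, so the substance of the theorem lies in proving
$$\langle Y(v,z)u, w\rangle = \langle u, Y'(\theta v, z)w\rangle.$$
My plan is to split this into two pieces. First, I would establish the ``adjoint property'' of the pairing: $\langle Y(v,z)u, w\rangle = \langle u, Y(v,z)^* w\rangle$, where $Y(v,z)^*$ denotes the formal operator adjoint obtained by applying $*$ coefficient-wise in $z$. This is built into the definition of $\langle \cdot,\cdot\rangle$ via residues of the $*$-product: since $(xy)^* = y^*x^*$, applying the operator $Y(v,z)$ and then starring moves $Y(v,z)^*$ onto the right factor. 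Second, I would prove the operator identity $Y(v,z)^* = Y'(\theta v, z)$, after which the two pieces combine to give the theorem.

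For the operator identity, I proceed by linearity and induction on the number $k$ of creation operators in a standard basis vector $v = \alpha_1(-n_1)\cdots\alpha_k(-n_k)\otimes e^\alpha$. The base case $k=0$, $v=e^\alpha$, is handled directly by the second lemma,
$$Y(e^\alpha,z)^* = (-1)^{(\alpha,\alpha)/2} z^{-(\alpha,\alpha)} Y(e^{-\alpha}, z^{-1}),$$
combined with the fact that $\theta v = e^{-\alpha}$ is primary of conformal weight $(\alpha,\alpha)/2$. Indeed, $L_n e^{-\alpha}=0$ for $n\geq 1$ and $L_0 e^{-\alpha} = \tfrac{(\alpha,\alpha)}{2} e^{-\alpha}$, so the operator $e^{zL_1}z^{-2L_0}$ appearing in the definition of $Y'$ acts on $e^{-\alpha}$ as multiplication by $z^{-(\alpha,\alpha)}$, and the prefactor $(-1)^{\overline{L_0}(\theta v)} = (-1)^{(\alpha,\alpha)/2}$ matches the sign from the lemma. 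Hence $Y'(\theta e^\alpha, z) = Y(e^\alpha,z)^*$ as formal series.

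For the inductive step $v = \alpha(-n)v'$, I would use the recursive definition of vertex operators,
$$Y(\alpha(-n)v',z) = {:}\tfrac{1}{(n-1)!}\bigl(\tfrac{d}{dz}\bigr)^{n-1}\alpha(z)\,Y(v',z){:},$$
apply $*$ to both sides, and simplify using $\alpha(z)^* = z^{-2}\alpha(z^{-1})$ from the first lemma together with the inductive hypothesis $Y(v',z)^* = Y'(\theta v', z)$. To match the result against $Y'(\theta v, z)$, I would invoke the general conjugation formula $e^{zL_1}z^{-2L_0}\alpha(-n) z^{2L_0} e^{-zL_1}$, computed from the commutators $[L_1,\alpha(-n)]$ and $[L_0,\alpha(-n)]$. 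This rewrites $e^{zL_1}z^{-2L_0}\theta v$ as a normal-ordered expression in derivatives of $\alpha(z^{-1})$ applied to $e^{zL_1}z^{-2L_0}\theta v'$, which is exactly the form produced by the $*$-operation on the right-hand side.

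The main obstacle is this inductive step: matching the $*$-dualised normal-ordered product with the conjugated expression requires careful bookkeeping of the derivatives $\bigl(\tfrac{d}{dz}\bigr)^{n-1}$ after the change of variable $z\mapsto z^{-1}$, combined with the powers of $z$ coming from both $z^{-2L_0}$ and the first lemma. In practice, this is a specialisation of the general conjugation formula already stated by the author (the analogue of Lemma 5.2.3 of \cite{frenkelhuanglepowsky}), so the verification reduces to a purely formal identity in power series, closing the induction and the proof.
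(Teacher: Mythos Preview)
Your proposal is correct, and the base cases you check ($v=e^\alpha$ via the second lemma, and implicitly $v=\alpha(-1)$ via the first lemma) are exactly what the paper checks. The difference lies in how you extend to arbitrary $v$.

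The paper does not perform your induction on the number of creation operators. Instead, after verifying the identity for $v=\alpha(-1)$ and $v=e^\alpha$, it observes that every element of $V_L$ has the form $u^1_{n_1}\cdots u^l_{n_l}e^\alpha$ and concludes immediately that $\phi$ intertwines the two module actions. The point is that both $Y(\cdot,z)$ on $V_L$ and $Y'_\theta(\cdot,z)$ on $(V_L)'$ are genuine $V_L$-module structures (the latter by the FHL theorem in Section~\ref{dualmodule}), so the Jacobi identity on each side guarantees that an intertwining relation which holds for $v$ in a generating set propagates to all $v$. This replaces your entire inductive step with a one-line appeal to module structure.

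Your approach trades that structural shortcut for an explicit computation: you unfold the normal-ordered product recursion and invoke the FHL conjugation formula (Lemma~5.2.3 there) to close the induction. This is valid but laborious, and you are right to flag the bookkeeping of derivatives under $z\mapsto z^{-1}$ as the delicate point. In effect you are re-deriving by hand a consequence of the Jacobi identity for $Y'$ that the paper already has for free. The paper's route is shorter precisely because the hard work was front-loaded into proving $(V_L)'$ is a module; your route is more self-contained but duplicates part of that effort.
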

\begin{proof}
We have computed $\alpha(z)^*$ and $Y(e^{\alpha},z)^*$.  On the other hand,
$$\alpha(z) = Y(\alpha(-1),z)$$
hence
\begin{align*}
\langle Y'_{\theta}(\alpha(-1),z)u,w \rangle &= \langle u, Y(e^{z L_1} (-z^2)^{L_0} \theta(\alpha(-1)),z^{-1})w \rangle \\ &= \langle u, Y(z^{-2} \alpha(-1),z^{-1})w \rangle
\\ &= \langle u, z^{-2} \alpha(z) w \rangle.
\end{align*}
Secondly, suppose $(\alpha,\alpha) = k \in 2\Z$.  Then
\begin{align*}
\langle Y'_{\theta}(e^{\alpha},z)u,w \rangle &= \langle u, Y(e^{z L_1} (-z^2)^{L_0} \theta(e^{\alpha}),z^{-1})w \rangle  \\ &= \langle u, (-z^2)^{-k/2} Y(e^{-\alpha},z^{-1})w \rangle
\end{align*}
Because every element of $V_L$ is of the form $u^1_{n_1}...u^l_{n_l} e^{\alpha}$, we see that $\phi$ is compatible with the structure of $(V_L)'$ as a $V_L$-module.
\end{proof}

As a culmination of all of our efforts, we now see that we may define a bilinear form $(.,.)_{V_L} = (.,.)$ on $V_L$ by defining the two adjoint operators $\alpha(n)^* = \alpha(-n)$ and $(e^{\alpha})^* = \epsilon(\alpha,-\alpha) e^{-\alpha}$ and then extending to all of $V_L$ as before.  It is then clear that for $v \in V_L$, we have
$$(Y(v,z)u,w) = (u, Y(e^{zL_1}(-z^2)^{-L_0}\theta(v),z^{-1})w)$$

It is important to observe that this bilinear form is symmetric and also that
$$((V_L)_{\alpha},(V_L)_{\beta}) \neq 0 \text{ only if } \alpha = \beta.$$

This bilinear form has many properties which will be important to us.

\begin{proposition} \label{formpositivedefinite}
The bilinear form $(.,.)$ on $V_L$ is positive-definite if and only if the lattice $L$ is positive definite.
\end{proposition}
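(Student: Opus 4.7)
The plan is to exploit the tensor product structure $V_L = S(\widehat{h}^-) \otimes \C\{L\}$ and show that the bilinear form decomposes compatibly, reducing positive definiteness on $V_L$ to positive definiteness on each tensor factor. The easy direction (contrapositive of ``positive definite form $\Rightarrow$ positive definite $L$'') is to exhibit an explicit bad vector: if $\alpha \in L \otimes \R$ satisfies $(\alpha,\alpha) \leq 0$, then using the canonical commutation relation from Lemma \ref{commutatorbosonicfields} I would compute
\begin{equation*}
(\alpha(-1) \otimes e^0,\ \alpha(-1) \otimes e^0) = (1 \otimes e^0,\ \alpha(1)\alpha(-1) \otimes e^0)(e^0, e^0) = (\alpha,\alpha)(e^0,e^0),
\end{equation*}
which is nonpositive, contradicting positive definiteness.

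For the hard direction, I would first verify that the form respects the $L$-grading, i.e.\ $((V_L)_\alpha, (V_L)_\beta) = 0$ unless $\alpha = \beta$. This follows directly from $(e^\alpha)^* = \epsilon(\alpha,-\alpha) e^{-\alpha}$ together with the $L$-grading of $V_L$ (tracked by $\alpha(0)$). Within a fixed graded piece $(V_L)_\alpha = S(\widehat{h}^-) \otimes \C e^\alpha$, the form factors as the product of the form on $S(\widehat{h}^-)$ and the scalar $(e^\alpha, e^\alpha)$. The latter I would compute via the cocycle identities listed for $\epsilon$: combining $\epsilon(\alpha,-\alpha) = (-1)^{(\alpha,\alpha)/2}$ with $\epsilon(-\alpha,\alpha) = (-1)^{(\alpha,\alpha)} \epsilon(\alpha,-\alpha)$, the two sign contributions cancel, yielding $(e^\alpha, e^\alpha) = 1$.

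It then remains to show that the form on $S(\widehat{h}^-)$ is positive definite whenever $(.,.)$ is positive definite on $h = L \otimes \C$. Assuming $L$ is positive definite, I would choose an orthonormal basis $h_1, \ldots, h_d$ of $h$ and consider the monomial basis
\begin{equation*}
\prod_{i,n} h_i(-n)^{a_{i,n}} \otimes 1
\end{equation*}
of $S(\widehat{h}^-)$. Using the commutation relations $[h_i(m), h_j(n)] = m\,\delta_{ij}\,\delta_{m+n,0}$ (Lemma \ref{commutatorbosonicfields}), the adjoint $h_i(n)^* = h_i(-n)$ acts as an annihilation operator. By a standard Wick-style induction moving annihilations past creations, distinct monomials are orthogonal and each has norm
\begin{equation*}
\prod_{i,n} a_{i,n}!\, n^{a_{i,n}} > 0.
\end{equation*}

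The main obstacle I anticipate is bookkeeping: confirming that the cocycle identities really do force $(e^\alpha, e^\alpha) > 0$ rather than producing signs that depend on the parity of $(\alpha,\alpha)/2$, and that the definition of $(.,.)$ via the ``coefficient of $e^0$ in $\res_t\, t^{-1} u^* v$'' prescription is genuinely compatible with the naive tensor decomposition of the form. Once these sign computations are nailed down, the rest is routine Fock-space linear algebra.
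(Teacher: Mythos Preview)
Your proposal is correct and follows essentially the same strategy as the paper: exhibit $\alpha(-1)$ with $(\alpha,\alpha)\le 0$ for the easy direction, and for the hard direction pick an orthonormal basis $h_1,\dots,h_d$ of $L\otimes\R$, show the monomial basis is orthogonal with positive norms via the Heisenberg commutation relations, and reduce the $\C\{L\}$ factor to $(e^\alpha,e^\alpha)>0$. Your cocycle worry is in fact simpler than you anticipate: since $(e^\alpha)^* e^\alpha = \epsilon(-\alpha,\alpha)\,e^{-\alpha}e^{\alpha} = \epsilon(-\alpha,\alpha)^2 e^0 = e^0$, no parity argument is needed at all.
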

\begin{proof}
If $L$ is not positive definite, there exists $\alpha \in L$ such that $(\alpha,\alpha) \leq 0$.  Then
$$(\alpha(-1),\alpha(-1)) = (1,\alpha(1)\alpha(-1)) = (1, (\alpha,\alpha)) \leq 0.$$
This proves the "only if" part of the proposition.  To prove the "if" part of the proposition, assume $L$ is positive-definite.  Then let $h_1,...,h_d$ be an orthonormal basis for $L \otimes \R$.  Then it is easy to check that
$$(h_{i_1}(-j_1)...h_{i_r}(-j_r) e^{\alpha}, h_{k_1}(-l_1)...h_{k_s}(-l_s) e^{\beta})$$
is non-zero if and only if $r = s$ and there is a permutation $\sigma$ between the indexes $i,k$ and $j,l$.  Furthermore it is clear by induction on $r$ that if this expression is non-zero it must be positive.

Therefore, we may rescale the elements $h_{i_1}(-j_1)...h_{i_r}(-j_r) e^{\alpha}$ in such a way that they form an orthonormal basis for $S(\widehat{h}^{-}) \otimes_{\R} \R[L]$.  In particular, $(.,.)$ will also be positive-definite on $V_L$.
\end{proof}

\begin{lemma} \label{adjointvirasoro}
Suppose $n \in \Z$.  The adjoint of the operator $L_n$ on $V_L$ is $L_{-n}$.
\end{lemma}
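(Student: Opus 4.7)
The plan is to compute $L_n^\ast$ directly from the explicit formula
$$L_n = \frac{1}{2} \sum_{i=1}^d \sum_{j \in \Z} :h_i(-j) h_i(j+n):$$
using the rules $h_i(m)^\ast = h_i(-m)$ and $(xy)^\ast = y^\ast x^\ast$ that define the form. The whole argument reduces to tracking how the normal ordering interacts with the adjoint and then reindexing.

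First I would observe that by Lemma \ref{commutatorbosonicfields} (equivalently, by the canonical commutation relations of the $h_i(m)$'s), the two factors inside each summand of $L_n$ commute unless $-j + (j+n) = n = 0$. So for $n \neq 0$ the normal ordering in the definition of $L_n$ does nothing, and we can write $L_n = \frac{1}{2} \sum_i \sum_j h_i(-j) h_i(j+n)$. Taking adjoints term by term gives
$$\bigl(h_i(-j) h_i(j+n)\bigr)^\ast = h_i(j+n)^\ast h_i(-j)^\ast = h_i(-j-n) h_i(j).$$
Substituting $k = j+n$ and again using that the two factors commute (their commutator involves $\delta_{-k + (k-n),0} = \delta_{-n,0} = 0$), this equals $h_i(-k) h_i(k-n)$. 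Summing over $k$ recovers $L_{-n}$, settling the case $n \neq 0$.

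For $n = 0$, I would split the sum according to the sign of $j$. The ascending-order normal ordering forces
$$L_0 = \frac{1}{2} \sum_{i=1}^d h_i(0)^2 + \sum_{i=1}^d \sum_{j > 0} h_i(-j) h_i(j).$$
Since $h_i(0)^\ast = h_i(0)$ and $(h_i(-j) h_i(j))^\ast = h_i(j)^\ast h_i(-j)^\ast = h_i(-j) h_i(j)$ for $j > 0$, each summand is self-adjoint, so $L_0^\ast = L_0 = L_{-0}$.

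The only mild obstacle is the $n = 0$ case, where the normal ordering is genuinely non-trivial; but once one splits the sum by sign of $j$, the nonzero commutator $[h_i(j), h_i(-j)] = j$ is completely absorbed by the normal ordering prescription, so no central term appears and self-adjointness holds on the nose. Everything else is bookkeeping with the index shift $j \mapsto j+n$.
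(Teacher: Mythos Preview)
Your proof is correct and follows essentially the same approach as the paper: compute $L_n^\ast$ directly from the explicit mode expansion using $h_i(m)^\ast = h_i(-m)$ and $(xy)^\ast = y^\ast x^\ast$. The only difference is bookkeeping: the paper avoids your case split on $n$ by observing in one line that the adjoint of a normally ordered product is again normally ordered (since negating both indices and reversing the order preserves the ascending arrangement), so $:h_i(n-k)h_i(k):^\ast = :h_i(k-n)h_i(-k):$, and the reindexing $k \mapsto -k$ finishes uniformly for all $n$.
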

\begin{proof}
Recall that
$$L_n = \frac{1}{2} \sum\limits_{i=1}^d \sum\limits_{k \in \Z} : h_i(n-k)h_i(k) :$$
Observe that the adjoint of a normally ordered expression is again normally ordered.  Therefore,
$$:h_i(n-k)h_i(k):^* = :h_i(k-n)h_i(-k):$$
Replacing $k$ by $-k$ and summing over all $k \in \Z$ we recover $L_{-n}$.
\end{proof}

Next, suppose $v \in V_L$ satisfies $L_0 v = v$ and $L_n v = 0$ for $n>0$.  Then we can see that
$$\langle Y(v,z)u, w \rangle = \langle u, Y(e^{z L_1} (-z^{2})^{-L_0}\theta(v),z^{-1})= \langle u, Y(-z^{-2} \theta(v),z^{-1}) \rangle.$$
As usual, if we write
$$Y(v,z) = \sum\limits_{n \in \Z} v_n z^{-n-1}$$
Then we have
$$Y(v,z)^* = \sum\limits_{n \in \Z} v_n^* z^{-n-1}.$$
On the other hand, we have
$$z^{-2} Y(\theta(v),z^{-1}) = \sum\limits_{n \in \Z} -\theta(v)_n z^{n-1}$$
and comparing the coefficients for $z^{-1}$ yields the following :

\begin{corollary} \label{adjointcorollary}
Suppose $v \in V_L$ such that $L_0 v = v$ and $L_n v = 0$ for $n>0$.  Then $(v_0)^* = -\theta(v)_0$.
\end{corollary}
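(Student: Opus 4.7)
The plan is to extract the coefficient of $z^{-1}$ in the identity
$$\langle Y(v,z)u,w\rangle = \langle u, Y(e^{zL_1}(-z^2)^{-L_0}\theta(v),z^{-1})w\rangle$$
that has just been established in the passage before the corollary.

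First I would simplify the right-hand side using the hypotheses on $v$. Because $\theta$ is a VOA automorphism fixing the conformal vector $\omega$, it commutes with every $L_n$; hence $L_0\theta(v)=\theta(v)$ and $L_n\theta(v)=0$ for $n>0$. Consequently $e^{zL_1}\theta(v)=\theta(v)$ and $(-z^2)^{-L_0}\theta(v)=-z^{-2}\theta(v)$, so the identity collapses to
$$\langle Y(v,z)u,w\rangle = -z^{-2}\langle u, Y(\theta(v),z^{-1})w\rangle.$$

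Next I would expand both sides as formal Laurent series in $z$ and compare the coefficient of $z^{-1}$. The left-hand side equals $\sum_{n\in\Z}\langle v_n u,w\rangle z^{-n-1} = \sum_{n\in\Z}\langle u, v_n^* w\rangle z^{-n-1}$, whose coefficient of $z^{-1}$ is $\langle u,v_0^*w\rangle$. For the right-hand side,
$$-z^{-2} Y(\theta(v),z^{-1}) = -z^{-2}\sum_{n\in\Z}\theta(v)_n z^{n+1} = -\sum_{n\in\Z}\theta(v)_n z^{n-1},$$
so the coefficient of $z^{-1}$ is $-\theta(v)_0$. Equating the two gives $\langle u, v_0^* w\rangle = -\langle u,\theta(v)_0 w\rangle$ for all $u,w$, hence $v_0^*=-\theta(v)_0$.

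The only non-routine point is the verification that $\theta$ commutes with all $L_n$, which is what lets the exponential and the $L_0$-power act as scalars; everything else is bookkeeping on formal Laurent series. I would simply note that since $\theta$ is a VOA automorphism it must send $\omega$ to itself (the conformal vector is determined up to the defining axioms, and one checks $\theta(\omega)=\omega$ directly from $\omega=\frac12\sum_i h_i(-1)^2$ and $\theta(h_i(n))=-h_i(n)$), so $\theta Y(\omega,z)\theta^{-1}=Y(\omega,z)$ and in particular $\theta L_n\theta^{-1}=L_n$.
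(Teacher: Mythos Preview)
Your proof is correct and follows essentially the same approach as the paper: start from the adjoint identity $(Y(v,z)u,w)=(u,Y(e^{zL_1}(-z^2)^{-L_0}\theta(v),z^{-1})w)$, simplify the right-hand side to $-z^{-2}Y(\theta(v),z^{-1})$ using the hypotheses, and compare coefficients of $z^{-1}$. Your explicit justification that $\theta$ commutes with each $L_n$ (via $\theta(\omega)=\omega$) is a detail the paper leaves implicit, but otherwise the arguments coincide.
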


We have now completed the collection of all information needed about the bilinear form $(.,.)$.  We are almost ready to define a Lie algebra structure on a subset of $V_L$. It will be important to know some information about the Lie bracket of Virasoro operators with $Y(v,z)$.  The following proposition is proven in \cite{voa}, and we provide the proof again here because we will adapt it in a later section.
\begin{lemma} \label{highestweightcommutation}
Suppose $v \in V_L$ satisfies $L_0 v = hv$ and $L_n v = 0$ for all $n > 0$.  Then
$$[L_m, v_n] = (h(m+1) - m - n - 1) v_{m+n}.$$
\end{lemma}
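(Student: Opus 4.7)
The plan is to apply the commutator formula (\ref{commutatorformula}) from Theorem \ref{latticevertextheorem} with $u = \omega$ in the first slot and our given $v$ in the second slot, then extract coefficients. This follows the same template used in the proof of Theorem \ref{latticevertextheorem} for the Virasoro brackets, only now the hypotheses on $v$ replace the explicit computation of $\omega_n \omega$.

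First I would write
\begin{align*}
[Y(\omega,z_1),Y(v,z_2)] = \res_{z_0} z_2^{-1} \bigl( e^{-z_0 \tfrac{\partial}{\partial z_1}} \delta(z_1/z_2) \bigr) Y(Y(\omega,z_0)v,z_2).
\end{align*}
Since $Y(\omega,z_0)v = \sum_n L_n v \cdot z_0^{-n-2}$, the hypotheses $L_n v = 0$ for $n > 0$ and $L_0 v = h v$ collapse the singular part of $Y(\omega,z_0)v$ to exactly two terms, namely
$$Y^{-}(\omega,z_0)v = L_{-1} v \cdot z_0^{-1} + h v \cdot z_0^{-2}.$$
As in the proof of Theorem \ref{latticevertextheorem}, only the singular part contributes to the residue at $z_0 = 0$, so substituting and using $Y(L_{-1}v,z_2) = \tfrac{d}{dz_2} Y(v,z_2)$ yields
$$[Y(\omega,z_1),Y(v,z_2)] = z_2^{-1} \delta(z_1/z_2) \tfrac{d}{dz_2} Y(v,z_2) - h z_2^{-1} \bigl(\tfrac{\partial}{\partial z_1} \delta(z_1/z_2)\bigr) Y(v,z_2).$$

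The second step is to read off the coefficient of $z_1^{-m-2} z_2^{-n-1}$ on both sides. On the left this coefficient is $[L_m, v_n]$. On the right, using the expansions $\delta(z_1/z_2) = \sum_k z_1^k z_2^{-k}$ and $\tfrac{\partial}{\partial z_1}\delta(z_1/z_2) = \sum_k k z_1^{k-1} z_2^{-k}$, together with $\tfrac{d}{dz_2} Y(v,z_2) = -\sum_{n'} (n'+1) v_{n'} z_2^{-n'-2}$, one matches powers: the first term contributes $-(m+n+1) v_{m+n}$ and the second contributes $h(m+1) v_{m+n}$. Adding gives the claimed identity.

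No serious obstacle is expected; this is a mechanical application of the commutator formula with the primary-field hypothesis truncating the output of $Y(\omega,z_0)v$ to a two-term singular part. The only point requiring a bit of care is bookkeeping the indices correctly when extracting coefficients of $z_1^{-m-2} z_2^{-n-1}$, since the shift in powers produced by $\tfrac{\partial}{\partial z_1} \delta(z_1/z_2)$ is what delivers the $h(m+1)$ factor while the $z_2$-derivative delivers $-(m+n+1)$.
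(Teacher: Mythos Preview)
Your proposal is correct and follows essentially the same route as the paper: apply the commutator formula with $u=\omega$, use the hypotheses to truncate $Y(\omega,z_0)v$ to the two singular terms $L_{-1}v\,z_0^{-1}+hv\,z_0^{-2}$, and then extract coefficients. The only cosmetic difference is that the paper first isolates the coefficient of $z_1^{-m-2}$ to obtain $[L_m,Y(v,z)] = \bigl(z^{m+1}\tfrac{d}{dz}+h(m+1)z^m\bigr)Y(v,z)$ as an intermediate identity before reading off the $z^{-n-1}$ coefficient, whereas you extract both coefficients in one step.
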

\begin{proof}
Using the commutator formula,
$$[Y(\omega,z_1),Y(v,z)] = \res_{z_0} z^{-1} \left( e^{-z_0 \frac{\partial}{\partial z_1}} \delta(z_1/z) \right) Y(Y(\omega,z_0)v,z)).$$
As mentioned earlier, we need only consider the singular part of $Y(\omega,z_0)$ in the right hand side, allowing us to rewrite this commutator as
\begin{align*}
[Y(\omega,z_1),Y(v,z)] &= z^{-1}Y(L_{-1}v,z) \delta(z_1/z) - z^{-1} Y(L_0 v,z) \frac{\partial}{\partial z_1} \delta(z_1/z) \\
&+ \res_{z_0} z^{-1} \sum\limits_{n > 0} Y(L_n v,z) z_0^{-n-2} \left( e^{-z_0 \frac{\partial}{\partial z_1}} \delta(z_1/z) \right)
\end{align*}
In general this expression is unwieldy but using the conditions of the propositions, it reduces to
\begin{align*}
[Y(\omega,z_1),Y(v,z)] &= z^{-1} Y(L_{-1}v,z) \delta(z_1/z) - z^{-1}Y(L_0 v,z) \frac{\partial}{\partial z_1} \delta(z_1/z) \\
&= z^{-1} \left(\frac{d}{d z} Y(v,z) \right) \delta(z_1/z) - h z^{-1} Y(v,z) \frac{\partial}{\partial z_1} \delta(z_1/z).
\end{align*}
The coefficient of $z_1^{-m-2}$ in this expression yields
$$[L_m,Y(v,z)] = \left( z^{m+1} \frac{d}{dz} + h(m+1) z^m \right) Y(v,z)$$
and the coefficient of $z^{-n-1}$ in this expression results in
$$[L_m,v_n] = (h(m+1) - m - n - 1) v_{m+n},$$
as desired.
\end{proof}

\subsection{No-ghost theorem and the Lie algebra of physical states}
Define the subspace
$$P^i = \{ v \in V_L | L_0(v) = iv \text{ and } L_n(v) = 0 \text{ for all } n > 0 \}.$$
Elements of $P^i$ are know as \emph{lowest weight vectors} for the Virasoro Lie algebra.  Note that if $v \in P^i$, $L_0 L_m v = (i-m) L_m v$ and then, in particular, $L_{-1} P^0 \subseteq P^1$.  We refer to elements of $P^1$ as \emph{physical states}.

\begin{theorem} \label{liealgebraofphysicalstates}
Consider the set $\widetilde{P^1} = P^1/L_{-1} P^0$.  Let $u,v \in \widetilde{P^1}$.  Then the bracket
$$[u,v] = u_0 v$$ defines a Lie algebra structure on $\widetilde{P^1}$.
\end{theorem}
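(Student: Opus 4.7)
The plan is to verify in turn (a) the operation $[u,v] = u_0 v$ descends to a well-defined bilinear operation on $\widetilde{P^1} = P^1/L_{-1}P^0$, (b) it satisfies the Jacobi identity, and (c) it is antisymmetric modulo $L_{-1}P^0$.

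For (a), the key input is Lemma \ref{highestweightcommutation} applied with $h = 1$ and $n = 0$: for any $u \in P^1$ and every $m \in \Z$, we have $[L_m, u_0] = 0$. It follows immediately that $u_0$ preserves $P^1$ (giving $u_0 v \in P^1$ for $u, v \in P^1$), preserves $P^0$, and commutes with $L_{-1}$. Well-definedness on the right then follows: for $v' \in P^0$, $u_0 L_{-1} v' = L_{-1} u_0 v' \in L_{-1} P^0$. Well-definedness on the left follows from $(L_{-1} u')_0 = 0$, obtained from $Y(L_{-1} u', z) = \frac{d}{dz} Y(u', z)$ by comparing coefficients of $z^{-1}$.

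For (b), the Jacobi identity is an almost immediate consequence of Corollary \ref{commutatorcorollary}, which gives the operator identity $[u_0, Y(v, z)] = Y(u_0 v, z)$. Extracting the coefficient of $z^{-1}$ yields $[u_0, v_0] = (u_0 v)_0$, which applied to a third element $w \in P^1$ is precisely $[u, [v, w]] - [v, [u, w]] = [[u, v], w]$.

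For (c), I begin from the skew-symmetry property $Y(u, z) v = e^{zL_{-1}} Y(v, -z) u$. Taking $\mathrm{Res}_z$ and expanding the exponential, matching the coefficient of $z^{-1}$ gives
\[
u_0 v + v_0 u = -L_{-1} w, \qquad w = \sum_{n \geq 1} \frac{(-1)^n}{n!} L_{-1}^{n-1} v_n u.
\]
It remains to verify $w \in P^0$. The condition $L_0 w = 0$ is a direct termwise computation using $[L_0, L_{-1}^{n-1}] = (n-1) L_{-1}^{n-1}$, $[L_0, v_n] = -n v_n$, and $L_0 u = u$, which together make each summand of total $L_0$-weight zero.

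The main obstacle is showing $L_m w = 0$ for each $m > 0$, since the individual summands of $w$ are not themselves in $P^0$, so any term-by-term argument fails. The strategy is to use that $u_0 v + v_0 u \in P^1$ by (a), hence $L_m L_{-1} w = 0$; combined with the Virasoro relation $[L_m, L_{-1}] = (m+1) L_{m-1}$, this gives a recursion $L_{-1} L_m w = -(m+1) L_{m-1} w$. Starting from $L_0 w = 0$, one inductively controls $L_{-1} L_m w$ for each $m \geq 1$. Finally, deducing $L_m w = 0$ from $L_{-1} L_m w = 0$ is carried out by pairing against test vectors via the bilinear form $(\cdot, \cdot)$ from Section \ref{subsectionbilinearform} and using the adjoint relation $L_{-1}^* = L_1$ of Lemma \ref{adjointvirasoro}, which shows that the relevant weight components of $\ker L_{-1}$ are annihilated by every $L_k$ with $k > 0$.
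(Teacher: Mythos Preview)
Your outline matches the paper's proof closely: same three-part structure, same use of Lemma \ref{highestweightcommutation} for closure, same use of Corollary \ref{commutatorcorollary} for Jacobi, and the same skew-symmetry computation yielding $u_0 v + v_0 u \in L_{-1}(V_L)$. Your treatment of (a) is in fact slightly cleaner than the paper's, since you handle both slots of the bracket explicitly.

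The one genuine gap is the last step of (c). Your recursion $L_{-1}L_m w = -(m+1)L_{m-1}w$ is exactly what the paper uses, and starting from $L_0 w = 0$ you correctly reduce the problem to: from $L_{-1}(L_m w)=0$ deduce $L_m w = 0$. But your proposed justification via the bilinear form and $L_{-1}^*=L_1$ does not close the argument. From $L_{-1}y=0$ and adjointness you get $(y, L_1 z)=0$ for all $z$, i.e.\ $y\perp \operatorname{Im}(L_1)$; this does not imply $y=0$, nor does it imply $L_k y=0$ for $k>0$ without further input about the image of $L_1$ or positivity of the form (which fails here since $L$ is Lorentzian). Your concluding clause ``the relevant weight components of $\ker L_{-1}$ are annihilated by every $L_k$'' is not what you need and is not established by the pairing.

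The paper fills this step with a concrete fact about $V_L$: $L_{-1}$ is injective on each $S(\widehat{h}^-)\otimes e^{\alpha}$ for $\alpha\neq 0$, and for $\alpha=0$ its kernel is exactly $\C\cdot 1$, which lies in weight $0$. Since $L_m w$ has $L_0$-weight $-m<0$, the $\alpha=0$ component is automatically zero, and on the $\alpha\neq 0$ components injectivity of $L_{-1}$ gives $L_m w=0$ directly. This injectivity is an easy explicit check (e.g.\ $L_{-1}e^{\alpha}=\alpha(-1)e^{\alpha}\neq 0$, and more generally a leading-term argument on monomials). Replace your bilinear-form paragraph with this observation and the proof is complete.
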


\begin{proof}
It is clear that $[u,v]$ is bilinear.  To prove that this bracket is well-defined, we need to check that $[u,v] \in P^1$ and that if $v \in L_{-1} P^0$ then $[u,v] = 0$.  To see these, note that from the previous lemma,
$$[L_{m}, x_n] = (i(m+1)-m-n-1) x_{m+n}$$
for all $m,n \in \Z$ and $x \in P^i$.  This identity implies that $[L_{m},x_0] = 0$ for all $x \in P^1$.  Then for $u, v \in P^1$ we have
$$L_0 u_0v = u_0 L_0v = u_0v$$
and
$$L_n u_0 v = u_0 L_n v = 0$$ for $n > 0$.  This proves that $[u,v] \in P^1$.
Next suppose that $v \in L_{-1} P^0$.  Then $v_0 = 0$ because
$$Y(L_{-1}v,z) = \frac{d}{dz} Y(v,z)$$
hence $[u,v] = 0$.

Next we prove antisymmetry.  Because $V_L$ is a vertex operator algebra, we have by skew-symmetry the identity
$$Y(u,z)v = e^{z L_{-1}} Y(v,-z) u.$$
In particular, looking at coefficients of $z^{-1}$, we obtain
$$u_0v = \res_z Y(u,z) v = - v_0 u + \sum\limits_{i \geq 1} \frac{(-1)^{i+1}}{i!} L^i_{-1} (v_i u).$$
Therefore, $[u,v] = -[v,u] + L_{-1}x$ for some $x$.  It is clear that $L_{-1} x \in P^1$ but we must show that $x \in P^0$.  First observe that $L_{-1} y = 0$ if and only if $y \in \C \otimes e^0$, which is clearly already a subset of $P^0$, and that each $L_n$ preserves $S(\hat{h}^{-}) \otimes e^{\alpha}$ and that $L_{-1}$ is injective on each of these subspaces as long as $\alpha \neq 0$.  Therefore, we can assume $x \in S(\hat{h}^{-}) \otimes e^{\alpha}$ for some $\alpha \neq 0$.  In this case, observe that
$$L_{-1} x = L_0 L_{-1} x = L_{-1} L_0 x + L_{-1} x$$ hence $L_0 x = 0$.  Then assume by induction that $L_n x = 0$ and we see that
$$0 = L_{n+1} L_{-1} x = L_{-1} L_{n+1} x + (n+2) L_{n} x = L_{-1} L_{n+1} x$$
which proves that $L_{n+1} x = 0$ because $L_{-1}$ is injective, hence $x \in P^0$ by induction.  In particular, we conclude that $[u,v] = -[v,u]$ in the quotient $P^1/L_{-1}P^0$.

Finally we prove Jacobi identity for Lie algebras. Using Corollary \ref{commutatorcorollary}, we have
$$u_0(v_0 w) - v_0(u_0 w) = (u_0 v)_{0} w$$
which is equivalent to
$$[u,[v,w]] - [v,[u,w]] = [[u,v],w].$$
which proves Jacobi identity for Lie algebras.
\end{proof}

\begin{remark}\label{vertexrepresentationinclusion}
Observe that the representation $\pi$ defined in Theorem \ref{vertexrepresentation} in fact defines a Lie algebra homomorphism $\pi \rightarrow P^1$.  This provides a proof for the second part of the theorem because the only elements of $P^1$ corresponding to real roots are multiples of $1 \otimes e^{\alpha}$.
\end{remark}

Going back to the bilinear form $(.,.)$ we find the following.

\begin{proposition}
Suppose, $x,y,z \in \widetilde{P^1}$.  Then
$$([x,y],z) = (x,[\theta(y),z]).$$
Consequently, its radical is an ideal of $\widetilde{P^1}$.
\end{proposition}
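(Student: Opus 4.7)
The plan is to reduce the invariance identity to the adjoint formula $v_0^\ast = -\theta(v)_0$ from Corollary \ref{adjointcorollary}, using skew-symmetry of the Lie bracket on $\widetilde{P^1}$ to swap the roles of the two arguments.

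First I would verify that the form descends from $V_L$ to $\widetilde{P^1}$, i.e.\ that $L_{-1}P^0$ is orthogonal to $P^1$ under $(.,.)$. This is immediate from Lemma \ref{adjointvirasoro}: for $w\in P^0$ and $z\in P^1$ one has $(L_{-1}w,z) = (w,L_1 z) = 0$. In particular, any representative of $x_0y \bmod L_{-1}P^0$ may be used when pairing against an element of $P^1$. I would also note that $\theta$ is a VOA automorphism fixing $\omega$, so $\theta$ commutes with every $L_n$ and hence preserves $P^1$; this guarantees that $[\theta(y),z]$ makes sense as an element of $\widetilde{P^1}$.

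Next, for $x,y,z\in\widetilde{P^1}$, I would write $([x,y],z) = (x_0y,z)$. By the skew-symmetry argument in the proof of Theorem \ref{liealgebraofphysicalstates}, $x_0y = -y_0x + L_{-1}u$ for some $u\in P^0$; pairing with $z\in P^1$ kills the $L_{-1}u$ term by the previous paragraph, leaving $(x_0y,z) = -(y_0x,z)$. Now I would invoke Corollary \ref{adjointcorollary} — applicable because $y\in P^1$ is a lowest-weight vector of weight $1$ — to write $y_0^\ast = -\theta(y)_0$. Therefore
\[
(x_0 y, z) \;=\; -(y_0 x, z) \;=\; -(x, y_0^\ast z) \;=\; (x,\theta(y)_0 z) \;=\; (x,[\theta(y),z]),
\]
which is the invariance claim.

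Finally, for the corollary that the radical $R \subseteq \widetilde{P^1}$ of $(.,.)$ is an ideal, I would argue directly: if $r\in R$ and $y\in\widetilde{P^1}$, then for every $z\in\widetilde{P^1}$,
\[
([r,y],z) \;=\; (r,[\theta(y),z]) \;=\; 0,
\]
since $[\theta(y),z]\in\widetilde{P^1}$ and $r$ pairs trivially with all of $\widetilde{P^1}$. Hence $[r,y]\in R$, and $R$ is an ideal (skew-symmetry takes care of the other side). The only real obstacle is the bookkeeping around the quotient $\widetilde{P^1}$: ensuring both that the form is well-defined on it and that skew-symmetry up to $L_{-1}P^0$ does not spoil the adjoint manipulation. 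Both are handled by the single observation $(L_{-1}P^0,P^1)=0$.
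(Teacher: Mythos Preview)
Your proof is correct and follows essentially the same route as the paper's: both use skew-symmetry to rewrite $([x,y],z)=-(y_0x,z)$ and then invoke Corollary~\ref{adjointcorollary} to pass the operator across the form. You have simply made explicit the bookkeeping the paper leaves implicit---that $(L_{-1}P^0,P^1)=0$ so the form descends to the quotient, and the direct verification that the radical is an ideal.
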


\begin{proof}
Using Corollary \ref{adjointcorollary}, we obtain
$$([x,y],z) = -(y_0 x,z) = (x,\theta(y)_0 z) = (x,[\theta(y),z])$$
which proves the proposition.
\end{proof}

From now on, we may refer to elements in the radical of $(.,.)$ as \emph{null states}.  We wish to provide a basis for $\widetilde{P^1}$.  To do so we first define a family of elements known as transversal states, introduced by Del Giudice, Di Vecchia and Fubini (DDF) in \cite{ddf}.  We will refer to this construction as \emph{DDF construction} and the corresponding operators as \emph{DDF operators}.  The algebra spanned by these operators is known as the \emph{spectrum-generating algebra}.

\begin{lemma} \label{isotropicc}
Suppose $\alpha$ is a non-zero element of $L$.  There exists $c \in h = L \otimes \R$ such that $(c,\alpha) = 1$ and $(c,c) = 0$.
\end{lemma}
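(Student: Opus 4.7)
The plan is to find an isotropic vector $c$ on the affine hyperplane $\{v \in h : (v, \alpha) = 1\}$. I will split into cases based on the value of $(\alpha, \alpha)$, using non-degeneracy of the form and the Lorentzian signature $(d-1,1)$ of $h$ with $d \geq 2$.

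First I would handle the case where $\alpha$ is isotropic, i.e. $(\alpha, \alpha) = 0$. By non-degeneracy there exists $\beta \in h$ with $(\beta, \alpha) = 1$. Then I would set
$$c := \beta - \tfrac{1}{2}(\beta,\beta)\,\alpha.$$
Since $(\alpha,\alpha) = 0$, this gives $(c,\alpha) = (\beta,\alpha) = 1$ and $(c,c) = (\beta,\beta) - (\beta,\beta) = 0$.

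For the case $(\alpha, \alpha) \neq 0$, I would use the orthogonal decomposition $h = \R\alpha \oplus \alpha^\perp$ and look for $c$ of the form
$$c = \tfrac{1}{(\alpha,\alpha)}\,\alpha + u, \qquad u \in \alpha^\perp.$$
This automatically yields $(c,\alpha) = 1$, and the isotropy condition $(c,c) = 0$ reduces to finding $u \in \alpha^\perp$ with $(u,u) = -\tfrac{1}{(\alpha,\alpha)}$. The existence of such $u$ is then a matter of reading off the signature of the restricted form on $\alpha^\perp$: if $(\alpha,\alpha) > 0$ then $\alpha^\perp$ has signature $(d-2,1)$ and contains vectors of arbitrary negative norm, so a suitable $u$ exists after rescaling a negative-norm vector; if $(\alpha,\alpha) < 0$ then $\alpha^\perp$ is positive-definite of dimension $d-1 \geq 1$, and rescaling any nonzero vector produces the required positive norm.

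The argument is pure linear algebra with no substantial obstacle; the only place care is needed is in verifying the signatures of $\alpha^\perp$ in the two sub-cases, which is automatic from the Lorentzian hypothesis and the rank bound $d \geq 2$.
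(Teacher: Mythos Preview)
Your proof is correct. It is, however, a genuinely different route from the paper's. The paper works in explicit coordinates: identifying $L \otimes \R$ with $\R^{d-1,1}$, it simply observes that the standard light-cone vectors $(0,\dots,1,\dots,0,\pm 1)$ (with a $1$ in some slot $l<d$ and $\pm 1$ in the last slot) are isotropic, and that for any nonzero $\alpha$ at least one of them pairs nontrivially with $\alpha$; scaling then gives $c$. There is no case split on $(\alpha,\alpha)$ and no orthogonal decomposition.

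Your argument is coordinate-free and more structural: you split on whether $\alpha$ is isotropic and, when it is not, reduce to finding a vector of prescribed norm in $\alpha^\perp$ via the signature. This has the mild advantage of generalising immediately to any non-degenerate form of indefinite signature (and indeed your isotropic case uses only non-degeneracy). The paper's version is shorter and more explicit, which is convenient here since later constructions (the DDF operators) are quite concrete anyway. Both are elementary; neither is obviously preferable.
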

\begin{proof}
Because $L$ is Lorentzian, $L \otimes \R = \R^{d-1,1}$ where an element $(a_1,...,a_d)$ of $\R^{d-1,1}$ has norm $a_1^2+...+a_{d-1}^2-a_d^2$.  Write $\alpha = (\alpha_1,...,\alpha_d)$.  Because $\alpha \neq 0$, there exists $l$ such that $\alpha_l \neq 0$.  If $\alpha_l \neq \alpha_d$, choose $c = \frac{1}{\alpha_l - \alpha_d} (0,...,1,...,0,1)$.  If all $\alpha_l = \alpha_d$ choose $c = \frac{1}{2 \alpha_d} (0,...,1,...,0,-1)$.
\end{proof}

We now fix $\alpha \in L$.  Write
$$S(\alpha) = \{ v \otimes e^{\alpha} \in V_L\},$$
$$V_i(\alpha) = \{ u=v \otimes e^{\alpha} | L_0u = iu \}$$
and
$$P^i(\alpha) = \{ v \otimes e^{\alpha} \in P^i \}.$$

By the lemma, there exists $c \in h$ such that $(c,\alpha) = 1$ and $(c,c) = 0$.  For $a \in h$ such that $(a,c) = 0$ and $m \in \Z$, consider very special operators
$$A_m^a = \res_z Y(a(-1) e^{mc},z).$$
These operators do not in general act on $V_L$ because $c$ is not an element of $L$.  If however we replace $L$ with $L \otimes \Q$ then $A_m^a$ will map $S(\alpha)$ to $S(\alpha+mc)$.  This will be enough for our purposes, and we do not need any additional structure, including the structure of vertex operator algebras, although we imitate it.  All identities can be proven directly without the need for the Jacobi identity.  We list some properties of these operators.

\begin{proposition}\label{ddfproperties} Suppose $a,a_1,a_2 \in L \otimes \R$ such that $(a,c) = (a_1,c) = (a_2,c) = 0$ and $\alpha \in L$ and $m,m_1,m_2 \in \Z$.  Then
\begin{enumerate}[label=\alph*)]
\item $[A^{a_1}_{m_1},A^{a_2}_{m_2}] = m_1(a_1,a_2) \delta_{m_1+m_2,0} c(0)$
\item $[L_n,A^{a}_m] = 0$
\item $[c(m),A^a_m] = 0$.
\item $A^a_m e^\alpha = 0$ for all $m > 0$
\item $(A^a_m)^* = A^a_{-m}$
\end{enumerate}
\end{proposition}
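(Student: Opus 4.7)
The plan is to reduce each identity to a short residue computation by exploiting $(c,c)=0$ and $(a,c)=0$. The key observation is that $v = a(-1) e^{mc}$ is primary of weight one: indeed $L_0 v = (1 + m^2(c,c)/2) v = v$, $L_1 v = (a(0) + m(a,c)) e^{mc} = 0$, and $L_n v = 0$ for $n \geq 2$ by degree. This immediately gives (b) via Lemma \ref{highestweightcommutation} with $h = 1$ (applied to $v_0 = A^a_m$), and it gives (e) via Corollary \ref{adjointcorollary} together with $\theta(a(-1) e^{mc}) = -a(-1) e^{-mc}$, so that $(A^a_m)^* = -\theta(v)_0 = A^a_{-m}$.

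For (c), I would apply the commutator formula \eqref{commutatorformula} with $u = c(-1)$ and $v = a(-1) e^{nc}$. Since $[c(k), a(-1)] = k(c,a) \delta_{k,1} = 0$ and $c(k) e^{nc} = 0$ for every $k \geq 0$ (using $(c,c) = 0$), we have $Y(c(-1), z_0)(a(-1) e^{nc}) = a(-1) c(z_0) e^{nc} \in V_L[[z_0]]$; the $\res_{z_0}$ on the right-hand side of \eqref{commutatorformula} therefore vanishes, showing that $c(z_1)$ and $Y(a(-1) e^{nc}, z_2)$ commute as $z$-series, and hence $[c(m), A^a_n] = 0$. For (d), I would use $Y(a(-1) e^{mc}, z) = :a(z) Y(e^{mc}, z):$ together with $a^+(z) e^\alpha = 0$ to obtain $Y(a(-1) e^{mc}, z) e^\alpha = \epsilon(mc, \alpha) z^{m(c, \alpha)} \exp\bigl(\sum_{n > 0} \tfrac{m c(-n)}{n} z^n\bigr) a^-(z) e^{\alpha + mc}$; since $(c, \alpha) = 1$, the lowest power of $z$ in this expression is $z^{m-1}$, so for $m \geq 1$ there is no $z^{-1}$ term and the residue vanishes.

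Part (a) is the main obstacle, since it is the only part that requires tracking a genuine residue inside a product of vertex operators. Applying Corollary \ref{commutatorcorollary} (in the form $[u_0, v_0] = (u_0 v)_0$, obtained by taking residues) with $u = a_1(-1) e^{m_1 c}$ and $v = a_2(-1) e^{m_2 c}$, I would compute $u_0 v = \res_z :a_1(z) Y(e^{m_1 c}, z): \cdot a_2(-1) e^{m_2 c}$. Since $(a_1, c) = 0$, the field $a_1(z)$ commutes with $Y(e^{m_1 c}, z)$; splitting $:a_1(z) Y(e^{m_1 c}, z): = a_1^-(z) Y(e^{m_1 c}, z) + Y(e^{m_1 c}, z) a_1^+(z)$ and acting on $a_2(-1) e^{m_2 c}$, the $a_1^+$ piece extracts $(a_1, a_2) z^{-2} e^{m_2 c}$, while the $a_1^-$ piece contributes nothing to the residue: the only candidate $z^{-1}$-term comes from $a_1(0) \cdot e^{(m_1 + m_2) c}$, which vanishes since $(a_1, c) = 0$. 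Extracting the $z^{-1}$ coefficient yields $u_0 v = m_1 (a_1, a_2) \epsilon(m_1 c, m_2 c) c(-1) e^{(m_1 + m_2) c}$, and hence $(u_0 v)_0 = m_1 (a_1, a_2) \epsilon(m_1 c, m_2 c) A^c_{m_1 + m_2}$.

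To match the stated Kronecker delta, I observe that for $m_1 + m_2 = 0$ we have $\epsilon(m_1 c, -m_1 c) = (-1)^{(m_1 c, m_1 c)/2} = 1$ and $A^c_0 = \res_z c(z) = c(0)$, while for $m_1 + m_2 = k \neq 0$ the identity $L_{-1} e^{kc} = k c(-1) e^{kc}$ gives $c(-1) e^{kc} = k^{-1} L_{-1} e^{kc}$, so $Y(c(-1) e^{kc}, z) = k^{-1} \tfrac{d}{dz} Y(e^{kc}, z)$ has vanishing residue and $A^c_k = 0$. This completes the verification.
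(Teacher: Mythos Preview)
Your arguments for (b), (d), and (e) are correct and match the paper's: the paper likewise reduces (b) and (e) to the observation that $a(-1)e^{mc}$ is primary of weight~$1$, invoking Lemma~\ref{highestweightcommutation} and Corollary~\ref{adjointcorollary}, and dismisses (d) as ``a simple computation'' which is exactly what you carry out.

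For part (a), your route is genuinely different. The paper computes the bracket directly at the level of field expressions: because $(a_i,c)=(c,c)=0$, the operators $Y(e^{m_ic},z_i)$ commute with each other and with both $a_j(z_k)$, so
\[
[A^{a_1}_{m_1},A^{a_2}_{m_2}] \;=\; \res_{z_1}\res_{z_2}\,[a_1(z_1),a_2(z_2)]\,Y(e^{m_1c},z_1)Y(e^{m_2c},z_2),
\]
and then Lemma~\ref{commutatorbosonicfields} plus an integration by parts against $\partial_{z_1}\delta(z_1/z_2)$ finishes. You instead pass through the VOA identity $[u_0,v_0]=(u_0v)_0$, compute $u_0v = m_1(a_1,a_2)\,\epsilon(m_1c,m_2c)\,c(-1)e^{(m_1+m_2)c}$, and then kill $A^c_k$ for $k\neq 0$ via $c(-1)e^{kc}=k^{-1}L_{-1}e^{kc}$. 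Both arguments are correct; yours is arguably tidier and gives a conceptual reason for the $\delta_{m_1+m_2,0}$. The trade-off is that the paper explicitly warns, just before the Proposition, that $c\notin L$ and hence ``we do not need any additional structure, including the structure of vertex operator algebras\ldots\ All identities can be proven directly without the need for the Jacobi identity.'' Your use of Corollary~\ref{commutatorcorollary} (and of \eqref{commutatorformula} in part (c)) presupposes the Jacobi identity in the enlarged Fock space over $L\otimes\Q$; this is true but requires a word of justification, which the paper's direct computation deliberately sidesteps. Along the same lines, the cocycle $\epsilon(m_1c,m_2c)$ is not literally defined in the paper, so you are implicitly extending it (harmlessly, since $(c,c)=0$).

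For (c), your argument via \eqref{commutatorformula} works, but the paper's ``obvious'' is more elementary: since $(c,a)=(c,c)=0$, the mode $c(m)$ commutes with every factor of $:a(z)Y(e^{nc},z):$ by inspection of the Heisenberg relations, with no appeal to the Jacobi identity needed.
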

\begin{proof}
To prove part a), we compute the bracket directly without the need for normal ordering, using Lemma \ref{commutatorbosonicfields}.  Indeed,
\begin{align*}
[A_{m_1}^{a_1},A_{m_2}^{a_2}] &= \res_{z_1} \res_{z_2} [a_1(z_1),a_2(z_2)] Y(e^{m_1 c},z_1) Y(e^{m_2 c},z_2) \\
&= - \res_{z_1} \res_{z_2} (a_1,a_2) z_2^{-1} \frac{\partial}{\partial z_1} \left( \delta(z_1/z_2)\right) Y(e^{m_1 c},z_1) Y(e^{m_2 c},z_2) \\
&= \res_{z_1} \res_{z_2} (a_1,a_2) z_2^{-1} \delta(z_1/z_2) \frac{\partial}{\partial z_1} \left( Y(e^{m_1 c},z_1) Y(e^{m_2 c},z_2) \right) \\
&= \res_{z} (a_1,a_2) z^{-1} m_1 c(z) Y(e^{(m_1 + m_2) c},z)
\end{align*}
which is $0$ unless $m_1 + m_2 = 0$, and then residues result in
$$[A_{m_1}^{a_1},A_{m_2}^{a_2}] = m_1 (a_1,a_2) \delta_{m_1+m_2} c(0),$$
as desired.

To prove part b), it is enough to observe that $a(-1) e^{mc}$ is a lowest weight vector for the Virasoro Lie algebra of weight $1$ then apply Lemma \ref{highestweightcommutation}.  

Part c) is obvious and part d) follows from a simple computation.  Part e) follows from the fact that $a(-1) e^{mc} \in P^1$ and then use Corollary \ref{adjointcorollary}.
\end{proof}

Consider the subspace $T_M(\alpha,c)$ of $S(\alpha)$ spanned by elements $$A_{-m_1}^{a_1} A_{-m_2}^{a_2} ... A_{-m_k}^{a_k} e^{\alpha + M c}$$ for $a_1,...,a_k \in h$ satisfying $(a_i,c) = 0$ and $m_1 + m_2 + ... + m_k = M$. If $M$ is chosen so that $(\alpha,\alpha) = 2 - 2M$, then we write $T_M(\alpha,c) = T(\alpha,c)$.  Observe that $T(\alpha,c)$ is a subspace of $P^1(\alpha)$.  

Let $G(\alpha,c)$ be the subspace of $S(\alpha)$ spanned by elements 
\begin{equation} \label{Galpha}
L_{-1}^{\lambda_1} ... L_{-n}^{\lambda_n}  c(-1)^{\mu_1}... c(-m)^{\mu_m} v
\end{equation}
such that $v \in T_M(\alpha,c)$ for some $M$ and $\lambda_i, \mu_i \in \Z_+, \sum \lambda_i + \mu_i > 0$. Then we have the following lemma, as a first step in the proof of the no-ghost theorem.

\begin{lemma} \label{independentlemma} Suppose $\alpha \in L$.  Then
\begin{enumerate}[label=\alph*)]
\item The space $T(\alpha,c)$ is positive definite.
\item $S(\alpha)$ is generated by $T(\alpha,c)$ and $G(\alpha,c)$ and the generators of $T(\alpha,c)$ and $G(\alpha,c)$ are all linearly independent.
\end{enumerate}

\end{lemma}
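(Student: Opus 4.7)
The plan is to establish (a) by realizing $T(\alpha,c)$ as a Heisenberg Fock module with positive-definite inner product, and (b) via a dimension count coupled with an upper-triangular change of basis in a light-cone decomposition of $h$.

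For (a), observe that on any state in $T(\alpha,c)$ the operator $c(0)$ acts as the scalar $(c,\alpha+Mc)=(c,\alpha)=1$, so by Proposition \ref{ddfproperties}(a) the operators $\{A^a_m : a\in c^\perp,\ m\in\Z\}$ satisfy canonical Heisenberg commutation relations $[A^a_m,A^b_{-n}]=m(a,b)\delta_{mn}$. Together with parts (d) and (e) of the same proposition, this exhibits $T(\alpha,c)$ as a standard Heisenberg Fock module on the vacuum $e^{\alpha+Mc}$, with $A^a_{m}$ ($m>0$) acting as annihilation operators and $A^a_{-m}$ ($m>0$) as creation operators, adjoint to one another. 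I would first check that $A^c_m=0$ for $m\neq 0$: indeed $L_{-1}e^{mc}=mc(-1)e^{mc}$, so $Y(c(-1)e^{mc},z)=m^{-1}\frac{d}{dz}Y(e^{mc},z)$, and the residue of a derivative vanishes. Hence the oscillator algebra effectively acts through the positive-definite quotient $c^\perp/\C c$, and an elementary Fock-space calculation (commute every $A^a_{m}$ with $m>0$ past the creation operators to hit the vacuum) shows that the Gram matrix in the monomial basis is block-diagonal with positive-definite blocks, proving positive-definiteness of the natural contragredient pairing on $T(\alpha,c)$.

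For (b), the dimension count works as follows. By Proposition \ref{voadimension} the generating function of $S(\alpha)$ by weight equals $q^{(\alpha,\alpha)/2}\prod_{n\geq 1}(1-q^n)^{-d}$. By Proposition \ref{ddfproperties}(b) the operators $A^a_{-m}$ commute with $L_0$, so summing over $M\geq 0$ gives a generating function $q^{(\alpha,\alpha)/2}\prod_{n\geq 1}(1-q^n)^{-(d-2)}$ for $\bigoplus_M T_M(\alpha,c)$; adjoining the Virasoro descendants $L_{-n}$ contributes an extra factor $\prod_{n\geq 1}(1-q^n)^{-1}$, and adjoining the $c(-n)$'s contributes another $\prod_{n\geq 1}(1-q^n)^{-1}$, giving exactly $q^{(\alpha,\alpha)/2}\prod_{n\geq 1}(1-q^n)^{-d}$. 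So the claimed generating set has the correct cardinality.

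To upgrade this to linear independence, I would adopt a light-cone basis of $h\otimes\R$: choose a null $c'\in h\otimes\R$ with $(c,c')=1$ and an orthonormal basis $\{a_1,\ldots,a_{d-2}\}$ of $(\C c\oplus\C c')^\perp$. Two triangularity relations then need to be verified by a direct normal-ordering calculation: first, acting on $e^{\alpha+Mc}$ the operator $A^{a_i}_{-m}$ equals $a_i(-m)$ plus oscillator terms involving only the $c(-k)$'s, multiplying the shifted vacuum $e^{\alpha+(M-m)c}$; second, $L_{-n}$ applied to any PBW monomial in $\{c(-k),c'(-k),a_j(-k)\}$'s over $e^{\alpha+Mc}$ contributes a $c'(-n)$-leading term plus strictly lower terms in an appropriate weight-and-length filtration of the PBW basis of $S(\alpha)$. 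Since $c(-k)$ is already in the target basis, this expresses each monomial in $\{A^{a_i}_{-m},L_{-n},c(-k)\}$ applied to the appropriate $e^{\alpha+Mc}$ as the corresponding PBW monomial in $\{a_j(-m),c'(-n),c(-k)\}$ applied to $e^{\alpha+Mc}$, plus lower terms in the filtration; this is an upper-triangular change of basis. Combined with the matching dimensions, this proves simultaneously that the generators of $T(\alpha,c)$ and $G(\alpha,c)$ span $S(\alpha)$ and that they are linearly independent.

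The main obstacle is the second triangularity relation: since $L_{-n}$ does not commute with $c(-k)$ and has nontrivial quadratic tails, verifying that it really is triangular requires a careful induction. The key observation is that the leading contribution of $L_{-n}$ on $e^{\alpha+Mc}$ comes from the cross-term $c'(0)c(-n)$ via $(c,\alpha+Mc)=1$, yielding the $c'(-n)$-component, while all remaining terms of $L_{-n}$ involve either more oscillators or oscillators of strictly smaller absolute index. Once this is set up, the argument reduces to normal-ordering bookkeeping.
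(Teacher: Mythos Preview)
Your argument for (a) is essentially the paper's: positive-definiteness follows immediately from parts (a), (d), (e) of Proposition~\ref{ddfproperties}, which identify $T(\alpha,c)$ with a Heisenberg Fock module built on the positive-definite space $c^{\perp}/\C c$.

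For (b), your approach is correct but more laborious than the paper's. Both arguments use the same dimension count, and both prove linear independence by a leading-term argument, but the paper organizes it in two stages rather than one global triangularity. First, using Proposition~\ref{ddfproperties}(a)--(d) (in particular that the $A^a_m$ commute with every $L_{-i}$ and $c(-j)$ and kill the vacuum for $m>0$), any hypothetical relation can be hit by annihilation operators $A^a_{m}$ to strip away the transverse part entirely; this reduces the problem to showing that the elements $L_{-1}^{\lambda_1}\cdots L_{-n}^{\lambda_n}\,c(-1)^{\mu_1}\cdots c(-m)^{\mu_m}\,e^{\alpha'}$ (with $\alpha'=\alpha+Mc$) are linearly independent. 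For that, the paper simply observes that since $L_{-i}e^{\alpha'}=\alpha'(-i)e^{\alpha'}$, each such element contains the distinguished monomial $\alpha'(-1)^{\lambda_1}\cdots\alpha'(-n)^{\lambda_n}c(-1)^{\mu_1}\cdots c(-m)^{\mu_m}$, and these monomials are visibly distinct because $\alpha'$ and $c$ are linearly independent in $h$. No auxiliary null vector $c'$ and no filtration on the full PBW basis are needed.

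Your single-stage light-cone triangularity works too, but it forces you to control the action of $L_{-n}$ on arbitrary monomials rather than just on the vacuum, which is exactly the ``main obstacle'' you flag. (Incidentally, the term in $L_{-n}$ that produces the $c'(-n)$ leading piece is $c'(-n)c(0)$, via $c(0)e^{\alpha+Mc}=e^{\alpha+Mc}$, not $c'(0)c(-n)$ as you wrote.) The paper's two-step reduction sidesteps this entirely: once the transverse operators are removed by commutation, only the pure-vacuum action of $L_{-i}$ is needed, and that is a one-line computation.
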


\begin{proof}
The proof is based on \cite{noghost}, to which we add some details.  Part a) follows from Proposition \ref{ddfproperties} part a) and e).  We show that $S(\alpha) = T(\alpha,c) + G(\alpha,c)$.  It is enough to check that the generators of $T(\alpha,c)$ and $G(\alpha,c)$ are linearly independent, because then they will form a basis by counting dimension.

Suppose $x = \sum\limits_{i \in I} x_i$ is a linear combination in the generators in equation (\ref{Galpha}).  By Proposition \ref{ddfproperties} a), b), c) and d), any term not containing an operator $A^a_m$ may be canceled by applying $A^a_{-m}$.  Repeating this process as many times as necessary, we see that generators $$A_{-m_1}^{a_1} A_{-m_2}^{a_2} ... A_{-m_k}^{a_k} e^{\alpha + M c}$$
are linearly independent.  Therefore, we can assume that all $x_i$ terms of the form $$L_{-1}^{\lambda_1} ... L_{-n}^{\lambda_n} c(-1)^{\mu_1}... c(-m)^{\mu_m} v$$ share the same $v \in T(\alpha,c)$.  Equivalently, we can assume $v = 1 \otimes e^{\alpha+Mc}$ because $A^a_m$ commutes with all $L_{-i}$'s and $c(-i)$'s.  Therefore, we are reduced to proving that elements
$$x_i = L_{-1}^{\lambda_1} ... L_{-n}^{\lambda_n} c(-1)^{\mu_1}... c(-m)^{\mu_m} e^{\alpha+Mc}$$
are linearly independent.  Write $\alpha' = \alpha+Mc$.  Obviously, $\alpha'$ and $c$ are linearly independent hence so are different monomials in $\alpha'(-i),c(-j)$.  Note that $L(-i) e^{\alpha'} = \alpha'(-i) e^{\alpha'}$.  Then $x_i$ will contain a monomial of the form
$$\alpha'(-1)^{\lambda_1}...\alpha'(-n)^{\lambda_n} c(-1)^{\mu_1}...c(-m)^{\mu_m}$$
which cannot appear in any other $x_i$.  Therefore they are linearly independent.
\end{proof}

This Lemma also offers us an alternate description of $T(\alpha,c)$.
\begin{corollary} \label{transervedefinitionalternative} 
Suppose $v \in P^1(\alpha)$. Then $v \in T(\alpha,c)$ if and only if $c(n)v = 0$ for all $n > 0$.
\end{corollary}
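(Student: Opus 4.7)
The forward direction is immediate from the DDF structure. For $v = A^{a_1}_{-m_1} \cdots A^{a_k}_{-m_k} e^{\alpha+Mc} \in T(\alpha,c)$ and any $n > 0$, Proposition~\ref{ddfproperties}(c) lets me commute $c(n)$ past each DDF operator, reducing $c(n)v$ to $c(n) e^{\alpha+Mc}$. This vanishes because $c(n)$ with $n>0$ acts on $S(\widehat{h}^-)$ as the derivation $n\,\partial/\partial c(-n)$ and $1 \otimes e^{\alpha+Mc}$ contains no $c(-n)$ factor.

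For the reverse direction, suppose $v \in P^1(\alpha)$ satisfies $c(n)v = 0$ for all $n>0$. By Lemma~\ref{independentlemma} I write $v = t + g$ with $t \in T(\alpha,c)$ and $g \in G(\alpha,c)$. Applying the forward direction to $t$ gives $c(n)t = 0$ for $n>0$, so $g = v - t$ still satisfies $L_n g = 0$ and $c(n)g = 0$ for all $n>0$. The task reduces to proving $g = 0$.

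My plan is to show that $g$ is orthogonal to all of $S(\alpha) = T(\alpha,c) \oplus G(\alpha,c)$ with respect to $(\cdot,\cdot)$, and then invoke non-degeneracy of this form on $S(\alpha)$, which follows from the non-degeneracy of the inner product on $h$ (regardless of signature). For orthogonality to $G(\alpha,c)$: every generator in Lemma~\ref{independentlemma} has the form $L_{-i_1}^{\lambda_1} \cdots c(-j_s)^{\mu_s} w$ with at least one negative-index factor. Using $L_n^* = L_{-n}$ (Lemma~\ref{adjointvirasoro}) and the analogous $c(n)^* = c(-n)$, I iteratively move such factors across via adjoints, reducing each inner product to a pairing containing $L_{n_i} g$ or $c(n_i) g$ with $n_i > 0$, which vanishes by hypothesis. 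For orthogonality to $T(\alpha,c)$: expanding $g = \sum_I c_I \xi_I$ in the $G(\alpha,c)$-basis and taking adjoints reverses the order of operators, so each $\xi_I^*$ ends on the right with a positive-mode $L_i$ or $c(j)$ with $i, j \geq 1$. Applied to any $t \in T(\alpha,c)$, that trailing operator annihilates $t$: either $L_i t = 0$ since $t \in P^1(\alpha)$, or $c(j) t = 0$ by the forward direction applied to $t$. Hence $\xi_I^* t = 0$ for every $I$, and so $(g,t)=0$.

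The main conceptual point, and the only mild subtlety I anticipate, is that the forward direction is used twice: once to conclude that $t$ itself lies in the kernel of all $c(n)$ with $n>0$, so that $g$ inherits both annihilation conditions; and again to make the ``adjoint applied to $t$'' step symmetric by killing $t$ via either an $L_i$ or a $c(j)$ with $i,j \geq 1$. The remaining work is routine bookkeeping of Virasoro and Heisenberg commutators when reordering the $L_{-i}$'s and $c(-j)$'s. Combining $g \perp G(\alpha,c)$ and $g \perp T(\alpha,c)$ yields $g \perp S(\alpha)$, so non-degeneracy forces $g = 0$ and hence $v = t \in T(\alpha,c)$, completing the proof.
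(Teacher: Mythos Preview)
Your proof is correct but follows a genuinely different route from the paper's. The paper argues directly and combinatorially: writing $v$ in the basis of Lemma~\ref{independentlemma}, it applies strings of $c(n)$-operators (using $[c(n),L_{-m}] = n\,c(n-m)$ and $(c,c)=0$) to force all the $L_{-i}$-exponents to vanish, and then applies $L_m$-operators (using $[L_m,c(-j)] = j\,c(m-j)$ and crucially $(c,\alpha)=1$ so that $c(0)$ acts nontrivially) to force the $c(-j)$-exponents to vanish. No bilinear form enters.

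Your argument instead leverages the contravariant form: you show $g \perp G(\alpha,c)$ by passing one leftmost $L_{-i}$ or $c(-j)$ to the $g$-side via adjoints, and $g \perp T(\alpha,c)$ by passing all operators from $g$'s $G$-expansion to the $T$-side, then invoke non-degeneracy of $(\cdot,\cdot)$ on $S(\alpha)$. This is clean and conceptual, and the non-degeneracy claim is valid (it follows from non-degeneracy of the form on $h$, independent of signature, even though the paper only states the positive-definite case in Proposition~\ref{formpositivedefinite}). The trade-off is that you import an extra ingredient, the bilinear form, that the paper's bare-hands argument avoids; in exchange you sidestep the somewhat delicate ``maximal $\lambda$, minimal $\mu$'' bookkeeping. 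One minor remark: for $g \perp G(\alpha,c)$ a single adjoint step suffices rather than an iteration, since moving just the leftmost negative-mode factor already kills the pairing.
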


\begin{proof} \label{alternatetransverse}
By the lemma, it is clear that if $v \in T(\alpha,c)$ then $c(n)v = 0$ for all $n > 0$ by Proposition \ref{ddfproperties} part c).  Conversely, suppose $c(n)v = 0$ for all $n > 0$.  Without loss of generality, assume $v \neq 0$.  Write $v$ as a linear combination of terms of the form
$$L_{-1}^{\lambda_1} ... L_{-n}^{\lambda_n}  c(-1)^{\mu_1}... c(-m)^{\mu_m} v_i.$$
By applying $c(n)^{\lambda_n}...c(1)^{\lambda_1}$ to this expression and organizing the terms in such a way that the exponents $\lambda_1,...,\lambda_n$ are maximal while the exponents $\mu_1,...,\mu_m$ are minimal, we can see that only the terms containing the sequence $L_{-1}^{\lambda_1} ... L_{-n}^{\lambda_n}$ will have the term $c(-1)^{\mu_1}...c(-m)^{\mu_m} v_i$.  Therefore we can have no such term hence all $\lambda_1 = ... = \lambda_n = 0$.

To see why all $\mu_1 = ... = \mu_m = 0$ apply $L_m$ and use the fact that $(c,\alpha) = 1$.
\end{proof}

\begin{lemma}
For fixed $\alpha,c$, let $N^s$ denote all elements of $G(\alpha,c)$ of weight $s$ such that any of the $\lambda_i$ is non-zero.  Then the operators $L_1, \widetilde{L}_2 = L_2 + \frac{3}{2} L_1^2$ are well-defined maps $N^1 \to N^0$ and $N^1 \to N^{-1}$ if and only if $d = 26$.
\end{lemma}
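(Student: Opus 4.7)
The plan is to reduce the question to the decomposition of $S(\alpha)$ guaranteed by Lemma~\ref{independentlemma}. Write $G(\alpha,c) = N \oplus M$, where $M$ is spanned by those generators with all $\lambda_i = 0$ (i.e., built only from $c(-j)$ operators applied to a DDF state); then $S(\alpha) = T(\alpha,c) \oplus M \oplus N$, and the assertions $L_1(N^1) \subseteq N^0$ and $\widetilde{L}_2(N^1) \subseteq N^{-1}$ amount to saying the outputs carry no $T$- or $M$-component. The strategy is to commute $L_1$ and $\widetilde{L}_2$ rightward across a generic string $L_{-n_1}^{\lambda_1} \cdots c(-m_k)^{\mu_k}$ using $[L_n, \alpha(m)] = -m\,\alpha(n+m)$, then apply the facts that DDF states are Virasoro lowest weight vectors and that $[L_n, A^a_m] = 0$ from Proposition~\ref{ddfproperties}. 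The key observation is that the central extension can enter only via $[L_2, L_{-2}] = 4L_0 + \tfrac{c}{2}$.

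For $L_1$, no central term can ever appear in the relevant commutators, and a short check on each type of generator confirms $L_1(N^1) \subseteq N^0$ with no restriction on $d$.

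For $\widetilde{L}_2$, a direct calculation yields
\[
[\widetilde{L}_2, L_{-2}] = 13 L_0 + \tfrac{c}{2} + 9 L_{-1} L_1.
\]
Applied to a DDF vector $v \in T_M(\alpha,c)$ of weight $-1$ (which exists once $(\alpha,\alpha)$ is sufficiently negative), one obtains
\[
\widetilde{L}_2(L_{-2} v) = \bigl(\tfrac{c}{2} - 13\bigr) v,
\]
since $\widetilde{L}_2 v = 0$ and $L_1 v = 0$. But $L_{-2} v \in N^1$, while the right-hand side lies in $T(\alpha,c)$, so $\widetilde{L}_2$ fails to be well-defined into $N^{-1}$ unless $c = d = 26$. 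This gives the "only if" direction.

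For the converse, at $d = 26$ this single obstruction vanishes and all other contributions are harmless. Inducting on the number of $L_{-i}$ factors in a generator: two or more guarantee that an $L_{-i}$ factor survives commutation; a single $L_{-n}$ with $n \geq 3$ produces $[\widetilde{L}_2, L_{-n}]$ containing only lower $L_{-i}$'s and no central piece; and the single $L_{-2}$ case is precisely the one controlled by the $(\tfrac{c}{2} - 13)$ cancellation. The main obstacle is the combinatorial bookkeeping when $\widetilde{L}_2$ is commuted through mixed strings of $L_{-i}$'s and $c(-j)$'s, but this becomes routine once the unique source of the central-charge obstruction is isolated to the $[\widetilde{L}_2, L_{-2}]$ term acting on weight $-1$ DDF states.
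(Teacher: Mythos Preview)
Your identification of the obstruction is correct and matches the paper: the commutator $[\widetilde{L}_2,L_{-2}] = 13L_0 + \tfrac{c}{2} + 9L_{-1}L_1$ applied to a weight $-1$ DDF vector produces $(\tfrac{c}{2}-13)v \in T(\alpha,c)$, which escapes $N^{-1}$ unless $d=26$. This gives the ``only if'' direction cleanly.

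The gap is in your ``if'' direction. First, your case analysis for a single $L_{-n}$ factor covers $n=2$ and $n\geq 3$ but omits $n=1$; there one has $[\widetilde{L}_2,L_{-1}] = 6(1+L_0)L_1$, which is not of the form $L_{-j}(\cdots)$ and only kills the stray term because the argument has weight $0$. Second, your inductive step (``two or more factors guarantee an $L_{-i}$ survives'') is not justified as stated: commuting $\widetilde{L}_2$ past the outermost $L_{-n_1}$ leaves $[\widetilde{L}_2,L_{-n_1}]$ acting on an element of a \emph{different} weight, so the hypothesis about $N^1\to N^{-1}$ does not apply to it, and for $n_1=1$ the commutator again has no leading $L_{-j}$. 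The paper avoids this bookkeeping entirely by observing that $L_{-1},L_{-2}$ generate all $L_{-n}$ in the enveloping algebra, so $N^1 = L_{-1}V_0(\alpha) + L_{-2}V_{-1}(\alpha)$; one then checks directly that $L_1$ and $\widetilde{L}_2$ send each of $L_{-1}x_1$ (with $L_0x_1=0$) and $L_{-2}x_2$ (with $L_0x_2=-x_2$) into $N$, the only non-$N$ remainder being $(-13+\tfrac{d}{2})x_2$. Replacing your induction with this two-case reduction closes the gap.
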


\begin{proof}
Write $y = L_{-n} x$ such that $x \in V_L$ and $L_0 y = sy$.  Suppose $y = L_{-n} x \in N^1$.  Then we can write
$$y = L_{-1} x_1 + L_{-2} x_2$$
with $L_0 x_1 = 0$ and $L_0 x_2 = -x_2$.  This decomposition is justified because $L_{-n}$ can be written as a polynomial in $L_{-1}$ and $L_{-2}$ in the universal enveloping algebra of the Virasoro Lie algebra.  Notice that
\begin{align*}
L_1 (L_{-1} x_1) &= L_{-1} L_1 x_1 + 2 L_0 x_1 \\
&= L_{-1} L_1 x_1 \in N^0\\ 
L_2 (L_{-1} x_1) &= L_{-1} L_2 L_{-1} x_1 + 3 L_1 x_1 \\
L_1^2 (L_{-1} x_1) &= L_1 (L_{-1} L_1 x_1) \\
&= L_{-1} L_1^2 x_1 + 2 L_0 L_1 x_1 \\
&= L_{-1} (...) - 2 L_1 x_1
\end{align*}
It is clear from these computations that $L_2 + \frac{3}{2}L_1^2(L_{-1} x) \in N^{-1}$.  Observe that this is the reason why we require the operator $\widetilde{L}_{2}$.  Indeed, the operator $L_2$ is not a map $N^1 \to N^{-1}$ in general.  Next, we also compute
\begin{align*}
L_1 (L_{-2} x_2) &= L_{-2} L_1 x_2 + 3 L_{-1} x_2 \in N^{0}\\ 
L_2 (L_{-2} x_2) &= L_{-2} L_2 x_2 + 4 L_0 x_2 + \frac{d}{2} x_2 \\
&= L_{-2} (...) - 4 x_2 + \frac{d}{2} x_2 \\
L_1^2 (L_{-2} x_2) &= L_1 (L_{-2} L_1 x_2 + 3 L_{-1} x_2) \\
&= L_{-2} L_1^2 x_2 + 3 L_{-1} L_1 x_2 + 3 L_{-1} L_1 x_2 + 6 L_0 x_2 \\
&= L_{-1}(...) + L_{-2} (...) - 6 x_2 
\end{align*}
Then $(L_2 + \frac{3}{2}L_1^2)(L_{-2} x_2) \in N^{-1}$ if and only if $-13 + \frac{d}{2} = 0$ or equivalently, $d = 26$.
\end{proof}

We can now prove no-ghost theorem which first appeared in \cite{noghost}, but first used in this context in \cite{frenkel}, and then in \cite{borcherds1}.

\begin{theorem} \label{noghost}
Suppose $u \in P^1(\alpha)$ and that $\dim L = 26$.  Then $u = v + w$ where $v \in T(\alpha,c)$ and $w$ lies in $P^1$ and is a linear combination of terms $L_{-n} x$ for some $x \in V_L$.
\end{theorem}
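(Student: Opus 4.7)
The plan is to reduce the theorem to a statement purely about $G(\alpha,c)$ and then exploit the critical-dimension computation from the preceding lemma. By Lemma \ref{independentlemma}, we may write $u = v + w$ uniquely with $v \in T(\alpha,c)$ and $w \in G(\alpha,c)$; since $T(\alpha,c) \subseteq P^1$ and $u \in P^1$, we have $w \in P^1 \cap G(\alpha,c)$. It therefore suffices to show that every $w \in P^1 \cap G(\alpha,c)$ is a finite $\C$-linear combination of terms of the form $L_{-n}x$ with $x \in V_L$.

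To prove this reduction, I would filter $G(\alpha,c)$ by the total $c$-degree of its generators, setting $G^{\leq k}$ to be the span of generators with $\sum \mu_i \leq k$, and argue by induction on $k$. The base case $k = 0$ is immediate: generators then have the form $L_{-1}^{\lambda_1}\cdots L_{-n}^{\lambda_n} v'$ with $\sum \lambda_i > 0$ and $v' \in T$, so isolating the leftmost Virasoro operator writes the generator as $L_{-n_1}x$. For the inductive step, consider $w \in P^1 \cap G^{\leq k}$ with $k \geq 1$, and let $w^{(k)}$ be its top-$c$-degree component, supported on generators $L_{-I}c_{-J}v'$ with $\sum \mu_i = k$. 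I would construct explicit $x_i \in V_L$ such that $w - \sum L_{-n_i} x_i$ lies in $G^{\leq k-1}$ while remaining in $P^1$, then invoke the induction hypothesis.

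The crucial point is the solvability of the linear system defining the $x_i$. Since $L_1$ and $\widetilde{L}_2 = L_2 + \tfrac{3}{2} L_1^2$ generate (together with the Virasoro relations) the positive half of the Virasoro algebra, the constraint $w \in P^1$ is equivalent to $L_1 w = \widetilde{L}_2 w = 0$. The preceding lemma asserts that $L_1$ and $\widetilde{L}_2$ map $N^1 \to N^0$ and $N^1 \to N^{-1}$ respectively, with the latter holding precisely when $d = 26$. This closure of the pure Virasoro-descendant subspace $N^\ast$ under $L_1$ and $\widetilde{L}_2$ means that when we attempt to write the top-$c$-degree part $w^{(k)}$ as the image of a Virasoro operator $L_{-n_i}$, the consistency conditions reduce to equations within $N^\ast$ rather than picking up residual non-$L$-descendant terms, and so they admit a solution for the $x_i$.

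The main obstacle will be carrying out the combinatorial bookkeeping in the inductive step: one must track how $L_{-n}$ acts on mixed expressions $L_{-I}c_{-J}v'$, identify which pieces feed into $G^{\leq k-1}$, and ensure the correction $\sum L_{-n_i} x_i$ does not disturb the weight-one and Virasoro-annihilation conditions. The role of $d = 26$ is precisely to make the quadratic central-charge contribution (the $-13 + d/2$ term from the preceding lemma) vanish, which is exactly the obstruction that would otherwise prevent the systematic trade-in of $c(-k)$ operators for $L_{-k}$-descendants.
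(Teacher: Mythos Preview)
Your initial reduction is fine: writing $u = v + w$ with $v \in T(\alpha,c)$ and $w$ in the span of generators carrying at least one $L_{-i}$ or $c(-j)$, and noting $T(\alpha,c)\subseteq P^1$ so that $w\in P^1$, is correct. The gap is in the inductive step. You assert that for $w\in P^1\cap G^{\le k}$ one can find $x_i$ with $w-\sum L_{-n_i}x_i\in P^1\cap G^{\le k-1}$, but you neither construct the $x_i$ nor explain why such a choice exists. Subtracting an $L$-descendant does not in general leave you in $P^1$, so to apply the induction hypothesis you would need $\sum L_{-n_i}x_i$ itself to be physical; nothing you write forces this. More seriously, the preceding lemma concerns $N^1$, the span of basis elements with at least one $\lambda_i\ne 0$, and asserts only that $L_1,\widetilde L_2$ carry $N^1$ into $N^0,N^{-1}$ when $d=26$. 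But your top-$c$-degree piece $w^{(k)}$ can contain pure-$c$ generators $c(-1)^{\mu_1}\cdots c(-m)^{\mu_m}v'$ with all $\lambda_i=0$; these lie outside $N^1$, the lemma says nothing about them, and they are not $L$-descendants in any visible way. The phrase ``trade-in of $c(-k)$ operators for $L_{-k}$-descendants'' names precisely the move that is unavailable.

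The paper sidesteps this by a different splitting. It sets $K^s$ to be the span of the pure-$c$ generators $c(-1)^{\mu_1}\cdots c(-m)^{\mu_m}v'$ (with $v'$ transversal of the appropriate weight), so that $V_s(\alpha)=N^s\oplus K^s$: the pure-$c$ states are grouped with $T$, not with $G$. One checks directly that $L_1,L_2$ (hence also $\widetilde L_2$) map $K^s$ into $K^{s-1},K^{s-2}$; combined with the $d=26$ lemma for $N$, this makes $L_1,\widetilde L_2$ block-preserving on $N\oplus K$. Writing $u=v+w$ with $v\in K^1$, $w\in N^1$, the conditions $L_1u=\widetilde L_2u=0$ then force $v,w\in P^1$ separately, with no induction. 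Now $w\in N^1$ is tautologically a combination of terms $L_{-n}x$, and a short argument applying $L_k$ and using $(c,\alpha)=1$ shows that $v\in K^1\cap P^1$ must have all $\mu_i=0$, i.e.\ $v\in T(\alpha,c)$. The critical dimension enters only to keep $N$ closed under $L_1,\widetilde L_2$, not to enable any exchange of $c$'s for $L$'s.
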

\begin{proof}

Write $K^s$ to be the subspace of $V_s(\alpha)$ composed of elements of the form
$$A = \sum\limits_{i \in I} a_i  c(-1)^{\mu^i_1}... c(-m)^{\mu^i_{m}} B$$ for some $B \in T(\alpha,c)$, with the condition $L_0(A) = sA$.  It can be easily checked that $L_1, L_2$ are maps $K^s \rightarrow K^{s-1}$ and $K^s \rightarrow K^{s-2}$, respectively.

Next, suppose $u \in P^1(\alpha)$.  By the lemma, we can write $u = v + w$ where $w \in N^1$ and $v \in K^l$.  By assumption, $L(1)u = L(2)u = 0$.  Therefore, we must also have $L_1(v) = L_2(v) = 0$ and $L_1(w) = L_2(w) = 0$ because $L_1, L_2$ act independently on $N^1, K^1$ by the previous lemma.  In particular, $L_n(v) = L_n(w) = 0$ for all $n \geq 1$ because $L_1$ and $L_2$ generate all $L_n, n \geq 1$.

Therefore, we can conclude that $w$ satisfies the conditions of the theorem.  It remains to show that $v \in T(\alpha,c)$.  Write
$$v = \sum\limits_{i \in I} a_i  c(-1)^{\mu^i_1}... c(-m)^{\mu^i_{m}} v'$$
for some $v' \in T(\alpha,c)$.  It is clear that $L_n v' = 0$ by Proposition \ref{ddfproperties}.  To see why all $\mu^i_j$ must be $0$, observe that if $k$ is the largest number such that $\mu^i_k$ is non-zero, then by applying $L_k^{\mu^i_{k}} v$ to $v$ we obtain a linear combination of linearly independent vectors with the term originally $c(-k)^{\mu^i_k}$, now become $c(0)^{\mu^i_k}$ (recall as well that $(c,\alpha) = 1$), which sums up to $0$ (because $L_k(v) = 0$).  This shows that all the coefficients must be $0$ and hence that such a $k$ does not exist.
\end{proof}

We may now use the no-ghost theorem to describe the structure of the quotient of $P^1$ by the radical of $(.,.)$ more explicitly.

\begin{corollary} \label{dimensionfakemonster}
Suppose $L$ has rank 26.  Then $\widetilde{P^1} = P^1/L_{-1}P^0$ is a positive semidefinite space and the quotient $g_L$ of $\widetilde{P^1}$ by the radical of $(.,.)$ is a Lie algebra such that $$\dim g_{\alpha} = p_{24}\left( 1 - \frac{(\alpha,\alpha)}{2} \right).$$
\end{corollary}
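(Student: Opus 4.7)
The plan is to use the no-ghost theorem (Theorem \ref{noghost}) to decompose $P^1$ into a transverse part and a null part, identify the quotient by the radical of $(.,.)$ with $T(\alpha,c)$, and then count the latter via the Heisenberg structure of the DDF operators.

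First, I would show that the bilinear form descends to $\widetilde{P^1}$ and is positive semidefinite there. Given $u \in P^1(\alpha)$, Theorem \ref{noghost} gives $u = v + w$ with $v \in T(\alpha,c)$ and $w \in P^1$ a sum of terms $L_{-n_i} x_i$. By Lemma \ref{adjointvirasoro} we have $L_{-n}^* = L_n$, and since $L_n$ annihilates $P^1$ for $n \geq 1$, each $L_{-n_i} x_i$, and thus $w$ itself, is orthogonal to all of $P^1$. The same argument with $n=1$ applied to $x \in P^0$ shows $L_{-1} P^0$ lies in the radical as well, so the form is well-defined on $\widetilde{P^1}$, and $(u,u) = (v,v) \geq 0$ because $T(\alpha,c)$ is positive definite by Lemma \ref{independentlemma}(a).

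Second, since the radical of $(.,.)$ is an ideal of $\widetilde{P^1}$ (by the proposition in Subsection \ref{subsectionbilinearform}), the quotient $g_L$ inherits a Lie algebra structure from Theorem \ref{liealgebraofphysicalstates}. The no-ghost decomposition shows that every class in $\widetilde{P^1}_\alpha$ has a representative in $T(\alpha,c)$; if such a representative $v$ maps to zero in $g_\alpha$, then $(v,v) = 0$ and positive-definiteness on $T(\alpha,c)$ forces $v = 0$. Hence $g_\alpha \simeq T(\alpha,c)$ as vector spaces.

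Third, I would count $\dim T(\alpha,c)$. The key observation is that $A^c_{-m} = 0$ for every $m > 0$: indeed, using $L_{-1} e^{-mc} = -m\, c(-1) e^{-mc}$ we get $Y(c(-1) e^{-mc},z) = -\frac{1}{m} \frac{d}{dz} Y(e^{-mc},z)$, whose residue vanishes. By Proposition \ref{ddfproperties}(a) the remaining $A^a_{-n}$ with $a \in c^\perp$ commute with $A^c_{-m}$, so any monomial containing a factor $A^c_{-m}$ with $m > 0$ is zero. Choosing $a_1,\ldots,a_{24}$ whose images form a basis of $c^\perp/\C c$ on which the induced form is non-degenerate, parts (a) and (d) of Proposition \ref{ddfproperties}, together with the fact that $c(0)$ acts as $(c,\alpha+Mc) = 1$ on the relevant components, imply that the $A^{a_i}_{-m}$ with $m > 0$ generate a rank-$24$ Heisenberg algebra acting freely on the vacuum $e^{\alpha+Mc}$. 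The resulting PBW monomials are linearly independent by Lemma \ref{independentlemma}(b), yielding $\dim T(\alpha,c) = p_{24}(M) = p_{24}(1 - (\alpha,\alpha)/2)$ with $M$ forced by the weight condition $L_0 e^{\alpha+Mc} = e^{\alpha+Mc}$. The main remaining technical point, beyond the no-ghost theorem itself, is this clean isolation of the $24$ transverse oscillators via the vanishing $A^c_{-m} = 0$.
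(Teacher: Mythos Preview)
Your proof is correct and follows essentially the same route as the paper: use the no-ghost decomposition $P^1(\alpha) = T(\alpha,c) \oplus D_1$, observe via $L_{-n}^* = L_n$ that $D_1$ lies in the radical so that $(g_L)_\alpha \simeq T(\alpha,c)$, and then read off the dimension. Your third paragraph is more explicit than the paper's terse ``by construction'' --- in particular the observation $A^c_{-m}=0$ isolating exactly $24$ transverse oscillators is a useful detail the paper only mentions later, in Section~\ref{sectionlongitudinal}.
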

\begin{proof}
By the theorem, we have shown that $P^1(\alpha) = T(\alpha,c) \oplus D_1$ where $D_1$ is a subspace of $P^1$ of elements of the form $L_{-n}x$ for some $x \in V_L$.  Observe that if $u \in P^1$,
$$(L_{-n}x, u) = (x, L_n u) = 0$$
and therefore that $D_1$ lies in the radical of the bilinear form $(.,.)$.  In particular, this shows that $P^1$ is positive semi-definite.  Therefore, so is $g_L$ and it is clear that $(g_L)_{\alpha} = T(\alpha,c)$, which has the dimension stated in the theorem by construction.
\end{proof}

For $L=II_{25,1}$, the Lie algebra $g_L$ is known as the \emph{fake monster Lie algebra}. We will return to study $g_L$ in more detail in the final chapter.

\subsection{Longitudinal vertex operators} \label{sectionlongitudinal}

In the previous section, we defined
$$A_m^a = \res_z Y(a(-1)e^{mc},z)$$
for $(c,\alpha) = 1$ and $(a,\alpha)=(a,c) = 0$.  It is then natural to ask about the operators $A_m^\alpha$ and $A_m^c$.  It is easy to see that $A_m^c = 0$ hence it can be discarded.  For the other operator, following \cite{gebert}, observe that
$$L_1 \cdot \alpha(-1)e^{mc} = m e^{mc}$$
hence we would obtain a third term from the commutator formula.  Adjusting the proof of Lemma \ref{highestweightcommutation} in this case, we obtain an extra term :
\begin{align*}
[L_n,Y(\alpha(-1)e^{mc},z) = & \left( z^{n+1} \frac{d}{dz} + (n+1)z^n \right) Y(\alpha(-1)e^{mc},z) \\&+ \frac{1}{2}mn(n+1) Y(e^{mc},z)z^{n-1}
\end{align*}
In turn, this implies that $[L_n,A^\alpha_m] \neq 0$.  Therefore, any application of $A^\alpha_m$ on a physical state will not in general be a physical state again.  To remedy this problem, \cite{Brower} introduces an extra term in the following way : let
$$c^{\times}(z) = z c(z) - 1 = \sum\limits_{n \neq 0}c(n)z^{-n}.$$
The second equality only makes sense as operators on physical states in $S(\alpha - Nc)$, for some $N$, because $(c,\alpha) = 1$.  Regardless, we may now introduce
$$\log(zc(z)) = \log(1+c^{\times}(z)) = \sum\limits_{i \geq 1} \frac{(-1)^{i+1}}{i} (c^{\times}(z))^i.$$
The coefficients of $z^k$ of this expression are well-defined operators on $S(\alpha-Nc)$.

\begin{proposition}
On $\bigoplus\limits_{N \in \Z} S(\alpha-Nc)$, we have the following commutators :
$$[L_n,\log(1+c^\times(z))] = z^{n+1} \frac{d}{dz} \log(1+ c^\times(z)) + nz^n$$
and consequently,
$$[L_n, \frac{d}{dz} \log(1 + c^\times(z))] = \left( z^{n+1} \frac{d}{dz} + (n+1)z^n \right) \frac{d}{dz} \log(1 + c^\times (z)) + n^2 z^{n-1}.$$
\end{proposition}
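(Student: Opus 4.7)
The plan is to derive the first commutator by expanding $\log(1 + c^\times(z))$ as a formal power series in $c^\times(z)$, reducing everything to the primary-field identity for $c(z)$, and then to obtain the second formula by simply differentiating the first with respect to $z$.

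First I would apply Lemma \ref{highestweightcommutation} to the vector $v = c(-1) \cdot 1$. Since $c \in h$ and $(c,c) = 0$, a short direct computation in the Fock space shows that $c(-1) \cdot 1$ is a lowest-weight vector for the Virasoro algebra of weight $h = 1$, so Lemma \ref{highestweightcommutation} gives $[L_m, c(n)] = -n\, c(m+n)$. Translating this into field language produces the standard primary-field identity
$$[L_n, c(z)] = \left(z^{n+1} \frac{d}{dz} + (n+1) z^n\right) c(z).$$
On $\bigoplus_{N \in \Z} S(\alpha - Nc)$ the zero mode $c(0)$ acts as the scalar $(c, \alpha - Nc) = 1$, so $c^\times(z) = z c(z) - 1 = \sum_{n \neq 0} c(n) z^{-n}$. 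Multiplying the previous identity by $z$ and rewriting in terms of $c^\times$ yields
$$[L_n, c^\times(z)] = z^{n+1} \frac{d}{dz} c^\times(z) + n z^n \bigl(1 + c^\times(z)\bigr).$$

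Next, the isotropy $(c,c) = 0$ together with Lemma \ref{commutatorbosonicfields} implies $[c(z_1), c(z_2)] = 0$, and in particular $c^\times(z)$ commutes with both $c^\times(z)$ and $\frac{d}{dz} c^\times(z)$. Consequently $c^\times(z)$ commutes with $[L_n, c^\times(z)]$, and an easy induction on $k$ gives $[L_n, c^\times(z)^k] = k\, c^\times(z)^{k-1} [L_n, c^\times(z)]$. Summing the expansion $\log(1 + c^\times) = \sum_{k \geq 1} \frac{(-1)^{k+1}}{k} (c^\times)^k$ term by term then yields
$$[L_n, \log(1 + c^\times(z))] = \left(\sum_{k \geq 1} (-1)^{k+1} c^\times(z)^{k-1}\right) [L_n, c^\times(z)] = \frac{[L_n, c^\times(z)]}{1 + c^\times(z)}.$$
Substituting the expression for $[L_n, c^\times]$ and using $\frac{d}{dz} \log(1 + c^\times) = (c^\times)'/(1 + c^\times)$ produces the first formula. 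For the second identity I would simply differentiate both sides with respect to $z$, note that $L_n$ has no $z$-dependence (so the commutator commutes with $\frac{d}{dz}$), and apply the Leibniz rule; the remaining $\frac{d}{dz}(n z^n) = n^2 z^{n-1}$ produces precisely the anomalous term.

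The main obstacle is the rigorous justification that the formal series $\log(1 + c^\times(z))$ and the manipulations above are well-defined as operators, which is where the restriction to $\bigoplus_{N \in \Z} S(\alpha - Nc)$ enters. On any fixed element of this space the positive-index modes $c(n)$ strictly lower weight, so only finitely many terms in $(c^\times(z))^k$ contribute to a given coefficient of $z^j$, and the sum over $k$ becomes a well-defined formal Laurent series in $z$. Once this is granted — and the sentence preceding the proposition asserts it — the rest of the argument is purely algebraic, driven entirely by the isotropy of $c$ and the primary-field identity for $c(z)$.
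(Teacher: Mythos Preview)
Your proof is correct and follows essentially the same route as the paper: both compute $[L_n,c^\times(z)]$ (you via the primary-field identity for $c(z)$, the paper directly from the mode expansion), use $(c,c)=0$ to justify the Leibniz rule on $(c^\times)^k$, sum the logarithmic series, and then differentiate to obtain the second formula. The only cosmetic difference is that you package the telescoping as the identity $\sum_{i\ge 1}(-1)^{i+1}(c^\times)^{i-1}=(1+c^\times)^{-1}$, whereas the paper writes out the telescoping cancellation explicitly.
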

\begin{proof}
Using the convention that $c(0) = 1$, we evaluate
\begin{align*}
[L_n, c^{\times}(z)] &= [L_n,\sum\limits_{m \in \Z^\times} c(m) z^{-m}] \\
&= \sum\limits_{m \in \Z^\times} -m c(m+n) z^{-m} \\
&= z^n \sum\limits_{m \in \Z^\times} - m c(m+n) z^{-m-n} \\
&= z^n \left( z \frac{d}{dz} + n \right) c^{\times}(z) + n z^n.
\end{align*}
Next, we have using the chain rule, the fact that $(c,c) = 0$ and the cancellation of terms in the partial sum (justified by the fact that only finitely many terms will ever be non-zero when used as an operator),
\begin{align*}
[L_n, \log(1 + c^{\times}(z)] &= \sum\limits_{i \geq 1} (-1)^{i+1} (c^{\times} (z))^{i-1} [L_n, c^{\times}(z)] \\
&= \sum\limits_{i \geq 1} (-1)^{i+1} (c^{\times}(z))^{i-1} \left( z^{n} \left( z \frac{d}{dz}+n \right) c^{\times}(z) + n z^n \right) \\
&= n z^n  + \sum\limits_{i \geq 1} (-1)^{i+1} (c^{\times}(z))^{i-1} \left( z^{n+1} \frac{d}{dz} c^{\times}(z) \right) \\
&= n z^n + z^{n+1} \frac{d}{dz} \log(1 + c^{\times}(z))
\end{align*}
as desired.  The second assertion follows from the first by differentiating both sides of the equality.
\end{proof}
From there, let
$$c'(z) = \frac{d}{dz} \log(1 + c^{\times}(z)) - z^{-1}.$$
Then we find
$$[L_n, c'(z)] = \left( z^{n+1} \frac{d}{dz} + (n+1)z^n \right) c'(z) + n(n+1)z^{n-1},$$
and also, using the product rule, the fact that $(c,c) = 0$ and Lemma \ref{highestweightcommutation},
\begin{align*}
[L_n, c'(z) Y(e^{mk},z)] = & \left( z^{n+1} \frac{d}{dz} + (n+1)z^n \right) c'(z) Y(e^{mk},z) \\ &+ n(n+1)z^{n-1} Y(e^{mk},z).
\end{align*}
Now, we define
$$\mathcal{Y}(\alpha(-1)e^{mk},z) = Y(\alpha(-1)e^{mk},z) - \frac{m}{2} c'(z) Y(e^{mk},z).$$
It is then clear that
$$[L_n,\mathcal{Y}(\alpha(-1)e^{mk},z)] = \left(z^{n+1} \frac{d}{dz} + (n+1)z^n \right) \mathcal{Y}(\alpha(-1)e^{mk},z).$$
And in particular,
$$[L_m, \res_z \mathcal{Y}(\alpha(-1)e^{mk},z)] = 0.$$
Therefore, the operators
$$\mathfrak{L}_m := -\res_z \mathcal{Y}(\alpha(-1)e^{mk},z)$$
may be used to generate new physical states.  It turns out that they also have some very interesting commutation relations :
\begin{theorem}
On $\bigoplus\limits_{N \in \Z} S(\alpha-Nc)$, we have
$$[\mathfrak{L}_m,\mathfrak{L}_n] = (m-n) \mathfrak{L}_{m+n} + 2 (m^3-m) \delta_{m+n} c(0)$$
and
$$[\mathfrak{L}_m,A^a_n] = -n A^a_{m+n}$$
\end{theorem}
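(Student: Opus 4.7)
The plan is to compute both brackets by applying the commutator formula \eqref{commutatorformula} to the two building blocks of $\mathcal{Y}(\alpha(-1)e^{mc}, z)$, namely $Y(\alpha(-1)e^{mc}, z)$ and $\frac{m}{2}c'(z)Y(e^{mc}, z)$. The bracket $[\mathfrak{L}_m, \mathfrak{L}_n]$ then decomposes into four pieces, while $[\mathfrak{L}_m, A^a_n]$ reduces to two. For each piece I would extract only the singular part of the corresponding OPE, since nonsingular contributions disappear after the two residue extractions.

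For the pure vertex piece, I would compute the OPE $Y(\alpha(-1)e^{mc}, z_0)(\alpha(-1)e^{nc})$ by Wick-style contractions, using $(c,c)=0$, $(\alpha,c)=1$, and the commutator in Lemma \ref{commutatorbosonicfields}. Weight considerations and the vanishing of $\alpha(k)e^{nc}$ for $k \geq 1$ show that the only nonzero modes $(\alpha(-1)e^{mc})_k(\alpha(-1)e^{nc})$ are for $k = 0, 1, 2, 3$. These contribute, respectively, a $\mathfrak{L}_{m+n}$-like term, a $\partial e^{(m+n)c}$ term, an $e^{(m+n)c}$ term, and a scalar; this is entirely analogous to the pattern in the proof of Theorem \ref{latticevertextheorem} for $[Y(\omega,z_1), Y(\omega,z_2)]$.

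The three remaining pieces involving $c'(z)$ are then computed from the already-established commutator $[L_n, c'(z)]$ together with a short computation of $[c'(z_1), Y(e^{nc}, z_2)]$, which follows from $[c(k), Y(e^{nc}, z)]$ and the definition of $c'$ as a derivative of a logarithm. The anomalous $\partial e^{(m+n)c}$ and $e^{(m+n)c}$ contributions of the pure vertex piece, which would prevent $\mathfrak{L}_m$ from closing on itself, are precisely cancelled by the corresponding mixed pieces, and the sole surviving non-$\mathfrak{L}$ contribution is a scalar, which on $\bigoplus_{N} S(\alpha - Nc)$ is $c(0)$ times a number. For the second commutator, the transversality conditions $(a,c)=(a,\alpha)=0$ kill all OPE contractions between $\alpha$ and $a$, so only $k=0$ survives in $(\alpha(-1)e^{mc})_k(a(-1)e^{nc})$; combined with the $c'(z)$ correction, this directly yields a multiple of $Y(a(-1)e^{(m+n)c}, z)$, whose residue is $-n A^a_{m+n}$ after using $\alpha(0) e^{nc} = n$.

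The main obstacle is bookkeeping the central term. One must verify that the anomalous scalars produced by all four pieces of $[\mathfrak{L}_m, \mathfrak{L}_n]$ combine to give exactly $2(m^3 - m)\delta_{m+n}$. The cubic-in-$n$ behaviour arises from the $n(n+1) z^{n-1}$ summand in the identity $[L_n, c'(z)] = \left(z^{n+1} \tfrac{d}{dz} + (n+1)z^n\right)c'(z) + n(n+1) z^{n-1}$; this is the only source of a derivative-of-derivative of a delta, and extracting the relevant residue from the corresponding delta function identity yields the $(m^3-m)$ dependence. The delicate step is ensuring that the numerical coefficient is $2$ and not some other rational, which requires the partial contributions from all four pieces and from the three-term expansion of the OPE to combine with the correct signs; this is the computational heart of the argument.
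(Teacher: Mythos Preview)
The paper does not actually prove this theorem: its entire proof reads ``See \cite{gebert}, Section 3.2.'' So there is no in-paper argument to compare against. Your proposal is precisely the kind of direct computation that Gebert--Nicolai carry out in that reference, and your outline is correct: split $\mathcal{Y}(\alpha(-1)e^{mc},z)$ into its vertex part and its $\tfrac{m}{2}c'(z)Y(e^{mc},z)$ correction, compute the four cross-terms, and verify that the anomalous pieces cancel except for the scalar that produces the central charge $24$ (i.e.\ the coefficient $2$ in $2(m^3-m)$).

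Two small cautions. First, the commutator formula \eqref{commutatorformula} is a VOA identity and does not literally apply to the $c'(z)$ factor, since $c'(z)$ is built from $\log(zc(z))$ and is only defined as an operator on $\bigoplus_N S(\alpha - Nc)$, not as $Y(u,z)$ for some $u\in V_L$. You already implicitly handle this by saying you will compute $[c'(z_1), Y(e^{nc},z_2)]$ directly from the mode commutators, which is the right move; just be aware that the four pieces are not all instances of \eqref{commutatorformula}. Second, you invoke $(a,\alpha)=0$, but the paper's standing hypothesis in this section is only $(a,c)=0$. The extra contraction between $\alpha$ and $a$ that arises when $(a,\alpha)\neq 0$ produces a term proportional to $\partial_z Y(e^{(m+n)c},z)$, whose residue vanishes, so the formula $[\mathfrak{L}_m,A^a_n]=-nA^a_{m+n}$ survives without the stronger assumption; it is worth noting this rather than silently strengthening the hypotheses.
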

\begin{proof}
See \cite{gebert}, Section 3.2.
\end{proof}
As a result, the operators $\mathfrak{L}_m$'s behave as a Virasoro Lie algebra of central charge 24 (regardless of the rank of the lattice $L$) which preserve DDF states.  The result is then that given a base state $e^{\alpha}$, one can apply the operators $A^a_m$ and $\mathfrak{L}_m$ and obtain linearly independent elements in $P^1$.  However, we may also compute directly : 
$$\mathfrak{L}_{-1} e^{\alpha} \propto L_{-1} e^{\alpha - c}$$
hence by applying the operator $\mathfrak{L}_{-1}$ on any element of $P^1$ we obtain an element of $L_{-1}P^0$!  These results together with counting the dimensions of the respective spaces proves the following theorem :
\begin{theorem}
The Lie algebra $P^1 / L_{-1} P^0$ is spanned by elements of the form
$$A^{a_{i_1}}_{-m_1} ... A^{a_{i_l}}_{-m_l} \mathfrak{L}_{-j_1} ... \mathfrak{L}_{-j_k} e^{\alpha}$$
where $i_1,...,i_l \in \{1,...,d-2\}$, $m_1,...,m_l \geq 1$ and $j_1,...,j_k \geq 2$.
\end{theorem}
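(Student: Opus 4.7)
The plan is to build on the no-ghost decomposition $P^1(\beta) = T(\beta,c) \oplus D_1$ from Theorem \ref{noghost} together with the spectrum-generating structure given by the DDF operators $A^a_{-m}$ and the longitudinal operators $\mathfrak{L}_{-j}$ from Section \ref{sectionlongitudinal}. The strategy mirrors the decomposition $S(\alpha) = T(\alpha,c) + G(\alpha,c)$ of Lemma \ref{independentlemma}, with the key modification that the ordinary Virasoro operators $L_{-j}$ (which do not preserve $P^1$) are replaced by their longitudinal counterparts $\mathfrak{L}_{-j}$, while the $c(-n)$ creation operators are discarded since they cannot appear in physical states.

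First I would verify that each proposed element lies in $P^1$. The operator $A^a_m$ commutes with every $L_n$ by Proposition \ref{ddfproperties}(b), and $\mathfrak{L}_{-j}$ commutes with $L_n$ for $n \geq 0$ by construction. Hence these operators preserve the annihilation conditions $L_n v = 0$ for $n > 0$. The $A$'s do not change the $L_0$-eigenvalue, whereas each $\mathfrak{L}_{-j}$ raises it by $j$; starting from the Virasoro lowest weight vector $e^{\alpha}$, the choice $(\alpha,\alpha) = 2 - 2\sum j_l$ then ensures the resulting element has $L_0$-eigenvalue exactly $1$ and lies in $P^1(\beta)$ for $\beta = \alpha - (\sum m_i + \sum j_l) c$.

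Next I would show that these elements span $P^1/L_{-1}P^0$. The DDF part $T(\beta,c)$ is spanned by the $k=0$ monomials, by the very definition of $T(\beta,c)$. For the spurious summand $D_1$, a PBW-style argument using the Virasoro-like relations among the $\mathfrak{L}_{-j}$ (of central charge $24$) together with the commutator $[\mathfrak{L}_m, A^a_n] = -n A^a_{m+n}$ shows that the span of all monomials with $j_l \geq 1$ already covers $P^1(\beta)$. To reduce to $j_l \geq 2$, I would use the identity $\mathfrak{L}_{-1} e^{\alpha'} \propto L_{-1} e^{\alpha' - c}$: any word containing a factor $\mathfrak{L}_{-1}$ is rewritten, by commuting $\mathfrak{L}_{-1}$ to the right through the other operators, as a finite sum of shorter words plus an $L_{-1}$-prefix acting on a base vector, with the latter term lying in $L_{-1} P^0$.

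The main obstacle is precisely this last step: verifying that after pushing $\mathfrak{L}_{-1}$ past all the remaining $\mathfrak{L}_{-j_l}$'s and $A^a_{-m_i}$'s and collecting the commutator corrections, the $L_{-1}$-prefix one obtains genuinely acts on an element of $P^0$ rather than merely $V_L$. This requires careful bookkeeping of weights and lattice shifts through each commutation, together with an induction on word length to ensure the reduction terminates with a remainder in $L_{-1} P^0$. A useful supporting consistency check is a generating-function count: combining $\prod_{m \geq 1}(1 - q^m)^{-(d-2)}$ for the DDF factor with $\prod_{j \geq 2}(1 - q^j)^{-1}$ for the longitudinal factor and comparing against the character of $P^1(\beta)/L_{-1}P^0(\beta)$ extracted from Proposition \ref{voadimension} and Theorem \ref{noghost}.
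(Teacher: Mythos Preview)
Your strategy is essentially the paper's: use the identity $\mathfrak{L}_{-1} e^{\alpha} \propto L_{-1} e^{\alpha-c}$ to eliminate monomials containing $\mathfrak{L}_{-1}$, and finish by counting dimensions. Two points deserve correction.

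First, your weight bookkeeping is off. Since $[L_n,\mathfrak{L}_m]=0$ for \emph{all} $n$, each $\mathfrak{L}_{-j}$ preserves the $L_0$-eigenvalue, exactly as the $A^a_{-m}$ do; it does not raise it by $j$. The correct picture is that the base vector $e^{\alpha}$ must already satisfy $(\alpha,\alpha)=2$, and the displayed monomial then lies in $P^1(\alpha - Mc)$ with $M=\sum m_i+\sum j_l$.

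Second, the ``main obstacle'' you flag dissolves once you remember that $L_{-1}$ itself commutes with every $A^a_{-m}$ and every $\mathfrak{L}_{-j}$ (Proposition~\ref{ddfproperties}(b) and the construction of $\mathfrak{L}_m$). After commuting $\mathfrak{L}_{-1}$ to the far right and replacing $\mathfrak{L}_{-1} e^{\alpha}$ by a multiple of $L_{-1} e^{\alpha-c}$, the $L_{-1}$ slides back out to the front at no cost, yielding $L_{-1}\bigl(A\cdots\mathfrak{L}\cdots e^{\alpha-c}\bigr)$ with the inner state manifestly in $P^0$ since $(\alpha-c,\alpha-c)=0$. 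No delicate induction or weight-tracking is needed. In the paper this is phrased as the single observation that $\mathfrak{L}_{-1}$ sends $P^1$ into $L_{-1}P^0$; the dimension count is then the actual spanning argument rather than a consistency check.
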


\subsection{Semi-infinite cohomology, continued}
So far we have shown how we may construct a Lie algebra with known root multiplicities from a vertex operator algebra on an even lattice and in particular the no-ghost theorem resulted in Corollary \ref{dimensionfakemonster}, a Lie algebra with multiplicities of $p_{24}(1 - (\alpha,\alpha)/2)$.  There are other ways to construct this Lie algebra, and in this section we describe how the problem can be approached from semi-infinite cohomology.  The vector space itself was found in \cite{feigin},\cite{frenkelgarlandzuckerman} and the Lie bracket was recovered in this context in \cite{lianzuckerman2}.  This section is also a continuation of subsection \ref{BRSTVir}.

Let $L = II_{25,1}$ and $\mf{g}$ be the Virasoro Lie algebra and $\mf{h} = \C L_0$.  Then $V_L$ is obviously a $\mf{g}$-module hence we may consider the chain complexes $C^{\frac{\infty}{2}+*}(\mf{g},\mf{h},V_L)$ and cohomology groups $H^{\frac{\infty}{2}+*}(\mf{g},\mf{h},V_L)$.  Furthermore, they inherit the $L$-grading on $V_L$, i.e.

$$H^{\frac{\infty}{2}+*}(\mf{g},\mf{h},V_L) = \bigoplus\limits_{\alpha \in L} H^{\frac{\infty}{2}+*}(\mf{g},\mf{h},V_L)[\alpha].$$
First, there is a vanishing theorem, adjusted in this setting.
\begin{theorem} (\cite{frenkelgarlandzuckerman}, Theorem 1.12)
$H^{\frac{\infty}{2}+m}(\mf{g},\mf{h},V_L) = 0$ unless $m=1$.
\end{theorem}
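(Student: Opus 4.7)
The plan is to exploit the interplay between the BRST differential $d$ and the ghost operators $b_n$, together with the no-ghost theorem (Theorem \ref{noghost}) which has just been proved for $L = II_{25,1}$. First I would decompose the relative complex along the $L$-grading inherited from $V_L$, writing
$$C^{\frac{\infty}{2}+*}(\mf{g},\mf{h},V_L) = \bigoplus_{\alpha \in L} C^{\frac{\infty}{2}+*}(\mf{g},\mf{h},V_L)[\alpha],$$
and treat each $\alpha$ separately. On each piece, the relative conditions $\iota(L_0)v = b_0 v = 0$ and $\theta(L_0)v = L_0^{tot}v = 0$ cut out the subspace of total weight zero and $b_0$-closed vectors, which is where real computation happens.

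Next I would use the fundamental identity $\{d, b_n\} = L_n^{tot}$, where $L_n^{tot} = L_n^V + L_n^{\Lambda}$ has total central charge $26 - 26 = 0$ precisely because $L$ has rank $26$; this is exactly what makes $d^2 = 0$. On eigenspaces of $L_0^{tot}$ with nonzero eigenvalue, $b_0 / L_0^{tot}$ gives a contracting homotopy, so cohomology is concentrated in the kernel of $L_0^{tot}$; but this is already built into the relative complex. To proceed further on the $L_0^{tot} = 0$ locus, I would pick an isotropic $c \in L \otimes \R$ with $(c,\alpha) = 1$ (available for $\alpha \neq 0$ by Lemma \ref{isotropicc}) and introduce a filtration by the $c(0)$-grading, so that the associated graded splits the Fock space into transverse (DDF) and longitudinal/ghost directions. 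The associated graded differential becomes a Koszul differential on the ghost-plus-longitudinal part tensored with the transverse Fock space, and its cohomology can be computed directly: in each ghost degree, the light-cone decomposition of the no-ghost theorem realizes the cohomology as the transverse space $T(\alpha,c)$ sitting in degree $1$, with all other degrees killed by Koszul exactness.

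The last step is to show that the spectral sequence of this filtration degenerates at $E_1$, which follows because the higher differentials would have to raise ghost number while preserving the total weight zero constraint, and the Koszul exactness leaves no room for nontrivial pages. The case $\alpha = 0$ requires a separate argument, since the recipe for choosing $c$ breaks down; I would handle it by noting that the weight-zero piece of $V_L$ at $\alpha = 0$ is just $S(\widehat{h}^-)$ at grade zero, namely $\C \cdot 1$, and then the relative complex reduces to computing $H^{\frac{\infty}{2}+*}(\mf{g},\mf{h},\C)$, whose explicit analysis (using only the ghost Fock space, constrained by $L_0^{tot} = 0$) again concentrates in degree $1$ and gives the Cartan $h = L \otimes \C$.

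The main obstacle I expect is keeping track of the bookkeeping between the semi-infinite conventions (choice of $q_F = 2$, normal ordering) and the no-ghost decomposition of Section \ref{subsectionphysicalstate}, so that the isomorphism $H^{\frac{\infty}{2}+1}(\mf{g},\mf{h},V_L)[\alpha] \simeq T(\alpha,c)$ is compatible with the BRST differential; once this identification is set up carefully, both the vanishing in degrees $\neq 1$ and the nontriviality in degree $1$ fall out of the Koszul argument.
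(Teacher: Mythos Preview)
The paper does not actually prove this statement; it is stated with a citation to \cite{frenkelgarlandzuckerman}, Theorem 1.12, and no argument is given. So there is no in-paper proof to compare your proposal against.

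That said, your sketch for $\alpha \neq 0$ is recognisably the light-cone approach to the FGZ vanishing theorem, and the core mechanism --- use an isotropic $c$ with $(c,\alpha)=1$ to filter the complex so that on the associated graded the BRST differential pairs the two longitudinal matter oscillators against the $bc$ ghosts into an acyclic Koszul-type complex, leaving only the transverse DDF sector concentrated in ghost degree $1$ --- is correct and is essentially what \cite{frenkelgarlandzuckerman} do (phrased there as an explicit contracting homotopy built from the light-cone oscillators and $b_n$, but encoding the same cancellation). The identity $\{d,b_0\}=L_0^{tot}$ and the resulting reduction to $\ker L_0^{tot}$ are exactly right.

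There is, however, a genuine gap in your treatment of $\alpha = 0$. You claim that the weight-zero piece of $(V_L)_{\alpha=0}$ is $\C\cdot 1$, so the relative complex reduces to $H^{\frac{\infty}{2}+*}(\mf{g},\mf{h},\C)$. This is not correct: the relative condition is $L_0^{tot}=L_0^V+L_0^{\Lambda}=0$, not $L_0^V=0$, and the ghost vacuum carries a nonzero $L_0^{\Lambda}$-eigenvalue, so the matter sector must contribute all of $S(\widehat{h}^{-})$, with each graded piece paired against ghost excitations of compensating weight. The module you must analyse at $\alpha=0$ is therefore the full Heisenberg Fock module, not the trivial module, and the light-cone homotopy is unavailable precisely because Lemma \ref{isotropicc} requires $\alpha\neq 0$. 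In \cite{frenkelgarlandzuckerman} this case is handled by a separate and more delicate direct computation; your sketch would need to supply that argument rather than collapse it to the trivial-module case.
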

Then using the Euler-Poincar\'e principle, it was shown that
\begin{theorem} (\cite{frenkelgarlandzuckerman}, Theorem 2.7)
$$\dim H^{\frac{\infty}{2}+1}(\mf{g},\mf{h},V_L)[\alpha] = p_{24} \left( 1 - \frac{(\alpha,\alpha)}{2} \right).$$
\end{theorem}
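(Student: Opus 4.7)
The plan is to combine the vanishing theorem with the Euler--Poincaré principle. Since $H^{\frac{\infty}{2}+m}(\mathfrak{g},\mathfrak{h},V_L)[\alpha]$ vanishes for $m \neq 1$, the dimension we want equals (up to sign) the Euler characteristic
$$\chi[\alpha] = \sum_{n} (-1)^n \dim C^{\frac{\infty}{2}+n}(\mathfrak{g},\mathfrak{h},V_L)[\alpha]$$
of the relative chain complex restricted to the $L$-grading. Introducing the ghost number operator $U$, this is simply the trace of $(-1)^U$ on the $\alpha$-component of the relative complex, and the task becomes a generating-function identity.

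To compute this trace I would use the tensor product structure and introduce a formal variable $q$ tracking the total $L_0$-eigenvalue $\theta(L_0) = \pi(L_0) + \rho(L_0)$. On the matter side, Proposition \ref{voadimension} gives
$$\operatorname{Tr}_{V_L[\alpha]} q^{L_0} = q^{(\alpha,\alpha)/2} \prod_{n \geq 1}(1-q^n)^{-26}.$$
On the ghost side, the Fock space $\Omega^{\frac{\infty}{2}} \mathfrak{g}$ is freely generated from its vacuum by the creation operators $c(n)$ for $n \leq 1$ and $b(n)$ for $n \leq -2$, each carrying ghost number $\pm 1$ and $L_0$-weight $-n$. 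The relative constraint $\iota(L_0) v = b(0)v = 0$ removes the $c(0)$ creation mode (otherwise $b(0)$ would act nontrivially), and the constraint $\theta(L_0) v = 0$ extracts the coefficient of $q^0$. A direct calculation then yields
$$\operatorname{Tr}_{\ker b(0)}(-1)^U q^{\rho(L_0)} = q^{-1} \prod_{n\geq 1}(1-q^n)^2,$$
after the telescoping between the $c(1)$ factor $(1-q^{-1})$, the $b(-2)$ factor $(1-q^2)$, and the $c(-1), c(-2), \ldots$ and $b(-3), b(-4), \ldots$ infinite products, with the vacuum energy of the $bc$ system supplying the $q^{-1}$.

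Multiplying the two characters produces
$$q^{(\alpha,\alpha)/2 - 1} \prod_{n \geq 1}(1 - q^n)^{-24},$$
and the restriction to $L_0^{\mathrm{total}} = 0$ coming from $\theta(L_0)v=0$ picks off the constant term in $q$, which is precisely the coefficient of $q^{1 - (\alpha,\alpha)/2}$ in $\prod_{n \geq 1}(1-q^n)^{-24}$, that is, $p_{24}\bigl(1 - (\alpha,\alpha)/2\bigr)$. The sign works out because only the degree-one cohomology survives and the Euler characteristic of the complex is nonnegative; equivalently, one may insert the variable $u$ for ghost number and track it, then set $u = -1$ only at the end.

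The principal obstacle, and the reason this argument requires care rather than a perfunctory character computation, is handling the semi-infinite vacuum and the relative quotient correctly. The naïve absolute Euler characteristic vanishes because of the $(1-q^0)$ factor contributed by the zero-mode pair $(b(0), c(0))$; it is precisely the relative constraint against $\mathfrak{h} = \mathbb{C} L_0$ that both deletes the $c(0)$ mode and fixes the $L_0$-grading, converting the identically zero trace into a meaningful coefficient extraction. The cancellation between the $26$ bosonic matter oscillators and the $2$ surviving ghost oscillators, which produces the critical exponent $24$, only occurs after this relative bookkeeping is done correctly, and the vacuum energy shift $-1$ is what supplies the shift by $1$ in the argument of $p_{24}$.
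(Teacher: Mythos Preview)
The paper does not actually prove this statement; it merely cites \cite{frenkelgarlandzuckerman} and remarks that the result follows ``using the Euler--Poincar\'e principle.'' Your proposal is precisely that Euler--Poincar\'e argument, carried out in the expected way: vanishing reduces the dimension to the Euler characteristic, the matter and ghost characters are computed separately, and the $26$-versus-$2$ cancellation together with the ghost zero-mode bookkeeping produces $p_{24}$ with the shift by~$1$. One small imprecision: the factor $q^{-1}$ in the ghost character arises from rewriting the $c(1)$ contribution $(1-q^{-1}) = -q^{-1}(1-q)$ and absorbing it into the product, not from a separate vacuum-energy constant (with the paper's choice $q_F=2$ the semi-infinite vacuum has $\rho(L_0)$-eigenvalue~$0$); but the end formula and the overall logic are correct.
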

Therefore we see that a priori, the dimensions of the cohomology groups match with that of $T(\alpha,c)$.  This is not a coincidence.  Indeed, given $v \in P^1$, define $T : V \to C^{\frac{\infty}{2}+1}(\mf{g},\mf{h},V_L)$ by
$$T(v) = v \otimes c(1) = v \otimes L'_{-1}$$

\begin{proposition}
For any $v \in P^1$, $dT(v) = 0$.
\end{proposition}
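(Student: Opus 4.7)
The plan is to compute $dT(v)$ directly using the explicit formula for the BRST differential $d$ given earlier (equation~(\ref{BRSTformula})). Split the operator as $d = d_1 + d_2$, where $d_1 = \sum_{n \in \Z} \pi(L_n) c_{-n}$ is the matter piece and $d_2 = \sum_{m<n}(m-n){:}b_{m+n} c_{-m} c_{-n}{:}$ is the pure ghost piece. I will evaluate each on $T(v) = v \otimes c(1)\cdot 1 = v \otimes L'_{-1}$.

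First I would dispatch the matter contribution. Since $\pi(L_n)$ acts only on the $V_L$-factor, $d_1 T(v) = \sum_n (L_n v) \otimes c(-n)c(1)\cdot 1$. The physical-state hypotheses $L_n v = 0$ for $n \geq 1$ and $L_0 v = v$ eliminate all terms with $n \geq 1$. On the ghost side, the factor $c(-n)c(1) \cdot 1$ vanishes for every $n \leq -1$: for $n=-1$ because $c(1)^2 = 0$ by antisymmetry of $\epsilon$, and for $n \leq -2$ because then $c(-n) = \epsilon(L'_n)$ with $L'_n \in \mf{n}'_-$ contracts only $\mf{n}_-$-elements, while $c(1)\cdot 1 = L'_{-1}$ lies in $\mf{n}'_+$. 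Only $n = 0$ survives, giving $d_1 T(v) = v \otimes L'_0 \wedge L'_{-1}$.

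Next I would show that $d_2 T(v) = -v \otimes L'_0 \wedge L'_{-1}$, producing the required cancellation. The non-vanishing factor $c_{-m}c_{-n} c_1 \cdot 1$ forces $0 \leq m < n$, and then $b_{m+n}$ contracts the three-form $L'_m \wedge L'_n \wedge L'_{-1}$, yielding a non-trivial output only when $m = 0$, which produces a contribution proportional to $L'_0 \wedge L'_{-1}$. The (formally divergent) coefficient $\sum_{n\geq 1} n$ is regularised by the normal ordering; the finite remainder is precisely $-1$. Conceptually the cancellation reflects the fact that $c(z)$ is a Virasoro primary of weight $-1$ for the ghost current $L^\Lambda$, so that $L^\Lambda_0 c(1)\cdot 1 = -c(1)\cdot 1$ and $L^\Lambda_n c(1)\cdot 1 = 0$ for $n \geq 1$; assembled through the BRST current $J(z) = (L^V(z) + \tfrac12 L^\Lambda(z))c(z)$, the ghost Virasoro exactly compensates the matter Virasoro's $L_0 v = v$ contribution. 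One may also verify the cancellation indirectly from the identity $\{d,b_n\} = L^{\mathrm{tot}}_n$ together with $b_n T(v) = 0$ for $n \geq 0$ and $L^{\mathrm{tot}}_n T(v) = 0$ for $n \geq 0$.

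The main obstacle will be the bookkeeping on the ghost side: carefully tracking fermionic signs arising from anticommuting $b$ past $c$-creation operators, and handling the normal-ordering subtraction so as to extract the finite contribution $-v \otimes L'_0 \wedge L'_{-1}$ from the formally divergent sum. Once the two pieces are combined, the equality $dT(v) = 0$ follows.
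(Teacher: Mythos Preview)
Your treatment of the matter piece $d_1$ is correct and matches the paper's argument exactly: only $n=0$ survives and contributes $v\otimes L'_0\wedge L'_{-1}$.

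Your treatment of the ghost piece $d_2$, however, contains a genuine gap. You first apply $c_{-m}c_{-n}$ to $c_1\cdot 1$, concluding that only $0\le m<n$ survive, and then let $b_{m+n}$ contract the resulting three-form. But this is \emph{not} how the normal-ordered operator $:b_{m+n}c_{-m}c_{-n}:$ acts. When $b_{m+n}$ is an annihilation operator (i.e.\ $m+n\ge -1$, since $q_F=2$), normal ordering moves it to the right so that it acts \emph{first} on $c_1\cdot 1=L'_{-1}$; it contracts $L'_{m+n}$, which is nonzero only when $m+n=-1$. Your analysis therefore looks at the wrong region of $(m,n)$-space, produces the divergent sum $\sum_{n\ge 1}n$, and then asserts without justification that this ``regularises'' to $-1$. (Zeta-regularisation, for instance, would give $-1/12$, so the claim is not merely informal but actually false as stated.)

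The paper's argument is the correct direct one. Working with the normal-ordered expression from the start, one sees that a nonzero contribution requires either all three operators to be creation operators (forcing $m=n=-1$, excluded by $m<n$), or $b_{m+n}$ to be an annihilation operator hitting $L'_{-1}$, which forces $m+n=-1$. Combined with $m<n$ and the requirement that $c_{-m}$ not be an annihilation operator ($m\ge -1$), this pins down the \emph{single} term $(m,n)=(-1,0)$, with coefficient $m-n=-1$, yielding $v\otimes L'_{-1}\wedge L'_{0}$ and cancelling the $d_1$ contribution. No divergent sums or regularisation are involved. Your side remarks about $L^{\Lambda}_0 c_1\cdot 1=-c_1\cdot 1$ and about $\{d,b_n\}=L^{\mathrm{tot}}_n$ are correct in spirit and could be turned into an alternative proof, but as written they do not replace the flawed main computation.
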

\begin{proof}
We use equation \ref{BRSTformula}, rewriting it as
$$d = \sum\limits_{i \in \Z} \pi(L_n) \epsilon(L'_n) + \sum\limits_{m < n} (m-n) : \iota(L_{m+n}) \epsilon(L'_m) \epsilon(L'_n):$$
It is clear that $\pi(L_n) \epsilon(L'_n) T(v) = 0$ if $n > 0$ and it is easy to verify using our definition of $\epsilon$ that $\pi(L_n) \epsilon(L'_n) T(v) = 0$ if $n < 0$.  Therefore the only non-zero term from $\pi(L_n) \epsilon(L'_n)$ is
$$\pi(L_0) \epsilon(L'_0) T(v) = v \otimes L'_0 \wedge L'_{-1}.$$

Next we examine the terms that are created from the second term.  Because of the normal ordering, the only way to obtain a non-zero expression is if all of the terms are creation operators or if $m+n = -1$.  In the first case, we can only have three creation operators if $m=n=-1$, in which case the term will be $0$ in the exterior algebra.  In the second case, if $m+n=-1$, then because $m<n$ we must have $m < 0$, but also $m \geq -1$ else $\epsilon(L'_m)$ is a annihilation operator.  In this case $m = -1$ and $n=0$ and we obtain the term $v \otimes L'_{-1} \wedge L'_{0}$ which cancels out with the first term.
\end{proof}

This inclusion is not just a simple linear map.  Indeed,
\begin{theorem}(\cite{frenkelgarlandzuckerman}, Theorem 2.8)
The map $T : P^1 \to C^{\frac{\infty}{2}+1}(\mf{g},\mf{h},V_L)$ factors to a (unitary) isomorphism
$$P^1 / rad(.,.) \simeq H^{\frac{\infty}{2}+1}(\mf{g},\mf{h},V_L).$$
\end{theorem}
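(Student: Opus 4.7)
The plan is to combine the no-ghost theorem with the dimension formula quoted from Frenkel--Garland--Zuckerman and verify three compatibilities in sequence: factorization through the radical, injectivity of the induced map, and preservation of bilinear forms.

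First I would pin down the target of the quotient map. By Theorem \ref{noghost}, $P^1(\alpha) = T(\alpha,c) \oplus D_1(\alpha)$ where $D_1(\alpha)$ is the span of elements of the form $L_{-n}x$ that happen to lie in $P^1$. Using $L_n^* = L_{-n}$ (Lemma \ref{adjointvirasoro}) together with $L_n P^1 = 0$ for $n>0$, one sees $D_1(\alpha)$ is contained in the radical, and since $(.,.)$ is positive-definite on $T(\alpha,c)$ (Lemma \ref{independentlemma}), the intersection $\mathrm{rad}(.,.) \cap P^1(\alpha)$ is exactly $D_1(\alpha)$. Hence $P^1/\mathrm{rad}(.,.)$ is isomorphic to $T(\alpha,c)$ in each $\alpha$-component, of dimension $p_{24}(1-(\alpha,\alpha)/2)$.

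Next I would verify that $T$ descends by showing that each generator $L_{-n} x \in D_1(\alpha)$ maps to a coboundary. For the base case $x \in P^0$, $n=1$, I would compute $d(x \otimes \mathbf{1})$ via formula (\ref{BRSTformula}). The ghost vacuum relations $c(n)\mathbf{1} = 0$ for $n \geq 2$ and $b(n)\mathbf{1} = 0$ for $n \geq -1$, together with the lowest-weight conditions on $x$, kill every term of $d(x \otimes \mathbf{1})$ except $L_{-1}x \otimes c(1)$. For higher $n$ (or for cascades $L_{-n_1}L_{-n_2}\cdots x$), take $w = x \otimes c(1-n)$ and run an induction on $n$: the quadratic ghost term in $d$ only produces outputs of the form $L_{-m}x' \otimes c(1)$ with $m < n$, to which the inductive hypothesis applies. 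This gives a well-defined map $\overline{T}\colon P^1/\mathrm{rad}(.,.) \to H^{\infty/2+1}(\mathfrak{g},\mathfrak{h},V_L)$ respecting the $L$-grading.

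With both sides of dimension $p_{24}(1-(\alpha,\alpha)/2)$ per $\alpha$-component, it suffices to show $\overline{T}$ is injective on each component. The approach is as follows: if $v \in T(\alpha,c)$ satisfies $T(v) = d\omega$, I would use the homotopy identity $\{d,\iota(L_0)\} = L_0^{\mathrm{tot}}$ on the BRST complex to move $\omega$ to a representative of total weight zero, and then pair $(T(v), T(v))$ against a standard ghost-extension of $(.,.)$: the pairing with a coboundary must vanish, while the $V_L$-piece reduces to $(v,v)_{V_L} > 0$ by positive-definiteness of $T(\alpha,c)$, forcing $v = 0$. This is also where unitarity enters: extending $(.,.)$ by the natural ghost pairing (with $b(n)^* = b(-n)$ and $c(n)^*$ adjoint so that the one-ghost state $c(1)\mathbf{1}$ is self-dual up to sign), the form on representatives $v \otimes c(1)$ reduces to $(v,v')_{V_L}$, so $\overline{T}$ is an isometry onto $H^{\infty/2+1}$.

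The main obstacle will be the injectivity step: producing a clean contracting homotopy on the BRST complex in the weight-zero bidegree and identifying exactly how exactness of $T(v)$ interacts with the bilinear form on $V_L$ requires careful bookkeeping of the ghost-pairing signs and of the fact that $b(0)$ only partially splits $d$. A purely dimension-theoretic shortcut is possible once one knows $\overline{T}$ is nonzero on each weight component of $T(\alpha,c)$, but establishing that nondegeneracy still relies on the same positive-definiteness argument.
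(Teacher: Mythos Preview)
The paper does not give its own proof of this theorem; it is quoted directly from \cite{frenkelgarlandzuckerman} and immediately used. So there is no paper-proof to compare against, only your proposal to assess on its merits.

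Your overall architecture---identify $P^1/\mathrm{rad}(.,.)$ with $T(\alpha,c)$ via no-ghost, match dimensions against the cited $p_{24}$ formula, and then argue injectivity through a pairing---is the right shape and is essentially how \cite{frenkelgarlandzuckerman} proceed. Two places need tightening.

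First, in the factorization step you treat the radical as if it were generated by $L_{-n}x$ with $x\in P^0$, starting your induction there. But Theorem~\ref{noghost} only says that a null physical state is a linear combination of terms $L_{-n}x$ with $x\in V_L$ arbitrary (coming from the $N^1$ piece of the decomposition in Lemma~\ref{independentlemma}); the individual summands need not lie in $P^1$, nor need the inner $x$'s be lowest-weight. Your computation of $d(x\otimes\mathbf{1})$ uses $L_kx=0$ for $k>0$ to kill unwanted terms, so the base case already fails for generic $x$, and the induction does not close. You can bypass this entirely: since you already know $\dim H^{\frac{\infty}{2}+1}[\alpha]=\dim T(\alpha,c)$, it suffices to prove that $T$ restricted to $T(\alpha,c)$ is injective into cohomology; the factorization through the radical then follows for free.

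Second, in the injectivity step you invoke the homotopy $\{d,\iota(L_0)\}=L_0^{\mathrm{tot}}$ to move $\omega$ to total weight zero. In the \emph{relative} complex $C^{\frac{\infty}{2}+*}(\mathfrak{g},\mathfrak{h},V_L)$ with $\mathfrak{h}=\C L_0$, every cochain already satisfies $b(0)\omega=0$ and $\theta(L_0)\omega=0$ by definition, so this homotopy is identically zero there and does no work. The pairing argument you outline afterward is the correct mechanism: extend $(.,.)$ to the ghost sector so that $d$ is (anti-)self-adjoint, observe that coboundaries pair to zero with cocycles, and conclude that if $T(v)=d\omega$ then $(v,v')_{V_L}=0$ for all $v'\in P^1$, forcing $v=0$ by positive-definiteness on $T(\alpha,c)$. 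That step stands on its own without the homotopy.
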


Therefore, there is at least a linear isomorphism between the two, in which the dimension $26$ is of critical importance.  However, it was shown in \cite{lianzuckerman2} that this linear isomorphism is in fact a Lie algebra isomorphism.  Indeed, for two elements $u,v \in H^{\frac{\infty}{2}+*}(\mf{g},V_L)$ (note : not in relative cohomology) with homogeneous ghost numbers $|u|$ and $|v|$, they defined
$$\{u,v \} = (-1)^{|u|} (b_{-1} u)_0 v,$$
for which they proved many properties and we list a few here (\cite{lianzuckerman2}, Theorem 2.2) :
\begin{theorem}
On the cohomology $H^{\frac{\infty}{2} + *}$, we have
\begin{enumerate}[label=\alph*)]
\item $\{u,v\} = - (-1)^{(|u|-1)(|v|-1)} \{v,u\}$
\item $(-1)^{(|u|-1)(|t|-1)} \{u,\{v,t\}\} +(-1)^{(|t|-1)(|v|-1)} \{t,\{u,v\}\} $\newline$+(-1)^{(|v|-1)(|u|-1)} \{v,\{t,v\}\} = 0$
\item $\{.,.\} : H^{\frac{\infty}{2} + p} \times H^{\frac{\infty}{2} + q} \to H^{\frac{\infty}{2} + p+q-1}$
\end{enumerate}
\end{theorem}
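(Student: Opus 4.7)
The plan rests on four chain-level facts in the BRST complex: (i) the identity $\{d,b_{-1}\}=L_{-1}$; (ii) the fact that $d$ acts as a super-derivation of the vertex algebra structure on the chain complex, i.e.\ $[d,Y(x,z)]=Y(dx,z)$, which follows from $d=J_0$ as in subsection \ref{BRSTVir}; (iii) the skew-symmetry identity $Y(u,z)v=(-1)^{|u||v|}e^{zL_{-1}}Y(v,-z)u$; and (iv) Corollary \ref{commutatorcorollary} in its graded form $[u_0,Y(v,z)]=Y(u_0v,z)$. The strategy is to first show that the bracket descends to cohomology, deduce (c) as a degree count, and then derive (a) and (b) from (iii) and (iv) respectively by reducing each to its leading term modulo $d$-exact corrections.

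For well-definedness, using (ii) I compute $d((b_{-1}u)_0v)=(d(b_{-1}u))_0v+(-1)^{|u|-1}(b_{-1}u)_0(dv)$, which by (i) equals $(L_{-1}u)_0v-(b_{-1}du)_0v+(-1)^{|u|-1}(b_{-1}u)_0(dv)$. The first summand vanishes because $Y(L_{-1}x,z)=\partial_zY(x,z)$ has no residue at $z=0$, so $(L_{-1}x)_0=0$. This single identity shows simultaneously that the bracket of two cocycles is a cocycle and that replacing either entry by a coboundary yields a coboundary. Part (c) is then immediate: $b_{-1}$ lowers ghost number by one and zero modes preserve ghost number, so $\{u,v\}\in H^{\frac{\infty}{2}+|u|+|v|-1}$.

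For antisymmetry (a), I apply (iii) with $u$ replaced by $b_{-1}u$, take residues in $z$, and expand $e^{zL_{-1}}$ as a series. Terms with $n\geq 1$ involve $L_{-1}$ applied to cocycles and are therefore $d$-exact by (i), so only the $n=0$ term survives in cohomology, producing $(b_{-1}u)_0v\equiv -(-1)^{(|u|-1)|v|}\,v_0(b_{-1}u)$ modulo $\im d$. The remaining task is to interchange $b_{-1}$ and $v_0$: the supercommutator $[b_{-1},v_0]$ is computed from the commutator formula applied to the $b$-ghost field $b(z)$, and the resulting error terms are again shown to be $d$-exact on cocycles using (i) together with (ii). Collecting signs yields $\{u,v\}\equiv -(-1)^{(|u|-1)(|v|-1)}\{v,u\}$. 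For graded Jacobi (b), I apply (iv) to $u\mapsto b_{-1}u$ and $v\mapsto b_{-1}v$ at the level of chain representatives, giving $(b_{-1}u)_0(b_{-1}v)_0 w-(-1)^{(|u|-1)(|v|-1)}(b_{-1}v)_0(b_{-1}u)_0 w=((b_{-1}u)_0(b_{-1}v))_0 w$; I then identify the right-hand side with $(b_{-1}\{u,v\})_0 w$ modulo $d$-exact terms, using $b_{-1}^2=0$ and the derivation computation from (a). Multiplying by the overall sign and rearranging yields the cyclic form of Jacobi.

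The main obstacle is the sign bookkeeping, and in particular the step in (a) showing that $[b_{-1},v_0]$ acts trivially in cohomology. This requires expanding the commutator formula for $\{b(z_1),Y(v,z_2)\}$ into a finite sum of terms indexed by modes $b_n v$ with $n\geq 0$, and arguing term by term that each contribution is either zero or $d$-exact whenever $v$ is a cocycle; this relies on (i), (ii), and the explicit description of $b_n$ as a contraction operator in the semi-infinite exterior algebra, and is where I expect the bulk of the technical labor.
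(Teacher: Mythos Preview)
The paper does not actually prove this theorem: it is quoted verbatim from \cite{lianzuckerman2}, Theorem 2.2, with no argument given. So there is no ``paper's own proof'' to compare against; your proposal is effectively an attempt to reconstruct the original Lian--Zuckerman argument. Your overall strategy---use $\{d,b_{-1}\}=L_{-1}$, the derivation property of $d$, VOA skew-symmetry for (a), and the commutator formula for (b)---is indeed the route taken in \cite{lianzuckerman2}, and your treatment of well-definedness and of part (c) is correct.

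There is, however, a genuine gap in your sketch of (a). After applying skew-symmetry and taking residues you obtain terms $\frac{(-1)^{n+1}}{n!}L_{-1}^{n}\bigl(v_{n}(b_{-1}u)\bigr)$ for $n\ge 1$, and you assert these are ``$L_{-1}$ applied to cocycles and therefore $d$-exact.'' But $v_{n}(b_{-1}u)$ is \emph{not} a cocycle when $du=dv=0$: using $d(b_{-1}u)=L_{-1}u$ and the derivation property, one finds $d\bigl(v_{n}(b_{-1}u)\bigr)=(-1)^{|v|}v_{n}(L_{-1}u)=(-1)^{|v|}\bigl(L_{-1}(v_{n}u)+n\,v_{n-1}u\bigr)$, which is nonzero. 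If you try to write the $n\ge 1$ sum as $d$ of the obvious candidate $\sum_{n\ge 1}\frac{(-1)^{n+1}}{n!}b_{-1}L_{-1}^{n-1}\bigl(v_{n}(b_{-1}u)\bigr)$, the telescoping leaves a non-exact remainder proportional to $b_{-1}(v_{0}u)$. This extra term is precisely what has to be matched against the ``interchange $b_{-1}$ and $v_{0}$'' step you flag later; the two corrections are not independent, and the antisymmetry identity only closes once they are combined into a single explicit homotopy. In \cite{lianzuckerman2} this is handled by writing down that homotopy directly (their operation often denoted $m$ or $m_{-}$) rather than by treating the skew-symmetry tail and the $[b_{-1},v_{0}]$ error separately. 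So your diagnosis that the hard part is ``$[b_{-1},v_{0}]$ acting trivially in cohomology'' is slightly off: that commutator does \emph{not} act trivially, and its failure to do so is exactly what cancels the non-exact piece left over from the skew-symmetry expansion.
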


From there, we see that $H^{\frac{\infty}{2}+1}$ is a Lie algebra.  In our case of interest, we see clearly that $b_{-1}$ cancels out the term created from $c_1$ and therefore that $T$ is indeed a Lie algebra homomorphism.  Most importantly, the results of \cite{lianzuckerman2} indicate that there is a bigger structure to study, that of a Gerstenhaber algebra.

\section{The N=1 Lie algebra of physical states}

\subsection{Fermionic construction}

There is a second version of the no-ghost theorem, for the Neveu-Schwarz algebra.  This leads to an analogous Lie algebra.  We describe it in this section.  The definition of the Fock space and its vertex operators appeared in physics in \cite{friedan}, but in this section is based on the mathematics literature from \cite{feingoldfrenkelries}, \cite{tsukada}.  Let $A$ be a complex vector space of dimension $d=2l$ and fix a basis $a_i, a_i^*$, $i=1,...,l$ of $A$.  For brevity, we will also write $a_{l+i} = a^*_i$ and $a^*_{l+i} = a_i$ for $1 \leq i \leq l$. Then define a bilinear form on $A$ as follows :
$$ (a_i,a_j) = (a_i^*, a_j^*) = 0$$
$$(a_i, a_j^*) = \delta_{i,j}$$
for all $i,j = 1,...,l$.  Then the vector space
$$\widehat{A} = A \otimes \C[t,t^{-1}]t^{\frac{1}{2}} \oplus \C c$$
is a Lie superalgebra with even part $\widehat{A}_0 = \C c$, $\widehat{A}_1 = A \otimes \C[t,t^{-1}]t^{\frac{1}{2}}$ and superbracket defined by
$$\{ a \otimes t^r, b \otimes t^s\} = (a,b) \delta_{r+s,0} c$$
$$[c,x] = 0 \text{ for all } x \in \widehat{A}$$
As before, we use the notation $a(r) = a \otimes t^r$, however this time $r \in \Z + \frac{1}{2}$.  Similarly, we write
$$a(z) = \sum\limits_{r \in \Z + \frac{1}{2}} a(r) z^{-r - \frac{1}{2}}.$$
Observe that the exponents of $z$ are still integers.
Furthermore, $\widehat{A}^{-} = A \otimes \C[t^{-1}]t^{\frac{1}{2}}$.  The Fock space in this case is given by
$$F_d = \Lambda(\widehat{A}^{-})$$
understood as the exterior algebra of $\widehat{A}^{-}$.  $V$ has a superalgebra structure given by
$$(F_d)_0 = \sum\limits_{n \in 2 \Z} \Lambda^n \widehat{A}^{-}$$
$$(F_d)_1 = \sum\limits_{n \in 2 \Z + 1} \Lambda^n \widehat{A}^{-}$$
with product being the usual product in the exterior algebra.  For a homogeneous element $v \in V$, let $\overline{v} = 0$ if $v \in (F_d)_0$ or $1$ if $v \in (F_d)_1$.  Then it is clear that for homogeneous elements $u,v \in F_d$, we have
$$u \wedge v = (-1)^{\overline{u} \overline{v}} v \wedge u.$$
There is also a symmetric bilinear form on $\widehat{A}^{-}$ defined by
$$(a(-r), b(-s)) = (a,b)\delta_{r,s}$$
and extended to all of $F_d$ by
$$(u_1 \wedge ... \wedge u_m, v_1 \wedge ... \wedge v_n ) = \delta_{m,n} \sum\limits_{\sigma \in S_m} \sgn (\sigma) (u_1, v_{\sigma(1)})...(u_m, v_{\sigma(m)}).$$

The action of $\widehat{A}$ on $F_d$ is given by
\begin{align*}
a(-r) \cdot v &= a(-r) \wedge v \\
a(r) \cdot v &= \frac{\partial}{\partial a(-r)} v \\
c \cdot v &= v
\end{align*}

for $a \in A, v \in F_d, r \in \Z_{+} + \frac{1}{2}$.
The derivation is understood as a super derivation, i.e., it follows the rules
\begin{align*}
\frac{\partial 1}{\partial a(-r)} &= 0 \\
\frac{\partial b(-r)}{\partial a(-s)} &= (a,b) \delta_{r,s} \\
\frac{\partial}{\partial a(-r)} u \wedge v &= \left( \frac{\partial}{\partial a(-r)} u \right) \wedge v + (-1)^{\overline{u}} u \wedge \left( \frac{\partial}{\partial a(-r)} v \right)
\end{align*}

We may adapt some of our previous lemmas in this settings :
\begin{lemma} \label{commutatorfermionicfields}
Suppose $a,b \in A$.  Then
$$\{ a(z_1), b(z_2) \} = (a,b) z_2^{-1} \delta(z_1/z_2).$$
\end{lemma}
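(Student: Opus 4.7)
The proof is a direct computation from definitions; there is no real obstacle, only bookkeeping with the half-integer indices. The plan is to expand both fields into their modes, apply the defining superbracket relation $\{a(r),b(s)\} = (a,b)\delta_{r+s,0}c$ (noting that $c$ acts as the identity on $F_d$), and then recognize the resulting formal series as the claimed $\delta$-function expression.

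Concretely, I would begin by writing
$$\{a(z_1),b(z_2)\} = \sum_{r,s \in \Z + \frac{1}{2}} \{a(r),b(s)\} \, z_1^{-r-\frac{1}{2}} z_2^{-s-\frac{1}{2}}.$$
Bilinearity of the superbracket justifies pulling the sum outside, and substituting the defining relation collapses the double sum to the diagonal $s = -r$, giving
$$\{a(z_1),b(z_2)\} = (a,b) \sum_{r \in \Z + \frac{1}{2}} z_1^{-r - \frac{1}{2}} z_2^{r - \frac{1}{2}}.$$

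Next, I would reindex by setting $n = -r - \frac{1}{2} \in \Z$ so that $r - \frac{1}{2} = -n - 1$, turning the sum into
$$(a,b)\sum_{n \in \Z} z_1^{n} z_2^{-n-1} = (a,b)\, z_2^{-1} \sum_{n \in \Z} (z_1/z_2)^n,$$
which is exactly $(a,b)\, z_2^{-1} \delta(z_1/z_2)$ by the definition of the formal $\delta$-function recalled at the start of Section 3.

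Stylistically, the one small subtlety worth flagging is the sign: because both $a(z_1)$ and $b(z_2)$ are odd fields, the super-commutator is an anticommutator, so no sign appears when one exchanges the order of modes inside the bracket. Once that is noted, the computation is essentially identical in structure to the bosonic analogue (Lemma \ref{commutatorbosonicfields}), with the Kronecker delta $\delta_{r+s,0}$ playing the role that the derivative $\partial_{z_1}\delta(z_1/z_2)$ played before; here no derivative appears because the fermionic modes are indexed by $\Z + \frac{1}{2}$ rather than $\Z$, and the shift is absorbed by the $z_2^{-1}$ prefactor.
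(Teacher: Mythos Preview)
Your proof is correct and is exactly the natural direct computation; the paper itself states this lemma without proof, implicitly leaving it as an easy adaptation of the bosonic case (Lemma~\ref{commutatorbosonicfields}). One small remark on your closing commentary: the absence of a derivative here is not due to the half-integer indexing but to the fact that the fermionic bracket $\{a(r),b(s)\}=(a,b)\delta_{r+s,0}$ lacks the factor of $m$ present in $[\alpha(m),\beta(n)]=m(\alpha,\beta)\delta_{m+n,0}$; this does not affect the validity of your argument.
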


\begin{lemma} \label{adjointfermion}
With respect to $(.,.)$ the adjoint of $a(z)$ is $z^{-1} a(z^{-1})$.
\end{lemma}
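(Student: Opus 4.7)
First I would extract modes: expanding
$$z^{-1} a(z^{-1}) = \sum_{r \in \Z + \frac{1}{2}} a(-r)\, z^{-r-\frac{1}{2}}$$
and matching the coefficient of $z^{-r-\frac{1}{2}}$ against the corresponding coefficient of $a(z)^*$, the lemma reduces to the mode identity
$$a(r)^* = a(-r), \qquad r \in \Z + \tfrac{1}{2}.$$
Since $(\cdot,\cdot)$ is symmetric, the adjoint relation $(a(r) u, v) = (u, a(-r) v)$ for a fixed $r$ is equivalent to the same relation with $r$ replaced by $-r$, so it suffices to treat $r > 0$. In that case $a(r)$ acts as the super-derivation $\partial/\partial a(-r)$ and $a(-r)$ acts as left wedge multiplication by $a(-r)$.

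To verify that these two operators are adjoint, I would work on the monomial basis of $F_d = \Lambda(\widehat{A}^{-})$. By bilinearity it is enough to check
$$\left(\frac{\partial}{\partial a(-r)}\, u,\, v\right) = (u,\, a(-r) \wedge v)$$
on monomials $u = a_{i_1}(-r_1) \wedge \cdots \wedge a_{i_m}(-r_m)$ and $v = a_{j_1}(-s_1) \wedge \cdots \wedge a_{j_n}(-s_n)$, with $m = n+1$ (otherwise both sides vanish by the paper's formula for the form). On the left, the super-Leibniz rule expands the derivation into an alternating sum whose $k$-th term is $(-1)^{k-1}(a, a_{i_k})\delta_{r, r_k}$ times $u$ with its $k$-th factor deleted, so pairing with $v$ yields a signed sum over $S_n$. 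On the right, the permanent-like formula expands the pairing of two length-$(n+1)$ monomials as a signed sum over $S_{n+1}$; grouping these terms according to which factor of $u$ is paired with the newly inserted $a(-r)$ recovers exactly the same $k$-indexed subsums, with the required sign $(-1)^{k-1}$ produced by transposing $a(-r)$ past the first $k-1$ entries of $u$.

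The main obstacle is this sign bookkeeping. Conceptually the identity is the standard multilinear-algebra fact that, on $\Lambda V$ equipped with the inner product induced from one on $V$, the adjoint of left exterior multiplication by $w$ is interior multiplication by the metric dual of $w$. Here the form $(a(-r), b(-s)) = (a,b)\delta_{r,s}$ identifies the metric dual of $a_i(-r)$ with the linear functional implemented by the super-derivation $\partial/\partial a_i(-r)$, which is precisely $a_i(r)$; one sees this at once by evaluating both on the generators $a_j(-s)$ and $a_j^*(-s)$. With the signs verified, the mode identity $a(r)^* = a(-r)$ follows, and summing the series recovers the stated identity $a(z)^* = z^{-1} a(z^{-1})$.
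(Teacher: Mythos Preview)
Your proof is correct and follows essentially the same route as the paper: reduce the field identity to the mode identity $a(r)^* = a(-r)$ via the same formal series manipulation as in Lemma~\ref{adjointlemma1}. The paper's proof simply cites that lemma and leaves the mode adjoint implicit; you go further and actually verify it from the determinant definition of $(.,.)$ on $F_d$, which is the standard exterior-algebra fact that interior multiplication is adjoint to exterior multiplication---a welcome bit of extra care, since in the fermionic setting this identity is not part of the definition of the form.
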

\begin{proof}
The proof is almost the same as the proof of Lemma \ref{adjointlemma1}.
\end{proof}

Because of the supercommutativity, we need to be more careful when defining our normal ordering.  The normal ordering on $\widehat{A}$ is given by
$$: a_1(n_1)...a_k(n_k) : = \sgn(\sigma) a_1(n_{\sigma(1)})... a_k(n_{\sigma(k)})$$
for $a_1,...,a_k \in A$, and where $\sigma$ is chosen such that $n_{\sigma(1)} \leq n_{\sigma(2)} ... \leq n_{\sigma(k)}$.  Observe that we can replace this condition with the condition that all negative $n_i$'s are on the left and all positive $n_i$'s are on the right (or in other words, that the creation operators appear on the left of the annihilation operators) and the definition does not depend on the choice of $\sigma$ satisfying either of these two conditions.  However, unlike the previous normal ordering, $\sgn(\sigma)$ plays an important role.

Define
$$\wt(a_1(-n_1) \wedge ... \wedge a_k(-n_k)) = n_1 + ... + n_k$$
which decomposes $F_d$ as
$$F_d = \sum\limits_{\substack{n \in \frac{1}{2}\Z}} (F_d)_n.$$
Furthermore, define $D : F_d \to F_d$ by
\begin{align} \label{operatorsuperD}
D \cdot 1 &= 0 \\ 
D(a(-r)) &= (r+\frac{1}{2})a(-r-1)
\end{align}
and acting as derivation on $F_d$, i.e.,
\begin{equation} \label{operatorsuperD3}
D(u \wedge v) = Du \wedge v + u \wedge Dv.
\end{equation}
Finally, define
$$Y(v,z) : F_d \to F_d[[z,z^{-1}]]$$
by
\begin{align*}
Y(1,z) &= \id \\
Y\left(a \left(-\frac{1}{2}\right) \right) &= a(z) \\
Y(u \wedge v,z) &= : Y(u,z) Y(v,z) : \\
Y(Dv,z) &= \frac{d}{dz} Y(v,z).
\end{align*}

These rules are sufficient to define $Y(.,z)$ on the entirety $F_d$.
\begin{proposition}
Suppose $b_1,...,b_k \in A$ and $n_1,...,n_k \in \Z_{\geq 0}$.  Then
\begin{align*}
Y(b_1(-n_1 - \frac{1}{2}) \wedge ... \wedge b_k(-n_k - \frac{1}{2},z) \\
= \frac{1}{n_1!...n_k!} : \left( \frac{d}{dz} \right)^{n_1} (b_1(z)) ... \left( \frac{d}{dz} \right)^{n_k} (b_k(z)):
\end{align*}
\end{proposition}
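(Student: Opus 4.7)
The plan is a two-layer induction, first on the single-field case (inducting on $n$ to handle a single $b(-n-1/2)$) and then on the number $k$ of wedge factors.

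First, I would establish by induction on $n$ that $D^n(a(-1/2)) = n!\, a(-n-1/2)$ for every $a \in A$ and $n \geq 0$. The base case $n=0$ is trivial, and for the inductive step, using the defining rule $D(a(-r)) = (r + 1/2) a(-r-1)$, one computes
$$D^{n+1}(a(-\tfrac{1}{2})) = D(n!\, a(-n-\tfrac{1}{2})) = n!\,(n+1)\, a(-n-\tfrac{3}{2}) = (n+1)!\, a(-(n+1)-\tfrac{1}{2}).$$
Combining this with the axiom $Y(Dv,z) = \tfrac{d}{dz}Y(v,z)$ and linearity then yields the single-field identity
$$Y(a(-n-\tfrac{1}{2}),z) = \tfrac{1}{n!} \left(\tfrac{d}{dz}\right)^n Y(a(-\tfrac{1}{2}),z) = \tfrac{1}{n!}\left(\tfrac{d}{dz}\right)^n a(z).$$

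Next I would proceed by induction on $k$, using $Y(u \wedge v,z) = :Y(u,z)Y(v,z):$. The case $k=1$ is exactly what was just shown (no normal ordering is necessary on a single field). For the inductive step, write $u = b_1(-n_1-\tfrac{1}{2}) \wedge \cdots \wedge b_{k-1}(-n_{k-1}-\tfrac{1}{2})$ and $v = b_k(-n_k-\tfrac{1}{2})$; by the inductive hypothesis and the single-field case,
$$Y(u\wedge v,z) = :Y(u,z)Y(v,z): = \tfrac{1}{n_1!\cdots n_{k-1}!\, n_k!} : \left(\tfrac{d}{dz}\right)^{n_1}b_1(z)\cdots \left(\tfrac{d}{dz}\right)^{n_{k-1}}b_{k-1}(z)\,\left(\tfrac{d}{dz}\right)^{n_k}b_k(z): ,$$
provided one knows that iterated normal orderings combine, i.e.\ $::AB:C: = :ABC:$ (with the appropriate sign from the Koszul convention encoded in $:\,:$).

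The main technical obstacle is precisely this associativity/flattening property of normal ordering: in the fermionic setting the signs matter and one must check that the definition of $:a_1(n_1)\cdots a_k(n_k):$ given earlier (which rearranges all creation operators to the left and all annihilation operators to the right, picking up $\sgn(\sigma)$) is compatible with iterated applications $:(:\cdot:)\cdot:$, and also with differentiation term-by-term (which is unambiguous because $\tfrac{d}{dz}$ preserves the creation/annihilation splitting of each mode expansion). Once this compatibility is verified, the induction closes immediately. I would handle it by expanding each $\left(\tfrac{d}{dz}\right)^{n_i}b_i(z)$ into its mode sum, applying the definition of normal ordering directly to the resulting multiple sum of modes, and comparing with the left-hand side expansion obtained from wedging in the Fock space $F_d$.
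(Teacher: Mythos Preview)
Your approach is correct and is exactly the natural way to make the statement explicit from the four defining rules for $Y(\cdot,z)$. The paper does not in fact supply a proof of this proposition: it is stated immediately after those rules as their direct consequence, so there is nothing to compare against beyond noting that your two-layer induction is precisely the intended unpacking. You have also correctly isolated the only nontrivial point, the flattening $::AB:C:\,=\,{:}ABC{:}$ of iterated normal orderings, which holds here because the modes form a Clifford algebra with central anticommutators (so any reordering differs from the fully normal-ordered product only by scalar contraction terms, and these cancel identically once the $\sgn(\sigma)$ convention is in force).
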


Recall that $a_{l+i} = a^*_i$ and define the distinguished element
$$\omega = \frac{1}{2} \sum\limits_{i=1}^{2l} a_i\left( -\frac{3}{2} \right) a_i^* \left( - \frac{1}{2} \right) = \frac{1}{2} \sum\limits_{i=1}^l a_i\left( -\frac{3}{2} \right) a_i^* \left( - \frac{1}{2} \right) + a_i^* \left( -\frac{3}{2} \right) a_i \left( - \frac{1}{2} \right)$$

\begin{proposition} \label{n=1structure}
The vector space $V$ with operator $Y(.,z)$ and distinguished element $\omega$ is a Vertex Operator superalgebra.
\end{proposition}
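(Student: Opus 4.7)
The plan is to follow the strategy indicated for Theorem \ref{latticevertextheorem}: first establish the super-Jacobi identity on a generating set of fields, extend it to all of $F_d$ via a reconstruction-type argument, and then separately verify the Virasoro structure for $\omega$ and its compatibility with the weight grading. The vacuum axiom, the creation property $\lim_{z\to 0} Y(v,z)1 = v$, and the lower-truncation of $Y(v,z)w$ in $z$ are immediate from the explicit formulas on monomials together with the fact that the annihilation operators $a(r)$, $r > 0$, kill $1$.

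The principal obstacle is the super-Jacobi identity. I would begin with Lemma \ref{commutatorfermionicfields}, which is equivalent to pairwise super-locality of the generating fields: $(z_1-z_2)\{a(z_1), b(z_2)\}=0$ for all $a, b \in A$. From this one invokes a reconstruction-type theorem for vertex superalgebras, the super analogue of the approach used for Theorem \ref{latticevertextheorem}: super-locality of a set of generating fields, closure of $Y$ under normal-ordered product and derivative (both built into our definition), and translation covariance by the derivation $D$ of (\ref{operatorsuperD})--(\ref{operatorsuperD3}) together suffice to produce a vertex superalgebra whose $Y$-map is exactly the one we constructed. The propagation of super-locality from generators to all normally ordered products proceeds by the super version of Dong's lemma, which is the step demanding the most careful sign bookkeeping.

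The Virasoro structure is verified by a direct Wick-style calculation, exactly parallel to the one carried out at the end of the proof of Theorem \ref{latticevertextheorem}. Using Lemma \ref{commutatorfermionicfields} to contract the two copies of $\omega$ against each other, one finds the singular part
$$Y^{-}(\omega,z_0)\omega = (L_{-1}\omega)\, z_0^{-1} + 2\omega\, z_0^{-2} + \frac{c}{2}\, z_0^{-4},$$
with the $z_0^{-3}$ coefficient vanishing by parity of the fermionic double contraction, and with $c = d/2$ arising as the scalar produced by double-contracting the four fermion operators in $\omega$ against those in the second copy. Substituting into the commutator formula (\ref{commutatorformula}) and comparing coefficients of $z_1^{-m-2} z_2^{-n-2}$ yields the Virasoro relations with central charge $c = d/2$. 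Compatibility of $L_0$ with the weight grading is a short direct computation on monomials, and the identification $L_{-1} = D$ follows from verifying $[L_{-1}, a(-r)] = (r+\frac{1}{2})a(-r-1)$; together with $Y(Dv,z) = \partial_z Y(v,z)$ this establishes the translation axiom $\partial_z Y(v,z) = Y(L_{-1}v, z)$.

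The main technical difficulty throughout is careful tracking of signs introduced by fermionic anticommutation, particularly inside nested normally ordered products and in the double-contraction terms where a single sign error would flip the sign of the central charge; once the sign conventions are fixed, the computations are routine adaptations of the bosonic lattice case, and I would refer to \cite{lepowsky}, \cite{feingoldfrenkelries}, \cite{tsukada} for a detailed treatment rather than reproduce it here.
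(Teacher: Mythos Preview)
Your proposal is correct and follows essentially the same approach as the paper: defer the super-Jacobi identity to standard external results (the paper cites weak supercommutativity and weak associativity from \cite{tsukada} and \cite{lepowsky}, while you invoke Dong's lemma and a reconstruction theorem, which amounts to the same thing), and then verify the Virasoro structure by computing the singular part of $Y(\omega,z_0)\omega$ and checking $L_0$, $L_{-1}$ directly. The paper carries out the $\omega_n\omega$ computations a bit more explicitly---in particular it writes out $L_k$ in modes and does the $L_2\omega = \frac{l}{2}$ calculation by hand---but your sketch of the Wick contraction yielding $c = d/2 = l$ is the same content.
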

\begin{proof}
To prove Jacobi identity, observe that it follows from weak supercommutativity and weak associativity (see for example \cite{lepowsky}), both of which are shown in \cite{tsukada}.  Here we verify that the conformal element $\omega$ generates the Virasoro Lie algebra.
Observe that
$$Y(\omega,z) = \frac{1}{2} \sum\limits_{i=1}^{2l} : \frac{d}{dz} Y\left(a_i \left( - \frac{3}{2} \right),z\right)  Y\left( a_i\left(- \frac{1}{2}\right),z\right) :$$
Write $Y(\omega,z) = \sum\limits_{n \in \Z} L_{n} z^{-n-2}$.  First observe that
$$\frac{d}{dz} a(z) = \sum\limits_{n \in \Z+\frac{1}{2}} (-n - \frac{1}{2}) a(n) z^{-n - \frac{3}{2}}.$$
Therefore,
$$L_k = \frac{1}{2} \sum\limits_{i=1}^{2l} \sum\limits_{n \in \Z + \frac{1}{2}} (n-k - \frac{1}{2}) : a_i(k-n) a_i^*(n): $$
In particular,
$$L_0 = \sum\limits_{i=1}^{2l} \sum\limits_{n \in \Z_{\geq 0} + \frac{1}{2}} n a_i(-n) a_i^*(n)$$
and it is clear from this formula that $L_0$ acts as the degree operator on $F_d$.  Similarly,
$$L_{-1} = \sum\limits_{i=1}^{2l} \sum\limits_{n \in \Z_{\geq 0} + \frac{1}{2}} (n + \frac{1}{2}) a_i(-n - 1) a_i^*(n)$$
and it is clear from this formula that $L_{-1}$ acts on $F_d$ as the operator $D$ defined by equations (\ref{operatorsuperD})-(\ref{operatorsuperD3}).

The proof that the components of $\omega$ satisfy the relations of the Virasoro Lie algebra may be done as in Theorem \ref{latticevertextheorem}.  We must check the relations
\begin{align*}
\omega_0 \omega &= L_{-1} \omega \\
\omega_1 \omega &= L_0 \omega = 2 \omega \\
\omega_2 \omega &= L_1 \omega = 0 \\
\omega_3 \omega &= L_2 \omega = \frac{l}{2} 1\\
\omega_n \omega &= L_n \omega = 0 \text{ for } n \geq 4.
\end{align*}
We have just done the first two of these.  To verify that $L_2 \omega = \frac{l}{2} 1$, consider
$$L_{2} = \frac{1}{2} \sum\limits_{i=1}^{2l} \sum\limits_{n \in \Z} (n - 5/2)  a_i(-n + 2) a_i^*(n).$$
When applied to $\omega$, the only nontrivial terms appear from $n = \frac{1}{2}$ and $n = \frac{3}{2}$.  Of course, if $n$ is one of these two numbers then $2-n$ is the other one.  Therefore, we consider the action of the terms
$$-\frac{1}{2} a_i\left(\frac{3}{2}\right) a^*_i\left(\frac{1}{2}\right)$$ on $\omega$, for $i=1,...,2l$.  Because these terms are super derivations, the result will be $\frac{1}{4}$ which results in $L_2 \omega = \frac{l}{2} 1$ when summing over all $2l$ of them.  The other conditions are easy to check.
\end{proof}
We have found in Proposition \ref{voadimension} the dimensions of the graded spaces $(V_L)_{\alpha}$ of a given weight.  We can accomplish something similar for $F_d$.

\begin{proposition} \label{svoadimension}
Write $c(n)$ as the dimension of $(F_d)_n$, the space of elements of $F_d$ of weight $n$.  Then $c(n)$ is the coefficient of $q^n$ in the expansion of
$$\prod\limits_{i \in \Z_{\geq 0} + \frac{1}{2}}(1+q^i)^{2l},$$
\end{proposition}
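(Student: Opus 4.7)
The plan is to identify a homogeneous basis of $F_d = \Lambda(\widehat{A}^-)$ and then read off the generating function directly. First I would fix the obvious ordered basis of $\widehat{A}^-$ consisting of the elements $a_i(-r)$ for $i = 1,\dots,2l$ and $r \in \frac{1}{2} + \Z_{\geq 0}$, and recall that the weight grading defined in the paper assigns $\wt(a_i(-r)) = r$, extended additively to wedge products.

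Next I would invoke the elementary fact that for any graded vector space $W = \bigoplus_j \C e_j$ with $\wt(e_j) = w_j$, the exterior algebra $\Lambda W$ has a basis indexed by finite subsets $S \subseteq \{e_j\}$, namely the wedge products $e_{j_1} \wedge \cdots \wedge e_{j_k}$ with $j_1 < \cdots < j_k$, of total weight $\sum_{e_j \in S} w_j$. Consequently the weight-generating series of $\Lambda W$ factors as $\prod_j (1 + q^{w_j})$: each generator $e_j$ is either absent or present once, contributing the factor $(1 + q^{w_j})$. Applied to $W = \widehat{A}^-$, the product ranges over all pairs $(i,r)$ with $1 \leq i \leq 2l$ and $r \in \frac{1}{2} + \Z_{\geq 0}$.

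The last step is just bookkeeping: grouping the product according to $i$ gives
$$\sum_{n} c(n) q^n \;=\; \prod_{i=1}^{2l} \prod_{r \in \frac{1}{2} + \Z_{\geq 0}} \bigl(1 + q^r\bigr) \;=\; \prod_{r \in \frac{1}{2} + \Z_{\geq 0}} (1+q^r)^{2l},$$
which is the claim. There is no real obstacle here; the only thing to double-check is that the weight grading on $F_d$ really does coincide with the grading induced from $\widehat{A}^-$ (immediate from the definition $\wt(a_1(-n_1) \wedge \cdots \wedge a_k(-n_k)) = n_1 + \cdots + n_k$), and that the wedge products with distinct generators indexed by an ordered set form a basis of $\Lambda(\widehat{A}^-)$, which is the standard construction of the exterior algebra.
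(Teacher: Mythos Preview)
Your argument is correct and is essentially the same as the paper's: both identify the standard monomial basis of $\Lambda(\widehat{A}^-)$ (ordered wedge products of the generators $a_i(-r)$) and read off the generating function $\prod_{r}(1+q^r)^{2l}$. The only cosmetic difference is that the paper phrases the count as ``partitions of $n$ into half-integers with $2l$ colors,'' whereas you phrase it as the product over generators of $(1+q^{w_j})$; these are two descriptions of the same bijection.
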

\begin{proof}
It is clear that a basis for $(F_d)_n$ is given by
$$a_{i_1}(-j_1) \wedge ... \wedge a_{i_k}(-j_k)$$
such that $j_1 + ... j_k = n$ and where $j_k \leq j_{k-1} \leq ... \leq j_1$ and $j_{r} = j_{r+1}$ implies that $a_{i_r} < a_{i_k}$.  Equivalently, these basis elements can be described as partitions of $n$ into half-integers of the form $\frac{1}{2} + \Z_{\geq 0}$ with $2l$ colors, and the number of such partitions is evidently obtained from the desired generating function.
\end{proof}

\subsection{Construction of N = 1 and N=2 SVOAs}

We will now combine the construction of the vertex operator algebra $V_L$ from an even lattice $L$ of rank $d \in 2 \Z$ with the construction of the vertex operator superalgebra $F_d$ constructed from $A$ spanned by the basis $a_1,...,a_l,a^*_1,...,a^*_l$.  The objective of this section is to construct a $N=1$ SVOA based on the lattice $L$.  However, this construction also results in an $N=2$ SVOA with some minor modifications.  Therefore, we introduce these minor modifications right away because we will need the $N=2$ construction later.

Indeed, we let
$$V_{NS} = V_L \otimes F_d$$
where the tensor product is understood as in \cite{lepowsky}, Proposition 3.12.5.  Observe that $V_{NS}$ is again a vertex operator superalgebra with even part
$$(V_{NS})_0 = V_L \otimes (F_d)_0$$
odd part
$$(V_{NS})_1 = V_L \otimes (F_d)_1,$$
and conformal element the sum
$$\omega = \frac{1}{2} \sum\limits_{i=1}^{d} h_i(-1)^2 + \frac{1}{2} \sum\limits_{i=1}^{2l} a_i\left( -\frac{3}{2} \right) a_i^* \left( - \frac{1}{2} \right)$$
where the $h_i$'s form an orthogonal basis for $L \otimes \C$.  The Virasoro algebra generated by $\omega$ will then have central charge equal to the sum of two original charges, i.e. $d + \frac{d}{2} = \frac{3d}{2}$.  We now let
$$h_i^+ = \sum\limits_{i=1}^l \frac{1}{\sqrt{2}} (h_i + i h_{l+i})$$
and
$$h_i^- = \sum\limits_{i=1}^l \frac{1}{\sqrt{2}} (h_i - i h_{l+i}).$$
We can easily check that $(h_i^+,h_i^+) = (h_i^-, h_i^-) = 0$ and 
$(h_i^+,h_j^-) = \delta_{i,j}$.  Observe that if the lattice $L$ has signature $(l,l)$ then the orthogonal elements $h_i$'s can be chosen so that $h_i^+,h_i^-$ lie in $L \otimes \R$.  Then, we define
$$\tau^+ = \sum\limits_{i=1}^l h_i^+(-1) a_i (-\frac{1}{2}),$$
$$\tau^- = \sum\limits_{i=1}^l h_i^-(-1) a^*_i (-\frac{1}{2}),$$
$$j = \sum\limits_{i=1}^l a_i (-\frac{1}{2}) \wedge a_i^* (-\frac{1}{2}).$$

resulting in the following proposition (see \cite{qiu}, \cite{kazama}) :

\begin{proposition}
$V_{NS}$ is an $N=2$ SVOA with distinguished elements $\omega, \tau^+, \tau^-, j$.  Therefore, it is also an $N=1$ SVOA with distinguished elements $\omega, \tau = \tau^+ + \tau^-$.
\end{proposition}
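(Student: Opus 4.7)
The strategy is to verify each defining relation of the $N=2$ superconformal algebra by applying the commutator formula (equation (\ref{commutatorformula})) to the vertex operators of the distinguished elements $\omega, \tau^+, \tau^-, j$. For any pair $u, v \in \{\omega, \tau^+, \tau^-, j\}$, extracting the (anti-)commutator of $Y(u, z_1)$ and $Y(v, z_2)$ reduces to computing the singular part of $Y(u, z_0) v$, i.e., the products $u_n v$ for the finitely many $n \geq 0$ with nonzero contribution. Since $\tau^\pm$ and $j$ are each quadratic in the bosonic and fermionic fields, these products can be expanded in Wick-style fashion using the two-point contractions from Lemma \ref{commutatorbosonicfields} and Lemma \ref{commutatorfermionicfields}, together with the orthogonality relations $(h_i^\pm, h_j^\pm) = 0$, $(h_i^+, h_j^-) = \delta_{ij}$ and the fermionic pairing $\{a_i(r), a_j^*(s)\} = \delta_{ij}\delta_{r+s}$.

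The Virasoro structure on $\omega$ with central charge $c = 3d/2$ is already contained in Proposition \ref{n=1structure} (combined with the computation of Theorem \ref{latticevertextheorem}). For the remaining relations, I would argue as follows. The anticommutators $\{G^+_r, G^+_s\} = 0$ and $\{G^-_r, G^-_s\} = 0$ follow from the vanishing of every Wick contraction between $\tau^+$ and $\tau^+$ (respectively $\tau^-$ and $\tau^-$): on the bosonic side $(h_i^+, h_j^+) = 0$, and on the fermionic side $\{a_i(r), a_j(s)\} = 0$. The relations $[L_m, G_r^\pm] = (m/2 - r) G^\pm_{r+m}$, $[L_m, J_n] = -n J_{m+n}$, and (if one checks it separately) $[J_m, J_n]$ can be read off by observing that $\tau^\pm$ and $j$ are lowest weight vectors for the Virasoro Lie algebra of weights $3/2$ and $1$ respectively, and invoking Lemma \ref{highestweightcommutation}. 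The relations $[J_m, G^\pm_r] = \pm G^\pm_{m+r}$ reduce to the single contraction $\{a_i(r), a_j^*(s)\} = \delta_{ij}\delta_{r+s}$, which yields $j_0 \tau^\pm = \pm \tau^\pm$ and $j_n \tau^\pm = 0$ for $n \geq 1$.

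The main technical obstacle is the mixed relation
$$\{G^+_r, G^-_s\} = L_{r+s} + \tfrac{1}{2}(r-s) J_{r+s} + \tfrac{c}{6}(r^2 - 1/4) \delta_{r+s,0},$$
which must reproduce \emph{three} kinds of terms simultaneously. Concretely, one must compute $\tau^+_n \tau^-$ for $n = 0, 1, 2, 3$ and match the results to $L_{-1}$, to $L_0 + \tfrac{1}{2} J_{-1}$ (with correct coefficients), to a combination of $J_0$ and lower, and to a pure constant contributing to the central term. This requires keeping track of the double contraction in which \emph{both} the bosonic pairing $(h_i^+, h_j^-) = \delta_{ij}$ and the fermionic pairing contribute, producing the central constant precisely when summed over the $l$ pairs of $h_i^\pm$. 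The sign and factor conventions in the normal-ordered quadratic expressions for $\tau^\pm$ are the most delicate part of the bookkeeping.

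Finally, the $N=1$ SVOA structure with $\tau = \tau^+ + \tau^-$ follows immediately from the $N=2$ structure: the operator $G_r := G^+_r + G^-_r$ satisfies
$$\{G_r, G_s\} = \{G^+_r, G^-_s\} + \{G^-_r, G^+_s\},$$
because the pure $++$ and $--$ anticommutators vanish, and the $J$-contributions cancel by antisymmetry in $r \leftrightarrow s$, leaving exactly the super-Virasoro relation $2 L_{r+s} + \tfrac{c}{3}(r^2 - 1/4) \delta_{r+s,0}$. The required identity $\omega = \tfrac{1}{2} G_{-1/2} \tau$ can then be verified by a direct computation of $\tau^+_0 \tau^- + \tau^-_0 \tau^+$ using the same contractions employed above.
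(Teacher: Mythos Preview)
Your proposal is correct and follows essentially the same approach as the paper: compute the singular products $u_n v$ for pairs of distinguished elements and feed them into the commutator formula, with $\{G^+_r, G^-_s\}$ singled out as the most substantial case. The paper organizes the computation by first listing the full table of products $\omega_n j$, $\omega_n \tau^\pm$, $\tau^\pm_n \tau^\pm$, $\tau^+_n \tau^-$, $j_n \tau^\pm$, $j_n j$ and then carrying out the mode-extraction explicitly for $\{G^+_r, G^-_s\}$; your use of Lemma~\ref{highestweightcommutation} for the $[L_m, \cdot]$ relations is a mild shortcut but not a different method. One small correction: since $\tau^\pm$ has weight $3/2$, the product $\tau^+_n \tau^-$ vanishes for $n \geq 3$, so only $n = 0, 1, 2$ contribute (the paper records $\tau^+_0 \tau^- = \omega + \tfrac{1}{2} L_{-1} j$, $\tau^+_1 \tau^- = j$, $\tau^+_2 \tau^- = l$).
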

\begin{proof}
Define operators $G^{\pm}_{r}$ by
$$Y(\tau^{\pm},z) = \sum\limits_{n \in \Z} \tau_n z^{-n-1} = \sum\limits_{n \in \Z} G^{\pm}_{n + \frac{1}{2}} z^{-n-2}$$
and operators $J_n$ by
$$Y(j,z) = \sum\limits_{n \in \Z} j_n z^{-n-1} = \sum\limits_{n \in \Z} J_n z^{-n-1}.$$
Observe that $j_n = J_n$.  Then we may evaluate,
$$G^+_{n + \frac{1}{2}} = \sum\limits_{i=1}^l \sum\limits_{k \in \Z} h^+_i(k) a_i(n + \frac{1}{2} - k),$$
$$G^-_{n + \frac{1}{2}} = \sum\limits_{i=1}^l \sum\limits_{k \in \Z} h^-_i(k) a^*_i(n + \frac{1}{2} - k)$$
and
$$j_n = \sum\limits_{i=1}^l \sum\limits_{k \in \Z} : a_i (k+ \frac{1}{2}) \wedge a_i^* (n-k- \frac{1}{2}) :$$

From there, it is easy to compute the following table, which can also be found in \cite{scheit} :
\begin{align*}
\omega_0 j &= L_{-1} j & \tau^+_n \tau^+ &= \tau^-_n \tau^- = 0 \text{ for } n \geq 0 \\
\omega_1 j &= L_0 j = j & \tau^+_0 \tau^-&= G^+_{-\frac{1}{2}} \tau^- = \omega + \frac{1}{2} L_{-1} j\\
\omega_n j &= 0 \text{ for } n \geq 2 & \tau^+_1 \tau^- &= G^+_{\frac{1}{2}} \tau^- =  j \\
\omega_0 \tau^{\pm} &= L_{-1} \tau^{\pm} & \tau^+_2 \tau^- &= l
\end{align*}
\begin{align*}
\omega_1 \tau^{\pm} &= L_0 \tau^{\pm} = \frac{3}{2} \tau^{\pm} & \tau^+_n \tau^- &= 0 \text{ for } n \geq 0 \\
\omega_n \tau^{\pm} &= 0 \text{ for } n \geq 2 & j_0 \tau^{\pm} &= \pm \tau^{\pm} \\
j_0 j &= 0 & j_n \tau^{\pm} &= 0 \text{ for } n \geq 0 \\
j_1 j &= l & j_n j &= 0 \text{ for } n \geq 2.
\end{align*}
Using the techniques introduced in Theorem \ref{latticevertextheorem} and Proposition \ref{n=1structure}, we can recover the commutation relations for the $N=2$ superconformal algebra.  Because the process is fairly repetitive, we only recover $\{G^+_r, G^-_s \}$ here, since it is the most difficult one.

Using the supercommutator formula, we compute
\begin{align*}
& \{ Y(\tau^+,z_1), Y(\tau^-,z_2)\} = \res_{z_0} z_2^{-1} \left(e^{- z_0 \frac{\partial}{\partial z_1}} \delta(z_1/z_2) \right) Y(Y(\tau^+,z_0)\tau^-,z_2) \\
&= \res_{z_0} z_2^{-1} \left(e^{- z_0 \frac{\partial}{\partial z_1}} \delta(z_1/z_2) \right) ( Y(\omega + \frac{1}{2} L_{-1} j,z_2)z_0^{-1} + Y(j,z_2)z_0^{-2} + l z_0^{-3}Y(1,z_2)) \\
&= Y(\omega,z_2) z_2^{-1} \delta(z_1/z_2) + \frac{z_2^{-1}}{2} \delta(z_1/z_2)  \frac{\partial}{\partial z_2} Y(j,z_2) \\
&- z_2^{-1} \frac{\partial}{\partial z_1} \delta(z_1/z_2) Y(j,z_2) + \frac{l}{2} z_2^{-1} \left( \frac{\partial}{\partial z_1} \right)^2 \delta(z_1/z_2)
\end{align*}
Taking the coefficients of $z_1^{-m-2}$ on both sides, we find
\begin{align*}
\{ G^+_{m+\frac{1}{2}}, Y(\tau^-,z_2) \} &= Y(\omega,z_2) z_2^{m+1} + \frac{z_2^{m+1}}{2} \frac{\partial}{\partial z_2} Y(j,z_2) \\ &+ (m+1) z_2^m Y(j,z_2) + \frac{l}{2} m(m+1) z_2^{m-1}.
\end{align*}
Then, taking the coefficients of $z_2^{-n-2}$ on both sides, we find
\begin{align*}
\{G^+_{m+\frac{1}{2}}, G^-_{n + \frac{1}{2}} \} &= L_{m+n+1} + \frac{-m-n-2}{2} J_{m+n+1} \\ &+ (m+1) J_{m+n+1} + \frac{l}{2} m(m+1) \delta_{m+n+1,0} \\
&= L_{m+n+1} + \frac{1}{2} (m-n) J_{m+n+1} + \frac{l}{2} m(m+1) \delta_{m+n+1,0}.
\end{align*}
Letting $r = m+\frac{1}{2}$ and $s = n + \frac{1}{2}$ and observing that $l = \frac{c}{3}$ we then have
$$\{G^+_{r}, G^-_{s} \} = L_{r+s} + \frac{1}{2} (r-s) J_{r+s} + \frac{c}{6} (r^2 - \frac{1}{4}) \delta_{r+s,0},$$
as desired.
\end{proof}

\subsection{No-ghost theorem in the Neveu-Schwarz model}
For the remainder of this section we will only be interested in the $N=1$ structure of $V_{NS}$.  Define for $i=1,...,l$,
$$\tilde{h}_i = \frac{1}{\sqrt{2}} (a_i + a_i^*)$$
and for $i=l+1,...,d$,
$$\tilde{h}_i = \frac{i}{\sqrt{2}} (a_i - a_i^*).$$
It is clear that $(\tilde{h}_i,\tilde{h}_j) = \delta_{i,j}$.
Given $x = \sum\limits_{i=1}^d x_i h_i$ we define $\tilde{x} = \sum\limits_{i=1}^d x_i \tilde{h}_i$.  The following is a generalization of the commutator $[L_k,\alpha(n)] = -n \alpha(k+n)$.

\begin{lemma} \label{bosonfermionbridge}
Suppose $n \in \Z_{\geq 0}$, $r,s \in \frac{1}{2} + \Z$ and $x \in L$.  Then $$[G_{r},x(-n)] = n \tilde{x}(n + r).$$  Furthermore, $$[G_{r},\tilde{x}(s)] = x(r+s).$$
\end{lemma}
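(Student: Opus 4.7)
The plan is to write $G_r = G_r^+ + G_r^-$ using the explicit mode expansions from the preceding proposition,
$$G_r^+ = \sum_{i=1}^{l} \sum_{k \in \Z} h_i^+(k) a_i(r-k), \qquad G_r^- = \sum_{i=1}^{l} \sum_{k \in \Z} h_i^-(k) a_i^*(r-k),$$
and then evaluate each super-bracket by applying the Heisenberg relations on $V_L$ and the canonical anticommutation relations on $F_d$ to the appropriate tensor factor.

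For the first identity I would exploit that $x(-n)$ is an even operator acting only on the $V_L$ factor, so it commutes with every fermionic mode $a_i(r-k)$ and $a_i^*(r-k)$ appearing in $G_r^\pm$. Only the bosonic brackets $[h_i^\pm(k), x(-n)] = k(h_i^\pm, x)\delta_{k,n}$ therefore contribute, collapsing each sum to the term $k=n$ and producing an overall factor $n$ together with a surviving fermionic mode $a_i^{(\ast)}$. Writing $x = \sum_j x_j h_j$ in the orthonormal basis of $L \otimes \C$ and unpacking the definitions of $h_i^\pm$ and $\tilde h_i$, one checks directly that
$$\sum_{i=1}^{l} \bigl[(h_i^+, x) a_i + (h_i^-, x) a_i^*\bigr] \;=\; \sum_{i=1}^{l} \bigl[x_i \tilde h_i + x_{l+i} \tilde h_{l+i}\bigr] \;=\; \tilde x,$$
which assembles the contributions into the claimed identity.

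For the second identity, $G_r$ and $\tilde x(s)$ are both odd, so the super-bracket is the anticommutator. Now $\tilde x(s)$ commutes with each $h_i^\pm(k)$, so only the fermionic anticommutators $\{a_i(r-k), \tilde x(s)\} = (a_i, \tilde x)\delta_{r-k+s,0}$ and $\{a_i^*(r-k), \tilde x(s)\} = (a_i^*, \tilde x)\delta_{r-k+s,0}$ survive, forcing $k = r+s$. A short computation from the definitions of $\tilde h_j$ shows $(a_i, \tilde x) = (h_i^-, x)$ and $(a_i^*, \tilde x) = (h_i^+, x)$, so the bracket becomes
$$\sum_{i=1}^{l} \bigl[(h_i^-, x) h_i^+(r+s) + (h_i^+, x) h_i^-(r+s)\bigr].$$
The polarization identity $\sum_{i=1}^{l} \bigl[h_i^+ \otimes h_i^- + h_i^- \otimes h_i^+\bigr] = \sum_{j=1}^{d} h_j \otimes h_j$ then collapses this sum to $x(r+s)$, which finishes the proof.

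The computations themselves are essentially mechanical; the main source of friction is the bookkeeping of parities (commutator in the first case because $x(-n)$ is even, anticommutator in the second because $\tilde x(s)$ is odd) and the several changes of basis relating $\{h_j\}$ to $\{h_i^\pm\}$ on the bosonic side and $\{a_i, a_i^*\}$ to $\{\tilde h_i\}$ on the fermionic side.
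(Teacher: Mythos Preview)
The paper states this lemma without proof, introducing it simply as ``a generalization of the commutator $[L_k,\alpha(n)] = -n\alpha(k+n)$'' and leaving the verification to the reader. Your approach---expanding $G_r = G_r^+ + G_r^-$ via the explicit mode formulas and reducing to the Heisenberg relations on the bosonic side and the CAR on the fermionic side---is exactly the intended direct computation, and your basis-change identities relating $\{h_j\}$, $\{h_i^\pm\}$, and $\{\tilde h_j\}$ are correct.

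One remark: your computation actually yields $[G_r, x(-n)] = n\,\tilde x(r-n)$, not $n\,\tilde x(n+r)$ as printed in the lemma. This is consistent with the analogy the paper draws (substituting $n \mapsto -n$ in $[L_k,\alpha(n)] = -n\alpha(k+n)$ gives a mode index $k-n$, not $k+n$), so the discrepancy is almost certainly a typographical slip in the paper's statement rather than an error in your argument.
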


To state the no-ghost theorem in this setting, we first modify the definition of $P^i$ from section \ref{subsectionphysicalstate}, replacing lowest weight vectors for Virasoro with lowest weight vectors for super Virasoro, i.e.
$$P^i = \{ v \in V_{NS} | L_0v = iv, G_{n - \frac{1}{2}}v = 0 \text{ for all } n > 0\}.$$
Observe that these conditions also imply that $L_n v = 0$ for all $n > 0$.

Again fix $\alpha \neq 0 \in L$.  Then there exists $c \in L \otimes \R$ such that $(c,\alpha) = 1$ and $(c,c) = 0$ by Lemma \ref{isotropicc}.  Define again
$$S(\alpha) = \{v \otimes e^{\alpha} \in V_{NS} \}$$
$$V_s(\alpha) = \{ u=v \otimes e^{\alpha} \in V_{NS},L_0 u = su \}$$ and
$$P^i(\alpha) = \{ v \otimes e^{\alpha} \in P^i \}.$$
Then we replace $T(\alpha,c)$ with
$$T(\alpha,c) = \{ v \in V_{NS}(\alpha) | G_{-\frac{1}{2}+n } v = c(n)v = \tilde{c}(n - \frac{1}{2})v = 0 \text { for all } n > 0 \},$$
where the definition is motivated by Corollary \ref{transervedefinitionalternative}.  We generalize Lemma \ref{independentlemma} in the following way :
\begin{lemma}
Suppose $\lambda_1,...,\lambda_n,\mu_1,...,\mu_m \in \Z_{\geq 0}$ and $\epsilon_1,...,\epsilon_k$, $\delta_1,...,\delta_l \in \{ 0, 1 \}$.  Then for different values of $\lambda_i,\mu_i,\epsilon_i,\delta_i$, the spaces
$$G_{- \frac{1}{2}}^{\epsilon_1}...G_{-k + \frac{1}{2}}^{\epsilon_k}L_{-1}^{\lambda_1}...L_{-n}^{\lambda_n} \tilde{c} \left(- \frac{1}{2}\right)^{\delta_1}...\tilde{c} \left(-l + \frac{1}{2}\right)^{\delta_l} c_{-1}^{\mu_1}...c_{-m}^{\mu_m} T(\alpha,c)$$
are independent and their direct sum is the entirety of $S(\alpha)$.
\end{lemma}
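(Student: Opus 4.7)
The plan is to extend the proof of Lemma \ref{independentlemma} to the $N=1$ setting. I would parallel the structure: the conformal and longitudinal modes $L_{-n}, c(-n)$ (bosonic) and $G_{-r}, \tilde{c}(-r)$ (fermionic, appearing with exponent $0$ or $1$ by Fermi statistics) serve as the longitudinal generators, while $T(\alpha,c)$ plays the role of the transverse sector. To bridge the two, I would introduce fermionic DDF-type operators $B^a_r$ in addition to the bosonic $A^a_m$ of section \ref{subsectionphysicalstate}, and verify by direct computation, using the supercommutator formula and Lemma \ref{bosonfermionbridge}, that $A^a_m, B^a_r$ commute or supercommute with each of $L_{-n}, G_{-r}, c(-n), \tilde{c}(-r)$, and that repeated application of $A^a_{-m}, B^a_{-r}$ to $e^{\alpha + Mc}$ generates all of $T(\alpha,c)$.

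For linear independence, the strategy is the two-step reduction from Lemma \ref{independentlemma}. First, using the commutations above together with the analog of Proposition \ref{ddfproperties}, any relation among elements of different transverse type can be peeled off by applying the corresponding positive-index operators $A^a_m, B^a_r$, which reduces the problem to the case where each transverse factor is $e^{\alpha + Mc}$. Then a direct computation on $e^{\alpha'}$ with $\alpha' = \alpha+Mc$ shows that $L_{-n}$ contributes a leading $\alpha'(-n)$, that $c(-n)$ is its own leading term, that $G_{-r}$ produces by Lemma \ref{bosonfermionbridge} a leading $\tilde{\alpha}'(-r)$ in the fermionic sector, and that $\tilde{c}(-r)$ is its own leading fermionic term. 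Since $\alpha'$ and $c$ are linearly independent in $L \otimes \Q$, distinct choices of $(\lambda_i,\mu_i,\epsilon_i,\delta_i)$ produce distinct monomials in $\alpha'(-n), c(-n), \tilde{\alpha}'(-r), \tilde{c}(-r)$, establishing independence.

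For the spanning claim, I would match graded dimensions. By Propositions \ref{voadimension} and \ref{svoadimension}, the graded dimension of $S(\alpha)$ equals $q^{(\alpha,\alpha)/2}\prod_{n \geq 1}(1-q^n)^{-d}\prod_{r \in \Z_{\geq 0}+1/2}(1+q^r)^d$. A no-ghost-style computation analogous to Corollary \ref{dimensionfakemonster} would give the graded dimension of $T(\alpha,c)$ as a $(d-2)$-fold product of these factors, while the longitudinal and conformal operators $L_{-n}, c(-n), G_{-r}, \tilde{c}(-r)$ supply the remaining two copies. Matching generating functions then completes the proof. The main obstacle is verifying the commutation of the fermionic DDF operators $B^a_r$ with the longitudinal modes $G_{-r}$ and $\tilde{c}(-r)$: this requires careful bookkeeping in the mixed bosonic/fermionic setting, and it is precisely here that the enlarged definition of $T(\alpha,c)$, requiring annihilation by $G_{-1/2+n}, c(n), \tilde{c}(n-1/2)$ for all $n > 0$ rather than $c(n)$ alone, plays its essential role.
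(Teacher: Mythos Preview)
Your approach differs from the paper's in a significant structural way. You propose to develop the fermionic DDF operators $B^a_r$ first and use them (together with $A^a_m$) to generate $T(\alpha,c)$, then peel off the transverse content as in Lemma~\ref{independentlemma}. The paper does \emph{not} do this: in the $N=1$ section the DDF operators $B^a_r$ are introduced only later, in Section~\ref{subsectionDDFN=1}, and their construction requires the delicate formal object $c(-1)^{-1/2}$ and the enlarged space $R(\widehat{h}^-)$. More importantly, the statement that the $A^a_{-m}, B^a_{-r}$ generate all of $T(\alpha,c)$ (Theorem~\ref{DDFbasis}) is established in the paper by a dimension count that invokes Theorem~\ref{multiplicitiesN=1}, which in turn rests on the no-ghost theorem, which rests on \emph{this} lemma. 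So your plan, as written, risks circularity unless you supply an independent proof that the DDF states exhaust $T(\alpha,c)$.

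The paper's route is more elementary and avoids all of this. It chooses $x_1,\dots,x_{d-2}$ orthogonal to $\alpha,c$, sets $y_i=\tilde{x}_i$, and asserts directly from the annihilation-condition definition of $T(\alpha,c)$ that its elements are linear combinations of monomials in the $x_i(-j)$ and $y_i(-r)$ applied to $e^\alpha$. The leading-term argument you describe for the second stage is exactly what the paper does: $L_{-i}$ produces a leading $\alpha(-i)$, $G_{-r}$ produces a leading $\tilde{\alpha}(-r)$, and these together with the explicit $c(-j),\tilde{c}(-r)$ factors give distinguishable monomials since $\alpha,c,x_1,\dots,x_{d-2}$ are independent in $h$. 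Spanning is then a direct bijection of graded bases (send $\alpha(-i)\mapsto L_{-i}$, $\tilde{\alpha}(-r)\mapsto G_{-r}$), not a no-ghost-style computation. Your dimension-matching idea would work too, but note that the dimension of $T(\alpha,c)$ should be read off from its monomial description, not from Corollary~\ref{dimensionfakemonster} or its analogue.
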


\begin{proof}
Choose linearly independent vectors $x_1,...,x_{d-2}$ in $L \otimes \R$ which are orthogonal to $\alpha,c$ and let $y_i = \tilde{x_i}$.  Then every element of $T(\alpha,c)$ can be written as a linear combination of linearly independent elements of the form
$$T = \prod\limits_{i=1}^{d-2} x_i(-1)^{a^1_i}...x_i(-k_i)^{a^{k_i}_i} y_i( - \frac{1}{2})^{b^1_i}...y_i(-l_i + \frac{1}{2})^{b^{l_i}_i} e^{\alpha}.$$
with $a_i^j \in \Z$ and $b_i^j \in \{0,1 \}$.  Observe that these monomials do not contain any $\alpha,\tilde{\alpha},c,\tilde{c}$ and therefore we may add some terms of the form $\alpha(-i),\tilde{\alpha}(-\frac{1}{2}-i)$ or $c(-i), \tilde{c}(-\frac{1}{2} -i)$ and we will obtain again linearly independent monomials.  

But then, observe that $L_{-i}^{\lambda_i} e^{\alpha}$ generates a monomial of the form $\alpha(-i) e^{\alpha}$.  Furthermore, $G_{-i + \frac{1}{2}} e^{\alpha}$ generates a monomial of the form $\tilde{\alpha}(-i + \frac{1}{2}) e^{\alpha}$.  Therefore, the choices of values $\lambda_i,\mu_i,\epsilon_i,\delta_i$ generate (among other terms) when applied to $T$, a monomial of the form
\begin{align*}
&\tilde{\alpha}(-\frac{1}{2})^{\epsilon_1} ... \tilde{\alpha}(-k + \frac{1}{2})^{\epsilon_k} \alpha(-1)^{\lambda_1}... \alpha(-n)^{\lambda_n} \cdot \\ &\cdot \tilde{c} \left(- \frac{1}{2}\right)^{\delta_1}... \tilde{c} \left(-l + \frac{1}{2}\right)^{\delta_l} c_{-1}^{\mu_1}...c_{-m}^{\mu_m} T
\end{align*}

From there it is enough to order the $\lambda_i,\mu_i,\epsilon_i,\delta_i,a_i^j,b_i^j$ with some lexicographic ordering and apply a maximality argument to see why they must yield linearly independent elements.

The second part of the statement follows by using linear independence of the monomials $T$ previously described and counting the dimension of all generated monomials.  Equivalently, one can write an isomorphism of vector spaces sending $\alpha(-i)$ to $L_{-i}$ and $\tilde{\alpha}(-l+\frac{1}{2})$ to $G_{-l + \frac{1}{2}}$, and this linear map is obviously injective (by linear independence) and surjective by definition.
\end{proof}

\begin{lemma}
For fixed $\alpha,c$, let $N^s$ denote all elements of $V_s(\alpha)$ such that any of the $\lambda_i,\epsilon_i$ is non-zero.  Then the operators $G_{\frac{1}{2}}$ and $G_{\frac{3}{2}} + 2 L_1 G_{\frac{1}{2}}$ are well-defined maps $N^{\frac{1}{2}} \to N^0$ and $N^{\frac{1}{2}} \to N^{-1}$ if and only if $d = 10$.
\end{lemma}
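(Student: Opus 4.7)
The plan is to mimic the proof of the analogous $N=0$ lemma (for $L_1$ and $\widetilde{L}_2 = L_2 + \tfrac{3}{2}L_1^2$ acting on $N^1$), replacing the Virasoro relations with the super Virasoro relations in the Neveu--Schwarz sector. The negative part of the NS super Virasoro algebra is generated by $L_{-1}, L_{-2}, G_{-1/2}, G_{-3/2}$ (one checks that $[L_{-1}, G_{-1/2}]=0$ so $G_{-3/2}$ really must be included as a generator), so I would begin by decomposing an arbitrary $y \in N^{1/2}$ as
\[
y = L_{-1} x_1 + L_{-2} x_2 + G_{-1/2} x_3 + G_{-3/2} x_4,
\]
where the weights are forced to be $L_0 x_1 = -\tfrac{1}{2}x_1$, $L_0 x_2 = -\tfrac{3}{2}x_2$, $L_0 x_3 = 0$, $L_0 x_4 = -x_4$.

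Next I would apply $G_{1/2}$ to each of the four summands, using $[L_m, G_r]=(\tfrac{m}{2}-r)G_{m+r}$ and $\{G_r,G_s\}=2L_{r+s}+\tfrac{c}{3}(r^2-\tfrac{1}{4})\delta_{r+s,0}$. In each case the commutator produces terms that still begin with one of $L_{-1}, L_{-2}, G_{-1/2}, G_{-3/2}$, so the image of $G_{1/2}$ lies in $N^0$ regardless of $d$. Then I would apply $G_{3/2} + 2 L_1 G_{1/2}$ to each summand. The critical design feature is that for $L_{-1}x_1$, the two pieces $G_{3/2}L_{-1}x_1 = L_{-1}G_{3/2}x_1 + 2 G_{1/2}x_1$ and $2L_1 G_{1/2} L_{-1}x_1 = 2 L_{-1}L_1 G_{1/2}x_1 + 4 L_0 G_{1/2}x_1 + 2 G_{-1/2}L_1 x_1 + 2 G_{1/2}x_1$ produce three extraneous $G_{1/2} x_1$ contributions (with coefficients $+2, -4, +2$ after using $L_0 G_{1/2} x_1 = -G_{1/2} x_1$), which cancel exactly, leaving only terms in $N^{-1}$. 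The analogous (longer but routine) calculations for $L_{-2}x_2$ and $G_{-1/2}x_3$ show that those images also land in $N^{-1}$ for all $d$; these are the motivation for the precise coefficient $2$ in front of $L_1 G_{1/2}$.

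The key step is $G_{-3/2}x_4$. Here I would use $\{G_{3/2},G_{-3/2}\} = 2L_0 + \tfrac{c}{3}\bigl(\tfrac{9}{4}-\tfrac{1}{4}\bigr) = 2L_0 + \tfrac{2c}{3}$ together with $\{G_{1/2},G_{-3/2}\}=2L_{-1}$ and $[L_1,G_{-3/2}]=2G_{-1/2}$, plus $[L_1,L_{-1}]=2L_0$. Using $L_0 x_4 = -x_4$ and the central charge $c = \tfrac{3d}{2}$ of $V_{NS}$, the scalar contributions collect to $(\tfrac{2c}{3} - 2) x_4 - 8 x_4 = (d-10)\,x_4$, with all other terms beginning with one of $L_{-1},L_{-2},G_{-3/2}$ or $G_{-1/2}$ and hence in $N^{-1}$. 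The outcome is
\[
(G_{3/2} + 2L_1 G_{1/2})(G_{-3/2} x_4) = (d-10)\, x_4 + (\text{term in } N^{-1}).
\]
Since $x_4$ need not lie in $N^{-1}$ (it can be chosen in the ``transverse'' part, e.g.\ $x_4 = e^{\alpha - c}$ with $\alpha$ chosen so that $(\alpha-c,\alpha-c) = 2$), the map $G_{3/2}+2L_1G_{1/2}\colon N^{1/2}\to N^{-1}$ is well-defined if and only if $d = 10$, giving both directions of the ``iff''.

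The main obstacle, and the only nontrivial computation, is tracking the constants that accumulate on the $G_{-3/2} x_4$ summand: the central term $\tfrac{2c}{3}$ from $\{G_{3/2}, G_{-3/2}\}$, the contribution $8 L_0 x_4$ from $4 L_1 L_{-1} x_4$, and the cancellations among the bracket terms. All other pieces are bookkeeping with the super Virasoro brackets, closely parallel to the $N=0$ proof.
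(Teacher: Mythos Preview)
Your proof is correct and follows the same strategy as the paper: decompose an element of $N^{1/2}$ according to generators of the negative part of the Neveu--Schwarz algebra, then track the scalar term arising from the central extension in the $G_{-3/2}$ piece. Your critical computation $(\tfrac{2c}{3}-2)-8 = d-10$ agrees with the paper's.

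The only difference is efficiency: you decompose into four summands $L_{-1}x_1 + L_{-2}x_2 + G_{-1/2}x_3 + G_{-3/2}x_4$, whereas the paper uses only two, $G_{-1/2}x_1 + G_{-3/2}x_2$. This is because $L_{-1}$ and $L_{-2}$ are redundant generators: from $\{G_r,G_s\}=2L_{r+s}+\cdots$ one has $G_{-1/2}^2 = L_{-1}$ and $\{G_{-1/2},G_{-3/2}\}=2L_{-2}$, so every element of $N^{1/2}$ already has the form $G_{-1/2}(\cdot)+G_{-3/2}(\cdot)$. Your $L_{-1}x_1$ and $L_{-2}x_2$ cases are therefore subsumed by the $G_{-1/2}$ case (and, as you correctly found, contribute no central-charge obstruction). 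The paper's two-term decomposition halves the bookkeeping but the content is the same.
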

\begin{proof}
Write $y = G_{-n}x$ such that $x \in V_L$ and $L_0 y = \frac{1}{2}y$.  Then we can write
$$y = G_{-\frac{1}{2}} x_1 + G_{- \frac{3}{2}} x_2$$
with $L_0 x_1 = 0$ and $L_0 x_2 = -x_2$.  Once again, this decomposition is justified because $G_{-\frac{1}{2}}$ and $G_{- \frac{3}{2}}$ generate the lower part of the super Virasoro algebra.  Therefore, we compute
\begin{align*}
G_{\frac{1}{2}}(G_{-\frac{1}{2}} x_1) &= 2L_0 x_1 - G_{-\frac{1}{2}}G_{\frac{1}{2}} x_1 \\
&= - G_{-\frac{1}{2}}G_{\frac{1}{2}} x_1 \in N^0 \\
G_{\frac{3}{2}}G_{-\frac{1}{2}} x_1 &= 2L_1 x_1 - G_{-\frac{1}{2}}G_{\frac{3}{2}}x_1 \\
L_1 G_{\frac{1}{2}} G_{-\frac{1}{2}} x_1 &= - L_1 G_{-\frac{1}{2}}G_{\frac{1}{2}}x_1 \\
&= G_{-\frac{1}{2}}(...) - G_{\frac{1}{2}}G_{\frac{1}{2}}x_1
\end{align*}
and we see that $(G_{\frac{3}{2}} + 2 L_1 G_{\frac{1}{2}})(G_{-\frac{1}{2}}x_1) \in N^0$ because $2 G_{\frac{1}{2}}G_{\frac{1}{2}} = 2L_1$.  We also compute
\begin{align*}
G_{\frac{1}{2}}G_{-\frac{3}{2}}x_2 &= 2 L_{-1} x_2 - G_{-\frac{3}{2}}G_{\frac{1}{2}}x_2 \in N^0 \\
G_{\frac{3}{2}}G_{-\frac{3}{2}}x_2 &= 2 L_0 x_2 + \frac{2c}{3} x_2 - G_{-\frac{3}{2}} G_{\frac{3}{2}} x_2 \\
&= G_{-\frac{3}{2}}(...) + \frac{2c}{3} x_2 - 2 x_2 \\
L_1 G_{\frac{1}{2}} G_{-\frac{3}{2}} x_2 &= 2 L_1 L_{-1} x_2 - L_1 G_{-\frac{3}{2}} G_{\frac{1}{2}} x_2 \\
&= 4 L_0 x_2 + L_{-1}(...) + G_{-\frac{1}{2}}(...) + G_{-\frac{3}{2}}(...)
\end{align*}
From these computations, we see that $(G_{\frac{3}{2}} + 2 L_1 G_{\frac{1}{2}})(G_{-\frac{3}{2}}x_2) \in N^{-1}$ if and only if $\frac{2c}{3} - 10 = 0$ or equivalently, $c = 15$.
\end{proof}

\begin{theorem} (No-ghost theorem)
Suppose $u \in P^{1/2}(\alpha)$ and $L$ is a Lorentzian even lattice of rank $10$.  Then $u = v + w$ where $v \in T(\alpha,c)$ and $w \in P^{1/2}$ is a linear combination of terms $G_{-n}x$ for some $x \in V_{NS}$.  
\end{theorem}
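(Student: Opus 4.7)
The plan is to mirror the proof of the $N=0$ no-ghost theorem (Theorem \ref{noghost}) essentially line for line, replacing the critical pair $L_1$ and $\widetilde{L}_2 = L_2 + \tfrac{3}{2}L_1^2$ with the super-Virasoro analogs $G_{1/2}$ and $G_{3/2} + 2L_1 G_{1/2}$ supplied by the preceding lemma; this is precisely where the constraint $d=10$ (equivalently $c=15$) enters. The strategy is to split a physical state $u \in P^{1/2}(\alpha)$ into a \emph{transverse} component built from monomials in $c(-i), \tilde{c}(-s+\tfrac{1}{2})$ acting on $T(\alpha,c)$, together with a \emph{spurious} component in the span of super-Virasoro creation operators, and then to argue that each component is annihilated independently by every positive super-Virasoro mode.

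First I would define $K^s \subseteq V_s(\alpha)$ as the span of elements of the form $\tilde{c}(-\tfrac{1}{2})^{\delta_1}\cdots\tilde{c}(-l+\tfrac{1}{2})^{\delta_l}\,c(-1)^{\mu_1}\cdots c(-m)^{\mu_m}\,B$ with $B \in T(\alpha,c)$, $\delta_i \in \{0,1\}$, $\mu_j \in \Z_{\geq 0}$ and total $L_0$-weight $s$. Using the commutators $[L_n, c(-j)] = j\,c(n-j)$, $[L_n, \tilde{c}(r)] = (-n/2-r)\tilde{c}(n+r)$, $[G_r, c(-j)] = j\,\tilde{c}(j+r)$ and $\{G_r, \tilde{c}(s)\} = c(r+s)$ (via Lemma \ref{bosonfermionbridge}), together with the defining annihilation conditions for $T(\alpha,c)$, one verifies that $L_n$ and $G_r$ with $n, r > 0$ restrict to maps $K^s \to K^{s-n}$ and $K^s \to K^{s-r}$. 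By the independence lemma just proved, $u$ decomposes uniquely as $u = v+w$ with $v \in K^{1/2}$ and $w \in N^{1/2}$. Since $u$ is physical, $G_{1/2}u = G_{3/2}u = 0$. The previous lemma (requiring $d=10$) guarantees that $G_{1/2}$ and $G_{3/2}+2L_1G_{1/2}$ act independently on $K^{1/2}$ and $N^{1/2}$, so $G_{1/2}v = G_{1/2}w = 0$ and $(G_{3/2}+2L_1G_{1/2})v = (G_{3/2}+2L_1G_{1/2})w = 0$, hence $G_{3/2}v = G_{3/2}w = 0$. Since the supercommutators $\{G_{1/2},G_{1/2}\} = 2L_1$, $\{G_{1/2},G_{3/2}\} = 2L_2$ and the bracket $[L_m,G_r] = (m/2-r)G_{m+r}$ together generate the entire positive part of the Neveu--Schwarz super Virasoro algebra, we conclude $L_nv = L_nw = 0$ and $G_rv = G_rw = 0$ for all positive $n$ and $r$.

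The spurious piece $w$ is then in the claimed form: $w \in N^{1/2}$ is by definition a combination of terms involving at least one $L_{-n}$ or $G_{-n+1/2}$ creation operator, and the identity $L_{-n} = \tfrac{1}{2}\{G_{-n+1/2},G_{-1/2}\}$ (valid for $n \geq 1$, where the central term in $\{G_r,G_s\}$ vanishes) lets us rewrite every $L_{-n}y$ as a combination of expressions of the form $G_{-r}x$. For the transverse piece I would prove $v \in T(\alpha,c)$ by a maximality argument on the exponents. Writing $v = \sum_I a_I M_I v_I'$ with $v_I' \in T(\alpha,c)$, suppose some $\mu^I_k \neq 0$, take $k$ maximal with this property, and then $\mu^I_k$ maximal among the remaining indices. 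Applying $L_k^{\mu^I_k}$, each factor $c(-k)^{\mu^I_k}$ in the corresponding $M_I$'s is converted to a nonzero multiple of $c(0)^{\mu^I_k} = (c,\alpha)^{\mu^I_k} = 1$, while the other $c(-j)$ with $j < k$ contribute annihilation operators $c(k-j)$ on $T(\alpha,c)$ and hence drop out, leaving a nonzero linear combination of linearly independent monomials and contradicting $L_k v = 0$. Once all $\mu^I_k$'s are shown to vanish, the analogous argument using $G_{s-1/2}$ together with $\{G_{s-1/2}, \tilde{c}(-s+\tfrac{1}{2})\} = c(0) = 1$ eliminates the $\delta^I_s$'s, so $v = \sum_I a_I v_I' \in T(\alpha,c)$.

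The main obstacle is really contained in the preceding lemma: one must verify carefully that the precise combination $G_{3/2}+2L_1G_{1/2}$ (rather than $G_{3/2}$ alone) is what stabilizes the $K/N$ decomposition, with the offending terms from $G_{-3/2}x_2$ cancelling exactly at $c=15$, parallel to the role of $\widetilde{L}_2 = L_2 + \tfrac{3}{2}L_1^2$ in the classical setting. A secondary technical complication is the interplay between $c$ and $\tilde{c}$ under $L$ and $G$: because $G_r$ swaps them via $[G_r,c(-j)] = j\tilde{c}(j+r)$ and $\{G_r,\tilde{c}(s)\} = c(r+s)$, one must eliminate the $\mu$-exponents before the $\delta$-exponents (or run a combined lexicographic induction) so that these swaps do not obscure the maximality argument.
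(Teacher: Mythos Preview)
Your proposal is correct and follows essentially the same approach as the paper. The paper's own proof is extremely brief (``The remainder of the proof proceeds basically the same as in Theorem \ref{noghost}''), defining $K^s$ exactly as you do, invoking Lemma \ref{bosonfermionbridge} to see that $G_{1/2}, G_{3/2}$ preserve the $K$-spaces, and then appealing to the $d=10$ lemma and the condition $(c,\alpha)=1$ to force all $\delta_i, \mu_j$ to vanish; your write-up simply fills in the details the paper omits, including the rewriting of $L_{-n}$ in terms of $G$'s and the two-stage elimination of the $\mu$- and $\delta$-exponents.
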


\begin{proof} 
The proof is now very similar to the proof in the non-super case, with only a few additions.  Replace $K^s$ with the subspace of $V_{s}(\alpha)$ composed of elements of the form
$$A = \sum\limits_{i \in I}a_i  \tilde{c} \left(- \frac{1}{2}\right)^{\delta_1}...\tilde{c} \left(-l + \frac{1}{2}\right)^{\delta_l} c(-1)^{\mu^i_1}...c(-m)^{\mu_m^i} B_i$$
for some $B_i \in T(\alpha,c)$ and the condition $L_0(A) = sA$.  Using Lemma \ref{bosonfermionbridge}, $G_{\frac{1}{2}}$ and $G_{\frac{3}{2}}$ will be maps $K^s \to K^{s - \frac{1}{2}}$ and $K^s \to K^{s - \frac{3}{2}}$, respectively.

The remainder of the proof proceeds basically the same as in Theorem \ref{noghost}.  We observe that $v,w \in P^{1/2}$ as well by independence of the spaces.  Then one remarks using $(c,\alpha) = 1$ that $A$ can only be in $P^{1/2}$ if all the exponents $\delta_1,...,\delta_l$ and $\mu_1,...,\mu_m$ are zero.
\end{proof}

\subsection{The Lie algebra of physical states}
The Lie algebra in this subsection was first described in \cite{borcherds0} and then studied further in \cite{scheit2}.  It is described here in a slightly different way.

Having defined a new space $P^i$ in the $N=1$ model, one would be tempted to define a Lie bracket as we have before, by
$$[u,v] = u_0v$$
for $u,v \in P^{1/2}$.  However, one should observe that the weight of our elements has changed from $1$ to $1/2$.  Furthermore, the space $P^i$ is defined by extra conditions : indeed, elements must now be lowest weight vectors for the entirety of the super-Virasoro algebra.  Because of these issues, this bracket is not in general closed, or even well-defined on the quotient $P^{1/2}/G_{- \frac{1}{2}}P^0$ anymore.  To resolve these issues, we replace the old Lie bracket by a new Lie bracket :
$$[u,v] = (G_{-\frac{1}{2}}u)_0 v.$$

\begin{theorem}
The space $P^{1/2}/G_{- \frac{1}{2}}P^0$ with the bracket defined above is a Lie algebra.
\end{theorem}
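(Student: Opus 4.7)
The plan is to verify in turn the three Lie algebra axioms: (i) the bracket $[u,v] = (G_{-1/2}u)_0 v$ is well-defined on $P^{1/2}/G_{-1/2}P^0$; (ii) the Jacobi identity; (iii) antisymmetry. The central observation driving every step is that, for $u \in P^{1/2}$, the element $x := G_{-1/2}u$ is a Virasoro lowest weight vector of weight one: the relations $[L_n, G_{-1/2}] = \tfrac{n+1}{2}G_{n-1/2}$ together with the annihilation conditions on $u$ force $L_n x = 0$ for $n \geq 1$ and $L_0 x = x$. Lemma~\ref{highestweightcommutation} then yields $[L_m, x_0] = 0$ for every $m$. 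A parallel super-commutator calculation with $Y(\tau, z_1)$ and $Y(x, z_2)$, whose singular OPE is $Y(\tau, z_0)x = u\,z_0^{-2} + (L_{-1}u)z_0^{-1} + \text{regular}$ (from $G_{-1/2}^2 = L_{-1}$, $\{G_{1/2},G_{-1/2}\}u = u$, and $G_r x = 2L_{r-1/2}u = 0$ for $r \geq 3/2$), yields
\[
[G_r, Y(x, z_2)] = \frac{d}{dz_2}\bigl(z_2^{r+1/2}Y(u, z_2)\bigr),
\]
whose residue vanishes, so $[G_r, x_0] = 0$ for all $r$. Combining these two commutations shows $x_0 v \in P^{1/2}$ whenever $v \in P^{1/2}$. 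Well-definedness on the quotient then follows: $[G_{-1/2}w, v] = (L_{-1}w)_0 v = 0$ since $Y(L_{-1}w,z) = \partial_z Y(w,z)$, and $[u, G_{-1/2}w] = G_{-1/2}(x_0 w) \in G_{-1/2}P^0$ because $[G_{-1/2}, x_0] = 0$ and $x_0 w \in P^0$ by the same commutations applied to $w$.

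For the Jacobi identity, the key is the identity $G_{-1/2}[u,v] = x_0(G_{-1/2}v) = u'_0 v'$, where $u' := G_{-1/2}u$ and $v' := G_{-1/2}v$ both lie in the weight-one Virasoro-lowest-weight space of $V_{NS}$ viewed as an $N=0$ VOA. This reduces the new Jacobi identity to the $N=0$ Jacobi identity of Theorem~\ref{liealgebraofphysicalstates}: applying Corollary~\ref{commutatorcorollary} to the even element $u'$ yields
\[
[[u,v],w] = (G_{-1/2}[u,v])_0 w = (u'_0 v')_0 w = [u'_0, v'_0]\,w = u'_0 v'_0 w - v'_0 u'_0 w = [u,[v,w]] - [v,[u,w]].
\]

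For antisymmetry I apply super skew-symmetry to $Y(x, z)v$ (with $x$ even and $v$ odd) to obtain $x_0 v = -v_0 x + L_{-1}X$ for an explicit $X$. Using the $N=1$ identity $\{G_{-1/2}, v_0\} = (G_{-1/2}v)_0$ and $L_{-1} = G_{-1/2}^2$, this rearranges to
\[
[u,v] + [v,u] = G_{-1/2}\bigl(v_0 u + G_{-1/2}X\bigr).
\]
Applying $G_{-1/2}$ once more gives $L_{-1}(v_0 u + G_{-1/2}X) = u'_0 v' + v'_0 u'$, which by the $N=0$ antisymmetry of Theorem~\ref{liealgebraofphysicalstates} lies in $L_{-1}$ times the weight-zero Virasoro-lowest-weight space of $V_{NS}$. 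Since the kernel of $L_{-1}$ on weight-zero vectors of $V_{NS}$ is spanned by the vacuum, $v_0 u + G_{-1/2}X$ differs by a scalar from some $z_0$ in that space, and $[u,v] + [v,u] = G_{-1/2}z_0$.

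The main obstacle, and the subtlest step, is that $z_0$, produced a priori only as a Virasoro lowest weight vector, must actually lie in $P^0$, that is, also satisfy $G_r z_0 = 0$ for all $r > 0$. This turns out to follow from the fact that $[u,v]+[v,u] \in P^{1/2}$: the identity
\[
L_m(G_{-1/2}z_0) = G_{-1/2}L_m z_0 + \tfrac{m+1}{2}G_{m-1/2}z_0 = 0
\]
together with $L_m z_0 = 0$ for $m \geq 1$ forces $G_{m-1/2}z_0 = 0$ for all $m \geq 1$, placing $z_0 \in P^0$ and completing the proof of antisymmetry modulo $G_{-1/2}P^0$.
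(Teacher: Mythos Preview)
Your proof is correct and follows essentially the same approach as the paper: the commutation $[G_r,(G_{-1/2}u)_0]=0$ for well-definedness, Corollary~\ref{commutatorcorollary} applied to $u'=G_{-1/2}u$, $v'=G_{-1/2}v$ for Jacobi, and skew-symmetry combined with $\{G_{-1/2},Y(v,z)\}=Y(G_{-1/2}v,z)$ for antisymmetry. Your antisymmetry step takes a small detour through the $N=0$ result to first show the correction term is Virasoro-lowest-weight before bootstrapping to super $P^0$, whereas the paper simply remarks that the inductive argument of Theorem~\ref{liealgebraofphysicalstates} adapts directly with $G_{-1/2}$ in place of $L_{-1}$.
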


\begin{proof}
We must prove that the bracket is well-defined.  Suppose $u,v \in P^{1/2}$.  Lemma \ref{highestweightcommutation} does not have an obvious generalization, because the commutator formula between $Y(\tau,z_1)$ and $Y(u,z)$ yields
\begin{align*}
\{ Y(\tau,z_1), Y(u,z) \} &= z^{-1} Y(G_{- \frac{1}{2}}u,z) \delta(z_1/z) - z^{-1} Y(G_{\frac{1}{2}}u,z) \frac{\partial}{\partial z_1} \delta(z_1/z) \\
&+ \res_{z_0} z^{-1} \sum\limits_{n > 0} Y(G_{n + \frac{1}{2}} v,z) z_0^{-n-2} \left( e^{-z_0 \frac{\partial}{\partial z_1}} \delta(z_1/z) \right).
\end{align*}
and while all the terms except for the first term disappear, there is nothing we can do with said first term.  Instead, if we replace $u$ with $G_{- \frac{1}{2}}u$, an even element, we see that only the first two terms remain, and we obtain
$$[ Y(\tau,z_1), Y(G_{- \frac{1}{2}}u,z) ] = z^{-1}Y(G_{- \frac{1}{2}}^2u,z) \delta(z_1/z) - z^{-1}Y(G_{\frac{1}{2}}G_{- \frac{1}{2}}u,z) \frac{\partial}{\partial z} \delta(z_1/z)$$
which simplifies to
$$[ Y(\tau,z_1), Y( G_{- \frac{1}{2}} u,z) ] = z^{-1}Y(L_{-1}u,z) \delta(z_1/z) - z^{-1}Y(2 L_0 u,z) \frac{\partial}{\partial z} \delta(z_1/z).$$
From there, the proof of Lemma \ref{highestweightcommutation} can be repeated, and we obtain that for $u \in P^{i}$,
$$[ G_{m + \frac{1}{2}},  (G_{- \frac{1}{2}}u)_0 ] = ((2i)(m+1) - m - 1)u_m.$$
In particular, for $i = \frac{1}{2}$, we obtain
$$[ G_{m + \frac{1}{2}}, (G_{- \frac{1}{2}}u)_0 ] = 0.$$
This implies that $[ u,v ] \in P^{1/2}$.  Next, suppose $u \in G_{- \frac{1}{2}}P^0$.  Then write $u = G_{- \frac{1}{2}} u'$ and then
$$[ u,v ] = (G_{- \frac{1}{2}}^2 u')_0 v = (L_{-1}u')_0 v = 0.$$
again because $(L_{-1}u')_0 = \res_{z} \frac{d}{dz} Y(u',z) = 0$.

Therefore, we see that the bracket is well-defined.  It remains to prove anticommutativity and the Jacobi identity.

For anticommutativity we once again use skew-symmetry, which results in
$$Y(G_{- \frac{1}{2}}u,z)v = e^{z L_{-1}} Y(v,-z) G_{- \frac{1}{2}} u.$$
Recall that for $N=1$ SVOA's, we have the property
$$\{G_{-\frac{1}{2}}, Y(a,z) \} = Y(G_{- \frac{1}{2}}a,z),$$
which allows us to rewrite
$$Y(G_{- \frac{1}{2}}u,z)v = e^{z L_{-1}} Y(G_{-\frac{1}{2}}v,-z)u - e^{z L_{-1}}G_{-\frac{1}{2}} Y(v,-z)u.$$
Taking residue at $z = 0$ we see that
$$[u,v] = (G_{-\frac{1}{2}}u)_0 v = -[v,u] + G_{-\frac{1}{2}}x$$
for some $x$ (here we also use the fact that $L_{-1} = G_{-\frac{1}{2}}^2$).  The proof that $x \in P^0$ is essentially the same as in the proof of Theorem \ref{liealgebraofphysicalstates}.

Finally, we must prove Jacobi identity, i.e.
\begin{equation} \label{jacobiidentity}
[u,[v,w]] - [v,[u,w]] = [[u,v],w]
\end{equation}
for all $u,v,w \in P^{1/2}$.
Once again, we have by Corollary \ref{commutatorcorollary} that
$$[u'_0,v'_0] = (u'_0 v')_0$$
Here we use $u' = G_{-\frac{1}{2}}u$ and $v' = G_{-\frac{1}{2}}v$ to find
\begin{align*}
[u,[v,w]] - [v,[u,w]] &= ((G_{-\frac{1}{2}} u)_0 G_{-\frac{1}{2}}v)_0 w \\
&= (G_{-\frac{1}{2}} (G_{-\frac{1}{2}} u)_0 v)w + ((G_{-\frac{1}{2}}^2u)_0 v)_0 w \\ &= (G_{-\frac{1}{2}} (G_{-\frac{1}{2}} u)_0 v)w + ((L_{-1}u)_0 v)_0 w
\end{align*}
however $(L_{-1}u)_0 = 0$ hence
$$[u,[v,w]]-[v,[u,w]] = [[u,v],w]$$
as desired.
\end{proof}

We have defined a bilinear form $V_L$ which we now denote $(.,.)_L$ and a bilinear form on $F_d$ which we now denote $(.,.)_F$.  Together they define a bilinear form on $V_{NS}$ by
$$(x_1 \otimes y_1, x_2 \otimes y_2) = (x_1,x_2)_L (y_1,y_2)_F$$
where $x_1,x_2 \in V_L$ and $y_1,y_2 \in F_d$.

We also extend $\theta$ to all of $V_{NS}$ by specifying $\theta(a(m)) = -a(m)$.  Then we may repeat the steps taken in Section \ref{subsectionphysicalstate} to obtain
\begin{theorem}
Suppose $u,v,w \in V_{NS}$.  Then we have
$$(Y(v,z)u,w) = (u, Y(e^{z L_1}(-1)^{\overline{L_0}} z^{-2L_0} \theta(v),z^{-1})w).$$
\end{theorem}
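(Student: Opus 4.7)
The plan is to reduce the identity to two parallel statements, one on each factor of $V_{NS} = V_L \otimes F_d$, and exploit the compatibility of all relevant operators with the tensor decomposition. Since the bilinear form factors as $(x_1 \otimes y_1, x_2 \otimes y_2) = (x_1,x_2)_L (y_1,y_2)_F$, the vertex operators factor as $Y(v_L \otimes v_F, z) = Y(v_L,z) \otimes Y(v_F,z)$, the involution splits as $\theta = \theta|_{V_L} \otimes \theta|_{F_d}$, and $L_0, L_1$ decompose as commuting sums on the two factors (so $e^{zL_1}$ and $z^{-2L_0}$ distribute over the tensor product on homogeneous vectors), the identity on a pure tensor $v_L \otimes v_F$ splits as a product of the analogous identities on $V_L$ and $F_d$. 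The $V_L$-identity, with $(-1)^{\overline{L_0}} z^{-2L_0}$ agreeing with $(-z^2)^{-L_0}$ on the integer-weight space $V_L$, was already established in Subsection \ref{subsectionbilinearform}.

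The remaining task is to prove the analog on $F_d$. For this I follow the strategy of Section \ref{dualmodule}: define a map $\phi : F_d \to F_d'$ by $\langle \phi(v), u \rangle = (v,u)_F$ and verify that $\phi$ intertwines the action $Y(\cdot, z)$ with the $\theta$-twisted dual module action $Y'_\theta(\cdot,z)$. Unwinding the definition of $Y'$ turns this intertwining into precisely the desired identity, once the right-hand side is rewritten using the conjugation formula for $e^{zL_1}(-1)^{\overline{L_0}} z^{-2L_0}$ acting on vertex operators. The base case $v = a(-1/2)$ reduces directly to Lemma \ref{adjointfermion} by a short computation. For the inductive step, I establish a fermionic analog of the conjugation formula (Lemma 5.2.3 of \cite{frenkelhuanglepowsky}) together with the super-parity lemma proved just before the theorem, both adapted to handle the normal-ordering signs that arise in $F_d$. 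Once conjugation is available, the Jacobi identity propagates the identity from generators $a(-1/2)$ to arbitrary normally ordered products of creation operators.

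The principal technical obstacle is the bookkeeping of fermionic signs. The involution $\theta$ multiplies each fermionic creation operator by $-1$, the factor $(-1)^{\overline{L_0}}$ depends on the integer part of the $L_0$-eigenvalue, and normally ordered products in $F_d$ introduce permutation signs when reordered; all three must be combined consistently through the conjugation formula. Once this is done, assembling the $V_L$ and $F_d$ identities via the tensor product decomposition and extending by bilinearity from pure tensors gives the formula for general $v, u, w \in V_{NS}$.
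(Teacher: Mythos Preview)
Your proposal is correct and follows essentially the same approach as the paper: reduce to checking the identity on generators, note that the bosonic generators were handled in Section~\ref{subsectionbilinearform}, and observe that the only new case is the fermionic generator $a(-\tfrac{1}{2})$, which reduces to Lemma~\ref{adjointfermion}. The paper works directly with the dual module $(V_{NS})'$ rather than explicitly splitting along the tensor factorization $V_L \otimes F_d$, but this is a cosmetic difference and the substance of the argument is the same.
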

\begin{proof}
There is no substantial difference between the proof of this theorem and the analogous proofs in Section \ref{subsectionbilinearform}.  The only additional thing to check is that, in the dual module $(V_{NS})'$, we have for $a \in A$,
\begin{align*}
\langle Y'_{\theta}(a ( - \frac{1}{2} ) ,z)u,w \rangle &= \langle u, Y(e^{z L_1}(-1)^{\overline{L_0}} z^{-2L_0} \theta(a ( -\frac{1}{2})),z^{-1})w \rangle \\ &= \langle u, z^{-1} a(z^{-1})w \rangle.
\end{align*}
and then use Lemma \ref{adjointfermion}.
\end{proof}

\begin{lemma}
With respect to $(.,.)$, the adjoint of operators $G_{r}$ for $r \in \Z + \frac{1}{2}$ on $V_{NS}$ is $G_{-r}$.
\end{lemma}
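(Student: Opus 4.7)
The plan is to mirror the proof of Lemma \ref{adjointvirasoro}: work directly with the explicit mode expansion of $G_r$ together with the known adjoints of the underlying Heisenberg and fermionic oscillators. One could alternatively try to use the adjoint-of-vertex-operator formula with $v = \tau$, which would involve carefully tracking the weight $3/2$ of $\tau$ and the sign $(-1)^{\overline{L_0}}$, but the direct mode computation is more transparent and makes the boson/fermion split visible.

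Decomposing $\tau = \tau^+ + \tau^-$ gives $G_r = G^+_r + G^-_r$, and the expressions derived in the proof of Proposition \ref{n=1structure},
\begin{align*}
G^+_{n+\frac{1}{2}} &= \sum_{i=1}^l \sum_{k \in \Z} h^+_i(k)\, a_i\!\left(n+\tfrac{1}{2}-k\right), \\
G^-_{n+\frac{1}{2}} &= \sum_{i=1}^l \sum_{k \in \Z} h^-_i(k)\, a^*_i\!\left(n+\tfrac{1}{2}-k\right),
\end{align*}
reduce the question to the adjoints of bosonic and fermionic modes. I would then invoke the Heisenberg convention $\alpha(n)^* = \alpha(-n)$ on $V_L$, extended $\C$-linearly to give $h_i^{\pm}(k)^* = h_i^{\pm}(-k)$, together with the relations $a_i(r)^* = a_i(-r)$ and $a_i^*(r)^* = a_i^*(-r)$ on $F_d$ coming from Lemma \ref{adjointfermion}. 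Since the boson modes and the fermion modes act on different tensor factors of $V_{NS} = V_L \otimes F_d$, they commute as operators, so the product rule $(AB)^* = B^* A^*$ introduces no super-sign and taking an adjoint just negates every mode index.

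The substitution $k \mapsto -k$ in the resulting summations then identifies $(G^{\pm}_r)^*$ with $G^{\pm}_{-r}$, and adding the two pieces yields $(G_r)^* = G_{-r}$. The one place at which a sign error could easily slip in is the verification that, although each $G^{\pm}_r$ is an odd operator built from odd pieces, the two classes of factors $h_i^{\pm}(k)$ and $a_i(r)$ commute (not anticommute) across the tensor product because they live in different factors; this is precisely what guarantees the answer $+G_{-r}$ rather than $-G_{-r}$. Once that point is pinned down, the remainder is bookkeeping with the summation indices.
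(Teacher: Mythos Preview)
Your proposal is correct and follows essentially the same approach as the paper: the paper's proof is literally ``See the proof of Lemma~\ref{adjointvirasoro}, replacing only the adjoint of $h_i(k)$ with the adjoint of $a_i(k)$,'' i.e.\ write $G_r$ as a sum of products of a bosonic mode and a fermionic mode, use $\alpha(n)^* = \alpha(-n)$ and $a(r)^* = a(-r)$, and reindex. The only cosmetic difference is that you pass through the decomposition $G_r = G^+_r + G^-_r$ in the $h_i^{\pm}, a_i, a_i^*$ basis, whereas the paper implicitly has the equivalent expression $G_r = \sum_{j=1}^{d}\sum_{k} h_j(k)\,\tilde h_j(r-k)$ in the orthonormal $h_j, \tilde h_j$ basis; either way the computation is identical.
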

\begin{proof}
See the proof of Lemma \ref{adjointvirasoro}, replacing only the adjoint of $h_i(k)$ with the adjoint of $a_i(k)$.
\end{proof}

Most importantly, the following analogue of Corollary \ref{adjointcorollary}.
\begin{corollary}
Suppose $v \in P^{1/2}$.  Then $(G_{-\frac{1}{2}}v)^*_0 = -(G_{- \frac{1}{2}}\theta(v))_0$.
\end{corollary}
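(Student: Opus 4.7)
The plan is to reduce to the analogue of Corollary \ref{adjointcorollary} by showing that $u := G_{-\frac{1}{2}}v$ lies in the $N{=}0$ ``physical states'' subspace $P^1$ of $V_{NS}$, and then to invoke the general adjoint formula just established for $V_{NS}$.

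First I would verify the weight data of $u$. Since $v \in P^{1/2}$ satisfies $L_0 v = \tfrac{1}{2}v$, the super-Virasoro relation $[L_0, G_{-\frac{1}{2}}] = \tfrac{1}{2} G_{-\frac{1}{2}}$ gives $L_0 u = u$. For $n > 0$, the relation $[L_n, G_{-\frac{1}{2}}] = \tfrac{n+1}{2}\, G_{n-\frac{1}{2}}$ together with $G_{n-\frac{1}{2}}v = 0$ (which holds for all $n \geq 1$ because $v \in P^{1/2}$) and $L_n v = 0$ yield $L_n u = 0$. Thus $u$ is a lowest-weight vector for the Virasoro subalgebra of $V_{NS}$ with $L_0$-eigenvalue $1$, so in particular $\overline{L_0}(u) = 1$, $L_0 u = u$, and $L_k u = 0$ for $k \geq 1$.

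Next I would plug $u$ into the adjoint formula
\[
(Y(u,z)x, y) = \bigl(x,\; Y(e^{z L_1}(-1)^{\overline{L_0}} z^{-2L_0}\theta(u),\,z^{-1})y\bigr).
\]
Using the three conditions above, $e^{zL_1}u = u$, $(-1)^{\overline{L_0}}u = -u$, and $z^{-2L_0}u = z^{-2}u$. This collapses the right-hand side to $(x,\, -z^{-2} Y(\theta(u), z^{-1})y)$. I then need $\theta(u) = G_{-\frac{1}{2}}\theta(v)$: because $\tau = \tau^+ + \tau^-$ is bilinear in a bosonic $h^\pm_i(-1)$ and a fermionic $a_i^{(*)}(-\tfrac{1}{2})$, and $\theta$ acts as $-1$ on each tensor factor, $\theta(\tau) = \tau$, so $\theta$ commutes with every mode $G_r$.

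Finally I would extract the coefficient of $z^{-1}$ on both sides. The left side contributes $(u_0 x, y) = ((G_{-\frac{1}{2}}v)_0 x, y)$, while the right side contributes $(x,\, -(G_{-\frac{1}{2}}\theta(v))_0 y)$ after writing $Y(\theta(u),z^{-1}) = \sum_n \theta(u)_n z^{n+1}$ and reading off the residue. This gives $(G_{-\frac{1}{2}}v)_0^* = -(G_{-\frac{1}{2}}\theta(v))_0$, as claimed. There is no real obstacle beyond checking these bookkeeping steps; the only subtlety worth being careful about is the sign $(-1)^{\overline{L_0}}u = -u$, which is what converts the analogue of Corollary \ref{adjointcorollary} into the present signed identity.
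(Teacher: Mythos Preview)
Your argument is correct and matches the paper's intended proof: the corollary is stated as the direct analogue of Corollary~\ref{adjointcorollary}, obtained by applying the SVOA adjoint formula to $u = G_{-\frac{1}{2}}v$ (which you correctly verify lies in $P^1$ for the Virasoro subalgebra) and using that $\theta$ fixes $\tau$ so that $\theta(G_{-\frac{1}{2}}v) = G_{-\frac{1}{2}}\theta(v)$. Your explicit checks of $L_n u = 0$, $(-1)^{\overline{L_0}}u = -u$, and the commutation of $\theta$ with $G_r$ are exactly the bookkeeping the paper leaves implicit.
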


Then it is clear that once again the radical of the bilinear form $(.,.)$ is an ideal.  In particular, we now have an analogous theorem (\cite{scheit2}) :
\begin{theorem} \label{multiplicitiesN=1}
The quotient $g_{NS}$ of $P^{1/2}$ by the radical of the bilinear form $(.,.)$ is a Lie algebra.  Furthermore, the multiplicities of $(g_{NS})_{\alpha}$ are given by the coefficient $c(\frac{1}{2} - \frac{(\alpha,\alpha)}{2})$ in $$\sum\limits_{r \in \frac{1}{2} \Z} c(r)q^r = \frac{\prod\limits_{i \in \Z_{\geq 0} + \frac{1}{2}}(1+q^i)^{8}}{\prod\limits_{i = 1}^\infty (1-q^i)^8}.$$
\end{theorem}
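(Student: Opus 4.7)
The plan is to mirror the strategy of Corollary \ref{dimensionfakemonster} in the $N{=}1$ setting: use the no-ghost theorem to decompose $P^{1/2}(\alpha)$ into a transverse summand $T(\alpha,c)$ and a summand lying in the radical of $(.,.)$, conclude $(g_{NS})_\alpha \simeq T(\alpha,c)$, and then count $\dim T(\alpha,c)$ via an explicit DDF-type basis.

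First I would verify that $g_{NS}$ inherits a Lie algebra structure from the bracket $[u,v] = (G_{-1/2}u)_0 v$ on $P^{1/2}/G_{-1/2}P^0$. The radical of $(.,.)$ is an ideal by the invariance-under-$\theta$ computation already recorded (the $N{=}1$ analog of the proposition following Corollary \ref{adjointcorollary}), so it suffices to check that $G_{-1/2}P^0 \subseteq \mathrm{rad}(.,.)$. For $v \in P^0$ and $u \in P^{1/2}$ this is immediate from the adjointness of $G_r$ proved in the lemma preceding the theorem:
$$(G_{-1/2}v, u) \;=\; (v, G_{1/2}u) \;=\; 0,$$
since $G_{1/2}u = 0$. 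The same adjointness shows that any element of the form $G_{-n}x \in P^{1/2}$, $n>0$, pairs trivially with all of $P^{1/2}$ and hence lies in the radical.

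Next I would use the no-ghost theorem of the previous subsection to write, for each non-zero $\alpha \in L$ and any isotropic $c$ with $(c,\alpha)=1$,
$$P^{1/2}(\alpha) \;=\; T(\alpha,c) \;\oplus\; D(\alpha),$$
where $D(\alpha)$ is spanned by elements $G_{-n}x \in P^{1/2}$. The previous paragraph puts $D(\alpha)$ into the radical, while the $N{=}1$ analog of Lemma \ref{independentlemma}(a) shows that $T(\alpha,c)$ is positive definite (hence intersects the radical trivially). Consequently $(g_{NS})_\alpha \simeq T(\alpha,c)$ linearly.

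The main step is then computing $\dim T(\alpha,c)$. I would introduce, alongside the bosonic DDF operators $A^a_m = \res_z Y(a(-1)e^{mc},z)$, the fermionic DDF operators
$$B^a_r \;=\; \res_z Y\!\left(\tilde{a}\!\left(-\tfrac{1}{2}\right)e^{rc},z\right), \qquad r \in \tfrac{1}{2}+\Z,$$
for $a \in L\otimes\R$ orthogonal to both $\alpha$ and $c$, and show (by direct residue calculations modelled on Proposition \ref{ddfproperties}) that the $A^a_m, B^a_r$ commute (resp.\ anticommute) with $L_n, G_r, c(n), \tilde{c}(n{-}1/2)$ for $n,r>0$, and so carry the vacuum-like state $e^{\alpha+Nc}$ into $T(\alpha,c)$. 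Choosing $N = \tfrac{1}{2} - \tfrac{(\alpha,\alpha)}{2}$ so that $e^{\alpha+Nc}$ has $L_0$-weight $1/2$, the monomials
$$A^{a_{i_1}}_{-m_1}\cdots A^{a_{i_k}}_{-m_k} B^{a_{j_1}}_{-r_1}\cdots B^{a_{j_l}}_{-r_l}\, e^{\alpha+Nc}$$
with $\sum m_p + \sum r_q = N$ and $a_{i_\bullet}, a_{j_\bullet}$ ranging over a fixed orthonormal transverse basis of the $8$-dimensional complement of $\spn\{\alpha,c\}$ produce elements of $T(\alpha,c)$. A leading-monomial argument in the bosonic modes $\alpha'(-m)$ and fermionic modes $\tilde{\alpha}'(-r)$, exactly as in the proof of Lemma \ref{independentlemma}, gives linear independence. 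The coefficient of $q^N$ in the generating function produced by these monomials is precisely
$$c(N) \;=\; [q^N]\,\frac{\prod_{r\in \frac{1}{2}+\Z_{\geq 0}}(1+q^r)^{8}}{\prod_{m\geq 1}(1-q^m)^{8}},$$
matching the claimed multiplicity.

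The main obstacle is proving that these monomials actually span $T(\alpha,c)$, and not merely inject into it. The most efficient route is a dimension count: by Propositions \ref{voadimension} and \ref{svoadimension} (with $d=10$, $2l=10$) one knows the weight-graded dimensions of $V_{NS}(\alpha)$, and the $N{=}1$ analog of Lemma \ref{independentlemma}(b) decomposes $V_{NS}(\alpha)$ freely under the non-transverse oscillators $L_{-n}, G_{-r}, c(-n), \tilde{c}(-r+\tfrac12)$ acting on $T(\alpha,c)$. Dividing the generating function for $V_{NS}(\alpha)$ by the generating function of this free action leaves exactly the transverse Fock series displayed above, forcing $\dim T(\alpha,c)$ to equal $c\!\left(\tfrac{1}{2} - \tfrac{(\alpha,\alpha)}{2}\right)$ and completing the proof.
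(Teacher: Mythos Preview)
Your overall strategy---reduce to $T(\alpha,c)$ via the $N{=}1$ no-ghost theorem and then count dimensions---is exactly the paper's approach, and your final paragraph (dividing the weight-graded generating function of $V_{NS}(\alpha)$ by that of the free action of the two bosonic and two fermionic non-transverse families $L_{-n}, G_{-r}, c(-n), \tilde c(-r)$) is precisely the character manipulation the paper invokes. That argument is self-contained and yields the stated multiplicities; together with your first two paragraphs it is a complete proof.

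The gap is in your intermediate DDF step. Your proposed fermionic operators
\[
B^a_r \;=\; \res_z Y\!\bigl(\tilde a(-\tfrac12)\,e^{rc},z\bigr),\qquad r\in \tfrac12+\Z,
\]
are identically zero on $S(\alpha+Nc)$. Since $(c,\alpha+Nc)=1$, the factor $z^{rc(0)}$ in $Y(e^{rc},z)$ contributes $z^{r}$, while $\tilde a(z)$ and the exponential factors contribute only integer powers; hence every power of $z$ appearing lies in $r+\Z=\tfrac12+\Z$, and there is no $z^{-1}$ term to extract. This is not a minor technicality---it is exactly the obstruction that forces the paper (following Brower--Friedman) to build $B^a_r$ from the peculiar state $\tilde c(-\tfrac12)\wedge\tilde a(-\tfrac12)\,c(-1)^{-1/2}e^{rc}$ in Section~\ref{subsectionDDFN=1}, where the formal square root contributes a compensating $z^{1/2}$ via $c(z)^{-1/2}=z^{1/2}(zc(z))^{-1/2}$. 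Note also that the paper's logical order is the reverse of yours: it first proves Theorem~\ref{multiplicitiesN=1} by the character count you sketch at the end, and only then uses those multiplicities to conclude that the correctly defined DDF states span (Theorem~\ref{DDFbasis}). So you should simply drop the DDF paragraph; the proof is already complete without it.
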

This theorem was proven by counting the dimensions of $T(\alpha,c) \cap P^{1/2}$ and by manipulation of the characters for $V_{NS}$.

\begin{remark}
Observe that if $L$ is even, the numbers $\frac{1}{2} - \frac{(\alpha,\alpha)}{2}$ are always an integer plus $1/2$.  Therefore we do not need the full numerator, indeed we may replace it with
$$\frac{1}{2} \left(\prod\limits_{i \in \Z_{\geq 0} + \frac{1}{2}} (1+q^i)^{8} - \prod\limits_{i \in \Z_{\geq 0} + \frac{1}{2}} (1-q^i)^{8} \right) = 8 q^{1/2} \prod\limits_{i = 1}^\infty (1+q^i)^8,$$
using the \emph{aequatio identica satis abstrusa} of Jacobi, used here as in \cite{gliozziolivescherk}.  Inserting this in the numerator we then obtain
$$8 q^{1/2} \frac{\prod\limits_{i = 1}^\infty (1+q^i)^{8}}{\prod\limits_{i = 1}^\infty (1-q^i)^8}.$$
\end{remark}

\subsection{DDF construction in the N=1 model} \label{subsectionDDFN=1}
We have successfully recovered the no-ghost theorem in the $N=1$ Neveu-Schwarz model, however we have yet to exhibit an explicit basis for the states that remain as a result of the quotient of $P^{1/2}$ by the radical of the bilinear form $(.,.)$.  In this section, we describe how to construct the transverse DDF operators in the $N=1$ model as introduced in \cite{browerfriedman}, but in the mathematical framework of Vertex Operator Algebras.

Fix $\alpha \in L$ and again choose $c \in L \otimes \R$ such that $(c,\alpha) = 1$ and $(c,c) = 0$.  Then assume $a \in L \otimes \R$ satisfies $(a,\alpha) = (a,c) = 0$.  We may consider
\begin{align*}
A^{a}_m &= \res_z Y(G_{- \frac{1}{2}} \tilde{a} (-1/2) e^{mc} ,z) \\ &= \res_z Y(a(-1) - m \tilde{a}(-1/2) \wedge \tilde{c}(-1/2) e^{mc},z)
\end{align*}
as an operator $S(\alpha + Nc) \to S(\alpha + (N+m)c)$.  By construction, it is easy to check that
$$[G_{r},A^a_m] = 0$$
because $\tilde{a}(-1/2)e^{mc} \in P^{1/2}$.  We also have :
\begin{proposition}
Suppose $a_1,a_2$ satisfy $(a_i,\alpha) = (a_i,c) = 0$.  Then,
$$[A^{a_1}_{m_1},A^{a_2}_{m_2}] = m_1 (a_1,a_2) \delta_{m_1+m_2} c(0)$$
\end{proposition}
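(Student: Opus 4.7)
The plan is to use the alternate explicit form
\[
A^{a}_m = \res_z Y(a(-1) e^{mc}, z) - m \res_z Y(\tilde{a}(-\tfrac12) \tilde{c}(-\tfrac12) e^{mc}, z) =: B^{a}_m - m\, C^{a}_m
\]
from the line preceding the statement, and expand the bracket bilinearly:
\[
[A^{a_1}_{m_1}, A^{a_2}_{m_2}] = [B^{a_1}_{m_1}, B^{a_2}_{m_2}] - m_2 [B^{a_1}_{m_1}, C^{a_2}_{m_2}] - m_1 [C^{a_1}_{m_1}, B^{a_2}_{m_2}] + m_1 m_2 [C^{a_1}_{m_1}, C^{a_2}_{m_2}].
\]
The first bracket is computed entirely inside $V_L$ and is precisely the purely bosonic DDF bracket of Proposition \ref{ddfproperties}(a), contributing $m_1 (a_1, a_2)\, \delta_{m_1+m_2}\, c(0)$. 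I would handle each of the remaining three via the commutator formula \eqref{commutatorformula}, reading off the singular part of the OPE from Wick contractions of the free fields, using
\[
a_i(z_1) a_j(z_2) \sim \frac{(a_i, a_j)}{(z_1-z_2)^2}, \qquad \tilde{a}_i(z_1) \tilde{a}_j(z_2) \sim \frac{(a_i, a_j)}{z_1 - z_2},
\]
together with the two crucial vanishings $(a_i, c) = 0$ and $(c, c) = 0$; the latter also forces the lattice vertex operators $V_i(z) := Y(e^{m_i c}, z)$ to have no singular OPE among themselves.

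For each cross bracket $[B^{a_i}_{m_i}, C^{a_j}_{m_j}]$, every potential Wick contraction vanishes: the boson $a_i$ and the fermions $\tilde{a}_j, \tilde{c}$ have opposite statistics, $[a_i(z_1), V_j(z_2)] = 0$ by $(a_i, c) = 0$, and $V_i(z_1) V_j(z_2)$ has no singularity by $(c, c) = 0$. Hence both cross brackets are identically zero. For the pure fermionic bracket $[C^{a_1}_{m_1}, C^{a_2}_{m_2}]$, the only surviving single contraction is $\tilde{a}_1 \tilde{a}_2$ (the contractions $\tilde{a}_i \tilde{c}$ and $\tilde{c} \tilde{c}$ vanish by the orthogonality/isotropy hypotheses, and all possible double contractions contain at least one vanishing factor $(a_i, c)$ or $(c, c)$); this produces the singular OPE
\[
\frac{(a_1, a_2)}{z_1 - z_2}\; :\!\tilde{c}(z_1) \tilde{c}(z_2)\, V_1(z_1) V_2(z_2)\!:.
\]
By Borcherds associativity, the coefficient of $(z_1-z_2)^{-1}$ equals $Y(Q^{(1)}_0 Q^{(2)}, z_2)$ with $Q^{(i)} := \tilde{a}_i(-\tfrac12)\tilde{c}(-\tfrac12) e^{m_i c}$, and evaluation at coincidence $z_1 = z_2$ yields a factor $:\!\tilde{c}(z)^2\!:$. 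Since $\{\tilde{c}(z_1), \tilde{c}(z_2)\} = (c,c) z_2^{-1}\delta(z_1/z_2) = 0$, we have $\tilde{c}(z)^2 = 0$, so $Q^{(1)}_0 Q^{(2)} = 0$ and hence $[C^{a_1}_{m_1}, C^{a_2}_{m_2}] = 0$. Summing, we recover the claimed formula.

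The main obstacle is the clean vanishing of $[C^{a_1}_{m_1}, C^{a_2}_{m_2}]$: the identity $\tilde{c}(z)^2 = 0$ crucially requires both the fermionic statistics of $\tilde{c}(z)$ and the null-vector property $(c, c) = 0$. Absent the isotropy hypothesis, the $\tilde{c}\tilde{c}$ double contraction would deliver an extra contribution, and the $N=1$ answer would deviate from its $N=0$ counterpart; the content of the proposition is precisely that these fermionic corrections conspire to vanish, so that the bosonic DDF commutator survives unchanged in the $N=1$ setting.
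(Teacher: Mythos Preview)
Your proof is correct and follows essentially the same strategy as the paper's: both split $A^a_m$ into its bosonic and fermionic pieces, invoke Proposition~\ref{ddfproperties}(a) for the $[B,B]$ term, dismiss the cross terms as having no nontrivial contractions, and kill the $[C,C]$ term by observing that the sole surviving $\tilde a_1\tilde a_2$ contraction leaves a factor $\tilde c(z)\wedge\tilde c(z)=0$ since $(c,c)=0$. Your OPE/Wick language is just a repackaging of the paper's explicit delta-function computation via Lemma~\ref{commutatorfermionicfields}.
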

\begin{proof}
We know from Proposition \ref{ddfproperties} that
$$[\res_{z_1} Y(a_1(-1)e^{m_1 c},z_1), \res_{z_2} Y(a_2(-1) e^{m_2 c},z_2)] = m_1(a_1,a_2) \delta_{m_1+m_2} c(0),$$
so we need only prove
$$[\res_{z_1} Y(a_1(-1)e^{m_1 c},z_1), \res_{z_2} Y(-m_2 \tilde{a}_2(-1/2) \wedge \tilde{c} (-1/2) e^{m_2 c},z_2)] = 0$$
which is easy, and that the commutator of
$$\res_{z_1} Y(-m_1 \tilde{a}_1(-1/2) \wedge \tilde{c} (-1/2) e^{m_1 c},z_2))$$
with
$$\res_{z_2} Y(-m_2 \tilde{a}_2(-1/2) \wedge \tilde{c} (-1/2) e^{m_2 c},z_2)]$$
is always zero.  Following the proof of Proposition \ref{ddfproperties}, and using Lemma \ref{commutatorfermionicfields} we end up with having to evaluate
$$m_1 m_2 \res_{z_1} \res_{z_2} (a_1,a_2) z_2^{-1} \delta(z_1/z_2) \tilde{c}(z_1) \wedge \tilde{c} (z_2) Y(e^{m_1 c},z_1) Y(e^{m_2 c},z_2)$$
which creates a term of the form $\tilde{c}(z) \wedge \tilde{c}(z) = 0$ by an easy application of Lemma \ref{commutatorfermionicfields}.
\end{proof}

Clearly these operators are not sufficient for the entire spectrum-generating algebra since they only add up to $8$ "bosons".  We need to consider other operators to find the remaining 8 "fermions", which will require explanation.  Define
$$B^{a}_r = \res_z Y(G_{-\frac{1}{2}} \tilde{c}(-1/2) \wedge \tilde{a}(-1/2) c(-1)^{-1/2} e^{rc} ,z),$$
for $r \in \frac{1}{2} + \Z$.
The exponent $-1/2$ on $c(-1)$ is peculiar.  In section \ref{sectionlongitudinal}, we defined an operator
$$\log (z c(z)),$$
hence we now define
$$(z c(z))^{-1/2} = \exp \left( \frac{1}{2} \log(z c(z)) \right)$$
and then, in turn,
$$c(z)^{-1/2} = z^{1/2} (zc(z))^{-1/2}.$$

Because the element $c(-1)^{-1/2}$ does not live in $S(\widehat{h}^-)$, let us consider a larger vector space $R(\widehat{h}^-)$ generated by symmetric polynomials in $$h(-1)^{\pm \frac{1}{2}},h(-2)^{\pm \frac{1}{2}},...$$
where we quotient out by the ideal generated by elements of the form $$h(-m)^{1/2} h(-m)^{-1/2}-1$$ so that we may treat any exponent of $0$ as a simplification replacing $h(-m)^0$ with $1$.
On this space, we still have the action for $n < 0$ :
$$\alpha(n) \cdot v = \alpha(n)v,$$
but now also for $n > 0$ :
$$\alpha(n) \cdot \beta(m)^{\pm 1/2} = \pm \frac{n}{2} (\alpha,\beta) \beta(n)^{\pm 1/2 - 1} \delta_{m+n}$$
which we extend to all of $R(\widehat{h}^-)$ by the product rule.

It is clear by construction that $S(\widehat{h}^-) \subset R(\widehat{h}^-)$. Furthermore, we have :
\begin{proposition}
Given $n \in \Z$, the actions of the operators $\alpha(n)$ on $S(\widehat{h}^-)$ and $R(\widehat{h}^-)$ are compatible.
\end{proposition}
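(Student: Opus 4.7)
The plan is to check compatibility on generators of $S(\widehat{h}^{-})$, using that both actions extend by derivation (resp. left multiplication). The cases split naturally on the sign of $n$.

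For $n < 0$, both definitions coincide verbatim: $\alpha(n)$ acts as left multiplication by $\alpha(n)$ on both $S(\widehat{h}^{-})$ and $R(\widehat{h}^{-})$. Since $S(\widehat{h}^{-}) \subset R(\widehat{h}^{-})$ as an algebra, there is nothing to prove. The case $n = 0$ is also immediate, since $\alpha(0)$ acts trivially on both spaces (its non-trivial action involves the $e^{\beta}$ factor, not the symmetric-algebra factor being considered here).

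For $n > 0$, the key observation is that the action on $S(\widehat{h}^{-})$, which is $\alpha(n) \cdot v = n\,\partial v/\partial \alpha(-n)$, is a derivation on $S(\widehat{h}^{-})$, and the action on $R(\widehat{h}^{-})$ is a derivation by construction. Therefore it suffices to verify that the two actions agree on the algebra generators $\beta(-m)$ of $S(\widehat{h}^{-})$. In $S(\widehat{h}^{-})$ the action gives
\[
\alpha(n)\cdot \beta(-m) \;=\; n\,(\alpha,\beta)\,\delta_{m,n}.
\]
In $R(\widehat{h}^{-})$, we write $\beta(-m) = \beta(-m)^{1/2}\cdot\beta(-m)^{1/2}$ and apply the product rule:
\[
\alpha(n)\cdot\bigl(\beta(-m)^{1/2}\beta(-m)^{1/2}\bigr) \;=\; 2\,\bigl(\alpha(n)\cdot\beta(-m)^{1/2}\bigr)\,\beta(-m)^{1/2}.
\]
By the defining formula in $R(\widehat{h}^{-})$, this equals $2\cdot\tfrac{n}{2}(\alpha,\beta)\,\delta_{m,n}\,\beta(-m)^{-1/2}\beta(-m)^{1/2}$, and using the quotient relation $\beta(-m)^{1/2}\beta(-m)^{-1/2}=1$ imposed on $R(\widehat{h}^{-})$, this simplifies to $n(\alpha,\beta)\delta_{m,n}$, matching the $S(\widehat{h}^{-})$ action.

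The only subtlety, and the one worth highlighting, is the interplay between the derivation rule for $\alpha(n)$ and the ideal relation $h(-m)^{1/2}h(-m)^{-1/2}=1$ that defines $R(\widehat{h}^{-})$; namely, one needs the derivation to descend to the quotient. This is straightforward: applying the Leibniz rule to $h(-m)^{1/2}h(-m)^{-1/2}$ gives $\tfrac{n}{2}(\alpha,h)\delta_{m,n}\bigl(h(-m)^{-1/2}h(-m)^{-1/2} - h(-m)^{1/2}h(-m)^{-3/2}\bigr)$, which vanishes modulo the ideal, so $\alpha(n)$ indeed descends to a well-defined derivation on $R(\widehat{h}^{-})$ and the argument above is justified.
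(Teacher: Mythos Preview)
Your proof is correct and follows essentially the same route as the paper: reduce to generators by the product rule and check $\alpha(n)\cdot\beta(-n)$ via the factorization $\beta(-n)=\beta(-n)^{1/2}\beta(-n)^{1/2}$. Your write-up is in fact more careful than the paper's, which omits the $n\le 0$ cases and does not pause to verify that the derivation passes to the quotient by the ideal generated by $h(-m)^{1/2}h(-m)^{-1/2}-1$; both of these additions are welcome.
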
 
\begin{proof}
We only need to verify that for $n > 0$,
$$\alpha(n) \cdot \beta(-n) = n (\alpha,\beta)$$
and then the proposition follows from the product rule.  Indeed,
\begin{align*}
\alpha(n) \cdot \beta(-n) &= \alpha(n) \cdot \beta(-n)^{1/2} \beta(-n)^{1/2} \\
&= \frac{n}{2} (\alpha,\beta) \beta(-n)^{-1/2} \beta(-n)^{1/2} + \beta(-n)^{-1/2} (\alpha,\beta) \frac{n}{2} \beta(-n)^{-1/2} \\
&= n (\alpha,\beta)
\end{align*}
as desired.
\end{proof}

This proposition allows us to extend the definitions of $L_m$ and $G_r$ to the entirety of $R(\widehat{h}^-)$ and implies that we may use the chain rule in some sense.  We can then add to Lemma \ref{bosonfermionbridge} :
\begin{lemma}
Given $n \in \Z_{\geq 0}$, $r,s \in \frac{1}{2} + \Z$ and $x \in L$.  Then
$$[G_{r},x(-n)^{s}] = ns x(-n)^{s-1} \tilde{x}(-n+r).$$
\end{lemma}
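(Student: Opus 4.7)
The plan is to derive the formula from the previous Lemma \ref{bosonfermionbridge} by leveraging the fact that $G_{r}$ acts as a graded derivation on the extended space $R(\widehat{h}^{-}) \otimes F_{d}$, together with the observation that the right-hand side splits cleanly across the two tensor factors.

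First I would observe that $G_{r}$ obeys an ordinary Leibniz rule when commuting with bosonic operators. By construction, the annihilation modes $\alpha(n)$ for $n > 0$ act on $R(\widehat{h}^{-})$ as derivations, including on the new half-integer generators $\beta(-m)^{\pm 1/2}$ via the rule recalled just above the statement. Since $G_{r}$ is a sum of products of such bosonic modes with fermionic modes $\tilde{h}_{i}(r-k)$, and since those fermionic modes act on the separate factor $F_{d}$ and therefore commute with every bosonic $x(-n)^{t}$, the commutator $[G_{r}, \cdot]$ distributes through bosonic products without signs. In particular, $\tilde{x}(-n+r)$ commutes with every $x(-n)^{t}$, so the ordering in $x(-n)^{s-1}\tilde{x}(-n+r)$ is immaterial.

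Second, for the base case $s = \tfrac{1}{2}$, I would use $x(-n)^{1/2} \cdot x(-n)^{1/2} = x(-n)$ together with the Leibniz rule to write
\begin{equation*}
n\,\tilde{x}(-n+r) \;=\; [G_{r}, x(-n)] \;=\; 2\, x(-n)^{1/2}\,[G_{r}, x(-n)^{1/2}],
\end{equation*}
where I have used that $[G_{r}, x(-n)^{1/2}]$, being a fermionic operator on the other tensor factor, commutes with $x(-n)^{1/2}$. Multiplying by the inverse $x(-n)^{-1/2} \in R(\widehat{h}^{-})$ then yields $[G_{r}, x(-n)^{1/2}] = \tfrac{n}{2}\, x(-n)^{-1/2}\,\tilde{x}(-n+r)$, which is the claim at $s = \tfrac{1}{2}$.

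Finally, I would extend to arbitrary $s \in \tfrac{1}{2} + \Z$ by induction in both directions, using
\begin{equation*}
[G_{r}, x(-n)^{s \pm 1}] \;=\; [G_{r}, x(-n)^{\pm 1}]\, x(-n)^{s} \,+\, x(-n)^{\pm 1}\,[G_{r}, x(-n)^{s}],
\end{equation*}
and collecting terms via the commutativity of $\tilde{x}(-n+r)$ with $x(-n)^{t}$. The main obstacle is the base case $s = \tfrac{1}{2}$: once that is secured, the inductive step is pure bookkeeping. In particular, the key technical point is that the extended action $\alpha(n)\cdot \beta(m)^{\pm 1/2} = \pm \tfrac{n}{2}(\alpha,\beta)\beta(n)^{\pm 1/2 - 1}\delta_{m+n}$ is precisely what makes $G_{r}$ behave as a derivation on half-integer powers, which is exactly the content of the preceding proposition reconciling the actions on $S(\widehat{h}^{-})$ and $R(\widehat{h}^{-})$.
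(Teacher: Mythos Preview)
Your argument is correct and in the same spirit as the paper, which does not actually write out a proof: it only remarks that the preceding compatibility proposition ``implies that we may use the chain rule in some sense,'' and then states the lemma. Your induction with base case $s=\tfrac{1}{2}$ is exactly a way to make that chain-rule remark precise. One small stylistic point: since the paper has already \emph{defined} the action $\alpha(n)\cdot\beta(-n)^{\pm 1/2}=\pm\tfrac{n}{2}(\alpha,\beta)\beta(-n)^{\pm 1/2-1}$, you could obtain the base case $s=\tfrac{1}{2}$ by a one-line direct computation of $[G_r,x(-n)^{1/2}]$ rather than by inverting the $s=1$ case; your route via Lemma~\ref{bosonfermionbridge} is equally valid but slightly less direct.
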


From these results and the facts that $(c,c) = (c,a) = 0$, we can now see that $ c(-1)^{-1/2} \tilde{a}(-1/2) \wedge \tilde{c}(-1/2) e^{rc}$ is a lowest weight vector for the Super Virasoro Algebra of weight $1/2$, hence also that
$$[G_r, B_s^a] = 0$$
with a direct computation.
We now prove the following lemma :

\begin{lemma}
Suppose $u,v \in P^{1/2}$.  Then
$$\res_{z_1,z_2} \{ Y( G_{- \frac{1}{2}} u,z_1),Y( G_{- \frac{1}{2}} v,z_2) \} = \res_{z_1,z_2} \{ G_{- \frac{1}{2}} , \{ Y(u,z_1),Y( G_{- \frac{1}{2}} v,z_2) \} \}$$
\end{lemma}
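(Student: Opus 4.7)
The plan is to treat both sides as specializations of the graded Jacobi identity for the super\-commutator on $\text{End}(V_{NS})$, where the third term of the Jacobi identity will vanish under residues. I will identify parities first: since $u,v\in P^{1/2}$ they are odd, so $G_{-1/2}u,G_{-1/2}v$ are even. Thus $Y(G_{-1/2}u,z_1)$ and $Y(G_{-1/2}v,z_2)$ are even operators and the bracket $\{Y(G_{-1/2}u,z_1),Y(G_{-1/2}v,z_2)\}$ is in fact an ordinary commutator; meanwhile $Y(u,z_1)$ is odd, $G_{-1/2}$ is odd, and the inner bracket $\{Y(u,z_1),Y(G_{-1/2}v,z_2)\}$ is an ordinary (odd$\times$even) commutator, so the outer bracket with $G_{-1/2}$ is a genuine anticommutator. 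This is the correct setting to invoke super Jacobi.

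Applied to $a=G_{-1/2}$, $b=Y(u,z_1)$, $c=Y(G_{-1/2}v,z_2)$, the super Jacobi identity reads
\begin{align*}
\{G_{-1/2},\{Y(u,z_1),Y(G_{-1/2}v,z_2)\}\}
&= \{\{G_{-1/2},Y(u,z_1)\},Y(G_{-1/2}v,z_2)\} \\
&\quad -\{Y(u,z_1),\{G_{-1/2},Y(G_{-1/2}v,z_2)\}\}.
\end{align*}
I would then reduce each inner bracket using the SVOA identity $\{G_{-1/2},Y(w,z)\}=Y(G_{-1/2}w,z)$ recalled after Corollary~\ref{commutatorcorollary}. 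The first inner bracket becomes $Y(G_{-1/2}u,z_1)$, producing the first term on the right-hand side of the proposed identity. The second inner bracket becomes $Y(G_{-1/2}^{2}v,z_2)=Y(L_{-1}v,z_2)=\tfrac{d}{dz_2}Y(v,z_2)$ by the $L_{-1}$-derivative property of vertex operators.

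The last step is to take $\res_{z_1,z_2}$ of both sides. The ``extra'' term
\[
\{Y(u,z_1),\tfrac{d}{dz_2}Y(v,z_2)\}=\tfrac{d}{dz_2}\{Y(u,z_1),Y(v,z_2)\}
\]
is a total $z_2$-derivative, so its residue in $z_2$ is zero, and hence its $\res_{z_1,z_2}$ vanishes. What remains is exactly
\[
\res_{z_1,z_2}\{Y(G_{-1/2}u,z_1),Y(G_{-1/2}v,z_2)\},
\]
which is the desired identity.

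The main obstacle is purely bookkeeping: verifying the signs when passing between the super\-commutator $\{\cdot,\cdot\}$ (as used in the paper's commutator formula) and genuine commutators or anticommutators, so that the super Jacobi identity is applied with the correct Koszul sign $(-1)^{|a||b|}=(-1)^{1\cdot 1}=-1$ producing the minus sign in front of the second term on the right. Once that sign is in place, the vanishing of $\res_{z_2}\circ\partial_{z_2}$ immediately yields the claim.
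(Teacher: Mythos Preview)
Your argument is correct and is essentially the same as the paper's: the paper's one-line proof says to substitute $Y(G_{-1/2}u,z_1)=\{G_{-1/2},Y(u,z_1)\}$ into the left-hand side and use $G_{-1/2}^2=L_{-1}$, which is precisely your super Jacobi expansion followed by the observation that the extra term $\{Y(u,z_1),\tfrac{d}{dz_2}Y(v,z_2)\}$ has vanishing $z_2$-residue. You have simply made explicit the sign bookkeeping that the paper leaves implicit.
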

\begin{proof}
Use the fact that $Y(G_{-\frac{1}{2}} u,z_1) = \{G_{- \frac{1}{2}}, Y(u,z_1)\}$ and $G_{- \frac{1}{2}}^2 = L_{-1}$ and rewrite the left-hand side of the equation.
\end{proof}

With this lemma, we can now more easily evaluate the supercommutator of the operators $B^{a}_r$.
\begin{proposition}
Suppose $a_1,a_2$ satisfy $(a_i,\alpha) = (a_i,c) = 0$.  Then,
$$\{ B^{a_1}_{r_1}, B^{a_2}_{r_2}\} = (a_1,a_2) \delta_{r_1+r_2} c(0)$$
and
$$[B_{r}^{a_1},A_{m}^{a_2}] = 0.$$
\end{proposition}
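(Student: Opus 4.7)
The strategy for both identities is to invoke the lemma just proved in order to peel off one factor of $G_{-\frac{1}{2}}$ from each (super)commutator, reducing the problem to simpler brackets that can be evaluated through the commutator formula. The vanishing identities $(c,c) = 0$ and $(a_i, c) = 0$ will kill almost every Wick contraction, leaving only a handful of surviving terms that can be matched to the right-hand side.

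Concretely, I would set $u_i = \tilde{c}(-\frac{1}{2}) \wedge \tilde{a}_i(-\frac{1}{2}) c(-1)^{-1/2} e^{r_i c}$, so that $B^{a_i}_{r_i} = (G_{-\frac{1}{2}} u_i)_0$. Each $u_i$ is even (two fermionic factors times a bosonic factor), hence $G_{-\frac{1}{2}} u_i$ is odd, and by the preceding lemma
$$\{B^{a_1}_{r_1}, B^{a_2}_{r_2}\} = \{G_{-\tfrac{1}{2}}, [(u_1)_0, B^{a_2}_{r_2}]\}.$$
Corollary \ref{commutatorcorollary} then rewrites $[(u_1)_0, B^{a_2}_{r_2}] = ((u_1)_0\, G_{-\frac{1}{2}} u_2)_0$, so it suffices to extract the $z_0^{-1}$ coefficient of the OPE $Y(u_1, z_0) G_{-\frac{1}{2}} u_2$. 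The same reduction, with $w = \tilde{a}_2(-\frac{1}{2}) e^{mc}$ replacing $u_2$, handles $[B^{a_1}_r, A^{a_2}_m]$.

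For this residue computation, Lemmas \ref{commutatorbosonicfields} and \ref{commutatorfermionicfields} together with $(c,c) = (a_i, c) = 0$ eliminate every $c$-$c$, $c$-$a_i$, $\tilde{c}$-$\tilde{c}$, and $\tilde{c}$-$\tilde{a}_i$ contraction. The only survivors are the bosonic $a_1$-$a_2$ pairing and the fermionic $\tilde{a}_1$-$\tilde{a}_2$ pairing, and matching the plane-wave factors $e^{r_1 c} \cdot e^{r_2 c}$ forces the $\delta_{r_1 + r_2}$ selection rule. Tracking signs and applying the outer $G_{-\frac{1}{2}}$ to convert the surviving $\tilde{c}(0)$-type factor into $c(0)$ then yields precisely $(a_1, a_2) \delta_{r_1 + r_2}\, c(0)$. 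For the mixed bracket the analogous analysis produces no surviving singular terms at all in $Y(u_1, z_0) G_{-\frac{1}{2}} w$, giving zero.

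I expect the main obstacle to be the careful bookkeeping around the $c(-1)^{-1/2}$ factor, which lives in the extended space $R(\widehat{h}^-)$ and interacts with the modes $c(n)$ via the chain rule rather than the ordinary Fock space action. One must verify that contractions between $c(z)^{-1/2}$ and the other $c$-powers appearing in $u_2$ or $w$ produce no unintended singularities, relying on $(c,c) = 0$ to collapse such terms to identity factors. Once this is confirmed, the rest of the argument reduces to a routine sign-tracking exercise.
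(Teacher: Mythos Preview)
Your overall strategy matches the paper's: use the preceding lemma to peel off one $G_{-\frac{1}{2}}$, compute the inner bracket via field commutators, then apply the outer $G_{-\frac{1}{2}}$. However, your invocation of Corollary~\ref{commutatorcorollary} to rewrite $[(u_1)_0, (G_{-\frac{1}{2}} u_2)_0]$ as $((u_1)_0\, G_{-\frac{1}{2}} u_2)_0$ is not justified here. That corollary is a consequence of the Jacobi identity for the SVOA, and the elements $u_i$ contain the factor $c(-1)^{-1/2}$, which lives only in the ad hoc extension $R(\widehat{h}^-)$ where no VOA structure has been established. The paper flags this explicitly (in the $N=0$ DDF discussion): ``we do not need any additional structure, including the structure of vertex operator algebras, although we imitate it. All identities can be proven directly without the need for the Jacobi identity.'' Moreover, the state $G_{-\frac{1}{2}} u_2$ does not lie in the space $\bigoplus_N S(\alpha+Nc)$ on which $(u_1)_0$ is actually defined as an operator, so the expression $(u_1)_0\, G_{-\frac{1}{2}} u_2$ is not even a priori meaningful.

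The paper's fix is to stay entirely at the operator level. It first expands $G_{-\frac{1}{2}} u_2$ explicitly as a sum of three concrete terms $T_1=\tilde{a}_2(-\tfrac12)c(-1)^{1/2}e^{r_2c}$, $T_2=-\tilde{c}(-\tfrac12)a_2(-1)c(-1)^{-1/2}e^{r_2c}$, and $T_3=-\tfrac12\tilde{c}(-\tfrac12)\wedge\tilde{a}_2(-\tfrac12)\wedge\tilde{c}(-\tfrac32)c(-1)^{-3/2}e^{r_2c}$, and then evaluates each $\res_{z_1,z_2}[Y(u_1,z_1),Y(T_i,z_2)]$ directly using only the delta-function identities of Lemmas~\ref{commutatorbosonicfields} and~\ref{commutatorfermionicfields}. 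With the contractions killed by $(c,c)=(a_i,c)=0$ exactly as you anticipate, only the $T_1$-commutator survives, yielding $(a_1,a_2)\res_z Y(\tilde{c}(-\tfrac12)e^{(r_1+r_2)c},z)$; the $T_3$-commutator collapses because the delta function forces a factor $\tilde{c}(z)\wedge\tilde{c}(z)=0$. Your Wick-contraction bookkeeping and the prediction about the $c(-1)^{-1/2}$ factor are on target, but they should be carried out as commutators of explicit field products acting on $\bigoplus_N S(\alpha+Nc)$, not as an OPE applied to a state.
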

\begin{proof}
First let us rewrite
\begin{align*}
 & G_{- \frac{1}{2}} \tilde{c}(-1/2) \wedge \tilde{a}(-1/2) c(-1)^{-1/2} e^{rc} \\
&= \tilde{a}(-1/2) c(-1)^{1/2} e^{rc} \\
&- \tilde{c}(-1/2) a(-1) c(-1)^{-1/2} e^{rc} \\
&- \frac{1}{2} \tilde{c}(-1/2) \wedge \tilde{a}(-1/2) \wedge \tilde{c}(-3/2) c(-1)^{-3/2} e^{rc}.
\end{align*}
Let us write $T_1$, $T_2$ and $T_3$ for each of these three terms in order with $a = a_2, r = r_2$.  Using the previous Lemma, we first evaluate
$$\res_{z_1,z_2} [ Y(\tilde{c}(-1/2) \wedge \tilde{a_1}(-1/2) c(-1)^{-1/2} e^{r_1 c},z_1) , Y(T_1 + T_2 + T_3,z_2) ]$$
Dealing with each term one at a time :
\begin{align*}
& \res_{z_1,z_2} [ Y(\tilde{c}(-1/2) \wedge \tilde{a_1}(-1/2) c(-1)^{-1/2} e^{r_1 c},z_1) , Y(T_1,z_2) ] \\
&= \res_{z_1,z_2} \tilde{c}(z_1) \{ \tilde{a}_1(z_1), \tilde{a}_2(z_2) \} c(z_1)^{-1/2} c(z_2)^{1/2} Y(e^{r_1 c},z_1) Y(e^{r_2 c},z_2) \\
&= \res_{z_1,z_2} (a_1,a_2) z_2^{-1} \delta(z_1/z_2) \tilde{c}(z_1) c(z_1)^{-1/2} c(z_2)^{1/2} Y(e^{r_1 c},z_1) Y(e^{r_2 c},z_2) \\
&= \res_{z} (a_1,a_2) \tilde{c}(z) Y(e^{(r_1+r_2) c},z) \\
&= (a_1,a_2) \res_z Y(\tilde{c}(-1/2) e^{(r_1+r_2)c},z) \\
& \res_{z_1,z_2} [ Y(\tilde{c}(-1/2) \wedge \tilde{a_1}(-1/2) c(-1)^{-1/2} e^{r_1 c},z_1) , Y(T_2,z_2)] \\
&= 0 \\
& \res_{z_1,z_2} [ Y(\tilde{c}(-1/2) \wedge \tilde{a_1}(-1/2) c(-1)^{-1/2} e^{r_1 c},z_1) , Y(T_3,z_2)] \\
&= \frac{1}{2} \res_{z_1,z_2} \tilde{c}(z_1) \tilde{c}(z_2) \{ \tilde{a}_1 (z_1) , \tilde{a}_2 (z_2) \} \frac{\partial}{\partial z_2} \tilde{c}(z_2)  c(z_1)^{-1/2} c(z_2)^{-3/2} \\ & \hspace{0.5cm} \centerdot Y(e^{r_1 c},z_1) Y(e^{r_2 c},z_2) \\
&= \frac{1}{2} (a_1,a_2) \res_{z_1,z_2} \tilde{c}(z_1) \tilde{c}(z_2) \delta(z_1/z_2) \frac{\partial}{\partial z_2} \tilde{c}(z_2) c(z_1)^{-1/2} c(z_2)^{-3/2} \\ & \hspace{0.5cm} \centerdot Y(e^{r_1 c},z_1) Y(e^{r_2 c},z_2) \\
&= 0
\end{align*}
This proves the first assertion.  To prove the second assertion, we evaluate
\begin{align*}
& \res_{z_1,z_2} [Y(\tilde{c}(-1/2) \wedge \tilde{a_1}(-1/2) c(-1)^{-1/2} e^{rc},z_1), Y(a_2(-1)e^{mc}  ,z_2) ] \\
&= 0
\end{align*}
and
\begin{align*}
& \res_{z_1,z_2} [Y(\tilde{c}(-1/2) \tilde{a}_1(-1/2) c(-1)^{-1/2} e^{rc},z_1), Y(-m \tilde{a}_2(-1/2) \tilde{c}(-1/2)  ,z_2) ] \\
&= m \res_{z_1,z_2} \{ \tilde{a}_1 (z), \tilde{a}_2 (z) \}\tilde{c}(z_1) \tilde{c}(z_2) c(z_1)^{-1/2} Y(e^{rc},z_1) Y(e^{mc},z_2) \\
&= m (a_1,a_2) \res_{z_1,z_2} \delta(z_1/z_2) \tilde{c}(z_1) \tilde{c}(z_2) c(z_1)^{-1/2} Y(e^{rc},z_1) Y(e^{mc},z_2) \\
&= 0.
\end{align*}
Together, these two evaluations show that $[B^{a_1}_r, A^{a_2}_m] = 0$.
\end{proof}

What we have done is define two sets of $d-2$ operators $A^a_m$ and $B^b_r$, which commute with the super Virasoro algebra and satisfy very simple commutation relations, where the $A$'s are "bosons" and the $B$'s are "fermions".  In particular, we can evaluate their adjoints directly, because most of the terms commute, to find
$$(A^a_m)^* = A^a_{-m}$$
$$(B^b_r)^* = B^b_{-r}$$

Together with the commutation relations, which implies linear independence, and the multiplicities of $g_{NS}$ (Theorem \ref{multiplicitiesN=1}), we have now proven :
\begin{theorem} \label{DDFbasis}
Let $\alpha$ be a root of $g_{NS}$.  Then
$$(g_{NS})_{\alpha} = \spn \{ A^{a_{i_1}}_{-m_1} ... A^{a_{i_k}}_{-m_k} B^{a_{j_1}}_{-r_1}...B^{a_{j_l}}_{-r_l} e^{\alpha + Nc} \}$$
for $i_1,...,i_k, j_1,...,j_l \in \{1,...,d-2\}$ and $N = m_1 + ... + m_k + r_1 + ... + r_l = \frac{1}{2} - \frac{(\alpha,\alpha)}{2}$.
\end{theorem}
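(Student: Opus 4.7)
The plan is to establish the asserted equality of spaces in three steps: inclusion of each DDF monomial into $P^{1/2}(\alpha)$; linear independence of those monomials modulo the radical of $(.,.)$; and a dimension count against the multiplicity formula of Theorem \ref{multiplicitiesN=1}. For the first step, I would observe that $e^{\alpha+Nc}$, viewed inside the extension based on $L \otimes \Q$, is itself a super-Virasoro lowest-weight vector of $L_0$-weight $\tfrac{(\alpha,\alpha)}{2} + N$. Because each $A^a_m$ and $B^a_r$ is, by construction, the residue of $Y(G_{-1/2}(\cdot), z)$ applied to a weight-$\tfrac{1}{2}$ super-Virasoro lowest-weight vector, we already have $[G_r, A^a_m] = [G_r, B^a_s] = 0$ for every $r,s$, so applying any combination of these operators to $e^{\alpha+Nc}$ again produces a super-Virasoro lowest-weight vector. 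Moreover the operators preserve $L_0$-weight (they are zero-modes of weight-one fields), so the choice $N = \tfrac{1}{2} - \tfrac{(\alpha,\alpha)}{2}$ makes the resulting weight exactly $\tfrac{1}{2}$, while the total root-shift of $-(m_1+\cdots+m_k+r_1+\cdots+r_l)c = -Nc$ brings the root back to $\alpha$. Hence every proposed DDF state lies in $P^{1/2}(\alpha)$.

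For step two, the (anti)commutation relations established in this section—$[A^{a_1}_{m_1}, A^{a_2}_{m_2}] = m_1(a_1,a_2)\delta_{m_1+m_2,0}c(0)$, $\{B^{a_1}_{r_1}, B^{a_2}_{r_2}\} = (a_1,a_2)\delta_{r_1+r_2,0}c(0)$, and $[A^{a}_m, B^{b}_r] = 0$—together with the facts that $c(0)$ acts on $e^{\alpha+Nc}$ as multiplication by $(c,\alpha+Nc) = 1$, and that $A^a_m e^{\alpha+Nc} = B^a_r e^{\alpha+Nc} = 0$ for $m,r > 0$ (by the weight-bound argument that the minimum $L_0$-weight in $S(\beta)$ is $\tfrac{(\beta,\beta)}{2}$), show that the subalgebra generated by the $A$'s and $B$'s acts on $e^{\alpha+Nc}$ as a Heisenberg-Clifford algebra with $e^{\alpha+Nc}$ as its cyclic vacuum. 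The ordered monomials in the negative modes therefore form a linearly independent set in $V_{NS}$. To promote this to linear independence in $g_{NS} = P^{1/2}/\mathrm{rad}(.,.)$, I would use the adjoint identities $(A^a_m)^* = A^a_{-m}$ and $(B^a_r)^* = B^a_{-r}$: moving annihilators to the right via these adjoints reduces any Gram pairing $(D_I, D_J)$ to a scalar read off from the commutators, multiplied by $(e^{\alpha+Nc}, e^{\alpha+Nc}) = 1$. That scalar factor is exactly the Gram determinant of the abstract Heisenberg-Clifford Fock module, which is nondegenerate, so no nontrivial combination of DDF monomials lies in the radical.

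For step three, an ordered basis of DDF monomials corresponds bijectively to pairs of colored partitions: an ordinary partition into positive integer parts with $d-2 = 8$ colors (from the $A$'s), together with a strict partition into positive half-integer parts with $8$ colors (from the $B$'s, which anticommute within a single color), the total being $N = \tfrac{1}{2} - \tfrac{(\alpha,\alpha)}{2}$. The generating function is $\prod_{n\geq 1}(1-q^n)^{-8} \prod_{i \geq 0}(1+q^{i+1/2})^8$, so the dimension of the space of ordered DDF monomials equals the coefficient $c(N)$ of Theorem \ref{multiplicitiesN=1}, which equals $\dim (g_{NS})_\alpha$. Together with the linear independence of step two, this forces the spanned subspace to be all of $(g_{NS})_\alpha$.

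The main obstacle I anticipate is the careful bookkeeping around the fermionic operators $B^a_r$: their defining vector involves the formal fractional power $c(-1)^{-1/2}$, which lives only in the extended space $R(\widehat{h}^-)$ rather than in $V_{NS}$ proper, so one must confirm (exactly as the preceding discussion of $R(\widehat{h}^-)$ is designed to permit) that $B^a_r$ descends to a well-behaved operator on the relevant extended Fock module, that its vanishing commutator with $G_r$ truly holds on this module, and that $(B^a_r)^* = B^a_{-r}$ despite the half-integer $c$-exponents. The bosonic half is a routine generalization of Proposition \ref{ddfproperties}, but these $c(-1)^{-1/2}$-dependent identities are where most of the computational care is required.
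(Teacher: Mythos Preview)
Your proposal is correct and follows essentially the same route as the paper: the paper's argument is the terse one-liner that the commutation relations (yielding a Heisenberg--Clifford action on the vacuum $e^{\alpha+Nc}$) give linear independence, the adjoint identities $(A^a_m)^* = A^a_{-m}$, $(B^b_r)^* = B^b_{-r}$ ensure this survives in the quotient by the radical, and the dimension count against Theorem \ref{multiplicitiesN=1} closes the argument. Your three steps unpack exactly these ingredients, and your flagged concern about $c(-1)^{-1/2}$ is precisely the issue the paper addresses in the passage on $R(\widehat{h}^-)$ preceding the theorem.
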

In particular, these operators provide an explicit basis for the elements of $g_{NS}$.

\begin{example}
We specialize to the lattice $L = E_{10} = E_8 \oplus II_{1,1}$.  We may write an element $\lambda$ of $L$ as $\lambda = \lambda' + m \rho + n \rho'$ where $(\rho,\rho') = -1$ and $\lambda' \in E_8$.  Then it is clear that $(\lambda,\lambda) = (\lambda',\lambda') - 2mn$.  In particular, the number $n$ induces a grading on $L$ which is inherited by $g_{NS}$ :
$$g_{NS} = \bigoplus\limits_{n \in \Z} (g_{NS})_n.$$
We can use this grading to study $g_{NS}$ to some extent.

If $n = 0$ then $(\lambda,\lambda) = (\lambda',\lambda') \geq 0$ because $E_8$ is positive-definite and equals $0$ only if $\lambda = 0$.  The only elements of $P^{1/2}$ in this case are linear combinations of $a_1(-1/2),...,a_{10}(-1/2)$ and nothing more.

If $n = \pm 1$ then $(\lambda,\lambda) = (\lambda',\lambda') \mp 2m$.  In this case we use the previous theorem to describe the structure of $(g_{NS})_{\lambda'}$.  For any $\lambda' \in E_8$ and any $m_1,..,m_k \in \Z_{\geq 1}, r_1,...,r_l \in \frac{1}{2} + \Z_{\geq 0}$, there exists $m \in \Z$ such that
$$m_1 + ... + m_k + r_1 + ... + r_l = \frac{1}{2} - \frac{(\lambda',\lambda')}{2} \mp m$$
as long as $l$ is an odd number.  Therefore, summing over all $m \in \Z$ and $\lambda' \in E_8$, we then have
$$(g_{NS})_{\pm 1} \simeq \bigoplus\limits_{i \in \Z} (V_{E_8} \otimes F_8)^{i + \frac{1}{2}},$$
linearly.
\end{example}

\begin{example}
We modify the previous example.  Let $L = E_8 \oplus \Z \rho \oplus \Z \rho'$ where $(\rho,\rho') = -\frac{1}{2}$.  Note that $L$ is not an even lattice anymore, yet we may still consider the space $V_{NS}$, no longer with a vertex operator superalgebra structure.  The no-ghost theorem still applies, and the operators $A^a_m$'s and $B^b_r$'s still provide an explicit basis for $V_{NS}$ via Theorem \ref{DDFbasis}.

Again write an element $\lambda$ of $L$ as $\lambda' + m \rho + n \rho'$, $\lambda' \in E_8$.  We grade $g_{NS}$ in the same way.  In the cases $n = \pm 1$, the main difference is that now, $(\lambda,\lambda) = (\lambda',\lambda') \mp m$.  Therefore, the relation
$$m_1 + ... + m_k + r_1 + ... + r_l = \frac{1}{2} - \frac{(\lambda',\lambda')}{2} \mp \frac{m}{2}$$
is satisfied for some $m \in \Z$, regardless of whether or not $l$ is odd or even.  Therefore, summing over all $m \in \Z$ and $\lambda' \in E_8$ we then have
$$(g_{NS})_{\pm 1} \simeq V_{E_8} \otimes F_8,$$
linearly.

\end{example}

\subsection{Semi-infinite cohomology in $N=1$, continued}
We continue what we first developed in section \ref{BRSTSVir}, once again a review of \cite{lianzuckerman1}.  For $\kappa = \frac{1}{2}$ we take $V_\frac{1}{2} = V_{NS}$.  For $\kappa = 0$, replace $\widehat{A}$ with
$$\widehat{A} = A \otimes \C[t,t^{-1}] \oplus \C c.$$
The main difference is that we now consider integer powers of $t$. Once again let
$$F_d = \Lambda(\widehat{A}^-).$$  However, this time let $R = Spin(9,1)$ be the spinor representation for $$a_1(0),...,a_5(0),a^*_1(0),...,a^*_5(0).$$  Then we define
$$V_{Ram} = V_L \otimes R \otimes F_d.$$  For $\kappa = 0$ we then have $V_0 = V_{Ram}$.
Next, decompose the chain complex $C^{\frac{\infty}{2}+*}(\mf{g}_\kappa, V_\kappa)$ further as
$$C^{r,s} = \{ v \in C^{\frac{\infty}{2}+*}(\mf{g}_\kappa, V_\kappa), Uv = rv, \theta(L_0)v = -sv\}.$$
It is clear that $d : C^{r,s} \to C^{r+1,s}$.
In \cite{lianzuckerman1}, they went through a great deal of effort to obtain the following proposition :
\begin{proposition}
There exists a bilinear form $\langle .,. \rangle_C$ on $C^{\frac{\infty}{2}+*}(\mf{g}_\kappa, V_\kappa)$ which is non-degenerate when restricted to $C^{r,s} \oplus C^{-r,s}$, and in particular non-degenerate for a fixed eigenvalue of $\theta(L_0)$.
\end{proposition}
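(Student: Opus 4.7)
The plan is to build $\langle .,. \rangle_C$ as a tensor product of a bilinear form on $V_\kappa$ and a bilinear form on the ghost space $\Omega^{\frac{\infty}{2}}\mf{g}_\kappa$. On $V_\kappa$, I would use the bilinear form already constructed in Section \ref{subsectionbilinearform} for $V_L$, extended to $V_{NS}$ as in the $N=1$ chapter; for $\kappa = 0$ one also needs the standard invariant pairing on the spinor representation $R = \mathrm{Spin}(9,1)$, which exists since $R$ is self-dual up to parity. By construction each of these forms pairs within a single $L_0$-eigenspace and is non-degenerate on each (finite-dimensional) weight space.

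On the ghost side, I would define the form by fixing a ghost vacuum $|0\rangle_{gh}$, declaring $\langle 0 | 0 \rangle_{gh}$ to be some normalization constant, and then prescribing adjoints
$$\epsilon(e'_i)^\dagger = \iota(e_i), \quad \epsilon(f'_i)^\dagger = \pm\,\iota(f_i), \quad \iota(e_i)^\dagger = \epsilon(e'_i), \quad \iota(f_i)^\dagger = \pm\,\epsilon(f'_i),$$
with signs dictated by the Koszul rule and by the choice of $q_B, q_F$ that determines which of $\epsilon, \iota$ is a creation operator and which an annihilation operator on $|0\rangle_{gh}$. This completely determines the form on monomials via the (anti)commutation relations of the ghosts. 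Because $[U,\epsilon(x)] = \epsilon(x)$ and $[U,\iota(y)] = -\iota(y)$, and adjoints swap $\epsilon \leftrightarrow \iota$, the ghost form pairs the ghost-number $r$ sector with the ghost-number $-r$ sector.

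Tensoring the two gives the desired $\langle .,. \rangle_C$. The key properties to verify are: (i) the operator $\theta(L_0)$ is self-adjoint — this reduces to checking that the matter $L_0$ and the ghost contribution to $L_0$ are each self-adjoint, which follows from Lemma \ref{adjointvirasoro} on the matter side and from the explicit formula for $\rho(L_0)$ on the ghost side combined with the adjoint prescription above; (ii) $\langle .,. \rangle_C$ restricted to $C^{r,s} \times C^{-r,s}$ is non-degenerate. For (ii) I would argue on a fixed $\theta(L_0) = -s$ eigenspace: this eigenspace is finite-dimensional (since each weight space of $V_\kappa$ is, and only finitely many ghost monomials contribute at a given weight), so it suffices to produce a dual basis. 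Working monomial by monomial in the ghosts against a basis for the matter weight space, the adjoint relations give an explicit lower-triangular pairing matrix with non-zero diagonal, which is invertible.

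The main obstacle is the bookkeeping on the ghost side for $\kappa = 0$ with the two possible choices $q_B = 0$ and $q_B = 1$, since the ghost vacuum and thus the reference ghost number shifts; one has to check that the same prescription yields a non-degenerate pairing in both cases and that the resulting form is compatible with the direct sum decomposition $\Omega^{\frac{\infty}{2}}_{0} \oplus \Omega^{\frac{\infty}{2}}_{1}$. A secondary subtlety is fixing signs in the Neveu–Schwarz sector so that the anticommuting nature of $\gamma, \beta$ does not cause $\langle u, v\rangle_C$ to vanish identically on some ghost-number sector; this is where I expect to spend the most care, and is presumably the "great deal of effort" alluded to in \cite{lianzuckerman1}.
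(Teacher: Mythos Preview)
The paper does not actually prove this proposition; it is quoted from \cite{lianzuckerman1} with only the remark that Lian--Zuckerman ``went through a great deal of effort'' to obtain it. So there is no in-paper proof to compare against, and your outline must be assessed on its own merits.

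Your general architecture --- tensor a matter form on $V_\kappa$ with a ghost form on $\Omega^{\frac{\infty}{2}}\mf{g}_\kappa$, arrange that adjoints swap $\epsilon \leftrightarrow \iota$ so ghost number $r$ pairs with $-r$, and check self-adjointness of $\theta(L_0)$ --- is the standard one and is indeed what \cite{lianzuckerman1} do. The finite-dimensionality claim at fixed $(r,s)$ is correct once you observe that all creation operators carry strictly positive $L_0$-weight.

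There is, however, a genuine error that undercuts your final paragraph: the ghosts $\beta,\gamma$ are \emph{commuting} (bosonic), not anticommuting. They come from the odd generators $G_r$ of $\mf{g}_\kappa$, and in the semi-infinite construction odd generators give bosonic ghosts living in the symmetric algebra $\vee^{\frac{\infty}{2}}\mf{g}$; this is explicit in the paper's definitions of $\epsilon,\iota$ on the $\vee$-factor. Consequently your ``lower-triangular pairing matrix with non-zero diagonal'' argument, which is modeled on the fermionic $b,c$ case, does not go through for the $\beta\gamma$ sector without substantial modification. For a single bosonic creation/annihilation pair the Fock module built on one vacuum carries only one sign of ghost number, so a form pairing ghost number $r$ with $-r$ cannot live on a single Fock space; this is exactly why, in the Ramond case $\kappa=0$, the paper takes the \emph{direct sum} $\Omega_0^{\frac{\infty}{2}}\mf{g}_0 \oplus \Omega_1^{\frac{\infty}{2}}\mf{g}_0$ of two pictures. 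The ``great deal of effort'' in \cite{lianzuckerman1} is precisely constructing and verifying non-degeneracy of the pairing across the bosonic ghost pictures, and your sketch does not yet engage with this.
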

We also write
$$C^r = \bigoplus\limits_{s \in \frac{1}{2} \Z } C^{r,s}.$$
We now define
\begin{align*}
& &B^r &= \{ v \in C^r, b(0) v = 0 \} & \text{ for } \kappa = \frac{1}{2} & &\\
& & &= \{ v \in C^r, b(0) v = \beta(0) v = 0 \} & \text{ for } \kappa = 0 & & \\
& & C^r_{\text{rel}} &= \{ v \in B^r, \theta(L_0)v = 0 \} & \text{ for } \kappa = \frac{1}{2} & & \\
& & &= \{ v \in B^r, \theta(L_0)v = \theta(G_0)v = 0 \} & \text{ for } \kappa = 0. & & 
\end{align*}
This now defines a relative chain complex, which also results in cohomology groups $H^n_{\text{rel}}$.  Clearly these groups are also graded by $L$, hence we may decompose them as
$$H^n_{\text{rel}} = \bigoplus\limits_{\lambda \in L} H^n_{\text{rel}}(\lambda).$$
Then, \cite{lianzuckerman1} dedicates en entire section to prove the following theorem, which we adjust here to our different choice of $q_F$, :
\begin{theorem}
Suppose $\lambda \neq 0$.  Then $H^n_{\text{rel}} = 0$ unless $n = 1$.
\end{theorem}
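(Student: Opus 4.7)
My plan is to prove the vanishing by combining a spectral sequence argument in the spirit of Frenkel-Garland-Zuckerman with the Poincar\'e-style duality provided by the bilinear form $\langle.,.\rangle_C$ from the preceding proposition. Since that form is non-degenerate when restricted to $C^{r,s} \oplus C^{-r,s}$, it descends to a pairing between $H^n_{\text{rel}}(\lambda)$ and $H^{N-n}_{\text{rel}}(\lambda)$ for some fixed total ghost number $N$ depending on $q_B, q_F$. It will therefore suffice to establish vanishing of $H^n_{\text{rel}}(\lambda)$ for all $n \leq 0$, the remaining range being handled automatically by duality.

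First I would introduce a decreasing filtration on $C^*_{\text{rel}}(\lambda)$ by matter oscillator number, assigning positive weight to each creation mode $h(-k),\ a(-r)$ of $V_\kappa$ and zero weight to every ghost mode. The associated graded differential $d_0$ then retains only the ``linear in the matter'' terms $\sum_n \pi(L_n) c_{-n}$ and $\sum_n \pi(G_{n+\kappa}) \gamma_{-n-\kappa}$ of the BRST operator from equation (\ref{BRSTformula}) and its super analogue; all quadratic ghost pieces pass to a higher filtration level. For $\lambda \neq 0$, Lemma \ref{isotropicc} produces $c \in L \otimes \R$ with $(c,\lambda) = 1$, and the DDF decomposition summarised in Theorem \ref{DDFbasis} splits each matter weight space as a direct sum of the physical subspace $T(\lambda,c)$ and the image of the positive super-Virasoro lowering operators $L_{-n}, G_{-r}$. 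On the image factor, $d_0$ is a Koszul-style differential pairing each $L_{-n}$ (resp.\ $G_{-r}$) against the corresponding ghost annihilator, and is acyclic except in top ghost degree; on $T(\lambda,c)$ the differential vanishes. This concentrates the $E_1$ page in ghost number $1$, identified with the cocycle $v \mapsto v \otimes c(1)$ already used in the Virasoro case. A degree argument then forces the spectral sequence to collapse at $E_1$, giving the desired vanishing for $n \neq 1$.

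The hard part will be controlling the infinitely many modes of the bosonic super-ghosts $\beta,\gamma$. Unlike the fermionic $b,c$ system, which contributes only finitely many wedge factors at each fixed weight, the $\beta,\gamma$ system produces a symmetric-algebra, hence infinite-dimensional, factor at every weight, and its normal ordering depends genuinely on the chosen $q_B$. In the Ramond case $\kappa = 0$ one must additionally analyse the zero modes $\beta(0),\gamma(0)$ and their interaction with the spinor representation $R$, which is precisely why both $q_B = 0$ and $q_B = 1$ are built into $V_0$. Establishing acyclicity of the Koszul complex on this infinite super-ghost sector — while using $\lambda \neq 0$ to invert the matter zero-mode operators that appear in the chain homotopy — is the genuine technical core of the argument.
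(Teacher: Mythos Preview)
The paper does not actually supply a proof of this theorem: it explicitly defers to \cite{lianzuckerman1}, remarking that they ``dedicate an entire section'' to it, and only records the statement (adjusted for the paper's convention $q_F = 2$). So there is no in-paper argument to compare against directly.

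Your outline is broadly faithful to the Lian--Zuckerman strategy, which itself adapts the Frenkel--Garland--Zuckerman filtration/spectral-sequence method to the super case. The ingredients you name --- a filtration so that the associated graded differential becomes Koszul-type on the non-transversal modes, the transversal/DDF subspace surviving as $E_1$, and a Poincar\'e-type duality from $\langle.,.\rangle_C$ to halve the work --- are the right ones. Two points deserve more care than your sketch gives them. First, the duality step requires knowing that $d$ is (skew-)adjoint for $\langle.,.\rangle_C$ and that the pairing descends to cohomology; this is part of what \cite{lianzuckerman1} establish and is not automatic from non-degeneracy on $C^{r,s}\oplus C^{-r,s}$ alone. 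Second, and you already flag this, the genuine obstacle beyond the $N=0$ case is the symmetric-algebra $\beta\gamma$ sector: the Koszul homotopy must be shown to converge despite infinitely many modes at each weight, and in the Ramond sector the zero modes $\beta(0),\gamma(0)$ together with the choice of $q_B\in\{0,1\}$ require the separate treatment that motivates the definition of $V_0$ in the paper. Your proposal correctly identifies this as the technical core but does not resolve it; that resolution is precisely the ``entire section'' of \cite{lianzuckerman1} the paper is pointing to.
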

With this theorem, as in \cite{frenkelgarlandzuckerman}, they are able to compute multiplicities of these relative cohomology groups and they found :
\begin{proposition}
For $\kappa = \frac{1}{2}$,
$$\dim H^1_{\text{rel}} (\lambda) = c\left( \frac{1}{2} - \frac{\langle \lambda, \lambda \rangle}{2} \right).$$
where $c(r)$ is the coefficient as in Theorem \ref{multiplicitiesN=1}.
For $\kappa = 0$,
$$\dim H^1_{\text{rel}} (\lambda) = \frac{\dim R}{2} c \left( - \frac{\langle \lambda,\lambda \rangle}{2}\right)$$
where $c(n)$ is the coefficient of $q^n$ in
$$\frac{\prod\limits_{i \in \Z_{\geq 0}} (1-q^{2i})^8}{\prod\limits_{i \in \Z_{\geq 0}} (1 - q)^{16}}.$$
\end{proposition}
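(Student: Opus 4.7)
The plan is to apply the Euler--Poincar\'e principle in the form used by Frenkel--Garland--Zuckerman. The preceding vanishing theorem asserts $H^n_{\text{rel}}(\lambda) = 0$ for $\lambda \neq 0$ and $n \neq 1$, so
$$\dim H^1_{\text{rel}}(\lambda) \;=\; -\chi(C^*_{\text{rel}}(\lambda)) \;=\; -\sum_{n} (-1)^n \dim C^n_{\text{rel}}(\lambda).$$
It therefore suffices to compute the bigraded character of $C^*_{\text{rel}}(\lambda)$ with respect to ghost number (tracked by a formal variable $u$) and $L_0$-eigenvalue (tracked by $q$), and to extract the constant-in-$q$ coefficient at $u=-1$. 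The relative conditions simply remove the $b(0)$-zero-mode (and $\beta(0)$ in the Ramond case) and enforce $\theta(L_0) = 0$ (and $\theta(G_0)=0$).

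For $\kappa = \tfrac{1}{2}$, the chain complex factors as matter tensored with ghosts. The charge-$\lambda$ part of $V_L$ has character $q^{(\lambda,\lambda)/2}\prod_{n\geq 1}(1-q^n)^{-10}$, the fermion Fock $F_{10}$ contributes $\prod_{n\geq 1}(1+q^{n-1/2})^{10}$, the $bc$-system of weights $(2,-1)$ contributes $\prod_{n\geq 1}(1-uq^n)(1-u^{-1}q^{n-1})$, and the Neveu--Schwarz super-ghost $\beta\gamma$-system of weights $(3/2,-1/2)$ contributes $\prod_{n\geq 1}(1-uq^{n-1/2})^{-1}(1-u^{-1}q^{n-1/2})^{-1}$. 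Setting $u=-1$ and imposing the relative conditions produces massive cancellation: two of the ten bosonic directions cancel against the $bc$-ghosts and two of the ten fermionic directions cancel against the $\beta\gamma$-ghosts, leaving the transverse factor
$$q^{(\lambda,\lambda)/2}\;\frac{\prod_{n\geq 1}(1+q^{n-1/2})^{8}}{\prod_{n\geq 1}(1-q^n)^{8}}.$$
The coefficient of $q^0$ in the total character, which computes $-\chi$, is then the coefficient of $q^{1/2 - (\lambda,\lambda)/2}$ in this transverse generating series (shifted by $q^{1/2}$ coming from the half-integer ghost zero-mode), which matches $c\bigl(\tfrac{1}{2} - \tfrac{(\lambda,\lambda)}{2}\bigr)$ as stated in Theorem \ref{multiplicitiesN=1}.

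For $\kappa = 0$, I would follow the same template, but replace $F_{10}$ (NS sector) by its Ramond analogue with character $\prod_{n\geq 1}(1+q^n)^{10}$, and carry the extra factor $\dim R$ coming from the zero-mode spinor module. The two additional relative conditions $\beta(0)v = \theta(G_0)v = 0$, combined with the non-degenerate pairing $\langle\cdot,\cdot\rangle_C$ of the preceding proposition (so that the $\theta(G_0)$-kernel inside the $\theta(L_0)$-kernel is exactly half-dimensional), cut $\dim R$ in half to produce the prefactor $\tfrac{\dim R}{2}$. After the analogous ghost/matter cancellations, what survives is $\prod_{n\geq 1}(1-q^{2n})^{8}/\prod_{n\geq 1}(1-q^n)^{16}$, so extracting the $q^{-(\lambda,\lambda)/2}$-coefficient gives the stated formula. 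The main obstacle is the bookkeeping for the bosonic $\beta\gamma$ zero modes in the Ramond sector: the two choices $q_B = 0,1$ give reducible subcomplexes whose sum defines $V_0$, and one must verify that the relative conditions together with the non-degenerate pairing identify the Ramond-sector Euler characteristic with exactly the displayed product, including the factor of two reduction from $\dim R$ to $\dim R/2$; this is where the care taken in \cite{lianzuckerman1} to set up $\langle\cdot,\cdot\rangle_C$ is essential.
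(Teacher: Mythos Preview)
Your approach is exactly the one the paper invokes: it does not give its own proof but simply attributes the result to \cite{lianzuckerman1}, remarking that ``as in \cite{frenkelgarlandzuckerman}, they are able to compute multiplicities of these relative cohomology groups'' via the Euler--Poincar\'e principle together with the vanishing theorem. Your character computation (matter tensored with $bc$ and $\beta\gamma$ ghosts, specialization $u=-1$, cancellation of two bosonic and two fermionic directions against ghosts) is the standard realization of that method, and your handling of the Ramond zero-mode subtleties correctly identifies where the extra care in \cite{lianzuckerman1} is needed.
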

Furthermore, they also recover the no-ghost theorem in this case, i.e. that $H^1_{\text{rel}}(\lambda)$ is positive definite.  Just as in the previous section, this indicates that, for $\kappa = \frac{1}{2}$, the relative cohomology groups should match with $g_{NS}$.  We may once again define a map
$$T : P^{1/2} \to H^1_{rel}, T(v) = v \otimes c(1)$$
In \cite{lianzuckerman1}, Theorem 2.20, they show that this map is a unitary isomorphism.  Therefore, we see that BRST cohomology provides yet another way to study $g_{NS}$ and also reveals an additional space for $\kappa = 0$.

\begin{remark}
A natural question whether or not the Lie algebra structure in $N=1$ can be extended to include $\kappa = 0$?  The answer to this question is yes.  A Lie superalgebra was obtained by Scheithauer in \cite{scheit}.  A Lie superalgebra structure on $H^1$ (not relative cohomology) was also obtained by Lian, Moore and Zuckerman in the unpublished paper \cite{lianmoorezuckerman}.  Both of these results were achieved by "bosonization" of the fermions and the inclusion of an additional operator known as the "picture changing" operator.  Because of this bosonization procedure, the structure presented in these papers appears very different from the one that appears in this document and therefore we do not include it here.  Note that the same techniques can also be used in $N=2$ as done in \cite{kugel}, although the more traditional techniques that we use here in the next chapter will provide us with plenty to examine.
\end{remark}

\section{The N=2 Lie algebra of physical states}

\subsection{No-ghost theorem in the N=2 model}
So far we have only considered cases where $L$ is a Lorentzian lattice.  Due to the unique structure of the $N=2$ superconformal algebra, we will now consider cases where the signature of $L$ is $(l,l)$, i.e.,
$$L \otimes \R = L^+ \oplus L^-$$
where $L^+$ and $L^-$ are isotropic subspaces of dimension $l$.  Then we may choose basis elements $h_i^+, h_i^-$ of $L^+, L^-$ as before. 
We replace the definition of $P^i$ with
$$P^{i,j} = \{v \in V_{NS} | L_0 v = iv, J_0 v = jv, G^{\pm}_{n - \frac{1}{2}}v = J_n v = 0 \text{ for all } n > 0 \}.$$
We may refer to the eigenvalues of $J_0$ as \emph{charge}.  Again fix $\alpha \neq 0 \in L$.  In this section, we will attempt to describe $P^{0,0}(\alpha)$ for $l=2$.  If $(\alpha,\alpha) = 0$ then it is easy to check that $P^{0,0}(\alpha) = \C e^{\alpha}$ therefore we may assume $(\alpha,\alpha) < 0$.  In this case, write $\alpha = \alpha^+ - \alpha^-$ where $(\alpha^+,\alpha^+) = (\alpha^-,\alpha^-) = 0$ and $(\alpha^+,\alpha^-) > 0$.  Choose $\beta^+ \in L^+$ and $\beta^- \in L^-$ such that $(\beta^+,\alpha) = (\beta^-,\alpha) = 0$ and $(\beta^+,\beta^-) = (\alpha^+,\alpha^-)$.  

In this case, we may write
$$J_n = k \sum\limits_{r \in \Z + \frac{1}{2}} : \widetilde{\alpha^+}(r) \wedge \widetilde{\alpha^-}(n-r) : + :\widetilde{\beta^+}(r) \wedge \widetilde{\beta^-}(n-r):$$
for $k$ such that $k (\alpha^+,\alpha^-)=1$.  This description of $J_n$ is valid because $J_n$ does not depend on the choice of dual bases for $L$.

We now adapt Lemma \ref{independentlemma} in this setting, following a similar approach to \cite{bienkowska} : 
\begin{lemma}
Suppose $l=2$ and $\lambda_1,...,\lambda_n, \lambda^j_1,...,\lambda^j_{n_j}, \mu^{\pm}_1,...,\mu^{\pm}_{m_{\pm}} \in \Z_{\geq 0}$ and \break $\epsilon^{\pm}_1,...,\epsilon^{\pm}_{k_{\pm}}$, $ \delta^{\pm}_1,...,\delta^{\pm}_{l_{\pm}} \in \{0,1\}$.  Then for different values of $\lambda,\mu,\epsilon, \delta$, the elements
\begin{align} \label{basiselements}
G_{- \frac{1}{2}}^{\pm \epsilon^{\pm}_1}...G_{-k + \frac{1}{2}}^{\pm \epsilon_{k_{\pm}}}L_{-1}^{\lambda_1}...L_{-n}^{\lambda_n} J_{-1}^{\lambda^j_1}...J_{-n}^{\lambda^j_{n_j}} & \widetilde{\beta^{\pm}} \left(- \frac{1}{2}\right)^{\delta_1}...\widetilde{\beta^{\pm}} \left(-l_{\pm} + \frac{1}{2}\right)^{\delta_{l_{\pm}}} \\ \cdot &\beta^{\pm}(-1)^{\mu_1}...\beta^{\pm}(-m_{\pm})^{\mu_{m_{\pm}}} e^{\alpha} \nonumber
\end{align}
where all $+$'ses appear before $-$'ses, is a basis for $(V_{NS})_{\alpha}$.
\end{lemma}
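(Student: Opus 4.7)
The plan is to adapt the argument for the analogous independence lemma in the $N=1$ section, now accounting for the second family of Super-Virasoro operators $G^{\pm}$, the current operators $J_n$, and the signature $(2,2)$ of $L$. The strategy is to identify the leading monomial produced by each factor in (\ref{basiselements}) when applied to $e^{\alpha}$, in a suitable monomial basis for $(V_{NS})_\alpha$, and then match dimensions via a graded character computation.

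First I would set up coordinates: the vectors $\alpha^{\pm}, \beta^{\pm}$ form an isotropic basis of $L \otimes \R$ with $(\alpha^+,\alpha^-) = (\beta^+,\beta^-) > 0$ and all other pairings zero, and their fermionic partners $\widetilde{\alpha^{\pm}}, \widetilde{\beta^{\pm}}$ form a compatible basis on the fermion side. The space $(V_{NS})_\alpha$ then has a standard monomial basis given by ordered products of the creation operators $\alpha^{\pm}(-n), \beta^{\pm}(-n)$ for $n \geq 1$ together with $\widetilde{\alpha^{\pm}}(-r), \widetilde{\beta^{\pm}}(-r)$ for $r \in \tfrac{1}{2} + \Z_{\geq 0}$, applied to $e^{\alpha}$. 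Using the explicit formulas for $L_n$, $J_n$, $G_r^{\pm}$, Lemma \ref{bosonfermionbridge}, and the fact that only $\alpha^{\pm}(0)$ has non-trivial zero-mode action on $e^{\alpha}$, I would compute the leading term produced by each factor: $L_{-n} e^{\alpha}$ yields $\alpha(-n)e^\alpha$ plus terms of lower bosonic degree; $J_{-n} e^{\alpha}$ yields a mixed fermionic leading term essentially of the form $\widetilde{\alpha^{+}}(-\tfrac{1}{2}) \wedge \widetilde{\alpha^{-}}(-n+\tfrac{1}{2}) e^{\alpha}$; $G^{\pm}_{-n+1/2} e^{\alpha}$ yields a term proportional to $\widetilde{\alpha^{\mp}}(-n+\tfrac{1}{2}) e^{\alpha}$; and the $\beta^{\pm}(-n)$, $\widetilde{\beta^{\pm}}(-r)$ factors act directly.

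With a lexicographic ordering that gives strict priority to $\alpha^{\pm}, \widetilde{\alpha^{\pm}}$ over $\beta^{\pm}, \widetilde{\beta^{\pm}}$ and that orders within each family by level, the multi-index $(\lambda, \lambda^j, \mu^{\pm}, \epsilon^{\pm}, \delta^{\pm})$ can be read off the leading monomial of the corresponding element of (\ref{basiselements}); standard triangularity then yields linear independence. To upgrade to a basis, I would compare graded characters of the two sides: that of $(V_{NS})_\alpha$ is a product of partition generating functions by Propositions \ref{voadimension} and \ref{svoadimension}, and that of the proposed generators is the matching product indexed by $(\lambda, \lambda^j, \mu^{\pm}, \epsilon^{\pm}, \delta^{\pm})$. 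The hard part is verifying that the leading monomials are genuinely distinct: $J_{-n}$ produces quadratic fermionic leading terms in the $\widetilde{\alpha^{\pm}}$ variables, and the two families $G^{\pm}$ anticommute with each other, so one must check that the convention of placing all $+$'s before $-$'s really picks out independent combinations and that the fermion terms produced by $J_{-n}$ do not collapse onto those produced by strings of $G^{+}_r G^{-}_s$. Separating these contributions relies on the observation that $G^{\pm}$ and $J$ never produce $\beta^{\pm}$-type leading terms at all, which decouples the $\alpha$-sector from the $\beta$-sector of the multi-index.
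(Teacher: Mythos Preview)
Your approach and the paper's differ in a genuine way. You aim to prove linear independence first via leading-term triangularity and then match dimensions; the paper instead proves \emph{spanning} first, by a double induction on the $L_0$-degree $M$ and the number $N$ of nonzero exponents in a monomial $p\,e^{\alpha}$, and only then counts dimensions. The paper's inductive engine is an explicit algebraic identity: from $G^+_{-n+r}G^-_{-r}e^{\alpha} = \alpha^+(-n)e^{\alpha} + \widetilde{\alpha^+}(-r)\wedge\widetilde{\alpha^-}(-n+r)e^{\alpha}$, one sums over $r$ and subtracts $J_{-n}e^{\alpha}$ (which contributes the same $\widetilde{\alpha^+}\wedge\widetilde{\alpha^-}$ sum together with a $\widetilde{\beta^+}\wedge\widetilde{\beta^-}$ piece) to solve for $\alpha^{\pm}(-n)e^{\alpha}$ in terms of the operators appearing in (\ref{basiselements}). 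This handles the base case $N=1$; commuting one factor past the rest handles the inductive step.

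Your triangularity argument is plausible but has a real gap precisely where you flag it. With your proposed ordering, which gives $\alpha^{\pm},\widetilde{\alpha^{\pm}}$ strict priority over $\beta^{\pm},\widetilde{\beta^{\pm}}$, the $\widetilde{\beta^+}\wedge\widetilde{\beta^-}$ terms that distinguish $J_{-n}$ from combinations of $G^+G^-$ are \emph{subleading}, so the leading monomial of $J_{-n}e^{\alpha}$ lands in the same $\widetilde{\alpha^+}\wedge\widetilde{\alpha^-}$ sector that $G^+_{-r}G^-_{-s}e^{\alpha}$ inhabits. You also need the bosonic $\alpha^+(-n)$ produced by $G^+_{-r}G^-_{-s}$ to sort out the various $(r,s)$ with $r+s=n$ from each other and from $L_{-n}$, but that term is the \emph{same} for all such $(r,s)$. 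So a single lexicographic ordering is unlikely to make all three families simultaneously triangular; at minimum a more refined filtration is needed, and you have not specified one. (Minor point: with the paper's conventions $G^{\pm}_{-s}e^{\alpha}$ produces $\widetilde{\alpha^{\pm}}(-s)e^{\alpha}$, not $\widetilde{\alpha^{\mp}}$.)

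The paper's spanning-first argument avoids this entirely: the identity above \emph{uses} the exact cancellation between the $J_{-n}$ and $\sum_r G^+_{-n+r}G^-_{-r}$ contributions as a feature rather than an obstacle, and the dimension count then forces independence without any monomial ordering.
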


\begin{proof}
Write $v = p e^{\alpha} \in (V_{NS})_{\alpha}$.  We first prove that $v$ can be written as a linear combination of elements of the form in (\ref{basiselements}).  Because $\alpha^{\pm},\beta^{\pm}$ is a basis for $L \otimes \R$, we may assume that $p$ is a monomial in terms of the form in (\ref{basiselements}), but with some additional powers of $\alpha^{\pm}(-n), \widetilde{\alpha^{\pm}}(-s)$ at the front.  Let $M$ be the $L_0$-degree of $p$ and $N$ be the number of non-zero exponents in this monomial.  The proof proceeds by double induction on $M$ first and $N$.  The case $M=0$ is trivial.  We explain how the inductive step proceeds before we deal with the base case $N=1$.

If $p = \alpha^{\pm}(-n) q$ or $\widetilde{\alpha^{\pm}}(-s) q$ for some $q$ then by induction hypothesis (on $M$) $q$ is a linear combination of terms of the form in (\ref{basiselements}).  We may then assume $q$ is again a monomial of the form in (\ref{basiselements}).  Commuting $\alpha^{\pm}(-n)$ or $\widetilde{\alpha^{\pm}}(-s)$ with $q$ will lead in new terms in two ways : some where a power of $\alpha^{\pm}(-n)$ or $\widetilde{\alpha^{\pm}}(-s)$ no longer leads the monomial, and some where they still lead the monomial but $N$ decreases.  In both cases we may still apply induction on $N$ to complete the proof.  The only scenario in which we do not have an inductive step is when $N=1$.

In other words, we need to show that $\alpha^{\pm}(-n) e^{\alpha}$ or $\widetilde{\alpha^{\pm}}(-s) e^{\alpha}$ can be written as a linear combination of elements in (\ref{basiselements}).  Observe that
$$\widetilde{\alpha^{\pm}}(-s) e^{\alpha} = G^{\pm}_{-s} e^{\alpha},$$
therefore terms of this type are accounted for.  For $\alpha^{\pm}(-n) e^{\alpha}$ however, the situation is slightly more complicated.  We perform the proof for $\alpha^+(-n) e^{\alpha}$ for simplicity, and the proof for the other term is more or less the same.  By a simple computation, we see :
$$G^+_{-n+r} G^-_{-r} e^{\alpha} = \alpha^+(-n) e^{\alpha} +  \widetilde{\alpha^+}(-r) \wedge \widetilde{\alpha^-}(-n+r) e^{\alpha}$$
and therefore,
$$\sum\limits_{1/2 \leq r < n} G^+_{-n+r} G^-_{-r} e^{\alpha} = n \alpha^+(-n) e^{\alpha} + \sum\limits_{1/2 \leq r < n} \widetilde{\alpha^+}(-r) \wedge \widetilde{\alpha^-}(-n+r) e^{\alpha}.$$
However, we have
$$J_{-n} e^{\alpha} = \sum\limits_{1/2 \leq r < n} \widetilde{\alpha^+}(-r) \wedge \widetilde{\alpha^-}(-n+r) e^{\alpha} + \widetilde{\beta^+}(-r) \wedge \widetilde{\beta^-}(-n+r) e^{\alpha}.$$
Consequently, we can solve for $\alpha^+(-n) e^{\alpha}$ in terms of other terms of the form in (\ref{basiselements}).  This completes the case $N=1$ and in general proves that the terms (\ref{basiselements}) form a spanning set for $(V_{NS})_{\alpha}$.

To see why they form a basis, count the dimension of $(V_{NS})_{\alpha}$ at a fixed weight $M$ using the definition and observe that it matches the number of elements of the form in (\ref{basiselements}), and since they form a spanning set they must also form a basis.
\end{proof}

\begin{lemma}
Let $l = 2$ and fix $\alpha,\beta$. Let $N^{r,s}$ denote the span of all elements $v$ of $V_{NS}(\alpha)$ in the form (\ref{basiselements}) such that $L_0 v = rv$ and $J_0v = sv$, and any of the $\lambda_i, \lambda^j_i, \epsilon^{\pm}_i$ is non-zero.  Then the operators $G^{+}_{\frac{1}{2}}$, $G^-_{\frac{1}{2}}$ and $J_1 + G^+_{\frac{1}{2}} G^-_{\frac{1}{2}} - G^-_{\frac{1}{2}}G^+_{\frac{1}{2}}$ are well-defined maps $N^{0,0} \to N^{-1/2,1}$, $N^{0,0} \to N^{-1/2,-1}$ and $N^{0,0} \to N^{1,0}$.
\end{lemma}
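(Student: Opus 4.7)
The strategy parallels the $N = 0$ and $N = 1$ cases. First, I would show that every $y \in N^{0,0}$ admits a decomposition
\begin{equation*}
y = G^+_{-\frac{1}{2}} x_1 + G^-_{-\frac{1}{2}} x_2 + G^+_{-\frac{3}{2}} x_3 + G^-_{-\frac{3}{2}} x_4 + J_{-1} x_5,
\end{equation*}
where each $x_i$ has the unique $L_0$- and $J_0$-eigenvalues dictated by $L_0 y = J_0 y = 0$. This is justified because the lowering part of the $N=2$ superconformal algebra is generated by $L_{-1}, L_{-2}, J_{-1}, J_{-2}, G^{\pm}_{-\frac{1}{2}}, G^{\pm}_{-\frac{3}{2}}$, and the anticommutators $\{G^+_{-\frac{1}{2}}, G^-_{-\frac{1}{2}}\} = L_{-1}$ together with $\{G^{\pm}_{-\frac{1}{2}}, G^{\mp}_{-\frac{3}{2}}\} = L_{-2} \pm \tfrac{1}{2} J_{-2}$ allow us to absorb $L_{-1}, L_{-2}, J_{-2}$ (and inductively all $L_{-n}, J_{-n}$ with $n \geq 2$) into the four $G^{\pm}$-terms; only $J_{-1}$ must be kept separately.

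For $G^{\pm}_{\frac{1}{2}} y$, the argument is routine bracket-chasing. Applying $G^+_{\frac{1}{2}}$ to each summand and using $\{G^+_r, G^+_s\} = 0$, the supercommutator formula
\begin{equation*}
\{G^+_r, G^-_s\} = L_{r+s} + \tfrac{1}{2}(r-s) J_{r+s} + \tfrac{c}{6}\bigl(r^2 - \tfrac{1}{4}\bigr) \delta_{r+s},
\end{equation*}
and $[G^+_{\frac{1}{2}}, J_{-1}] = -G^+_{-\frac{1}{2}}$, every resulting term either carries a $G^{\pm}_{-r}, L_{-n},$ or $J_{-n}$ on the left (so is spurious) or is a zero-level contribution $(L_0 \pm \tfrac{1}{2} J_0) x_i$ that vanishes by the prescribed eigenvalues of $x_i$. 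This proves $G^+_{\frac{1}{2}} y \in N^{-\frac{1}{2}, 1}$, and the symmetric computation handles $G^-_{\frac{1}{2}} y \in N^{-\frac{1}{2}, -1}$.

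The main obstacle is $\mathcal{J}_1 := J_1 + G^+_{\frac{1}{2}} G^-_{\frac{1}{2}} - G^-_{\frac{1}{2}} G^+_{\frac{1}{2}}$. Its definition is tailored to cancel the non-spurious terms $\pm G^{\pm}_{\frac{1}{2}} x_i$ that $J_1$ produces when commuted past $G^{\pm}_{-\frac{1}{2}} x_i$; using $\{G^+_{\frac{1}{2}}, G^-_{\frac{1}{2}}\} = L_1$, the key identity is $L_1 - 2 G^-_{\frac{1}{2}} G^+_{\frac{1}{2}} = G^+_{\frac{1}{2}} G^-_{\frac{1}{2}} - G^-_{\frac{1}{2}} G^+_{\frac{1}{2}}$, which makes $\mathcal{J}_1$ commute with $G^{\pm}_{-r}$ modulo spurious terms, yielding $\mathcal{J}_1(G^{\pm}_{-r} x_i) = G^{\pm}_{-r} \mathcal{J}_1 x_i + \text{(spurious)}$ for $i = 1,2,3,4$. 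The delicate case is $J_{-1} x_5$: using $[J_1, J_{-1}] = c/3$ and $[G^{\pm}_{\frac{1}{2}}, J_{-1}] = \mp G^{\pm}_{-\frac{1}{2}}$ together with the formulas above, a direct computation yields
\begin{equation*}
\mathcal{J}_1(J_{-1} x_5) = J_{-1} \mathcal{J}_1 x_5 + \bigl(\tfrac{c}{3} - 2\bigr) x_5 - 2 \bigl(G^+_{-\frac{1}{2}} G^-_{\frac{1}{2}} + G^-_{-\frac{1}{2}} G^+_{\frac{1}{2}}\bigr) x_5.
\end{equation*}
Every term on the right is spurious except the scalar multiple $(c/3 - 2) x_5$, and since the $N=2$ SVOA on a signature-$(l,l)$ lattice has central charge $c = 3l$, the hypothesis $l = 2$ is precisely what makes this obstruction vanish and the lemma go through.
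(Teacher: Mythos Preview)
Your proposal is correct and follows essentially the same strategy as the paper: decompose an element of $N^{0,0}$ using generators of the negative part of the $N=2$ algebra, then commute $G^{\pm}_{1/2}$ and $\mathcal{J}_1$ past each piece and verify that the only non-spurious contribution is the scalar $(c/3 - 2)x_5$ coming from $[J_1,J_{-1}]$, which vanishes precisely for $l=2$.

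The only difference is the decomposition itself. You use five terms $G^{\pm}_{-1/2}x_{1,2} + G^{\pm}_{-3/2}x_{3,4} + J_{-1}x_5$, while the paper uses only three, $G^+_{-1/2}x + G^-_{-1/2}y + J_{-1}z$, because $[J_{-1}, G^{\pm}_{-1/2}] = \pm G^{\pm}_{-3/2}$ already shows that $G^{\pm}_{-1/2}$ and $J_{-1}$ alone generate the entire lowering part. Your extra $G^{\pm}_{-3/2}$ cases are therefore redundant (and, as you would find, produce only spurious terms with no central-charge contribution, since $\{G^{\pm}_{1/2}, G^{\mp}_{-3/2}\}$ has no central term). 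The paper's three-term version is cleaner and avoids these two additional checks, but the substance of the argument and the identification of the critical obstruction are identical.
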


\begin{proof}
Let $u$ be an element of $N^{r,s}$.  Then we may write
$$ u = G^+_{-\frac{1}{2}}x + G^-_{-\frac{1}{2}}y + J_{-1} z$$
for some $x,y,z$, because $G^+_{-\frac{1}{2}},G^-_{-\frac{1}{2}},J_{-1}$ generate the lower part of the $N=2$ superconformal algebra.  We may evaluate $L_0x = L_0y = -1/2 x, L_0z = -z$ and $J_0x = -x, J_0y = y, J_0z = 0$.  The proof is essentially multiple computations as in the $N=0$ and $N=1$ cases.

Starting with the element $G^+_{-\frac{1}{2}} x \in N^{0,0}$,
\begin{align*}
G^{+}_{\frac{1}{2}}G^+_{-\frac{1}{2}}x &= G^{+}_{-\frac{1}{2}}G^+_{\frac{1}{2}}x \\
G^-_{\frac{1}{2}} G^+_{-\frac{1}{2}}x &= - G^+_{-\frac{1}{2}} G^-_{\frac{1}{2}}x + (L_0 - \frac{1}{2}J_0) x
\\&= - G^+_{-\frac{1}{2}} G^-_{\frac{1}{2}}x \in N^{-1/2,-1}
\\ J_1 G^+_{-\frac{1}{2}} &= G^+_{-\frac{1}{2}} J_1 x + G^+_{\frac{1}{2}}x
\\ G^-_{\frac{1}{2}}G^+_{\frac{1}{2}}G^+_{-\frac{1}{2}}x &= -G^-_{\frac{1}{2}}G^+_{-\frac{1}{2}}G^+_{\frac{1}{2}}x \\
&= G^+_{-\frac{1}{2}}G^-_{\frac{1}{2}}G^+_{\frac{1}{2}}x - (L_0 - \frac{1}{2}J_0) G^+_{\frac{1}{2}}x \\
&= G^+_{-\frac{1}{2}}G^-_{\frac{1}{2}}G^+_{\frac{1}{2}}x + G^+_{\frac{1}{2}}x
\\ G^+_{\frac{1}{2}}G^-_{\frac{1}{2}}G^+_{-\frac{1}{2}}x &= - G^+_{\frac{1}{2}} G^+_{-\frac{1}{2}} G^-_{\frac{1}{2}} x = G^+_{-\frac{1}{2}} G^+_{-\frac{1}{2}} G^-_{\frac{1}{2}} \in N^{1,0}
\end{align*}
From there we see that $J_1 + G^+_{\frac{1}{2}} G^-_{\frac{1}{2}} - G^-_{\frac{1}{2}}G^+_{\frac{1}{2}}$ indeed maps $G^+_{-\frac{1}{2}} x$ to an element of $N^{1,0}$.  Observe that these computations required specific values of $L_0$ and $J_0$.  The computation for $G^-_{-\frac{1}{2}} y$ is essentially the same with some minor sign differences.  On $J_{-1}z$, we have,
\begin{align*}
G^+_{\frac{1}{2}} J_{-1}z &= -G^+_{-\frac{1}{2}} z + J_{-1} G^+_{\frac{1}{2}} z \in N^{-1/2,1} \\
G^-_{\frac{1}{2}} J_{-1}z &= G^-_{-\frac{1}{2}} z + J_{-1} G^-_{\frac{1}{2}} z \in N^{-1/2,-1} \\
J_1 J_{-1} z &= J_{-1} J_1 z + \frac{c}{3} z
\end{align*}
\begin{align*}
G^+_{\frac{1}{2}} G^-_{\frac{1}{2}} J_{-1} z &= G^+_{\frac{1}{2}} J_{-1} G^-_{\frac{1}{2}} z + G^+_{\frac{1}{2}} G^-_{-\frac{1}{2}} z \\
&= -z + \text{ other terms in } N^{1,0} \\
G^-_{\frac{1}{2}} G^+_{\frac{1}{2}} J_{-1} z &= G^-_{\frac{1}{2}} J_{-1} G^+_{\frac{1}{2}} z - G^-_{\frac{1}{2}} G^+_{-\frac{1}{2}} z \\
&= z + \text{ other terms in } N^{1,0}
\end{align*}
Adding the last three terms we find that $J_1 + G^+_{\frac{1}{2}} G^-_{\frac{1}{2}} - G^-_{\frac{1}{2}}G^+_{\frac{1}{2}}$ maps $J_{-1} z$ into $N^{1,0}$ if and only if $\frac{c}{3} - 2 = 0$, or equivalently $c = 6$.  This happens for $l = 2$, as assumed.
\end{proof}

We are now ready for the $N=2$ version of no-ghost theorem.

\begin{theorem}
Suppose $u \in P^{0,0}(\alpha)$ and $L$ is an even lattice of signature $(2,2)$.  Then $u = v + w$ where $v \in \spn({e^\alpha}) \cap P^{0,0}(\alpha)$ and $w \in P^{0,0}$ is a linear combination of terms $G^{\pm}_{-n} x$ and $J_{-n} x$ for some $x$'s in $V_{NS}$.
\end{theorem}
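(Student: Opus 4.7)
The plan is to imitate the proof of the $N=0$ no-ghost theorem (Theorem~\ref{noghost}) and its $N=1$ analogue, with the three operators $G^{+}_{1/2}$, $G^{-}_{1/2}$, and $\widetilde{J}_1 := J_1 + G^{+}_{1/2}G^{-}_{1/2} - G^{-}_{1/2}G^{+}_{1/2}$ from the previous lemma playing the role of $L_1, \widetilde{L}_2$ (resp.\ $G_{1/2}, G_{3/2}+2L_1 G_{1/2}$). Since $(\alpha,\alpha)=0$ is handled in the remark preceding the theorem, I assume $(\alpha,\alpha)<0$, so that $\spn(e^\alpha)\cap P^{0,0}(\alpha)=\{0\}$ and the claim reduces to showing that $u$ is a linear combination of $G^{\pm}_{-n}x$ and $J_{-n}x$ terms.

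Using the basis lemma, I split $V_{NS}(\alpha)=T(\alpha)\oplus N(\alpha)$, with $T(\alpha)$ spanned by basis monomials built only from $\beta^{\pm}(-n),\widetilde{\beta^{\pm}}(-r)$ acting on $e^\alpha$, and $N(\alpha)$ spanned by those with a leading $L_{-n}, J_{-n}$, or $G^{\pm}_{-n+1/2}$. Decompose $u=v+w$ accordingly. The second lemma already says that $G^{\pm}_{1/2},\widetilde{J}_1$ preserve $N$ at the relevant bi-degrees; a direct computation shows they also preserve $T(\alpha)$, because the ``longitudinal'' $\alpha^{\pm}\widetilde{\alpha^{\mp}}$ pieces of $G^{\pm}_{1/2}$ annihilate $T(\alpha)$ (using $(\beta^{\pm},\alpha)=0$ and the absence of $\alpha^{\pm}, \widetilde{\alpha^{\pm}}$ excitations in $T(\alpha)$). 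Hence, from $u\in P^{0,0}$, the three annihilations project onto each summand, yielding $G^{\pm}_{1/2}v=\widetilde{J}_1 v=0$ and similarly for $w$. Since $G^{+}_{1/2}, G^{-}_{1/2}, J_1$ generate the entire positive cone of the $N=2$ superconformal algebra (via $L_1=\{G^{+}_{1/2},G^{-}_{1/2}\}$, $G^{\pm}_{3/2}=\pm[J_1,G^{\pm}_{1/2}]$, and iterated commutators), both $v$ and $w$ lie individually in $P^{0,0}$.

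The piece $w\in N(\alpha)\cap P^{0,0}$ is, by construction, a linear combination of basis monomials with a leading $L_{-n}$, $J_{-n}$, or $G^{\pm}_{-n+1/2}$. Using the anticommutator relation $L_{-n}=\{G^{+}_{-n+1/2},G^{-}_{-1/2}\}+\tfrac{n-1}{2}J_{-n}$ derived from the $N=2$ superalgebra relations, each $L_{-n}y$ rewrites as a sum of $G^{\pm}_{-m}x$ and $J_{-m}x$ terms acting on other elements of $V_{NS}$, so $w$ has exactly the form required by the theorem.

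The remaining, and main, obstacle is to show that $v\in T(\alpha)\cap P^{0,0}$ vanishes for $(\alpha,\alpha)<0$. On $T(\alpha)$ the positive generators restrict to an ``internal'' super-Virasoro action built only from the $\beta^{\pm},\widetilde{\beta^{\pm}}$ modes (the orthogonal signature-$(1,1)$ part of $L$), and since $(\alpha,\alpha)<0$, the subspace $T(\alpha)\cap\{L_0=J_0=0\}$ is genuinely non-trivial, so $v=0$ cannot be read off from the decomposition alone. My plan is to argue inductively on the largest mode index $n$ appearing in $v$: exploiting the non-degenerate pairing $(\beta^+,\beta^-)\neq 0$, applying a suitable positive generator converts the leading $\beta^{\mp}(-n)$ or $\widetilde{\beta^{\mp}}(-n+1/2)$ factor of each term of $v$ into a zero-mode contribution proportional to $e^\alpha$, and the annihilation of $v$ by that generator forces the coefficient of the leading factor to vanish; iterating strips off every transverse mode of $v$, leaving a scalar multiple of $e^\alpha$, which must itself be zero because $L_0 e^\alpha=\tfrac{(\alpha,\alpha)}{2}e^\alpha\neq 0$. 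This is the $N=2$ analogue of the ``$c$-mode stripping'' step at the end of the proof of Theorem~\ref{noghost}, now carried out simultaneously over both bosonic and fermionic transverse modes; the main technical subtlety lies in tracking signs and normal orderings in the fermionic part, but a direct sanity check at $(\alpha,\alpha)=-2$ using only $G^{\pm}_{1/2}$ and $J_1$ already pins down $v=0$, suggesting that the general induction is tractable.
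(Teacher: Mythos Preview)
Your overall architecture is correct and is essentially the paper's own: split $V_{NS}(\alpha)$ into the ``$\beta$-built'' subspace $K$ (your $T(\alpha)$) and the ``longitudinal'' subspace $N$, use the lemma on $N^{0,0}$ together with the observation that $G^{\pm}_{1/2}$ and $\widetilde J_1$ also preserve $K$ to conclude that $v,w\in P^{0,0}$ separately, and then rewrite each $L_{-n}$ in $w$ as a combination of $G^{\pm}_{-m}$ and $J_{-m}$. All of that matches the paper.

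The one place where your sketch is genuinely underspecified is the final step, showing $v\in\spn(e^{\alpha})$. Your description (``applying a suitable positive generator converts the leading $\beta^{\mp}(-n)$ or $\widetilde{\beta^{\mp}}(-n+\tfrac12)$ factor \dots\ into a zero-mode contribution proportional to $e^{\alpha}$'') is not quite the right mechanism: here $(\beta^{\pm},\alpha)=0$, so zero modes of $\beta^{\pm}$ kill $e^{\alpha}$ rather than detect it, unlike the $(c,\alpha)=1$ trick in the $N=0$ proof. The paper instead runs a maximality argument on the $\beta^{\pm},\widetilde{\beta^{\pm}}$ monomials, using the pairing $(\beta^{+},\beta^{-})\neq 0$ to show that the top monomial, under a suitable raising operator, produces a term no lower monomial can reach. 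The crucial point you have not anticipated is that this fails in two specific configurations: when the top monomial contains a factor of the form $\widetilde{\beta^{+}}(-r)\wedge\widetilde{\beta^{-}}(-\tfrac12)$ (or its $\pm$-swap), a single operator such as $J_1$ does \emph{not} isolate it, because a distinct lower monomial containing $\widetilde{\beta^{-}}(-\tfrac32)\wedge\widetilde{\beta^{+}}(-r-1)$ produces the same output. The paper resolves this by applying \emph{both} $L_1$ and $J_1$ and observing that the two problematic terms contribute with opposite relative signs ($L_1 x = -L_1 y + \cdots$ versus $J_1 x = J_1 y + \cdots$), so the resulting $2\times 2$ linear system has only the trivial solution. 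Your inductive stripping will run into exactly this collision; the fix is not just ``tracking signs and normal orderings'' but recognising that two operators are needed simultaneously at that step.
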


\begin{proof}
The proof proceeds as in Theorem \ref{noghost}, where we replace $K^s$ with the subspace of $V_s (\alpha)$ spanned by elements of the form
\begin{equation} \label{basiselementsK}
 \widetilde{\beta^{\pm}} \left(- \frac{1}{2}\right)^{\delta_1}...\widetilde{\beta^{\pm}} \left(-l_{\pm} + \frac{1}{2}\right)^{\delta_{l_{\pm}}} \beta^{\pm}(-1)^{\mu_1}...\beta^{\pm}(-m_{\pm})^{\mu_{m_{\pm}}} e^{\alpha}
\end{equation}
for some exponents $\delta_1,...,\delta_{l_{\pm}} \in \{ 0,1 \}$ and $\mu_1,...\mu_{m_{\pm}} \in \Z_+$.  As before, the operators $G^+_{\frac{1}{2}}$, $G^-_{\frac{1}{2}}$ and $J_1 + G^+_{\frac{1}{2}} G^-_{\frac{1}{2}} - G^-_{\frac{1}{2}}G^+_{\frac{1}{2}}$ map $K^s \to K^{s - 1/2}$ and $K^s \to K^{s-1}$.

Therefore, we can write $u = v + w$ where $v \in K^0$ and $w \in N^{0,0}$.  Then using the previous lemma we find that if $u \in P^{0,0}$ then so are $v,w$.  It remains only to prove that $v \in \spn({e^\alpha}) \cap P^{0,0}(\alpha)$.  To do so let
$$V = \spn \{  \widetilde{\beta^{\pm}} \left(- \frac{1}{2}\right)^{\delta_1}...\widetilde{\beta^{\pm}} \left(-l_{\pm} + \frac{1}{2}\right)^{\delta_{l_{\pm}}} \beta^{\pm}(-1)^{\mu_1}...\beta^{\pm}(-m_{\pm})^{\mu_{m_{\pm}}} e^{\alpha} \}.$$

Then given $v$ an element of $V$, it is clear that each monomial in its linear combination has the same $L_0$-weight.  Given a list of these terms, one may order them in such a way that a term is "greater" than another if the exponent on the highest number $k \in \frac{1}{2}\Z$ for which $\beta^{\pm}(-k)$ or $\widetilde{\beta^{\pm}}(-k)$ have different exponents is greater, giving preference to $+$ over $-$.

Then by looking at a few cases, excluding $e^{\alpha}$ itself, and applying operators $G^+_{\frac{1}{2}}$, $G^-_{\frac{1}{2}}$, and $J_1$, we see that from the maximal term we can create a non-zero term that cannot be attained by any lower ones.  However there are two special cases where this is not possible : first the one with maximal terms which contain $x = \widetilde{\beta^+}(-r) \wedge \widetilde{\beta^-} (-1/2)$.  In this case there is no way to create a term that cannot be attained by any other term.  By applying $J_1$, there may be other terms, which are no longer maximal, which will contain $y = \widetilde{\beta^-}(-3/2) \wedge \widetilde{\beta^+} (-r-1)$ which may produce the same result.  In this case observe that $L_1 x = -L_1 y + ...$ but $J_1 x = J_1 y + ...$, which induces a small system of equation on the resulting non-zero term where the only solution is the trivial solution.  Therefore, this first special case is not possible.  The second special cases is the same but with $+$ and $-$ swapped, and can be dealt with similarly.

Therefore, we conclude that the only possible term is $e^{\alpha}$ itself hence $K^0 = \spn{(e^{\alpha})} \cap P^{0,0}(\alpha)$.
\end{proof}

The bilinear form $(.,.)$ is defined in the exact same way and its radical is still an ideal.  Let $g^{(2)}_{NS}$ be the quotient of $P^{0,0}$ by the radical of the bilinear form $(.,.)$.  For now, we can say the following :

\begin{corollary} \label{corollaryn=2structure}
The vector space $g^{(2)}_{NS}$ is isomorphic to $\spn \{ e^{\alpha} | (\alpha,\alpha) = 0 \}$.
\end{corollary}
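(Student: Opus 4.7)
The plan is to do a case analysis based on the sign of $(\alpha,\alpha)$. Every element of $(V_{NS})_{\alpha}$ is obtained from $e^{\alpha}$ by applying creation operators from $\widehat{h}^{-}$ and $\widehat{A}^{-}$, each of which strictly increases the $L_0$-weight, so the minimum $L_0$-weight occurring in $(V_{NS})_{\alpha}$ is $\frac{(\alpha,\alpha)}{2}$. Consequently $P^{0,0}(\alpha) = 0$ whenever $(\alpha,\alpha) > 0$, and whenever $(\alpha,\alpha) = 0$ the only element of weight $0$ is (up to scalar) $e^{\alpha}$ itself, which trivially satisfies $G^{\pm}_r e^{\alpha} = J_n e^{\alpha} = 0$ for all $r, n > 0$; thus $P^{0,0}(\alpha) = \C e^{\alpha}$, as already noted in the text. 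To see that the class of $e^{\alpha}$ in $g^{(2)}_{NS}$ is non-zero I would pair it against $e^{-\alpha} \in P^{0,0}(-\alpha)$: the definition of $(.,.)$ gives $(e^{\alpha},e^{-\alpha}) \neq 0$, so $e^{\alpha}$ is not in the radical.

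The interesting case is $(\alpha,\alpha) < 0$, which I would handle via the $N=2$ no-ghost theorem just proved. Since $L_0 e^{\alpha} = \frac{(\alpha,\alpha)}{2} e^{\alpha} \neq 0$ we have $\spn(e^{\alpha}) \cap P^{0,0}(\alpha) = 0$, so the decomposition $u = v + w$ collapses to $u = w$, a linear combination of vectors of the form $G^{\pm}_{-n}x$ and $J_{-n}x$ with $x \in V_{NS}$ and $n > 0$. It then suffices to show that any such $w$ lies in the radical of $(.,.)$.

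I would do this by computing adjoints, following the pattern of Lemma \ref{adjointvirasoro} and its $N=1$ analogue. Applying the invariance identity $(Y(v,z)u,w) = (u, Y(e^{zL_1}(-1)^{\overline{L_0}}z^{-2L_0}\theta(v),z^{-1})w)$ to $v = \tau^{\pm}$ and to $v = j$, and using that $\theta$ fixes both $\tau^{\pm}$ and $j$ (each contains an even number of operators negated by $\theta$), that $L_1 \tau^{\pm} = L_1 j = 0$, and that $L_0 \tau^{\pm} = \frac{3}{2}\tau^{\pm}$, $L_0 j = j$, one obtains the adjoint relations $(G^{\pm}_r)^{*} = -G^{\pm}_{-r}$ and $(J_n)^{*} = -J_{-n}$. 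Hence for any $u' \in P^{0,0}$ and any $n > 0$,
\[
(G^{\pm}_{-n}x, u') \;=\; -(x, G^{\pm}_n u') \;=\; 0, \qquad (J_{-n}x, u') \;=\; -(x, J_n u') \;=\; 0,
\]
since $P^{0,0}$ is by definition annihilated by $G^{\pm}_n$ and $J_n$ for positive $n$. Therefore every such $w$ lies in the radical and $P^{0,0}(\alpha)$ maps to zero in $g^{(2)}_{NS}$, which together with the first paragraph establishes the stated isomorphism.

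The main obstacle is tracking signs in the adjoint computation — one must correctly combine the action of $\theta$, the factor $(-1)^{\overline{L_0}}$, and the power $z^{-2L_0}$ — but the calculation is formally identical to the analogous $N=0$ and $N=1$ computations already carried out in the document, so no conceptual difficulty arises.
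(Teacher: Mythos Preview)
Your proof is correct and follows essentially the same approach as the paper's (very terse) proof, which simply reads: ``From the previous theorem, every $u = v+w$ where $w$ lies in the radical and the options for $v$ are very limited.'' You have spelled out what the paper leaves implicit, namely the case split on $(\alpha,\alpha)$, the verification that $e^{\alpha}$ is not in the radical when $(\alpha,\alpha)=0$, and the adjoint computation showing $w$ is null.

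One small remark on the signs: the paper's own $N=1$ lemma (proved by direct mode expansion, as in Lemma~\ref{adjointvirasoro}) gives $G_r^* = G_{-r}$ without a minus sign, and the same direct computation applied to $G^{\pm}_r = \sum_{i,k} h_i^{\pm}(k)\, a_i^{(\ast)}(r-k)$ and to $J_n$ yields $(G^{\pm}_r)^* = G^{\pm}_{-r}$ and $J_n^* = J_{-n}$. Your negative signs come from applying the invariance formula with $(-1)^{\overline{L_0}}$ to $\tau^{\pm}\in V_{3/2}$; as you note, this tracking is delicate, and in fact there appears to be a sign tension between the two routes in the paper itself. Either sign, of course, gives $(G^{\pm}_{-n}x,u') = \pm(x,G^{\pm}_n u') = 0$ for $u'\in P^{0,0}$, so the conclusion is unaffected.
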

\begin{proof}
From the previous theorem, every $u = v+w$ where $w$ lies in the radical and the options for $v$ are very limited.
\end{proof}

\begin{example} \label{matrixrealization}
Consider the case where $L = II_{2,2}$.  All even indefinite unimodular lattices are unique up to isomorphism, therefore $L \simeq II_{1,1} \oplus II_{1,1}$.  We may then describe $L$ as the lattice generated by elements $\rho_1,\rho'_1,\rho_2,\rho'_2$ where $(\rho_i,\rho_j) = (\rho'_i,\rho'_j) = 0$ and $(\rho_i,\rho'_j) = \delta_{i,j}$.  Suppose then that $\lambda \in L$ satisfy $(\lambda,\lambda) = 0$.  Then write
$$\lambda = m_1 \rho_1 - m_2 \rho_2 + n_1 \rho'_1 + n_2 \rho'_2.$$
From $(\lambda,\lambda) = 0$ we then find the condition $m_1 n_1 - m_2 n_2 = 0$.  We may then think of $\lambda$ as a matrix of determinant $0$ :
$$\lambda = \begin{bmatrix}
m_1 & n_2 \\
m_2 & n_1
\end{bmatrix}.$$
Given two $2 \times 2$ matrices $A=[a_{ij}], B=[b_{ij}]$ of determinant $0$ representing two elements $\lambda_1,\lambda_2$, we recover $(\lambda_1,\lambda_2)$ from the quadratic form $\det(\lambda)$ with the formula
$$a_{11}b_{22} + b_{11} a_{22} - a_{12} b_{21} - a_{21} b_{12} = \det(A+B) - \det(A) - \det(B).$$
\end{example}

\subsection{The N=2 Lie algebra of physical states} \label{subsectionN=2}
Once again we consider the space $P^{0,0}$ of lowest weight vectors for the $N=2$ superconformal algebra with energy and charge $0$.  Define a Lie bracket on $P^{0,0}$ by
$$[u,v] = \res_{z} Y(G^+_{-\frac{1}{2}} G^-_{-\frac{1}{2}}u,z)v$$

\begin{remark}
There is another option for the Lie bracket, but it does not provide anything new, indeed, we may define
$$[u,v]' = \res_{z} Y(G^-_{-\frac{1}{2}} G^+_{-\frac{1}{2}}u,z)v$$
instead, but we have
$$\{ G^+_{-\frac{1}{2}}, G^-_{-\frac{1}{2}} \} = L_{-1}$$
and then by taking residues we have $[u,v]' = -[u,v]$.
\end{remark}

\begin{lemma}
Suppose $u \in P^{i,j}$.  Then we have the following commutation relations :
\begin{align*}
\{ G^+_{m+\frac{1}{2}}, (G^+_{-\frac{1}{2}} G^-_{-\frac{1}{2}}u)_0 \} &= -(i + \frac{j}{2})(m+1) (G^+_{-\frac{1}{2}}u)_m \\
\{ G^-_{m+\frac{1}{2}}, (G^+_{-\frac{1}{2}} G^-_{-\frac{1}{2}}u)_0 \} &= ((i - \frac{j}{2}+1)(m+1) - m - 1)(G^-_{-\frac{1}{2}} u)_m \\
\{ J_m,(G^+_{-\frac{1}{2}} G^-_{-\frac{1}{2}}u)_0 \} &= j (G^+_{-\frac{1}{2}} G^-_{-\frac{1}{2}}u)_m - m(i + \frac{j}{2}) u_{m-1}
\end{align*}
In particular, $(G^+_{-\frac{1}{2}} G^-_{-\frac{1}{2}}u)_0$ commutes with the action of the $N=2$ superconformal algebra if and only if $i=j = 0$.
\end{lemma}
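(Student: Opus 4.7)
The plan is to reduce each identity to the Borcherds commutator formula in mode form: for homogeneous $a, b \in V_{NS}$ and integers $p, q$,
$$[a_p, b_q]_{\pm} = \sum_{k \geq 0} \binom{p}{k} (a_k b)_{p+q-k},$$
which follows from the super-commutator formula (\ref{commutatorformula}) after taking residues in $z_1$ (see also Corollary \ref{commutatorcorollary}). Each of the three identities will come from applying this with $b = w := G^+_{-\frac{1}{2}} G^-_{-\frac{1}{2}} u$, $q = 0$, and $a$ equal to $\tau^+, \tau^-, j$ in turn, recalling $(\tau^{\pm})_k = G^{\pm}_{k-\frac{1}{2}}$ and $j_k = J_k$. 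So the work reduces to computing $a_k w$ for the first few $k$.

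Each such $a_k w$ I would compute by commuting the operator past $G^+_{-\frac{1}{2}} G^-_{-\frac{1}{2}}$ using the $N = 2$ superconformal relations from Section 3 and then invoking the lowest-weight conditions $L_n u = J_n u = G^{\pm}_{n-\frac{1}{2}} u = 0$ for $n > 0$, $L_0 u = iu$, $J_0 u = ju$. For $a = \tau^+$: the $k = 0$ term vanishes since $(G^+_{-\frac{1}{2}})^2 = 0$; the $k = 1$ term uses $\{G^+_{1/2}, G^-_{-1/2}\} = L_0 + \frac{1}{2} J_0$ on $u$ to give a multiple of $G^+_{-\frac{1}{2}} u$; and all $k \geq 2$ vanish because $G^+_{k-1/2} u = 0$. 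For $a = \tau^-$: the $k = 0$ term produces $L_{-1} G^-_{-\frac{1}{2}} u$ via $\{G^-_{-1/2}, G^+_{-1/2}\} = L_{-1}$, and using $(L_{-1} v)_n = -n v_{n-1}$ contributes the $-(m+1)$ part of the coefficient; the $k = 1$ term produces the $(i - j/2 + 1)(m+1)$ part through $\{G^-_{1/2}, G^+_{-1/2}\} = L_0 - \frac{1}{2} J_0$; $k \geq 2$ is killed by $L_1 u = J_1 u = 0$. For $a = j$: $[J_k, G^{\pm}_r] = \pm G^{\pm}_{k+r}$ and the fact that $w$ has charge $j$ give $J_0 w = j w$, $J_1 w = (i + j/2) u$, and $J_k w = 0$ for $k \geq 2$. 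Substituting these values into the Borcherds formula and carefully collecting terms yields the three identities.

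The final claim is then immediate: for $(G^+_{-1/2} G^-_{-1/2} u)_0$ to commute with the entire $N=2$ superconformal algebra one needs the coefficient $j$ in the $J_m$-identity, the coefficient $(i+j/2)$ in the $G^+_{m+1/2}$-identity, and the coefficient $(i - j/2)$ in the $G^-_{m+1/2}$-identity all to vanish; these three conditions force $i = j = 0$, and conversely $i = j = 0$ makes every right-hand side vanish identically. I anticipate no conceptual obstacle: the hardest part is pure bookkeeping, namely keeping the signs straight under the $\mathbb{Z}/2$-grading in the super-commutator formula and making sure the annihilation conditions $G^{\pm}_{n - 1/2} u = 0$ are applied with the correct index.
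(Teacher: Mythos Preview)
Your proposal is correct and is essentially the same argument as the paper's own proof. The paper likewise sets $y = G^+_{-1/2}G^-_{-1/2}u$, computes the table of values $\tau^\pm_k y$ and $j_k y$ for $k\ge 0$ using the $N=2$ relations and the lowest-weight conditions on $u$, and then feeds these into the super-commutator formula; the only cosmetic difference is that you phrase that formula in its component (Borcherds) form $[a_p,b_q]_\pm=\sum_{k\ge 0}\binom{p}{k}(a_k b)_{p+q-k}$ rather than in the generating-function version used in the text.
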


\begin{proof}
Recall the commutator formula for an SVOA : 
\begin{equation}
\{Y(x,z_1),Y(y,z_2)\} = \res_{z_0} z_2^{-1} \left( e^{-z_0 \frac{\partial}{\partial z_1}} \delta(z_1/z_2) \right) Y(Y(x,z_0)y,z_2).
\end{equation}
Here we use it three times, for $x = \tau^+, \tau^-, j$ and $y = G^+_{-\frac{1}{2}} G^-_{-\frac{1}{2}}u$.  It is enough to verify the following table :
\begin{align*}
\tau^+_{0} y &= G^+_{-\frac{1}{2}} G^+_{-\frac{1}{2}} G^-_{-\frac{1}{2}}u = 0 \\
\tau^+_{1} y &= G^+_{\frac{1}{2}}G^+_{-\frac{1}{2}} G^-_{-\frac{1}{2}}u = - (i + \frac{j}{2}) G^+_{-\frac{1}{2}}u \\
\tau^+_{n} y &= 0  \text{ for } n \geq 2\\
\tau^-_0 y &= G^-_{-\frac{1}{2}} G^+_{-\frac{1}{2}} G^-_{-\frac{1}{2}}u = L_{-1} G^-_{-\frac{1}{2}}u\\
\tau^-_{1} y &= G^-_{\frac{1}{2}}G^+_{-\frac{1}{2}} G^-_{-\frac{1}{2}}u = (i - \frac{j}{2} + 1) G^-_{-\frac{1}{2}}u \\
\tau^-_{n} y &= 0  \text{ for } n \geq 2\\
j_0 y &= J_0 G^+_{-\frac{1}{2}} G^-_{-\frac{1}{2}}u = j G^+_{-\frac{1}{2}} G^-_{-\frac{1}{2}}u\\
j_{1} y &= J_1 G^+_{-\frac{1}{2}} G^-_{-\frac{1}{2}}u = G^+_{\frac{1}{2}} G^-_{-\frac{1}{2}} u + ... =  (i + \frac{j}{2}) u\\
j_{n} y &= 0  \text{ for } n \geq 2
\end{align*}
and then apply the standard techniques presented before to find the coefficients of the commutator formula.
\end{proof}

\begin{theorem}
The space $$\widetilde{P}^{0,0}=P^{0,0}/(G^+_{-\frac{1}{2}}V_{-\frac{1}{2},-1} + G^-_{-\frac{1}{2}}V_{-\frac{1}{2},1}) \cap P^{0,0}$$ with bracket defined above is a Lie algebra.
\end{theorem}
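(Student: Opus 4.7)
The plan is to mirror the structure of Theorem \ref{liealgebraofphysicalstates} and its $N=1$ analogue, verifying three points: the bracket $[u,v]=(G^+_{-\frac{1}{2}}G^-_{-\frac{1}{2}}u)_0 v$ sends $P^{0,0}\times P^{0,0}$ into $P^{0,0}$; it descends to the quotient by $I:=(G^+_{-\frac{1}{2}}V_{-\frac{1}{2},-1}+G^-_{-\frac{1}{2}}V_{-\frac{1}{2},1})\cap P^{0,0}$; and on that quotient it is antisymmetric and satisfies Jacobi. Bilinearity is manifest from the definition.

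For the first two points, the essential input is the lemma immediately preceding the theorem: specialized to $i=j=0$, it says that for any $u\in P^{0,0}$ the operator $A_u:=(G^+_{-\frac{1}{2}}G^-_{-\frac{1}{2}}u)_0$ (anti)commutes with every $G^\pm_{m+\frac{1}{2}}$ and every $J_m$, and hence also with every $L_n$. Since $P^{0,0}$ is cut out by those operators annihilating the vector, this gives $[u,v]=A_u v\in P^{0,0}$. If $u=G^+_{-\frac{1}{2}}u'\in I$, then combining $L_{-1}=\{G^+_{-\frac{1}{2}},G^-_{-\frac{1}{2}}\}$, $(G^+_{-\frac{1}{2}})^2=0$, and $[L_{-1},G^+_{-\frac{1}{2}}]=0$ (from $[L_m,G^\pm_r]=(\tfrac{m}{2}-r)G^\pm_{m+r}$) yields $G^+_{-\frac{1}{2}}G^-_{-\frac{1}{2}}u=L_{-1}u$, whose zero mode is identically zero; the case $u=G^-_{-\frac{1}{2}}u''$ is identical. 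If instead $v=G^+_{-\frac{1}{2}}v'\in I$, the vanishing $\{G^+_{-\frac{1}{2}},A_u\}=0$ lets $A_u$ pass through $G^+_{-\frac{1}{2}}$ to give $[u,v]=-G^+_{-\frac{1}{2}}(A_u v')$, and since $A_u$ preserves weight and charge, $A_u v'\in V_{-\frac{1}{2},-1}$, hence $[u,v]\in I$; the $G^-$ case is symmetric.

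Jacobi is then the cleanest step. Commuting both $G^\pm_{-\frac{1}{2}}$ across $A_u$ using the same vanishing anticommutators gives the identity $G^+_{-\frac{1}{2}}G^-_{-\frac{1}{2}}[u,v]=A_u\cdot(G^+_{-\frac{1}{2}}G^-_{-\frac{1}{2}}v)$ in $V_{NS}$. Taking the zero mode and applying Corollary \ref{commutatorcorollary} in its even--even form (both $G^+_{-\frac{1}{2}}G^-_{-\frac{1}{2}}u$ and $G^+_{-\frac{1}{2}}G^-_{-\frac{1}{2}}v$ are even) yields $(G^+_{-\frac{1}{2}}G^-_{-\frac{1}{2}}[u,v])_0=[A_u,A_v]$, so that $[[u,v],w]=[A_u,A_v]w=[u,[v,w]]-[v,[u,w]]$ holds already in $P^{0,0}$.

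The main obstacle is antisymmetry modulo $I$. Since $G^+_{-\frac{1}{2}}G^-_{-\frac{1}{2}}u$ and $v$ are both even, SVOA skew-symmetry gives $[u,v]=\res_z\, e^{zL_{-1}}Y(v,-z)G^+_{-\frac{1}{2}}G^-_{-\frac{1}{2}}u$. I would shuttle both odd operators from the first argument onto $v$ using the graded commutator $[G^\pm_{-\frac{1}{2}},Y(\cdot,z)]=Y(G^\pm_{-\frac{1}{2}}\cdot,z)$ together with the super-Jacobi identity $G^-_{-\frac{1}{2}}G^+_{-\frac{1}{2}}=L_{-1}-G^+_{-\frac{1}{2}}G^-_{-\frac{1}{2}}$. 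The $k=0$ contribution of the $e^{zL_{-1}}$ expansion supplies $-[v,u]$ together with three explicit extras of the shape $-G^+_{-\frac{1}{2}}G^-_{-\frac{1}{2}}v_0 u$, $G^+_{-\frac{1}{2}}(G^-_{-\frac{1}{2}}v)_0 u$, $-G^-_{-\frac{1}{2}}(G^+_{-\frac{1}{2}}v)_0 u$, each of which lands in $G^\pm_{-\frac{1}{2}}V_{-\frac{1}{2},\mp 1}$ by a direct check of $L_0$- and $J_0$-eigenvalues. Each $k\geq 1$ contribution has the form $\tfrac{(-1)^{k+1}}{k!}L_{-1}^k X$ with $X=v_k(G^+_{-\frac{1}{2}}G^-_{-\frac{1}{2}}u)\in V_{-k,0}$; using $L_{-1}=G^+_{-\frac{1}{2}}G^-_{-\frac{1}{2}}+G^-_{-\frac{1}{2}}G^+_{-\frac{1}{2}}$ and $[L_{-1},G^\pm_{-\frac{1}{2}}]=0$, one rewrites
\[
L_{-1}^k X = G^+_{-\frac{1}{2}}\bigl(L_{-1}^{k-1}G^-_{-\frac{1}{2}}X\bigr)+G^-_{-\frac{1}{2}}\bigl(L_{-1}^{k-1}G^+_{-\frac{1}{2}}X\bigr),
\]
and the inner arguments have weight $-\tfrac{1}{2}$ and charge $\mp 1$ respectively, landing in $V_{-\frac{1}{2},\mp 1}$. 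Combining all pieces gives $[u,v]+[v,u]\in I$. The whole step is purely sign-and-weight bookkeeping, but it is what the proof hinges on.
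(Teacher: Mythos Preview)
Your proof is correct and follows essentially the same route as the paper's. The only notable difference is in the antisymmetry step: the paper commutes $G^\pm_{-\frac{1}{2}}$ past $Y(v,-z)$ \emph{before} taking residues and packages the result as $[u,v]=[v,u]'+G^+_{-\frac{1}{2}}x+G^-_{-\frac{1}{2}}y$, then invokes the identity $[v,u]'=-[v,u]$ from the preceding remark, whereas you expand $e^{zL_{-1}}$ mode by mode and verify the weight/charge conditions on each contribution explicitly. Your version is more detailed on the $V_{-\frac{1}{2},\mp 1}$ bookkeeping (which the paper leaves implicit), but the underlying mechanism is the same.
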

\begin{proof}
From the previous Lemma, we see that the Lie bracket of two elements of $P^{0,0}$ is once again an element of $P^{0,0}$, because $i=j=0$.  Furthermore, because $(G^{\pm}_{-\frac{1}{2}})^2 = 0$, and replacing $[u,v]$ with $[u,v]'$ where needed, we see that brackets with elements of the quotient are all trivial.  Therefore, the Lie bracket is well-defined.

As in $N=1$, Jacobi identity for Lie algebra follows from Jacobi identity for SVOAs, here because all elements are even elements.

Therefore, the only non-trivial thing to check is anticommutativity.  By skew-symmetry, we have
$$Y(G^+_{-\frac{1}{2}} G^-_{-\frac{1}{2}}u,z)v = e^{z L_{-1}} Y(v,-z) G^+_{-\frac{1}{2}} G^-_{- \frac{1}{2}} u.$$
Observe that we still have in vertex operator superalgebras the properties
$$\{ G^+_{-\frac{1}{2}}, Y(u,z) \} = Y(G^+_{-\frac{1}{2}}u,z)$$
and
$$\{ G^-_{-\frac{1}{2}}, Y(u,z) \} = Y(G^-_{-\frac{1}{2}}u,z),$$
so we may rewrite
\begin{align*}
&e^{z L_{-1}} Y(v,-z) G^+_{-\frac{1}{2}} G^-_{- \frac{1}{2}} u = e^{z L_{-1}} (G^+_{-\frac{1}{2}} Y(v,-z) G^-_{-\frac{1}{2}} u - Y(G^+_{-\frac{1}{2}}v,-z) G^-_{-\frac{1}{2}} u ) \\
&= e^{zL_{-1}}(G^+_{-\frac{1}{2}} Y(v,-z) G^-_{-\frac{1}{2}} u - G^-_{-\frac{1}{2}} Y(G^+_{-\frac{1}{2}} v,-z)u - Y(G^-_{-\frac{1}{2}} G^+_{- \frac{1}{2}} v,-z)u)
\end{align*}
Taking residues, we find
$$[u,v] = [v,u]' + G^+_{-\frac{1}{2}} x + G^-_{- \frac{1}{2}} y$$
for some $x,y$.  Therefore, we have
$$[u,v] = [v,u]' = -[v,u]$$
in the quotient.
\end{proof}

This Lie algebra structure is also inherited by $g^{(2)}_{NS}$, for which we have the following more concrete description (continuing from Corollary \ref{corollaryn=2structure}) :

\begin{corollary} \label{bracketn=2}
The vector space $g^{(2)}_{NS}$ is a Lie algebra with Lie bracket given by

\begin{equation*}
[e^{\alpha},e^{\beta}]=\begin{cases}
     (\alpha^+,\beta^-) \epsilon(\alpha,\beta) e^{\alpha+\beta}      \quad &\text{if } \, (\alpha,\beta) = 0 \\
          0 &\text{if } \, (\alpha,\beta) \neq 0 \\
     \end{cases}
\end{equation*}
where $\alpha=\alpha^+ + \alpha^-,\beta=\beta^+ + \beta^- \in L = L^+ \oplus L^-$ and $(\alpha,\alpha)=(\beta,\beta)=0$.  
\end{corollary}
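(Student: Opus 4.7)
The plan is a direct calculation via the defining formula for the bracket. By Corollary \ref{corollaryn=2structure}, $g^{(2)}_{NS}$ is spanned by the classes of the $e^\gamma$ with $(\gamma,\gamma)=0$, and for such $e^\alpha, e^\beta$ we have $[e^\alpha, e^\beta] = u_0 e^\beta$ with $u = G^+_{-1/2} G^-_{-1/2} e^\alpha$. Since $u_0 e^\beta \in P^{0,0}(\alpha+\beta)$, the $N=2$ no-ghost theorem decomposes this as $(\C e^{\alpha+\beta}\cap P^{0,0}) \oplus (\text{null states})$, and $e^{\alpha+\beta}$ lies in $P^{0,0}$ precisely when $(\alpha+\beta,\alpha+\beta) = 2(\alpha,\beta) = 0$. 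This settles the case $(\alpha,\beta)\neq 0$ immediately: the bracket is null and hence vanishes in $g^{(2)}_{NS}$.

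When $(\alpha,\beta)=0$, I will compute the scalar $c$ such that $[e^\alpha,e^\beta] = c\cdot e^{\alpha+\beta}$ modulo null. First I evaluate $G^\pm_{-1/2} e^\alpha$: using the mode expansion $G^\pm_{n+1/2} = \sum_{i,k} h^\pm_i(k)\, a_i^{(*)}(n+1/2-k)$, only the $k=0$ term survives on $e^\alpha$, yielding
$$G^\pm_{-1/2} e^\alpha \;=\; \sum_i (h_i^\pm, \alpha)\, a_i^{(*)}(-\tfrac{1}{2})\, e^\alpha.$$
Iterating via the supercommutator $\{G^+_{-1/2}, G^-_{-1/2}\} = L_{-1}$ together with the elementary mode relation $\{G^\pm_r, a_i^{(*)}(s)\} = h_i^\pm(r+s)$ then produces
$$u \;=\; \alpha^+(-1)\, e^\alpha \;+\; \Psi,$$
where $\Psi = \sum_{i,j}(h_i^+,\alpha)(h_j^-,\alpha)\, a_i(-\tfrac{1}{2})\, a_j^*(-\tfrac{1}{2})\, e^\alpha$; the $\alpha^-(-1)$-piece from $L_{-1}e^\alpha = \alpha(-1)e^\alpha$ cancels against the bosonic part of $G^-_{-1/2} G^+_{-1/2} e^\alpha$.

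The final step reads off the $e^{\alpha+\beta}$-coefficient of $u_0 e^\beta = \res_z Y(u,z)e^\beta$. The fermion bilinear $\Psi$ will contribute nothing: $Y(\Psi, z)e^\beta$ is a normally-ordered product of the fermion fields $a_i(z), a_j^*(z)$ applied to the $F_d$-vacuum $1$, and produces only states carrying nontrivial fermion wedge products, never a scalar multiple of $e^{\alpha+\beta}$. For the bosonic term $Y(\alpha^+(-1)e^\alpha, z) = \, :\alpha^+(z)\, Y(e^\alpha,z):\,$, the only contribution to a pure $e^{\alpha+\beta}$-coefficient comes from the annihilation half $\alpha^+(z)_+$ acting on $e^\beta$, which gives $(\alpha^+,\beta)\, z^{-1}\, Y(e^\alpha,z)\, e^\beta$. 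The constant term in the creation-exponential expansion of $Y(e^\alpha,z)e^\beta$ equals $\epsilon(\alpha,\beta)\, z^{(\alpha,\beta)}\, e^{\alpha+\beta}$; taking the residue at $(\alpha,\beta) = 0$ yields $c = (\alpha^+,\beta)\, \epsilon(\alpha,\beta)$. Since $L^+$ is isotropic one has $(\alpha^+,\beta^+) = 0$, so $(\alpha^+,\beta) = (\alpha^+,\beta^-)$, matching the formula in the statement.

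The main obstacle will be the verification that $\Psi$ makes no contribution to the $e^{\alpha+\beta}$-coefficient. This requires unpacking the normal-ordered product of fermion fields at coinciding points applied to $1\in F_d$; all modes appearing in $\Psi$ are of creation type ($a_i(-1/2), a_j^*(-1/2)$), so they cannot pairwise contract on the vacuum and must always leave fermionic excitations in the output. Antisymmetry of the bracket on generators follows automatically from the general theory (or can be checked directly using $\epsilon(\beta,\alpha) = \epsilon(\alpha,\beta)$ when $(\alpha,\beta)=0$ and $(\alpha^-,\beta^+) = -(\alpha^+,\beta^-)$).
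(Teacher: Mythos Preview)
Your argument is correct and follows the same route as the paper's proof: invoke the $N=2$ no-ghost theorem to kill the case $(\alpha,\beta)\neq 0$, and for $(\alpha,\beta)=0$ carry out the residue computation that the paper summarizes as ``a straight computation using residues of vertex operators.'' Your explicit identification $G^+_{-1/2}G^-_{-1/2}e^\alpha = \alpha^+(-1)e^\alpha + \Psi$ and the subsequent extraction of the $e^{\alpha+\beta}$--coefficient are exactly the details the paper omits.

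One small remark: your justification that $\Psi$ contributes nothing can be sharpened. Since $(\alpha+\beta,\alpha+\beta)=0$, the weight--$0$ subspace of $(V_{NS})_{\alpha+\beta}$ is exactly $\C e^{\alpha+\beta}$, so $u_0 e^\beta$ is \emph{equal} to a scalar multiple of $e^{\alpha+\beta}$, not merely congruent modulo null states. In particular $\Psi_0 e^\beta$ is literally zero: $Y(e^\alpha,z)e^\beta$ is regular in $z$ when $(\alpha,\beta)=0$, and $Y(\psi,z)1 = e^{zL_{-1}}\psi$ is also regular, so their tensor product has vanishing residue. This makes your ``main obstacle'' disappear entirely, and no separate orthogonality argument for the extra terms is needed.
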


\begin{proof}
If $(\alpha,\beta) \neq 0$ then $[e^{\alpha},e^{\beta}] \in (V_{NS})_{\alpha+\beta} \cap P^{0,0}$ which are all null states by no-ghost theorem because then $(\alpha+\beta,\alpha+\beta) \neq 0$.  Therefore we can assume $(\alpha,\beta) = 0$ and the result follows from a straight computation using residues of vertex operators.
\end{proof}

We specialize to $L = II_{2,2} = \Z \rho_1 \oplus \Z \rho_2 \oplus \Z \rho'_1 \oplus \Z \rho'_2 = L^+ \oplus L^-$ with $(\rho_i,\rho_j) = (\rho'_i,\rho'_j) = 0$ and $(\rho_i,\rho'_j) = \delta_{i,j}$.  For $\alpha \in L$, write
$$\alpha = (a,b,c,d) = a \rho_1 + b \rho_2 + c \rho'_1 + d \rho'_2.$$
For each non-zero $r = \frac{p}{q} \in \Q$, where $p,q$ are relatively prime and $q > 0$, define
$$u_r = (p,0,0,q), v_r = (0,-p,q,0).$$
It is clear that $(u_r,u_r) = (v_r,v_r) = (u_r,v_r) = 0$ hence also
$$(m u_r + n v_r, k u_r + l v_r) = 0.$$
However, $(u_r^+,v_r^-) = pq$ and $(v_r^+,u_r^-) = -pq$ therefore
$$((m u_r + n v_r)^+, (k u_r + l v_r)^-) = (ml-nk)pq.$$
which results in the Lie bracket with a proper choice of cocycle $\epsilon(u_r,v_r)$,
\begin{equation} \label{bracketn=2a}
[e^{m u_r + n v_r}, e^{k u_r + l v_r}] = (ml - nk) pq e^{(m+k) u_q + (n+l) v_q}.
\end{equation}

Before we proceed further, let us clarify how this cocycle is chosen.  Denote the elements $h_1 = (1,0,0,0), h_2 = (0,0,0,1), h_3 = (0,1,0,0), h_4 = (0,0,1,0)$.  Then define $\epsilon(h_i,h_j) = 1$ for $i \leq j$ and $(-1)^{(h_i,h_j)}$ for $j < i$.  Extend this definition bilinearly (with multiplication in $\Z / 2 \Z$).  By remark 5.1.1 in \cite{voa}, $\epsilon(.,.)$ is a 2-cocycle and it clearly satisfies $\epsilon(u_r,v_r) = 1$.

Next, note that if $r \neq r'$ then $(u_r,u_{r'}) = (u_r^+,u_{r'}^-) = (v_r,v_{r'}) = (v_r^+,v_{r'}^-) = 0$ however $(u_r,v_{r'}) = p q' - qp' \neq 0$ and $(v_r,u_{r'}) = -p q' + q p'$.  In particular,
$$(m u_r + n v_r, k u_{r'} + l v_{r'}) = (ml-nk)(pq' - qp').$$
Therefore, if $ml-nk = 0$, we consider the case
$$(m u_r^+ + n v_r^+, k u_{r'}^- + l v_{r'}^-) = ml (pq') - nk (pq') = ml(pq') - ml (pq') = 0,$$
which results in the Lie bracket
\begin{equation}\label{bracketn=2b}
[e^{m u_r + n v_r}, e^{k u_{r'} + l v_{r'}}] = 0.
\end{equation}

Now suppose that $\alpha = (a,b,c,d) \in L$ satisfies $(\alpha,\alpha) = 0$.  Then there are three possibilities.  Either $a=b=0$, $c=d=0$ or neither of these happen.  In the first two cases, we then have that either $\alpha^+ = 0$ or $\alpha^- = 0$ and then it is clear that $e^{\alpha}$ lies in the center of $g^{(2)}_{NS}$.  Therefore, let us consider the third possibility.  In this case, to have $(\alpha,\alpha) = 0$ we either have $ad \neq 0$ or $bc \neq 0$.  If both $ad$ and $bc$ are non-zero then the condition $ac + bd = 0$ implies that $a/d = -b/c$.  Therefore, we find that $\alpha$ is a linear combination with integer coefficients of $u_r,v_r$ for some unique $r$ given by $a/d$ or $-b/c$, whichever is defined.

We have now proven the following :
\begin{theorem} \label{corollarybracketn=2}
If $L = II_{2,2}$, the structure of $g^{(2)}_{NS}$ is given by
$$g^{(2)}_{NS} = \C e^0 \oplus A_0 \oplus A_{\infty} \oplus \bigoplus\limits_{r \in \Q^\times} A_r$$
where $A_r = \spn\{e^{m u_r + n v_r}\}$ for $r \in \Q^\times$ is the Lie algebra generated by $e^{\pm u_r}, e^{\pm v_r}$ with Lie bracket 
$$[e^{m u_r + n v_r}, e^{k u_r + l v_r}] = (ml - nk) pq e^{(m+k) u_q + (n+l) v_q},$$
each of these subalgebras commute with each other and
$$A_0 = \spn \{ e^\lambda , \lambda^+ = 0, \lambda^- \neq 0 \}, A_{\infty} = \spn \{ e^{\lambda}, \lambda^- = 0, \lambda^+ \neq 0 \}.$$
\end{theorem}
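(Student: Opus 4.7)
The plan is to assemble the theorem from the ingredients developed just before it. By Corollary \ref{corollaryn=2structure} the $N=2$ no-ghost theorem identifies $g^{(2)}_{NS}$ as a vector space with $\mathrm{span}\{e^\alpha \mid (\alpha,\alpha)=0\}$, and by Corollary \ref{bracketn=2} the bracket is
$[e^\alpha,e^\beta] = (\alpha^+,\beta^-)\,\epsilon(\alpha,\beta)\,e^{\alpha+\beta}$
whenever both sides are isotropic (and zero otherwise). So the entire proof reduces to (i) parametrizing the isotropic cone of $II_{2,2}$, (ii) checking that the decomposition $\C e^0 \oplus A_0 \oplus A_\infty \oplus \bigoplus_{r\in\Q^\times} A_r$ exhausts that cone, and (iii) verifying the bracket relations on each piece.

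First I would classify the isotropic elements. Writing $\alpha=(a,b,c,d)$ with bilinear form $(\alpha,\alpha)=2(ac+bd)$, the condition $ac+bd=0$ splits into three cases: $\alpha^+=(a,b,0,0)=0$, $\alpha^-=(0,0,c,d)=0$, or neither is zero. The first two cases give exactly $A_0$ and $A_\infty$. In the third case, $ac+bd=0$ with $(a,b)\neq 0$ and $(c,d)\neq 0$ forces a proportionality $a{:}(-b) = d{:}c$; writing this common ratio as $p/q$ in lowest terms with $q>0$ produces the unique $r=p/q\in\Q^\times$ such that $(a,b,c,d)$ is an integer combination of $u_r=(p,0,0,q)$ and $v_r=(0,-p,q,0)$. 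Thus every nonzero isotropic lattice vector belongs to exactly one $A_r$, $r\in\Q\cup\{\infty\}$, establishing the vector space decomposition.

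Next I would compute the brackets. Within a fixed $A_r$, a direct calculation using $(u_r,u_r)=(v_r,v_r)=(u_r,v_r)=0$, $(u_r^+,v_r^-)=pq$, $(v_r^+,u_r^-)=-pq$ gives $((m u_r+n v_r)^+,(k u_r+l v_r)^-) = (ml-nk)pq$, and with the cocycle normalization $\epsilon(u_r,v_r)=1$ (extended bilinearly as in the paragraph preceding the theorem) this yields the stated bracket, so $A_r$ closes up to the Lie subalgebra described. For $A_0$ and $A_\infty$, every element has $\alpha^+=0$ or $\alpha^-=0$, so $(\alpha^+,\beta^-)=0$ identically and these subalgebras are abelian. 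For $r\neq r'$ in $\Q^\times$, a short computation gives $(mu_r+nv_r,\,ku_{r'}+lv_{r'})=(ml-nk)(pq'-qp')$, and the cross terms $(u_r,v_{r'})$, $(v_r,u_{r'})$ are arranged so that $((mu_r+nv_r)^+,(ku_{r'}+lv_{r'})^-)=ml(pq')-nk(pq')=(ml-nk)pq'$. Thus when $(\alpha,\beta)=0$ (i.e.\ $ml=nk$) the coefficient $(\alpha^+,\beta^-)$ also vanishes, and in all other cases Corollary \ref{bracketn=2} sends the bracket to zero because $\alpha+\beta$ is no longer isotropic; so $[A_r,A_{r'}]=0$. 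The analogous cross-checks between $A_r$ and $A_0$ or $A_\infty$ are immediate since one factor has $\alpha^+=0$ or $\alpha^-=0$. Finally, $e^0$ is central because $(0,\beta)=0$ and $0^+=0$.

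The only delicate step is the classification case where both $\alpha^+$ and $\alpha^-$ are nonzero: one must check the integrality argument that reduces $(a,b,c,d)$ to $(mp,-np,nq,mq)$ for a unique coprime pair $(p,q)$ and integers $m,n$, and verify that the cocycle $\epsilon$ can indeed be chosen so that $\epsilon(u_r,v_r)=1$ (which follows from the bilinear extension described in Remark 5.1.1 of \cite{voa}). Once this bookkeeping is in place, the rest is a straightforward tabulation of brackets via Corollary \ref{bracketn=2}.
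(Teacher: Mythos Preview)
Your proposal is correct and follows essentially the same argument as the paper: you invoke Corollaries \ref{corollaryn=2structure} and \ref{bracketn=2}, parametrize the isotropic cone of $II_{2,2}$ by the trichotomy $\alpha^+=0$, $\alpha^-=0$, or neither (the latter yielding a unique $r\in\Q^\times$), compute the brackets inside each $A_r$ using $(u_r^+,v_r^-)=pq$ and the cocycle normalization $\epsilon(u_r,v_r)=1$, and dispose of the cross-brackets via $((mu_r+nv_r)^+,(ku_{r'}+lv_{r'})^-)=(ml-nk)pq'$ together with the vanishing forced by Corollary \ref{bracketn=2} when $(\alpha,\beta)\neq 0$. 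The paper's treatment of $A_0$ and $A_\infty$ is phrased slightly more strongly---it observes directly that $e^\alpha$ with $\alpha^+=0$ or $\alpha^-=0$ is central---but this is equivalent to your cross-check.
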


We return to Example \ref{matrixrealization}, where we realized $\lambda \in II_{2,2}$ with $(\lambda,\lambda) = 0$ as a matrix of determinant $0$ :
$$\lambda = \begin{bmatrix}
m_1 & n_2 \\
m_2 & n_1
\end{bmatrix}, \lambda^+ = \begin{bmatrix}
m_1 & 0 \\ m_2 & 0
\end{bmatrix}, \lambda^- = \begin{bmatrix}
0 & n_2 \\ 0 & n_1
\end{bmatrix}.$$
With this realization, we now have
$$u_r = \begin{bmatrix}
p & q \\
0 & 0
\end{bmatrix}, v_r = \begin{bmatrix}
0 & 0 \\
p & q 
\end{bmatrix}.$$

We discuss two different actions of the group $SL(2,\Z)$ on these matrices.  Given a matrix $M  =\begin{bmatrix}
a & b \\ c & d
\end{bmatrix} \in SL(2,\Z)$ and matrices $\lambda,\lambda_1,\lambda_2$ of determinant $0$, it is clear that $M \lambda$ and $\lambda M^t$ are once again matrices of determinant $0$.  
Therefore, from the formula for the bilinear form in Example \ref{matrixrealization}, we have
$$(M \lambda_1, M \lambda_2) = (\lambda_1 M^t, \lambda_2 M^t) = (\lambda_1, \lambda_2).$$
Now write
$$\lambda_1 = \begin{bmatrix}
a_1 & a_2 \\ a_3 & a_4
\end{bmatrix}, \lambda_2 = \begin{bmatrix}
b_1 & b_2 \\ b_3 & b_4
\end{bmatrix}.$$
We will examine the left action first.
$$(M \lambda_1)^+ = \begin{bmatrix}
a a_1 + b a_3 & 0 \\
c a_1 + d a_3 & 0
\end{bmatrix}, (M \lambda_2)^- = \begin{bmatrix}
0 & a b_2 + b b_4 \\
0 & c b_2 + d b_4
\end{bmatrix}.$$
From there, we see that
$$((M \lambda_1)^+, (M \lambda_2)^-) = (ad - bc) a_1 b_4 - (ad - bc) (b_2 a_3) = a_1 b_4 - b_2 a_3 = (\lambda_1^+, \lambda_2^-).$$
Therefore, the left action of $SL(2,\Z)$ on $II_{2,2}$ lifts to an isomorphism in $g^{(2)}_{NS}$.  Furthermore, using generators $S,T$ of $SL(2,\Z)$, we see that each preserve the subspaces $A_r, r \in \Q \cup \infty$.

Next we consider the right action.  Given $r = p/q$, observe that
$$u_r M^t = \begin{bmatrix}
p & q \\ 0 & 0
\end{bmatrix} \begin{bmatrix}
a & c \\ b & d
\end{bmatrix} = \begin{bmatrix}
ap+bq & cp+dq \\ 0 & 0
\end{bmatrix} = u_{r'}$$
for $r' = \frac{ap+bq}{cp+dq}$.  Similarly, $v_r M^t = v_{r'}$.  In this case, it is not true that \break $((\lambda_1 M^t)^+, (\lambda_2 M^t)^-) = (\lambda_1^+, \lambda_2^-)$.  As a counter-example, take $r=1$ and $S = \begin{bmatrix}
0 & 1 \\
- 1 & 0 \\
\end{bmatrix}$.  Then $r' = -1$, however $(u_1^+,u_1^-) = 1$ but $(u_{-1}^+, u_{-1}^-) = -1$.

Recall the well-known action of $SL(2,\Z)$ on $\C$ given by
$$\begin{bmatrix}
a & b \\ c & d
\end{bmatrix} \cdot z = \frac{a z + b}{cz + d}.$$
If $z = r = \frac{p}{q}$ we may rewrite this action as
$$\frac{ap/q + b}{cp/q + d} = \frac{ap + bq}{cp + dq} = r'.$$
We then see that these two actions match and we know it to be transitive on $\Q$.  

We have therefore proven :

\begin{proposition}
Each left action of $SL(2,\Z)$ on $II_{2,2}$ lifts to an isomorphism in $g^{(2)}_{NS}$ which preserves each $A_r, r \in \Q \cup \infty$.  Each right action of $SL(2,\Z)$ on $II_{2,2}$ induces vector space isomorphisms $A_r \simeq A_{r'}$ for $r,r' \in \Q$.
\end{proposition}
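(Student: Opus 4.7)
The plan is to leverage the matrix realization of $II_{2,2}$ from Example~\ref{matrixrealization}, where a norm-zero element $\lambda$ is a $2\times 2$ integer matrix with $\det\lambda = 0$, the bilinear form on $L$ is the polarization of $\det$, and $\lambda^+$, $\lambda^-$ correspond to the first and second columns respectively. Under this realization, the left action $\lambda\mapsto M\lambda$ commutes with the column decomposition, i.e.\ $(M\lambda)^{\pm} = M(\lambda^{\pm})$, while the right action $\lambda \mapsto \lambda M^t$ scrambles columns.

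For the left action, I would first verify that $M\cdot$ preserves the bilinear form on $L$ (since $\det M=1$ the polarization identity gives $(M\lambda_1,M\lambda_2)=(\lambda_1,\lambda_2)$) and, more subtly, the asymmetric pairing used in the bracket: a direct computation shows $((M\lambda_1)^+,(M\lambda_2)^-) = (\det M)\,(\lambda_1^+,\lambda_2^-) = (\lambda_1^+,\lambda_2^-)$. Since the automorphism $M$ of $L$ preserves $(\cdot,\cdot)$, it lifts to an automorphism $\hat M$ of the central extension $\hat L$ (any two lifts differ by a $\pm 1$ character on $L$), which in turn induces an automorphism $\Phi_M$ of $\C\{L\}$ sending $e^{\lambda}\mapsto \sigma(M,\lambda)\,e^{M\lambda}$ for some sign $\sigma$ correcting the cocycle $\epsilon$. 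Using Corollary~\ref{bracketn=2}, the bracket depends only on $(\alpha,\beta)$ (to decide vanishing), on $(\alpha^+,\beta^-)$, and on $\epsilon(\alpha,\beta)$; the first two are preserved on the nose, and the third is taken care of by the cocycle choice $\sigma$, so $\Phi_M$ is a Lie algebra automorphism of $g^{(2)}_{NS}$. Preservation of each $A_r$ then follows from the explicit computation $Mu_r = a u_r + c v_r$, $Mv_r = b u_r + d v_r$, showing $M$ restricts to an invertible linear map on the span of $u_r,v_r$ and hence on $A_r = \operatorname{span}\{e^{mu_r+nv_r}\}$.

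For the right action, I would compute $u_r M^t$ and $v_r M^t$: a one-line check gives $u_r M^t = u_{r'}$ and $v_r M^t = v_{r'}$ where $r' = \frac{ap+bq}{cp+dq}$, the standard M\"obius action of $SL(2,\Z)$ on $\Q$. Lifting as before produces a linear bijection $A_r \to A_{r'}$, and since the M\"obius action is transitive on $\Q$ the collection $\{A_r\}_{r\in\Q}$ is a single orbit of mutually isomorphic vector spaces. The counterexample with $S$ and $r=1$ in the preceding paragraph shows that the asymmetric pairing $(\cdot^+,\cdot^-)$ is not preserved, so these are not Lie algebra isomorphisms, justifying the weaker conclusion in the statement.

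The main obstacle is purely bookkeeping: constructing the sign $\sigma(M,\lambda)$ so that the lift is a genuine homomorphism of the twisted group algebra $\C\{L\}$. Because $M$ preserves the form, the two cocycles $\epsilon(M\alpha,M\beta)$ and $\epsilon(\alpha,\beta)$ differ by a coboundary on $L$, which is exactly what choosing a lift $\hat M$ of $M$ to $\hat L$ fixes; everything else reduces to the column-preservation computations already carried out in the paragraphs before the proposition.
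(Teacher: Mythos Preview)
Your proposal is correct and follows essentially the same route as the paper: use the matrix realization, verify by direct computation that the left action preserves both $(\cdot,\cdot)$ and the asymmetric pairing $(\cdot^+,\cdot^-)$ (the paper does the latter entrywise and finds the factor $ad-bc=1$), check that $u_r,v_r$ are mapped into their own span, and for the right action compute $u_rM^t=u_{r'}$, $v_rM^t=v_{r'}$ with $r'$ the M\"obius image, then invoke transitivity on $\Q$ and the $S$, $r=1$ counterexample. Your treatment is in fact more careful than the paper's on one point: the paper fixes a specific cocycle $\epsilon$ beforehand and then simply asserts the lift without discussing how the automorphism interacts with $\epsilon$, whereas you correctly note that one needs a sign correction $\sigma(M,\lambda)$ coming from a lift of $M$ to $\hat L$.
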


One may note however that $A_r \simeq A_{r'}$ whenever $r,r' \in \Q^{\times}$ as Lie algebras, however this isomorphism requires rescaling of the basis elements and therefore does not appear to be the natural extension of the right action of $SL(2,\Z)$ on $II_{2,2}$.

Because $g^{(2)}_{NS}$ is defined as a quotient of $\widetilde{P}^{0,0}$, it is not a subalgebra of $\widetilde{P}^{0,0}$.  However, we still have the following :
\begin{proposition}
Suppose $r \in \Q \cup \infty$.  Then each $A_r$ also exists as a subalgebra of $\widetilde{P}^{0,0}$ in the obvious way.
\end{proposition}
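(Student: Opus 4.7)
The plan is, for each $r$, to lift $A_r$ back to $\widetilde P^{0,0}$ using the same spanning set: inside $\widetilde P^{0,0}$ take the subspace spanned by the elements $e^\lambda$ themselves, where $\lambda$ ranges over the integral sublattice $L_r = \Z u_r \oplus \Z v_r$ of isotropic vectors for $r \in \Q^\times$, and analogously over the isotropic sublattices of $L^-$ or $L^+$ when $r = 0, \infty$. Three things must be checked: that each such $e^\lambda$ lies in $P^{0,0}$; that these elements remain linearly independent in the quotient $\widetilde P^{0,0}$; and that the Lie bracket evaluates exactly to the formula of Corollary \ref{bracketn=2} within $\widetilde P^{0,0}$ itself, not merely modulo the radical of $(.,.)$.

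The first two checks are straightforward. For the first, $L_0 e^\lambda = \tfrac{(\lambda,\lambda)}{2} e^\lambda = 0$ and $J_0 e^\lambda = 0$ since $e^\lambda$ carries no fermion modes, while $L_n e^\lambda = G^{\pm}_{n - 1/2} e^\lambda = J_n e^\lambda = 0$ for $n > 0$ because every term of these operators acts as an annihilation mode on $e^\lambda$. For linear independence in $\widetilde P^{0,0}$, observe that the quotient subspace $G^+_{-1/2} V_{-1/2,-1} + G^-_{-1/2} V_{-1/2,1}$ is $L$-graded, and within each graded piece $(V_{NS})_\lambda$ with $\lambda \neq 0$ isotropic no vector of $L_0$-weight $-\tfrac{1}{2}$ exists at all, since every creation mode strictly raises weight from the minimum $(\lambda,\lambda)/2 = 0$. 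Hence $V_{-1/2,\pm 1} \cap (V_{NS})_\lambda = 0$, and the image of $e^\lambda$ in $\widetilde P^{0,0}$ is nonzero.

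The substantive step is the bracket computation. Using the mode expansions of $G^{\pm}_{-1/2}$ and the anticommutator $\{G^+_{-1/2}, a^*(\lambda)(-\tfrac{1}{2})\} = \lambda^+(-1)$, one derives
\begin{equation*}
G^+_{-1/2} G^-_{-1/2} e^\lambda \;=\; \lambda^+(-1)\, e^\lambda \;-\; a^*(\lambda)\left(-\tfrac{1}{2}\right) a(\lambda)\left(-\tfrac{1}{2}\right) e^\lambda,
\end{equation*}
where $a(\lambda) = \sum_i (h_i^+,\lambda) a_i$ and $a^*(\lambda) = \sum_i (h_i^-, \lambda) a_i^*$. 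Since $(u_r, u_r) = (v_r, v_r) = (u_r, v_r) = 0$, any two $\lambda, \mu \in L_r$ automatically satisfy $(\lambda, \mu) = 0$, so $Y(e^\lambda, z) e^\mu$ is a power series in $z$ starting at $z^0$. The bracket then splits into a bosonic residue $\res_z {:}\lambda^+(z) Y(e^\lambda, z){:}\, e^\mu$ and a fermionic residue $\res_z {:}a^*(\lambda)(z) a(\lambda)(z){:}\, Y(e^\lambda, z) e^\mu$. A direct zero-mode extraction gives $(\lambda^+, \mu^-)\, \epsilon(\lambda, \mu)\, e^{\lambda+\mu}$ for the bosonic piece; the fermionic piece vanishes because, acting on the fermion vacuum factor of $e^\mu$, only the purely creation parts of $a^*(\lambda)(z)$ and $a(\lambda)(z)$ contribute, producing a power series in $z$ with only nonnegative exponents, so the full product has no $z^{-1}$ coefficient. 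The hard part is keeping the mixed boson--fermion normal-ordering signs in the tensor-product SVOA correct so that the fermionic residue really is identically zero, rather than a nonzero null vector living in the radical. Once this is settled, closure of $\spn\{e^\lambda : \lambda \in L_r\}$ under the bracket is automatic from $L_r + L_r \subseteq L_r$, the abelian cases $r = 0, \infty$ follow because $\lambda^+ = 0$ or $\lambda^- = 0$ kills both terms in $G^+_{-1/2} G^-_{-1/2} e^\lambda$, and $A_r$ as constructed here projects isomorphically onto the $A_r$ of Theorem \ref{corollarybracketn=2}.
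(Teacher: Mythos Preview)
Your proof is correct and follows essentially the same approach as the paper. The paper's one-line argument is ``examine the proof of Corollary~\ref{bracketn=2} and observe that the quotient is not needed to study each $A_r$'': the point being that the radical was only invoked in that corollary to kill the bracket when $(\alpha,\beta)\neq 0$, whereas inside a fixed $A_r$ every pair $\lambda,\mu$ already satisfies $(\lambda,\mu)=0$, so the residue computation lands exactly on $(\lambda^+,\mu^-)\,\epsilon(\lambda,\mu)\,e^{\lambda+\mu}$ in $P^{0,0}$ itself. You have unpacked this reference in full --- checking $e^\lambda \in P^{0,0}$, nontriviality in the quotient $\widetilde P^{0,0}$, and the explicit vanishing of the fermionic residue --- which is more than the paper provides but not a different route.
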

\begin{proof}
Examine the proof of Corollary \ref{corollarybracketn=2} and observe that the quotient is not needed to study each $A_r$.
\end{proof}

\subsection{Lower bound for multiplicities of the Lie algebra of physical states in $N=2$}
We may attempt to imitate the DDF construction from the $N=0$ and $N=1$ cases in $N=2$.  Unfortunately we do not obtain a perfect analogy and there is no spectrum-generating algebra for $g^{(2)}_{NS}$.  However, we may still say something about $\widetilde{P}^{0,0}$.  Fix $\alpha \neq 0 \in L$ and assume $(\alpha,\alpha) < 0$.  Then we write
$$\alpha = \alpha^+ + \alpha^-$$
where $\alpha^+ \in L^+$ and $\alpha^- \in L^-$.  As before, we have the Lemma :
\begin{lemma}
Suppose $(\alpha,\alpha) < 0$.
There exists $c = c^{+} + c^- \in L^+ \oplus L^-$ such that $(c,c) = 0$ and $(c^+,\alpha) = (c^-,\alpha) = -1/2$.  In particular, $(c,\alpha) = -1$.
\end{lemma}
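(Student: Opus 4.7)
My plan is to reduce the problem to a straightforward linear algebra argument, exploiting the fact that $L^+$ and $L^-$ are totally isotropic so the bilinear form restricts to a nondegenerate pairing $L^+ \times L^- \to \R$. Under this pairing, the hypothesis $(\alpha,\alpha) = 2(\alpha^+,\alpha^-) < 0$ forces both $\alpha^+$ and $\alpha^-$ to be nonzero, while isotropy gives $(c^+,\alpha) = (c^+,\alpha^-)$, $(c^-,\alpha) = (c^-,\alpha^+)$, and $(c,c) = 2(c^+,c^-)$. So the three required identities become the system $(c^+,\alpha^-) = (c^-,\alpha^+) = -1/2$ and $(c^+,c^-) = 0$, which I would solve by constructing $c^+$ and $c^-$ in turn.

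First I would choose $c^+ \in L^+$ with $(c^+,\alpha^-) = -1/2$. Since $(\,\cdot\,,\alpha^-)$ is a nonzero linear functional on the two-dimensional space $L^+$, its fiber over $-1/2$ is a nonempty affine line $A \subset L^+$, and because $0 \notin A$ we have $A \neq \R \alpha^+$, so I can pick $c^+ \in A$ not proportional to $\alpha^+$. Next I look for $c^-$ on the affine line $B = \{c^- \in L^- : (c^-,\alpha^+) = -1/2\}$ that additionally satisfies $(c^+,c^-) = 0$. The restriction of the linear functional $(c^+,\,\cdot\,)$ to $B$ is nonconstant iff the functionals $(c^+,\,\cdot\,)$ and $(\alpha^+,\,\cdot\,)$ on $L^-$ are linearly independent; under the isomorphism $L^+ \cong (L^-)^*$ induced by the pairing, this independence is exactly the noncollinearity of $c^+$ and $\alpha^+$ in $L^+$, arranged in the previous step. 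Hence the restriction is surjective onto $\R$ and a suitable $c^- \in B$ exists.

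Setting $c = c^+ + c^-$ one reads off $(c,c) = 2(c^+,c^-) = 0$, $(c^\pm,\alpha) = -1/2$, and $(c,\alpha) = -1$. There is no serious obstacle; the only delicate point is the compatibility of the two constraints on $c^-$, and the key move that disposes of it is to first choose $c^+$ transverse to $\alpha^+$, which guarantees that the orthogonality constraint $(c^+,c^-)=0$ does not collide with the affine constraint $(c^-,\alpha^+) = -1/2$.
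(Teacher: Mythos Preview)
Your argument is correct. Both your proof and the paper's exploit the same basic fact: since $L^\pm$ are two-dimensional and $\alpha^\pm$ are nonzero (forced by $(\alpha,\alpha)=2(\alpha^+,\alpha^-)<0$), there is a one-parameter freedom in each of $c^+,c^-$ beyond the direction of $\alpha^\pm$, and this freedom is exactly enough to impose the single remaining constraint $(c^+,c^-)=0$.

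The difference is purely in presentation. The paper writes down an explicit candidate $c = \tfrac{\alpha^+}{2k} + \tfrac{\alpha^-}{2k} + h^+ + h^-$, where $h^\pm \in L^\pm$ are chosen orthogonal to $\alpha$ with a prescribed value of $(h^+,h^-)$, and then verifies the identities directly. Your version is an abstract existence argument: you parametrize the affine fibers $A\subset L^+$ and $B\subset L^-$ and observe that the noncollinearity of $c^+$ with $\alpha^+$ (which you can arrange since $A\neq\R\alpha^+$) makes the functional $(c^+,\,\cdot\,)$ nonconstant on $B$. The paper's formula is more concrete and reusable downstream; your argument makes the dimension count transparent and avoids any arithmetic. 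Either way the content is the same.
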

\begin{proof}
Let $k = -(\alpha,\alpha)$.  Choose $h^+,h^- \in L^+,L^-$ such that $(h^+,\alpha) = (h^-,\alpha) = 0$ and $(h^+,h^-) = 1/4k$.  Then the element
$$c = \frac{\alpha^+}{2k} + \frac{\alpha^-}{2k} + h^+ + h^-$$
satisfies all the desired properties.
\end{proof}
Although $e^c$ is not an element of $V_{NS}$ in general (because $c$ is not an element of $L$), we may think of it as an element of some larger space $W$ which contains $V_{NS}$, and vertex operators of the form $Y(e^c,z)$ are still well-defined on $\bigoplus\limits_{M \in \Z} (W)_{\alpha + Mc}.$

Therefore, let
$$\widetilde{G} = G^+_{-\frac{1}{2}}G^-_{-\frac{1}{2}} - G^-_{-\frac{1}{2}}G^+_{-\frac{1}{2}}$$
and define
$$A_m = (\widetilde{G} e^{mc})_0.$$
Because $L$ is even, there exists a number $M$ such that $(\alpha - Mc, \alpha - Mc) = 0$.  It is easy to see that $M = -(\alpha,\alpha)/2$.  Then we may consider elements of the form
$$A_{m_1}^{\lambda_1} ... A_{m_k}^{\lambda_k}  e^{\alpha - Mc}$$
These elements lie in $\widetilde{P}^{0,0}(\alpha)$ for $\lambda_1 m_1 + ... + \lambda_k m_k = M$.  There will then be
$$p(-(\alpha,\alpha)/2)$$
such elements, and we prove that they are linearly independent.

\begin{proposition}
The elements
$$A_{m_1}^{\lambda_1} ... A_{m_k}^{\lambda_k} e^{\alpha - Mc}$$
are linearly independent.
\end{proposition}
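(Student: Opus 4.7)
The plan is to project onto the subspace spanned by $\bar c$-modes and reduce the statement to the $N=0$ DDF linear-independence argument of Lemma \ref{independentlemma}. The first step is to compute $\widetilde{G}\, e^{mc}$ explicitly. Using $G^{\pm}_{-1/2} e^{mc} = m\, \phi^{\mp}\, e^{mc}$, where $\phi^+ := \sum_i (h_i^-, c)\, a_i^*(-1/2)$ and $\phi^- := \sum_i (h_i^+, c)\, a_i(-1/2)$ are the fermion-partners of $c^{\pm}$, together with the anticommutators $\{G^{\pm}_{-1/2}, \phi^{\pm}\} = c^{\pm}(-1)$ and $\phi^+ \phi^- = -\phi^- \phi^+$, a direct super-Leibniz calculation yields
$$\widetilde{G}\, e^{mc} \;=\; m\, \bar c(-1)\, e^{mc} \;-\; 2m^2\, \phi^+ \phi^-\, e^{mc},$$
where $\bar c := c^+ - c^-$. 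A direct check using $(c,c)=0$ and the preceding lemma shows that $\bar c$ is nonzero in $L \otimes \Q$ and satisfies $(\bar c, \bar c) = (\bar c, c) = (\bar c, \alpha) = 0$.

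Second, I would recognize the bosonic term $\bar c(-1)\, e^{mc}$ as precisely of the form required by Proposition \ref{ddfproperties}, giving rise to an $N=0$ DDF operator $A_m^{\bar c} := \res_z Y(\bar c(-1) e^{mc}, z)$, so that
$$A_m \;=\; m\, A_m^{\bar c} \;-\; 2m^2\, B_m, \qquad B_m := (\phi^+\phi^-\, e^{mc})_0.$$
Because $\bar c$ is orthogonal to $c$, to $\alpha$, and to itself, every $\bar c$-mode commutes past every other operator appearing here, and $\bar c(n)$ annihilates every relevant vector $e^{\alpha - Nc}$ for $n \geq 0$. In particular, $B_m$ neither creates nor destroys $\bar c$-modes, whereas $A_m^{\bar c}$ produces one new $\bar c$-mode from each of its summands.

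Third, I would introduce the projection $\Pi$ onto the subspace $\C[\bar c(-1), \bar c(-2), \ldots] \otimes 1 \otimes \C\, e^{\alpha}$ of $V_L \otimes F_d$ -- i.e., keep only those terms with no $c(-n)$-modes, no fermion modes, and lattice component $e^{\alpha}$. Any contribution to the expansion of $A_{m_1}^{\lambda_1}\cdots A_{m_k}^{\lambda_k}\, e^{\alpha-Mc}$ that uses at least one $B_{m_i}$ factor either leaves unpaired fermions or introduces $c(-n)$-modes that cannot be removed (since $B_m$ supplies no $\bar c$-modes to cancel against), and so lies in $\ker \Pi$. Among the remaining contributions, only the leading term $\bar c(-m)\, e^{\beta + mc}$ of each $A_{m_i}^{\bar c}\, e^{\beta}$ (which one reads off directly from the standard formula for $Y(\bar c(-1) e^{mc}, z)$) survives, and iterating gives
$$\Pi\bigl(A_{m_1}^{\lambda_1}\cdots A_{m_k}^{\lambda_k}\, e^{\alpha - Mc}\bigr) \;=\; C_{\vec m, \vec \lambda}\, \bar c(-m_1)^{\lambda_1}\cdots \bar c(-m_k)^{\lambda_k}\, e^\alpha$$
with a nonzero constant $C_{\vec m, \vec \lambda}$. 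Since distinct partitions $(m_i, \lambda_i)$ of $M$ produce distinct monomials in $\bar c(-1), \bar c(-2), \ldots$, which are linearly independent in $S(\widehat h^-)$, the original set is linearly independent.

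The main obstacle is the explicit computation in the first step: tracking two odd operators through the super-Leibniz rule and verifying that the only surviving terms are the longitudinal boson $\bar c(-1) e^{mc}$ and the fermion bilinear $\phi^+ \phi^-\, e^{mc}$. This rests crucially on $(c, c) = 0$, which kills the would-be higher-mode cross terms. Once this decomposition is in hand and $\bar c$ is identified as a bona fide $N=0$ DDF direction, the remainder of the argument is a clean filtration/projection argument closely modeled on the $N=0$ case.
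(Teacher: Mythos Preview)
Your proof is correct and follows essentially the same approach as the paper: both compute $\widetilde{G}\,e^{mc}$ explicitly, identify the distinguished bosonic term $m\,\bar c(-1)\,e^{mc}$ with $\bar c = c^+ - c^-$, and then argue that the resulting monomials $\bar c(-m_1)^{\lambda_1}\cdots \bar c(-m_k)^{\lambda_k}$ are linearly independent. Your projection $\Pi$ is simply a cleaner packaging of what the paper calls a ``maximality argument,'' and your recognition of $A_m^{\bar c}$ as an $N=0$ DDF operator makes the link to Proposition~\ref{ddfproperties} explicit where the paper leaves it implicit.
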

\begin{proof}
First, observe that all $A_{m}$'s commute with each other, but also that they commute with any (super)polynomial in $c^+,c^-$ that would be generated by applying $A_m$ to $e^{\alpha - Mc}$.  Therefore, each of these operators $A_{m}$'s is essentially multiplication by a specific (super)polynomial on $e^{\alpha - Mc}$, with the added effect of replacing $e^{\alpha-Mc}$ with $e^{\alpha-Mc + mc}$.  Let us examine these polynomials for a moment.

Fix $A_m$ for now.  Then
$$\widetilde{G} e^{mc} = m (c^+(-1) - c^-(-1)) + 2m^2 (G^+_{\frac{1}{2}} c(-1)) (G^-_{\frac{1}{2}} c(-1)) e^{mc}$$
Using definition of vertex operators, we see that $(mc,\alpha - Mc) = -m$ induces a power $z^{-m}$ in the computation and there are no power of $z$ originating from $c^+(0) - c^-(0)$ because $(c^+ - c^-,\alpha) = 0$.  In particular, this results in a term of the form $c^+(-m) - c^-(-m)$ in the corresponding polynomial, among other terms.

Therefore the polynomial obtained from
$$A_{m_1}^{\lambda_1} ... A_{m_k}^{\lambda_k} e^{\alpha - Mc}$$
will contain a term of the form
$$(c^+(-m_1) - c^-(m_1))^{\lambda_1}...(c^+(-m_k) - c^-(m_k))^{\lambda_k}.$$
and other complicated terms, which we ignore.  By a maximality argument, we then see that terms of the form $A_{m_1}^{\lambda_1} ... A_{m_k}^{\lambda_k} e^{\alpha - Mc}$ are linearly independent in $V_{NS}$.  Furthermore, they are also linearly independent in $\widetilde{P}^{0,0}$.
\end{proof}
From this proposition, we then obtain the following corollary :
\begin{corollary}
Suppose $\alpha \in L$.  Then $\dim \widetilde{P}^{0,0}(\alpha) \geq p(-(\alpha,\alpha)/2)$.
\end{corollary}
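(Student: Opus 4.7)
The plan is to read off the bound directly from the previous proposition by a partition-counting argument. The elements $A_{m_1}^{\lambda_1} \cdots A_{m_k}^{\lambda_k} e^{\alpha - Mc}$ were shown to be linearly independent in $\widetilde{P}^{0,0}(\alpha)$ whenever $\lambda_1 m_1 + \cdots + \lambda_k m_k = M = -(\alpha,\alpha)/2$; since such tuples $((m_i,\lambda_i))_i$ (with distinct positive integers $m_i$ and positive multiplicities $\lambda_i$) are in bijection with partitions of $M$, there are exactly $p(M) = p(-(\alpha,\alpha)/2)$ of them.

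First I would confirm that each such element indeed lies in $\widetilde{P}^{0,0}(\alpha)$. The $L$-grading is additive under the operators $A_m = (\widetilde{G}e^{mc})_0$: each $A_m$ shifts the lattice grading by $mc$, and the total shift $\sum \lambda_i m_i \cdot c = Mc$ brings the base vector $e^{\alpha-Mc}$ back to the $\alpha$-component. To see that $A_m$ preserves $P^{0,0}$ (modulo the relations defining $\widetilde{P}^{0,0}$), it suffices to observe that $e^{mc}$ is itself a vector in $P^{0,0}$ since $(mc,mc)=0$, and then $\widetilde{G}e^{mc} \in G^+_{-1/2} V_{-1/2,-1} + G^-_{-1/2} V_{-1/2,1}$ together with the calculation in Lemma/Theorem on $\widetilde{P}^{0,0}$ ensures that the zero-mode $(\widetilde{G}e^{mc})_0$ acts on $P^{0,0}$ and descends to an operator on $\widetilde{P}^{0,0}$—essentially the Lie bracket $[e^{mc},\,\cdot\,]$ used to define the algebra structure in the previous subsection.

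Next I would invoke the preceding proposition verbatim: the chosen family is linearly independent in $V_{NS}$, and moreover linearly independent in the quotient $\widetilde{P}^{0,0}$, since the maximality/leading-term argument there identifies a distinguished monomial in $c^+(-m_i), c^-(-m_i)$ attached to each element, and these monomials are not killed by passing to the quotient. Counting the independent vectors then gives $\dim \widetilde{P}^{0,0}(\alpha) \geq p(-(\alpha,\alpha)/2)$.

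The only edge cases requiring attention are when $(\alpha,\alpha) \geq 0$. If $(\alpha,\alpha)>0$ then $p(-(\alpha,\alpha)/2)=0$ and the bound is vacuous. If $(\alpha,\alpha)=0$ then $M=0$, the family reduces to $\{e^\alpha\}$, and the construction of $c$ via the previous lemma is not needed; the bound $p(0)=1$ holds because $e^\alpha$ itself is a nonzero element of $\widetilde{P}^{0,0}(\alpha)$ by Corollary \ref{corollaryn=2structure}. The main (and only real) obstacle is verifying carefully that $A_m$ descends to a well-defined operator on $\widetilde{P}^{0,0}$—a routine but notationally delicate check parallel to the one establishing the Lie bracket earlier in the section.
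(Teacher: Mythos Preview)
Your proposal is correct and follows exactly the paper's route: the corollary is read off directly from the preceding proposition by counting the $p(M)$ independent elements $A_{m_1}^{\lambda_1}\cdots A_{m_k}^{\lambda_k}e^{\alpha-Mc}$ with $\sum \lambda_i m_i = M = -(\alpha,\alpha)/2$. Your explicit treatment of the edge cases $(\alpha,\alpha)\geq 0$ is a welcome addition, since the paper's construction of $c$ explicitly assumed $(\alpha,\alpha)<0$ and left those cases implicit.
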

Unfortunately, due to the complexity of the ideal it appears difficult to find exact multiplicities for $\widetilde{P}^{0,0}$.

\section{Jacobi forms and Borcherds products}

The main goal of this section is to describe the technique developed by Borcherds in \cite{borcherds2} and \cite{borcherds3} known as \emph{Borcherds products.}  We will use these techniques to study the characters of the fake monster Lie algebra that we described in section \ref{sectionoghost} from a different perspective.  We simplify much of the statements and theorems from \cite{borcherds3} in the process, because we do not need to work with lattices that are not self-dual.

\subsection{Niemeier lattices}

\begin{definition} Let $L$ be an integer-valued lattice and $x_1,...,x_n$ be a $\Z$-basis for $L$.  Then we say $L$ is \emph{unimodular} if the determinant of the matrix $(a_{ij})_{i,j=1}^n$ where $a_{ij} = (x_i,x_j)$ is $1$ or $-1$.
\end{definition}

\begin{lemma}
An integer-valued lattice is unimodular if and only if it is self-dual.
\end{lemma}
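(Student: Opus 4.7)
The plan is to fix a $\Z$-basis $x_1, \ldots, x_n$ of $L$ and work with the Gram matrix $G = (a_{ij})$ defined by $a_{ij} = (x_i, x_j)$, together with the dual basis $y_1, \ldots, y_n$ of $L \otimes \Q$ characterized by $(y_i, x_j) = \delta_{ij}$. First I would observe that, since $(.,.)$ is non-degenerate on $L \otimes \Q$, the $y_i$ form a basis of $L^* = \{y \in L \otimes \Q : (y, x) \in \Z \text{ for all } x \in L\}$, because $(y, x_j) \in \Z$ for all $j$ completely determines membership in $L^*$ and the linear functionals $x_j \mapsto \delta_{ij}$ span the dual space.

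Next, I would write $y_i = \sum_j b_{ij} x_j$ and translate the defining relation $(y_i, x_j) = \delta_{ij}$ into the matrix identity $B G = I$, where $B = (b_{ij})$. Thus $B = G^{-1}$, computed over $\Q$. The condition $L = L^*$ becomes: every $y_i$ lies in $L$, i.e., every entry of $G^{-1}$ is an integer. (Containment $L \subseteq L^*$ is automatic from the lattice being integer-valued.)

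Then the equivalence of unimodularity and self-duality reduces to a clean statement about integer matrices: an integer matrix $G$ has $G^{-1}$ with integer entries if and only if $\det(G) = \pm 1$. One direction is immediate from $\det(G)\det(G^{-1}) = 1$ in $\Z$; the other follows from the adjugate formula $G^{-1} = \det(G)^{-1} \operatorname{adj}(G)$, where $\operatorname{adj}(G)$ already has integer entries when $G$ does.

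No step here is genuinely hard; the only place to be a bit careful is the identification of $\{y_i\}$ as a basis of $L^*$ rather than merely a generating set, which is where non-degeneracy of $(.,.)$ enters. The rest is bookkeeping in linear algebra over $\Z$.
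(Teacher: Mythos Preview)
Your proof is correct and follows essentially the same approach as the paper: both arguments pick a $\Z$-basis of $L$, form the Gram matrix $G$, relate it to the change-of-basis matrix between $L$ and $L^*$, and reduce to the fact that an integer matrix has integer inverse iff its determinant is $\pm 1$. Your version is arguably a bit more streamlined in that you identify the transition matrix $B$ directly as $G^{-1}$ and invoke the adjugate formula explicitly, whereas the paper routes through an auxiliary matrix $S$ and the identity $\det(S^{-1}G)=1$; but the underlying idea is the same.
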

\begin{proof} Because the lattice is integer-valued, it is clear that $L \subseteq L^*$.  Let $G = (a_{ij})_{i,j=1}^n$ as in the definition.  Suppose $x_1,...,x_n$ is a basis for $L$ and $y_1,...,y_n$ is a dual basis for $L^*$.  Then there exists an invertible $n \times n$ matrix $S$ with integer entries such that $x_i = \sum\limits_{k=1}^n s_{ik} y_k$.  We also have $y_i = \sum\limits_{k=1}^n t_{ik} x_k$, but the $t_{ik}$'s may not be integers.  Therefore,
$$\det(S^{-1})\det(G) = \det(S^{-1}G) = \det((x_i,y_j))_{i,j=1}^n = 1.$$
If $L$ is unimodular then $\det(G) = \pm 1$ by assumption and then $\det(S^{-1}) = \pm 1$ as well, hence the entries of $S^{-1}$ are also integers and $L$ is self-dual.  If $L$ is self-dual then the entries of $S^{-1}$ are integers hence $\det(S^{-1}) = \pm 1$ hence $\det(G) = \pm 1$.
\end{proof}

Consider the lattice $$II_{m,n} = \left\lbrace (a_1,...,a_{m+n} | a_i \in \Z \right\rbrace \cup \left\lbrace a_i,...,a_{m+n} | a_i \in \Z + 1/2 \right\rbrace$$ with inner product inherited from $\Z^{m,n}$.  It is a simple exercise to show that $II_{m,n}$ is unimodular if and only if $m-n$ is divisible by $8$.

\begin{theorem}
Suppose $L$ is an even unimodular lattice of signature $(m,n)$ with $m,n \geq 1$.  Then $(m-n)$ is divisible by $8$ and $L \simeq II_{m,n}$.
\end{theorem}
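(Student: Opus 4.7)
The plan is to prove the two assertions—divisibility of the signature and isomorphism with $II_{m,n}$—in sequence, since the existence of an even unimodular lattice of signature $(m,n)$ already forces $8 \mid (m-n)$, and the paper's definition of $II_{m,n}$ only yields an even unimodular lattice under the same divisibility condition.

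For the divisibility, the cleanest tool is Milgram's Gauss-sum formula: for any even non-degenerate lattice $L$ of signature $(m,n)$ with discriminant quadratic form $q_L : L^*/L \to \Q/\Z$, one has
$$\frac{1}{\sqrt{|L^*/L|}} \sum_{x \in L^*/L} e^{2\pi i \, q_L(x)} = e^{2\pi i (m-n)/8}.$$
When $L$ is unimodular, $L^*/L$ is trivial and the left side equals $1$, forcing $(m-n)/8 \in \Z$. If one wishes to avoid Milgram's formula, an alternative is to exhibit an auxiliary positive definite even unimodular lattice containing $L$ as a summand and use the classical fact (via theta functions) that a positive definite even unimodular lattice has rank divisible by $8$.

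For the isomorphism, I would induct on $\min(m,n)$. Since $L$ is indefinite of rank $\geq 2$, Meyer's theorem (trivial in rank $2$) produces a primitive isotropic vector $v \in L$. Unimodularity of $L$ makes the linear form $(\cdot,v):L \to \Z$ surjective, so some $w \in L$ satisfies $(v,w)=1$; evenness of $L$ then allows the substitution $w \mapsto w - \tfrac{(w,w)}{2} v$ to make $w$ isotropic, giving an embedding of $U \simeq II_{1,1}$ as an orthogonal summand $L = U \oplus U^\perp$, with $U^\perp$ even unimodular of signature $(m-1, n-1)$. Iterating yields $L \simeq II_{1,1}^{\oplus \min(m,n)} \oplus L_0$ with $L_0$ definite of rank $|m-n|$, and a direct check on the paper's definition gives an analogous decomposition of $II_{m,n}$.

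The main obstacle is that the definite summand $L_0$ is \emph{not} determined by its rank alone—already at rank $16$ there are two non-isomorphic positive definite even unimodular lattices ($E_8 \oplus E_8$ and $D_{16}^+$), and the number of isomorphism classes grows rapidly with rank. The rescue in our indefinite setting is Eichler's theorem (refined by Kneser via strong approximation for the spin group, using $m,n \geq 1$ so that the orthogonal group is isotropic over $\R$): two indefinite even unimodular lattices of the same signature are isomorphic, i.e.\ their common genus reduces to a single isomorphism class. Concretely, $II_{1,1} \oplus L_0$ is independent of the choice of definite decomposition $L_0$. I would cite this classification theorem rather than reproduce the spinor-genus machinery, and conclude $L \simeq II_{m,n}$ since both are indefinite even unimodular of signature $(m,n)$.
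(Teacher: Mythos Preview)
The paper does not supply a proof of this theorem; it is stated as a classical fact (in the section on Niemeier lattices, immediately after the definition of $II_{m,n}$) and the paper moves on directly to the Niemeier classification. The implicit reference is Conway--Sloane \cite{Conway}, which the paper lists in its bibliography.

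Your sketch is correct and is essentially the standard proof one finds in Serre's \emph{A Course in Arithmetic} or in \cite{Conway}. The divisibility step via Milgram's formula is clean; the alternative you mention (embedding into a definite lattice and invoking the weight-$k/2$ modularity of the theta series to force $8 \mid k$) is closer to what Serre does. For the uniqueness, your inductive splitting of hyperbolic planes is exactly right, and you correctly identify the real content: the leftover definite summand $L_0$ is \emph{not} unique, so one cannot conclude by pure induction---Eichler's theorem (indefinite unimodular lattices in a genus form a single class) is genuinely needed, and citing it rather than reproving it is the appropriate choice at this level. One small remark: your appeal to Meyer's theorem is slightly heavier than necessary, since in the even unimodular indefinite case one can exhibit an isotropic vector more directly (e.g.\ by reducing modulo $2$ and using that the induced quadratic form over $\mathbb{F}_2$ has a nontrivial zero), but Meyer is certainly valid.
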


We will need the following classification theorem from \cite{niemeier} :

\begin{theorem}
There are exactly 24 positive-definite unimodular even lattice of rank $24$.  They are known as Niemeier lattices.
\end{theorem}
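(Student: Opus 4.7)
The plan is to classify $L$ through its root system $R(L) = \{ \alpha \in L \mid (\alpha,\alpha) = 2 \}$. Since $L$ is positive-definite and even, $R(L)$ is a (possibly empty) ADE root system spanning a root sublattice $Q(L) \subseteq L$ of finite index. I would proceed in three stages: first show that either $R(L) = \emptyset$ or $R(L)$ has rank $24$ with all irreducible components sharing a common Coxeter number; next enumerate the finite list of such root systems; finally show each candidate (including the empty one) is realized by a unique $L$.

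First I would use modular forms of weight $12$ for $SL(2,\Z)$. The theta series $\theta_L(\tau) = \sum_{x \in L} q^{(x,x)/2}$ lies in the $2$-dimensional space spanned by $E_{12}$ and $\Delta$, hence equals $E_{12} + c \Delta$ for a constant $c$ determined by the root count $|R(L)|$. This pins down the number of lattice vectors of every norm. A combinatorial argument, using that an irreducible component of rank $r$ and Coxeter number $h$ contributes exactly $rh$ roots together with the constraint read off from $\theta_L$, shows that $R(L)$ is either empty or has rank $24$ with all components sharing a common Coxeter number $h$ and total root count $24h$; the equality of Coxeter numbers may be derived from the action of the Weyl group on $L$ or, equivalently, by comparing weighted theta series.

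Next I would enumerate ADE root systems of total rank $24$ whose irreducible components have a common Coxeter number; since $h(A_n) = n+1$, $h(D_n) = 2n-2$, $h(E_6)=12$, $h(E_7)=18$, $h(E_8)=30$, this is a finite search and yields exactly $23$ configurations ($D_{24}$, $D_{16} E_8$, $E_8^3$, $A_{24}$, $D_{12}^2$, $A_{17} E_7$, and so on down to $A_1^{24}$). For each candidate root system $R$, the Niemeier lattice $L$ with $R(L) = R$ corresponds to a subgroup $L/Q(R) \subseteq Q(R)^\ast / Q(R)$ that is self-dual isotropic with respect to the induced discriminant form (forced by $L$ being even unimodular) and contains no element lifting to a new norm-$2$ vector. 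A case-by-case analysis of the discriminant groups of irreducible ADE root lattices shows that exactly one such glue code exists in each of the $23$ cases, and each resulting $L$ is indeed even unimodular of rank $24$.

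The remaining case $R(L) = \emptyset$ is the Leech lattice. Existence is handled by the standard construction via the binary Golay code, and uniqueness by the fact that $\theta_L = E_{12} - (65520/691)\Delta$ has vanishing $q^1$-coefficient, combined with a direct geometric argument on the rigid configuration of norm-$4$ vectors (alternatively, the Minkowski-Siegel mass formula gives an independent numerical verification that the total mass matches the contributions of the $24$ claimed lattices). The main obstacle is the case-by-case glue-code verification in the third stage, which constitutes the technical heart of Niemeier's original argument and cannot be avoided; once this is complete, together with the separate Leech-lattice case, the total count is $23 + 1 = 24$.
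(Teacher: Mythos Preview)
The paper does not actually prove this theorem; it is simply quoted from \cite{niemeier} as a classification result to be used later (the very next sentence lists the 24 root systems without further argument). So there is no ``paper's own proof'' to compare against.

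Your outline is the standard Niemeier--Venkov approach and is correct in structure. A few remarks: the step showing that all irreducible components share a common Coxeter number is Venkov's refinement and is cleanest via the action of $-1$ in the Weyl group combined with the theta-series constraint, rather than just ``comparing weighted theta series''; you should be explicit there. The enumeration of the 23 candidate root systems is routine. The glue-code analysis is genuinely the laborious part, and your acknowledgement that it ``cannot be avoided'' is honest, but as written you have only asserted that exactly one self-dual isotropic code without short glue vectors exists in each case --- that is the content, not the proof, so in an actual write-up each of the 23 discriminant groups needs its own (short) verification. Finally, uniqueness of the Leech lattice is a theorem of Conway and is not a consequence of the theta-series formula alone (the theta series only tells you the vector counts, not the lattice); invoking the mass formula as a consistency check is fine, but it does not by itself establish uniqueness in the rootless case. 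With those caveats, your plan would yield a complete proof.
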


For a lattice $L$, denote by $R(L)$ the set of all elements of $L$ of square length $2$.  We call $R(L)$ the root system of $L$.  Niemeier lattices have been classified by their root systems.  Indeed, we have :
\begin{flalign*}
& \emptyset : \text{The Leech lattice } \Lambda,\\
& A_1^{24}, A_2^{12}, A_3^8, A_4^6, A_8^3, A_{12}^2, A_{24}, \\
& D_4^6, D_6^4, D_8^3, D_{12}^2, D_{24}, \\
& E_6^4, E_8^3, \\
& A_5^4D_4, A_7^2 D_5^2, A_9^2 D_6, A_{15} D_9, E_8 D_{16}, E_7^2 D_{10}, E_7 A_{17}, \\
& E_6 D_7 A_{11}
\end{flalign*}

If $L$ is a Niemeier lattice, then $L \oplus II_{1,1}$ is a unimodular lattice of signature $(25,1)$ and therefore isomorphic to $II_{25,1}$.  Conversely, suppose $\rho$ is an isotropic element of $II_{25,1}$.  Because $II_{25,1}$ is self-dual, there exists an element $\rho' \in II_{25,1}$ such that $(\rho,\rho') = 1$.  We may choose $\rho'$ to be isotropic by adding a suitable multiple of $\rho$ to $\rho'$.

Then the Gram matrix of $II_{25,1}$ can be written as
$\begin{bmatrix}
A & 0 \\
0 & J
\end{bmatrix}$
where $J = \begin{bmatrix}
0 & 1 \\
1 & 0
\end{bmatrix}$ and $A$ is the Gram matrix of the lattice $\rho^\perp / \rho$.  In particular, $\rho^\perp/\rho$ is a positive-definite unimodular even lattice of rank $24$, hence a Niemeier lattice.
We conclude :

\begin{proposition} \label{niemeiersublattice}
Up to automorphisms of $II_{25,1}$, there are $24$ orbits of isotropic vectors $\rho \in II_{25,1}$.  In particular, these orbits are in a bijective correspondence with the Niemeier lattices via the map $\rho \rightarrow \rho^{\perp} / \rho$.
\end{proposition}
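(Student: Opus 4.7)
The plan is to verify the map $\rho \mapsto \rho^\perp/\rho$ is well-defined on orbits, surjective, and injective onto the set of isomorphism classes of Niemeier lattices; the main technical point will be choosing, for each isotropic $\rho$, a convenient isotropic dual $\rho'$ so that $II_{25,1}$ splits as $(\rho^\perp/\rho) \oplus \langle \rho, \rho'\rangle$ with the second summand isomorphic to $II_{1,1}$.

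First I would establish that $\rho^\perp/\rho$ is indeed a Niemeier lattice, which is essentially done in the paragraph preceding the proposition: self-duality of $II_{25,1}$ supplies $\rho''$ with $(\rho,\rho'')=1$, and subtracting $\tfrac{1}{2}(\rho'',\rho'')\rho$ from $\rho''$ yields an isotropic $\rho'$ with $(\rho,\rho')=1$. The sublattice $\langle \rho,\rho'\rangle$ is isomorphic to $II_{1,1}$, and since $II_{25,1}$ is unimodular and even, its orthogonal complement is a positive-definite even unimodular lattice of rank $24$, hence a Niemeier lattice by the classification theorem. Moreover one identifies this complement with $\rho^\perp/\rho$ via the projection along $\rho$.

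Next I would check the map descends to $\mathrm{Aut}(II_{25,1})$-orbits and is surjective. Well-definedness is immediate: any $\sigma \in \mathrm{Aut}(II_{25,1})$ sending $\rho_1$ to $\rho_2$ restricts to a lattice isomorphism $\rho_1^\perp \to \rho_2^\perp$ carrying $\Z \rho_1$ to $\Z\rho_2$, hence induces an isomorphism of the quotients. For surjectivity, given any Niemeier lattice $N$, the lattice $N \oplus II_{1,1}$ is even unimodular of signature $(25,1)$, hence isomorphic to $II_{25,1}$ by the preceding uniqueness theorem; taking $\rho$ to be the image of an isotropic generator of the $II_{1,1}$-factor gives a preimage.

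The main step is injectivity. Suppose $\rho_1,\rho_2$ are isotropic and $\phi : \rho_1^\perp/\rho_1 \to \rho_2^\perp/\rho_2$ is a lattice isomorphism. Using the splittings above, choose isotropic duals $\rho_1',\rho_2'$ with $(\rho_i,\rho_i')=1$, giving orthogonal decompositions $II_{25,1} = N_i \oplus \langle\rho_i,\rho_i'\rangle$ where $N_i$ is the image of an explicit lift of $\rho_i^\perp/\rho_i$. Define $\sigma: II_{25,1} \to II_{25,1}$ by applying $\phi$ on the first summand and the unique isomorphism $\langle\rho_1,\rho_1'\rangle \to \langle\rho_2,\rho_2'\rangle$ sending $\rho_1\mapsto\rho_2$, $\rho_1'\mapsto\rho_2'$ on the second; this is a lattice automorphism of $II_{25,1}$ carrying $\rho_1$ to $\rho_2$. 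The hard part is simply being careful that the lifts $N_i$ actually realize $\rho_i^\perp/\rho_i$ as genuine sublattices so that $\phi$ can be transported back to $II_{25,1}$, but this is guaranteed by the isotropy of $\rho_i'$ together with the fact that $N_i = \langle\rho_i,\rho_i'\rangle^\perp$ maps isomorphically to $\rho_i^\perp/\rho_i$ under reduction mod $\rho_i$.
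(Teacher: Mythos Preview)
Your argument is correct and follows essentially the same route as the paper: the paper does not give a separate proof of this proposition but rather states it as a conclusion of the discussion immediately preceding it, which constructs the isotropic dual $\rho'$, splits off a hyperbolic plane, and invokes uniqueness of $II_{25,1}$ for surjectivity. Your write-up is in fact more complete than the paper's, since you spell out the injectivity step (building an automorphism from the splittings and the isomorphism $\phi$), which the paper leaves implicit. One small remark: both you and the paper tacitly assume $\rho$ is primitive (otherwise no $\rho'$ with $(\rho,\rho')=1$ exists and $\rho^\perp/\rho$ has torsion), which is the standard convention here.
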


\subsection{Jacobi forms and Hecke operators}

Let $\He$ be the upper-half space and $L$ a positive definite unimodular lattice.  From now on, we will use the notation $e(x) = \exp(2 \pi i x)$ for brevity when applicable.

\begin{definition}
Let $k \in \frac{1}{2} \Z$.  A function $f : \He \to \C$ is called a modular form of weight $k$ for $SL_{2}(\Z)$ if
\begin{enumerate}
\item $f((a \tau + b)/(c \tau + d)) = (ct + d)^k F(\tau)$ for all $\begin{bmatrix}
a & b \\ c & d
\end{bmatrix} \in SL_2(\Z)$.
\item $f$ is holomorphic on $\He$.
\item $f$ is holomorphic at the cusp $\infty$.
\end{enumerate}
We say that $f$ is nearly holomorphic if it has a pole of finite order at $\infty$.
\end{definition}

The main example of a nearly holomorphic modular form that will interest us is the negative 24th power of the Dedekind eta function.  Write
$$\Delta(\tau) = \eta^{-24}(\tau) = e^{-2 \pi i \tau} \prod\limits_{n = 1}^{\infty} (1 - e^{2 n \pi i \tau})^{-24} = \sum\limits_{n \geq -1} c(n) e(n \tau),$$
where $c(n) \in \Z$.  Then $\Delta(\tau)$ is a nearly holomorphic modular form of weight $12$.

\begin{definition}
A Jacobi form of weight $k \in \Z$ and index $m \in \N$ for $L$ is a function
$$\phi : \He \times (L \otimes \C) \rightarrow \C$$
such that
\begin{align*}
\phi \left( \frac{a \tau + b}{c \tau + d}, \frac{z}{c \tau + d} \right) &= (c \tau + d)^k \exp \left(\pi i \frac{c m(z,z)}{c \tau + d} \right) \phi(\tau,z) \\
\phi(\tau,z+\lambda \tau + \mu) &= \exp(-\pi i m((\lambda,\lambda) \tau + 2 (\lambda,z))) \phi(\tau,z)
\end{align*}
for any $A = \begin{bmatrix}
a & b \\ c & d
\end{bmatrix} \in SL_2(\Z)$ and any $\lambda,\mu \in L$.  Furthermore, $\phi(\tau,z)$ has a Fourier expansion
$$\phi(\tau,z) = \sum\limits_{n \in \Z, l \in L} f(n,l) \exp(2 \pi i (n \tau + (l,z))).$$
If $n \geq 0$ then $\phi$ is said to be weak.  If $2mn - (l,l) \geq 0$ then $\phi$ is said to be holomorphic and if $2mn - (l,l) > 0$ then $\phi$ said to be cusp.  Furthermore, we say $\phi$ is weakly holomorphic if it is weak and holomorphic.
\end{definition}

The following was proven in \cite{gritsenko2}, which provides a way of defining a modular form from a Jacobi form and vice-versa.

\begin{lemma} \cite{gritsenko2}
Suppose $\phi$ is a holomorphic Jacobi form of weight $k \in \Z$ and index $1$ and with Fourier expansion as in the definition.  Then the coefficients $f(n,l)$ of $\phi$ depend only on the norm $2n - (l,l)$.  Furthermore, the function
$$\phi(\tau,z) = \sum\limits_{\substack{n \in \Z, l \in L \\ 2n - (l,l) \geq 0}} f(n,l) \exp(2 \pi i (n \tau + (l,z)) = \phi(\tau) \Theta_{L} (\tau,z)$$
where
$$\Theta_{L} (\tau, z)
= \sum\limits_{\lambda \in L} e(\tau (\lambda,\lambda)/2 + (\lambda,z)),$$
defines a modular form $\phi(\tau)$ by
$$\phi(\tau) = \sum\limits_{r \in \Z} f(r) q^r$$
with $f(r) = f(r,0)$.
\end{lemma}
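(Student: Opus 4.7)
The plan is to extract information from the two transformation laws of a Jacobi form and then reorganize the Fourier expansion into a product.

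First, I would exploit the elliptic transformation
$$\phi(\tau, z + \lambda\tau + \mu) = e\bigl(-\tfrac{1}{2}(\lambda,\lambda)\tau - (\lambda,z)\bigr)\,\phi(\tau,z),$$
valid for all $\lambda,\mu \in L$ (here $m=1$). Setting $\mu = 0$ and expanding both sides in the Fourier series, then matching the coefficient of $e(N\tau + (M,z))$ on each side, yields
$$f\bigl(N - (M,\lambda),\,M\bigr) = f\bigl(N + \tfrac{1}{2}(\lambda,\lambda),\,M+\lambda\bigr).$$
After the substitution $n := N - (M,\lambda)$, this becomes the symmetry
$$f(n,l) = f\bigl(n + (l,\lambda) + \tfrac{1}{2}(\lambda,\lambda),\; l+\lambda\bigr), \qquad \lambda \in L.$$
A direct check shows that the quantity $2n - (l,l)$ is fixed by this action. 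Since $L$ is unimodular, $L^* = L$, so any $l \in L$ appearing in the Fourier expansion can be eliminated by choosing $\lambda = -l$; this gives $f(n,l) = f(n - (l,l)/2,\,0)$, proving that $f(n,l)$ depends only on $2n - (l,l)$.

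Next, setting $f(r) := f(r,0)$ and substituting into the Fourier expansion, I would make the change of summation variable $n = r + (l,l)/2$ (an integer because $L$ is even) to factorize
\begin{align*}
\phi(\tau,z) &= \sum_{n \in \Z,\, l \in L} f\bigl(n - \tfrac{(l,l)}{2}\bigr)\, e\bigl(n\tau + (l,z)\bigr) \\
&= \Bigl(\sum_{r \in \Z} f(r)\, e(r\tau)\Bigr)\Bigl(\sum_{l \in L} e\bigl(\tfrac{(l,l)}{2}\tau + (l,z)\bigr)\Bigr) = \phi(\tau)\,\Theta_L(\tau,z),
\end{align*}
which is the stated product identity.

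Finally, to conclude that $\phi(\tau)$ is a modular form, I would invoke the standard fact (proven by Poisson summation, using that $L$ is even, positive-definite, and self-dual of rank $r$ divisible by $8$) that $\Theta_L(\tau,z)$ is itself a Jacobi form of weight $r/2$ and index $1$ for $SL_2(\Z)$, and in particular is nonvanishing as a function of $\tau$ in a generic sense. Dividing the transformation law for $\phi(\tau,z)$ under $\begin{bmatrix} a & b \\ c & d\end{bmatrix} \in SL_2(\Z)$ by that for $\Theta_L(\tau,z)$, the index-$1$ exponential factors $e\bigl(c(z,z)/2(c\tau+d)\bigr)$ cancel, leaving $\phi\bigl((a\tau+b)/(c\tau+d)\bigr) = (c\tau+d)^{k - r/2}\phi(\tau)$, so $\phi$ transforms with weight $k - r/2$; holomorphy on $\He$ and at the cusp follow from the holomorphy hypothesis on $\phi(\tau,z)$ together with the fact that the $q$-expansion of $\phi$ is a subseries of that of $\phi(\tau,z)$ at $z=0$. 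The main technical point is the modular transformation of $\Theta_L$, but this is standard once unimodularity is assumed; the creative step of the argument is really the recognition that the elliptic translation action preserves $2n - (l,l)$ and acts transitively on $L$ via the translation part.
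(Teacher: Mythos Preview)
Your argument for the first claim is essentially identical to the paper's: use the elliptic transformation with $\mu=0$, compare Fourier coefficients to obtain $f(n,l)=f(n+(l,\lambda)+\tfrac{1}{2}(\lambda,\lambda),\,l+\lambda)$, and specialize to $\lambda=-l$. The paper stops there, declaring the factorization and modularity claims to be ``fairly lengthy, but straightforward computations''; your sketch of those remaining parts (the change of variable $n=r+(l,l)/2$ and the division of transformation laws using that $\Theta_L$ is a Jacobi form of weight $r/2$ and index $1$) is correct and simply fills in what the paper omits.
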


\begin{proof}
We only prove the first claim.  The remaining claims are fairly lengthy, but straightforward computations.  Using the property
$$\phi(\tau,z+\lambda \tau) = \exp(-\pi i m((\lambda,\lambda) \tau + 2 (\lambda,z))) \phi(\tau,z),$$
we find with $m=1$ and some minor simplifications,
$$\phi(\tau,z) = \sum\limits_{n,l} f(n,l) e(\tau(n + (\lambda,l) + (\lambda,\lambda)/2) + (\lambda+l,z)).$$
Therefore, by comparing the terms of the series we find
$$f(n,l) = f(n+(\lambda,l) + (\lambda,\lambda)/2, l + \lambda).$$
If we specialize to $\lambda = -l$ we then obtain
$$f(n,l) = f(n - (\lambda,\lambda)/2, 0),$$
which proves the first claim.
\end{proof}

We will need to know about Hecke operators in this context.  The idea of using Hecke operators on characters of affine Lie algebras first appeared in \cite{feingoldfrenkel} but we will follow the definitions from \cite{zagier}.  If $\phi$ is a Jacobi form of weight $k$ and index $m$, we define a Hecke operator
\begin{align*}
&\phi|T_l (\tau, z) =  \\ &\sum\limits_{\substack{ \begin{pmatrix}
a & b \\ c & d
\end{pmatrix}  \in SL_2(\Z) / M_2(\Z) \\ ad-bc = l}} l^{k-1} (c \tau + d)^{-k} \exp \left( ml \frac{-cz^2}{c \tau + d} \right) \phi \left( \frac{a \tau + b}{c \tau + d} , \frac{lz}{c \tau + d} \right)
\end{align*}

where $M_2(\Z)$ is the set of $2 \times 2$ matrices with coefficients in $\Z$.  $T_l$ is a Hecke operator of type $V_l$ from $\cite{zagier}$, with which they have shown the following (originally from \cite{feingoldfrenkel} in a different form) : 
\begin{proposition}
Suppose $\phi$ is a Jacobi form of weight $k$ and index $m$.  Then $\phi|T_{l}(\tau,z)$ is a Jacobi form of weight $k$ and index $ml$.
\end{proposition}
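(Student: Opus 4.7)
The plan is to verify directly the two transformation laws and the Fourier expansion condition defining a Jacobi form of weight $k$ and index $ml$. The key simplification is to use upper-triangular coset representatives: each $SL_2(\Z)$-orbit of integer matrices of determinant $l$ has a unique representative $M=\begin{pmatrix} a & b \\ 0 & d \end{pmatrix}$ with $ad=l$, $0\le b<d$, $a,d>0$. Substituting these into the definition kills the exponential correction (since $c=0$) and collapses $lz/(c\tau+d)$ to $az$, yielding
\[
\phi|T_l(\tau,z) = l^{k-1}\sum_{\substack{ad=l\\ 0\le b<d}} d^{-k}\,\phi\!\left(\tfrac{a\tau+b}{d},\,az\right).
\]
This explicit form will be the workhorse for the elliptic transformation and the Fourier expansion; the modular property, on the other hand, is cleanest from the original definition.

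First I would check modular invariance. For $\gamma\in SL_2(\Z)$, the substitution $(\tau,z)\mapsto(\gamma\tau,\,z/(c_\gamma\tau+d_\gamma))$ reindexes the sum via right-multiplication $M\mapsto M\gamma$, which permutes the coset representatives of $SL_2(\Z)\backslash \{M\in M_2(\Z):\det M=l\}$. The cocycle identities for $(c\tau+d)^{-k}$ and for the quadratic exponential correction combine---essentially the statement that the slash operators of weight $k$ and index $ml$ compose correctly---to produce the expected factor of automorphy at index $ml$.

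Next I would verify the elliptic transformation. Translating $z\mapsto z+\lambda\tau+\mu$ in the explicit form shifts $az\mapsto az+a\lambda\tau+a\mu$. Writing $\tau'=(a\tau+b)/d$, we have $a\lambda\tau=d\lambda\tau'-b\lambda$, so the shift becomes $(d\lambda)\tau'+(a\mu-b\lambda)$; applying $\phi$'s elliptic law at index $m$ with parameter $d\lambda\in L$ produces a phase $\exp(-\pi i m(d^2(\lambda,\lambda)\tau'+2d(\lambda,az)))$. Substituting back $\tau'=(a\tau+b)/d$ and using $ad=l$, this simplifies to
\[
\exp\!\bigl(-\pi i\, ml((\lambda,\lambda)\tau+2(\lambda,z))\bigr)\cdot\exp\!\bigl(-\pi i\, mbd(\lambda,\lambda)\bigr),
\]
and the parasitic second factor equals $1$ because $L$ is even, so $(\lambda,\lambda)\in 2\Z$. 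The surviving phase is independent of $M$, so it factors out of the sum and gives exactly the elliptic property at index $ml$.

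Finally, substituting the Fourier expansion $\phi(\tau,z)=\sum f(n,r)\,e(n\tau+(r,z))$ into the explicit form and using the orthogonality $\sum_{b=0}^{d-1}e(nb/d)=d\cdot\mathbf{1}_{d\mid n}$ yields an explicit Fourier series for $\phi|T_l$ whose coefficients are finite integral combinations of those of $\phi$; in particular, the weak and holomorphic conditions at index $ml$ are inherited from the same conditions for $\phi$ at index $m$. The main obstacle is the elliptic step: one has to keep track of the rescaling $\lambda\mapsto d\lambda$ in $\phi$'s elliptic law---this is precisely what upgrades the index from $m$ to $ml$---and confirm that the $b$-dependent parasitic phase is trivial, which uses evenness of $L$.
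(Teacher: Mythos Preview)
The paper does not actually prove this proposition: it simply attributes the result to Eichler--Zagier \cite{zagier} (their operator $V_l$) and to \cite{feingoldfrenkel}, and moves on. So there is no ``paper's proof'' to compare against.

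Your argument is the standard direct verification and is correct. A couple of remarks. First, the modular step as you describe it is the cleanest way to see it: the definition of $T_l$ is precisely a sum of index-$ml$ slash operators over coset representatives, and right-multiplication by $\gamma\in SL_2(\Z)$ permutes those cosets, so the weight-$k$, index-$ml$ automorphy factor drops out by the slash-operator cocycle relation. Second, your observation that the parasitic phase $\exp(-\pi i\,m b d(\lambda,\lambda))$ vanishes \emph{because $L$ is even} is exactly right in the paper's normalization (which uses $\pi i$ rather than $2\pi i$ in the elliptic law). In the paper's applications $L$ is a Niemeier lattice, hence even, so this is fine; but it is worth flagging that with this convention the statement as written would fail for an odd lattice such as $L=\Z$, whereas in Eichler--Zagier's original $2\pi i$-normalization no evenness is needed. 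This is a convention issue in the paper, not a flaw in your argument.
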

and the theorem :
\begin{theorem} \label{hecketransformation}
Suppose $L = \Z$ and $\phi$ is a Jacobi form of weight $k$ and index $m$ such that $\phi = \sum\limits_{n,r} c(n,r) q^n \xi^r$ where $q^n = exp(2 \pi in \tau)$ and $\xi^r = \exp(2 \pi i r z)$.  Then
$$\phi|T_{l} = \sum\limits_{n,r} \sum\limits_{a|(n,r,l)} a^{k-1} c\left(\frac{nl}{a^2},\frac{r}{a}\right) q^n \xi^r$$
\end{theorem}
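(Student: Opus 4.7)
The plan is to expand the Hecke operator over a convenient set of coset representatives for $SL_2(\Z) \backslash \{ M \in M_2(\Z) : \det M = l\}$, then substitute the Fourier expansion of $\phi$ and reindex. A classical choice of representatives is the set of upper triangular matrices
$$\left\{ \begin{pmatrix} a & b \\ 0 & d \end{pmatrix} : ad = l,\ a,d > 0,\ 0 \leq b < d \right\}.$$
With this choice $c=0$, so the exponential factor $\exp(ml \cdot (-cz^2)/(c\tau+d))$ in the definition of $T_l$ collapses to $1$ and $c\tau+d = d$. Therefore
$$\phi|T_l(\tau,z) = \sum_{ad=l} \sum_{b=0}^{d-1} l^{k-1} d^{-k} \,\phi\!\left( \frac{a\tau+b}{d},\ \frac{lz}{d}\right).$$

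Next I would plug in $\phi(\tau,z) = \sum_{n,r} c(n,r) e(n\tau + rz)$. Using $l/d = a$, the inner evaluation becomes
$$\phi\!\left(\frac{a\tau+b}{d},\frac{lz}{d}\right) = \sum_{n,r} c(n,r)\, e\!\left(\frac{na\tau}{d}\right) e\!\left(\frac{nb}{d}\right) e(ra\,z).$$
The sum over $b$ is the key step: $\sum_{b=0}^{d-1} e(nb/d) = d$ if $d \mid n$ and $0$ otherwise. Writing $n = dn'$ when $d\mid n$, the factor $na/d$ becomes $n'a$, and one obtains
$$\phi|T_l(\tau,z) = \sum_{ad=l} l^{k-1} d^{1-k} \sum_{n',r} c(dn',r)\, q^{n'a}\xi^{ra}.$$

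Now I would change variables to the exponents actually appearing in the final series: set $N = n'a$ and $R = ra$, so the sum over $(n',r)$ becomes a sum over $(N,R)$ with the constraints $a\mid N$ and $a\mid R$; also $dn' = (l/a)(N/a) = lN/a^2$, so $c(dn',r) = c(lN/a^2, R/a)$. Since $l^{k-1} d^{1-k} = l^{k-1}(l/a)^{1-k} = a^{k-1}$, summing over $a$ with $ad=l$ (equivalently $a\mid l$) gives
$$\phi|T_l(\tau,z) = \sum_{N,R} \left( \sum_{\substack{a\mid l\\ a\mid N,\ a\mid R}} a^{k-1}\, c\!\left(\frac{lN}{a^2},\frac{R}{a}\right) \right) q^N \xi^R,$$
and the inner divisibility conditions collapse to $a \mid (N,R,l)$, yielding the stated formula.

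The computation is essentially bookkeeping; the one place requiring care is the step using the geometric sum $\sum_b e(nb/d)$, which forces the divisibility $d\mid n$ and is what produces the ``$a \mid n$'' condition in the final formula after the change of variables. The only other subtlety is verifying that the chosen upper triangular matrices do give a complete set of representatives for $SL_2(\Z) \backslash M_2(\Z)_{\det=l}$, which is a standard fact (Hermite normal form) that can be invoked without proof.
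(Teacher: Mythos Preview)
Your argument is correct and is the standard proof of this Fourier-coefficient formula for the Hecke operator $V_l$ on Jacobi forms. However, the paper does not supply its own proof of this statement: it is quoted from Eichler--Zagier \cite{zagier} (attributed in a different form to \cite{feingoldfrenkel}) and stated without proof, so there is nothing in the paper to compare against. For the record, your derivation is exactly the one in \cite{zagier}, Theorem~4.2: upper-triangular Hermite normal form representatives for $SL_2(\Z)\backslash\{M\in M_2(\Z):\det M=l\}$, the character sum $\sum_{b=0}^{d-1} e(nb/d)=d\cdot[d\mid n]$, and the reindexing $N=an'$, $R=ar$ that collapses the three divisibility constraints into $a\mid(N,R,l)$.
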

We will use these results later.

\begin{remark}
Up to this point we have stated all our results for positive-definite lattices or lattices of signature $(d-1,1)$.  However from now on we will use the equivalent statements for negative definite lattices and lattices of signature $(1,d-1)$.  The author believes that it is more natural to work with positive-definite lattices in the more physical and algebraic setting.  However the literature in the context of Jacobi form and Borcherds products is more consistent with negative-definite lattices.  This is a trivial change however, because every appearance of the bilinear form $(.,.)$ may be replaced with $-(.,.)$ to switch from one setting to the other.
\end{remark}

Now, we denote $q(\lambda) = \lambda^2/2 = (\lambda,\lambda)/2$ for brevity.  Let $M = II_{1,25} \oplus II_{1,1}$.  Write an element of $M$ in the form $(\lambda,m,n)$ with square norm $\lambda^2 - 2mn$.  Then we let $z = (0,0,1)$ and $z' = (0,-1,0)$ (so that $(z,z') = 1$).  Also, we write $K = II_{1,25}$.

The complex upper-half space can be generalized in the following way, based on pages 46-47 from \cite{borcherds3} :  let $D_n = \{Z = X + iY \in K \otimes \C, q(Y) > 0 \}$.  Consider the set
$$N= \{[Z_M] \in P(M \otimes \C)); (Z_M,Z_M) = 0 \}$$
where $Z_M \in M \otimes \C$ and $P(M \otimes \C)$ is the projective space of $M \otimes \C$.  Then we have the subset
$$N' = \{[Z_M] \in N, (Z_M, \overline{Z_M}) > 0 \}$$
Let $P$ be the connected component of $N'$ of the identity under the action of $O(M)$.  In particular, if $Z \in D_n$, we have a biholomorphic map
$$Z \to Z_M = (Z, 1, -q(Z)) \in N'$$
We define $\He_n$ to be the component of $D_n$ mapped into $P$.
In particular, given an automorphic form $\psi_M$ on $P$ we can define automorphic form $\psi_z$ on $\He_n$ in the following way :
$$\psi_Z(Z) = \psi_M(Z_M) = \psi_M((Z,1,-q(Z))).$$  Finally, if $Z = X + iY \in \He_l$, we write
\begin{align*}
&X_M = (X,1,q(Y) - q(X)) \\
&Y_M = (Y,0,-(X,Y))
\end{align*}
We think of $X_M, Y_M$ as an oriented base for $v$ a $2$-dimensional positive subspace of $M$.

Because $M$ has signature $(2,26)$, choose $v$ to be some $2$-dimensional positive definite subspace of $M$.  We denote by $v^\perp$ the orthogonal complement of $v$.  Given $\lambda \in M \otimes \R$ we now have a decomposition $\lambda = \lambda_{v} + \lambda_{v^{\perp}}$.  Let $w = v \cap (z_v)^{\perp}$ and $w^{\perp} = v^{\perp} \cap (z_{v^\perp})^{\perp}$.  We also have a decomposition
$$M \otimes \R = w \oplus \R z_{v} \oplus w^{\perp} \oplus \R z_{v^\perp}.$$
We have a special element
$$\mu = -z' + \frac{z_v}{2 z_v^2} + \frac{z_v^\perp}{2 z_{v^\perp}^2}.$$
For an arbitrary lattice $Q$ of signature $(b^+,b^-)$, define
\begin{align*}
\Theta_{Q} (\tau, u)
= \sum\limits_{\lambda \in Q} e(\tau \lambda^2_{u}/2 + \overline{\tau} \lambda^2_{u^\perp} / 2)
\end{align*}
and 
\begin{align*}
\Theta_{Q} (\tau, u;r,t)
= \sum\limits_{\lambda \in Q} e(\tau q((\lambda + t)_u) + \overline{\tau} q((\lambda+t)_{u^\perp}) - (\lambda + t/2,r))
\end{align*}
where $r,t \in Q \otimes \R$ and $u$ is a $b^+$-dimensional positive-definite subspace of $Q$.  If $r = t = 0$, we will simply write $\Theta_Q(\tau,u)$.  We will consider $\Theta_M(\tau,v)$ where $v$ is as before and $\Theta_K(\tau,w,r,t)$ where $w$ is a one-dimensional positive definite subspace of $K$.

  Next, let $F$ be a modular form of weight $1-b^-/2$ and we consider the integral
$$\Phi_M (v, F) = \int\limits_{SL_2(\Z) / \He} \overline{\Theta}_M(\tau;v) F (\tau) y^{- 1} dx dy$$
This integral does not in general converge and must be regularized.  An examination of the singularities of $\Phi_M(v,F)$ reveals that they are all logarithmic.  Borcherds proved the following theorem in \cite{borcherds3}, which we simplify here for our purposes :

\begin{theorem}\label{borcherdsproduct}  With all the notation we have set up, there exists a meromorphic function $\Phi_M(Z_M,F)$, $Z_M \in P$ with the following properties :
\begin{enumerate}
\item $\Phi_M$ is an automorphic form of weight $1/2$ for $O(M)$.
\item $\log| \Psi_Z(Z,F)| = \frac{- \Phi_M (Z_M,F)}{4} - \frac{1}{2} (\log |Y_M| + \Gamma'(1)/2 + \log \sqrt{2 \pi})$
where $\Gamma'(1) = \lim\limits_{n \to \infty} (\log(n) - 1/1 - 1/2 - ... - 1/n)$ is also known as Euler's constant.
\item $\Psi_Z(Z,F)$ has an infinite product expansion 
$$e(Z, \rho) \prod\limits_{\substack{\lambda \in K \\ (\lambda,W) > 0}} (1 - e((\lambda, Z)))^{c(q(\lambda))}$$
\end{enumerate}
\end{theorem}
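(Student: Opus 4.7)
The plan is to follow Borcherds's strategy in \cite{borcherds3} by computing the regularized integral $\Phi_M(v,F)$ explicitly and then exponentiating the resulting expression. The critical first step is to exploit the splitting $M = K \oplus II_{1,1}$: apply Poisson summation in the $II_{1,1}$-direction to rewrite
$\overline{\Theta}_M(\tau;v)$ as a sum over pairs $(c,d) \in \Z^2$ of expressions of the form $\tfrac{1}{\sqrt{2q(Y)}} \Theta_K(\tau,w;r_{c,d},t_{c,d})$ multiplied by simple exponential factors depending on $X,Y$ and $\tau$. The contribution from $(c,d)=(0,0)$ gives a boundary term that is handled separately, while the remaining terms are indexed by the bottom row of an $SL_2(\Z)$-matrix.

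Once this decomposition is in place, I would unfold the integral. The sum over $(c,d)\neq(0,0)$ modulo the stabilizer can be identified with a sum over $\Gamma_\infty \backslash SL_2(\Z)$, and combining this with the fundamental domain $SL_2(\Z)\backslash \He$ unfolds the integral onto the strip $\Gamma_\infty \backslash \He = \{0 \leq x \leq 1,\ y > 0\}$. On this strip only the identity coset survives and the integrand simplifies to $\overline{\Theta}_K(\tau,w)F(\tau)\,y^{-1}\,dx\,dy$, up to elementary prefactors. Writing $F(\tau) = \sum_n c(n)q^n$ and expanding $\Theta_K$ as a Fourier series in $x$, the $x$-integration picks out the diagonal terms $2n + \lambda^2 = 0$ for $\lambda \in K$, and the remaining $y$-integral can be evaluated by a standard computation involving the incomplete gamma function (or directly as a Bessel-type integral).

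The main obstacle will be the regularization of the divergent pieces and the bookkeeping of the singular terms. Specifically, each singular contribution takes the form $\log|1 - e((\lambda,Z))|$ arising from the Hecke-type identity
\[
-\tfrac{1}{2}\log|1 - e(u)|^2 = \re \sum_{n \geq 1} \tfrac{1}{n} e(nu),
\]
applied term by term in the $\lambda$-sum; matching these with the $y$-integrals produces the infinite product $\prod_{(\lambda,W)>0}(1 - e((\lambda,Z)))^{c(q(\lambda))}$, where the condition $(\lambda,W)>0$ corresponds to choosing a positive cone (Weyl chamber) in $K$ from each pair $\pm\lambda$. The constant term $c(0)$ of $F$ contributes the factor $e((Z,\rho))$ via the Weyl vector $\rho$, obtained from regularizing the zero-mode contribution separately using the standard formula for $\sum_{\lambda > 0} (\lambda,Z)$.

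Finally, to conclude that $\Psi_Z(Z,F)$ defined via $\log|\Psi_Z| = -\Phi_M/4 - \tfrac{1}{2}(\log|Y_M| + \Gamma'(1)/2 + \log\sqrt{2\pi})$ is a meromorphic automorphic form of the claimed weight on $P$, I would argue as follows. Automorphy with respect to $O(M,\Z)$ is built into the construction since $\Theta_M$, $F$, and the measure $y^{-1}dx\,dy$ are all $SL_2(\Z)$-invariant and the integrand transforms equivariantly under $O(M,\Z)$; the identification $\Psi_Z \leftrightarrow \Psi_M$ via $Z \mapsto (Z,1,-q(Z))$ transfers this to $P$. The weight follows from tracking the $(c\tau+d)$-factors in the modular transformations of $\Theta_M$ and $F$, together with the normalization $\log|Y_M|$ which accounts for the Petersson-style correction. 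Meromorphicity and the exact location of zeros/poles follow from the product expansion, which makes the divisor of $\Psi_Z$ manifest and shows that the logarithmic singularities of $\Phi_M$ are precisely cancelled by $\log|\Psi_Z|$.
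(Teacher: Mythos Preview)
Your overall strategy matches the paper's proof closely: Poisson summation in the hyperbolic direction to rewrite $\overline{\Theta}_M$ as a sum over $(c,d)\in\Z^2$ of shifted $\Theta_K$'s, unfolding the $(c,d)\neq(0,0)$ terms against $\Gamma_\infty\backslash SL_2(\Z)$, then integrating in $x$ to pick out Fourier coefficients and in $y$ to produce the logarithmic terms $\log|1-e((\lambda,Z))|$. That is exactly what the paper does in Theorems~\ref{thetaK} and~\ref{maintheorem} and in Section~8.5.

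Where your proposal has a real gap is in the treatment of the $(c,d)=(0,0)$ contribution. You describe it as ``a boundary term that is handled separately'' and later suggest the Weyl vector $\rho$ arises from ``regularizing the zero-mode contribution separately using the standard formula for $\sum_{\lambda>0}(\lambda,Z)$.'' In the paper this term is not a boundary term at all: it is $\tfrac{1}{\sqrt{2z_v^2}}\Phi_K(w,F)$, a full regularized theta integral for the Lorentzian lattice $K$, and computing it requires a \emph{second} reduction $K=L\oplus II_{1,1}$ to the definite sublattice $L$ (Theorem~\ref{lorentziantheorem}). Only after that second unfolding, together with the Bernoulli identity $\sum_{n\neq0}e(n(\lambda,\mu))/n^2 = 2\pi^2((\lambda,\mu)^2-(\lambda,\mu)+1/6)$ and a Wall-Crossing argument to kill the nonlinear terms, does one obtain $\Phi_K(w,F)=8\sqrt{2}\pi(\overline{w},\rho)$ with the explicit Weyl vector. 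In particular $\rho$ is not produced by $c(0)$ alone; its components involve all the coefficients $c(\lambda^2/2)$ for $\lambda\in L$ and the constant term of $\overline{\Theta}_L F E_2/24$. Without this second reduction you cannot identify the linear piece $8\pi(Y,\rho)$ in $\Phi_M$, and hence cannot extract the prefactor $e((Z,\rho))$ in the product expansion.
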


The proof of this theorem in \cite{borcherds3} is done in a more general setting, which introduces a lot of complexity.  However, in our setting much of the complexity disappears and the lemmas and computations become easier to follow.  The remainder of this section is dedicated to the proof of this theorem in this simplified setting, which may be skipped, as well as important examples.

\subsection{Reduction to smaller lattice}

We first state an important theorem about theta functions.
\begin{theorem} \label{thetatransform} \cite{borcherds3}, Theorem 4.1
Let $Q$ be any lattice of signature $(b^+, b^-)$ and $u$ a $b^+$ dimensional positive definite subspace of $N$.  Then
\begin{align*}
& \Theta_Q((a \tau + b) / (c \tau + d),u; a \alpha + b \beta, c \alpha + d \beta) \\
&= (c \tau + d)^{b^+/2} (c \overline{\tau} + d)^{b^-/2} \Theta_Q(\tau,u; \alpha, \beta).
\end{align*}
\end{theorem}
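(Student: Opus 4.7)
The plan is to reduce the transformation law to checking it on the two generators $T = \left(\begin{smallmatrix} 1 & 1 \\ 0 & 1 \end{smallmatrix}\right)$ and $S = \left(\begin{smallmatrix} 0 & -1 \\ 1 & 0 \end{smallmatrix}\right)$ of $SL_2(\Z)$. Any element of $SL_2(\Z)$ factors as a word in $S$ and $T$, and the prefactor $(c\tau+d)^{b^+/2}(c\overline{\tau}+d)^{b^-/2}$ satisfies a cocycle identity under composition, so invariance under the two generators implies the full transformation law.

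For $T$, the action is $\tau \mapsto \tau+1$, $(\alpha,\beta) \mapsto (\alpha+\beta,\beta)$, with trivial prefactor. Substituting into the definition of $\Theta_Q(\tau, u; r, t)$, each summand over $\lambda \in Q$ acquires an extra factor
\[ e\bigl(q((\lambda+\beta)_u) + q((\lambda+\beta)_{u^\perp}) - (\lambda+\beta/2,\beta)\bigr) = e\bigl(q(\lambda+\beta) - (\lambda,\beta) - q(\beta)\bigr) = e(q(\lambda)). \]
Because $Q$ is taken to be even (as in all cases of interest in the excerpt), $q(\lambda) \in \Z$, so this factor is identically $1$ and $T$-invariance follows immediately.

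For $S$ the heart of the argument is Poisson summation. I would rewrite the left-hand side $\Theta_Q(-1/\tau, u; -\beta, \alpha)$ as a sum over $\lambda \in Q$ of a shifted Gaussian-type Schwartz function on the Euclidean space $Q \otimes \R$, decomposed along $Q \otimes \R = u \oplus u^\perp$ where $q$ is positive definite on $u$ and negative definite on $u^\perp$. After completing the square in the exponent to separate the Gaussian part from the character coming from the shift, the Fourier transform factors as a product of Gaussian integrals on $u$ and $u^\perp$. The $u$-integral contributes $\tau^{b^+/2}$ and the $u^\perp$-integral contributes $\overline{\tau}^{b^-/2}$, producing exactly the required prefactor. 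Applying Poisson summation (and using self-duality of $Q$, so that the sum over $Q^*$ re-indexes to a sum over $Q$) reassembles the remaining factors into $\Theta_Q(\tau, u; \alpha, \beta)$, up to the substitution $\mu \mapsto -\mu$ that realizes the sign flip $\alpha \mapsto -\beta$, $\beta \mapsto \alpha$.

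The main obstacle I anticipate is the consistent choice of branches for the fractional powers $\tau^{b^+/2}$ and $\overline{\tau}^{b^-/2}$ (and their $SL_2(\Z)$-generalizations) when $b^+$ or $b^-$ is odd. Tracking these correctly requires working on the metaplectic double cover of $SL_2(\R)$ and verifying that the branch produced by the Gaussian integral computation agrees with the branch implicit in the stated transformation law. A secondary bookkeeping task, routine but error-prone, is correctly completing the square so that the linear character $e(-(\lambda+t/2, r))$ transforms to the corresponding character on the dual side under the Fourier transform.
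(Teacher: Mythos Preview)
Your proposal is correct and follows essentially the same route as the paper: reduce to the generators $T$ and $S$, dispose of $T$ directly, and handle $S$ via Poisson summation applied to the Gaussian summand split along $u \oplus u^\perp$. The paper's proof is terser (it simply asserts the Fourier transform of the summand and invokes Poisson summation), while you have been more explicit about the role of evenness for the $T$-step and self-duality for the Poisson step; both hypotheses are tacitly in force in the paper's setting, so your caution is well placed but not a divergence in method.
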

\begin{proof}
It is enough to check this relation for the two generators $T,S$ of $SL_2(\Z)$, but it is trivial for $T$.  Therefore, we prove that
\begin{align*}
\Theta_Q(-1 / \tau,u; - \beta, \alpha) = \sqrt{\tau / i}^{b^+} \sqrt{i \overline{\tau}}^{b^-} \Theta_Q(\tau,u; \alpha, \beta)
\end{align*}
The Fourier transform of
$$\sqrt{\tau / i}^{b^+} \sqrt{i \overline{\tau}}^{b^-} e( (-1 / \tau) q((\lambda + \alpha)_{u}) + (-1 / \overline{\tau}) q((\lambda + \alpha)_{u^\perp}) - (\lambda + \alpha/2, \beta))$$ is
$$e( \tau q((\lambda + \beta)_{u}) + \overline{\tau} q((\lambda + \beta)_{u^\perp}) - (x + \beta/2, \alpha))$$
and the proof follows by the Poisson summation formula.
\end{proof}

As stated before, $\Phi$ must be regularized.  We achieve this in the following way.  Let
$$F_{\infty} = \{ \tau \in C | |\tau| \geq 1, |\re \tau | \leq 1/2 \}$$
be the usual fundamental domain for $SL_2(\Z)$.   Then define
$$F_{w} = \{ \tau \in F_{\infty}, \im(\tau) \leq w \}.$$
It is clear that $\lim\limits_{w \to \infty} F_w = F_{\infty}$.  Then let $s$ be an arbitrary complex number and we consider
$$\lim\limits_{w \to \infty} \int\limits_{F_w} \overline{\Theta}_M(\tau,v) F(\tau) dx dy / y^{1 + s}$$
as a function of $s$, which can be analytically continued to a function for all complex numbers $s$.  Then we define $\Phi_M(v,F)$ to be the constant term of the Laurent expansion of this analytic continuation at $s = 0$.

Similarly, we also define $\Phi_K(v,F)$ to be the constant term of the Laurent expansion of the analytic continuation at $s= 0$ of
$$\lim\limits_{w \to \infty} \int\limits_{F_w} \overline{\Theta}_K(\tau,v) F(\tau) dx dy / y^{3/2 + s}.$$

Because we are integrating over a fundamental domain, we should check that the integral does not depend on the choice of fundamental domain.  We verify this for $\Phi_M$ and the proof for $\Phi_K$ is similar.

\begin{proposition}
Suppose $\gamma \in \Gamma$.  Then
$$\overline{\Theta}_M (\gamma \tau,v) F(\gamma \tau) \Im(\gamma \tau) = \overline{\Theta}_M (\tau,v) F(\tau) y $$
In particular,
$$\int\limits_{F_{\infty}} \overline{\Theta}_K (\gamma \tau,v) F(\gamma \tau) \Im(\gamma \tau) dx dy/y^2 = \int\limits_{F_\infty} \overline{\Theta}_M(\tau,v) F(\tau) dx dy / y$$
\end{proposition}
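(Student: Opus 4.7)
The plan is to prove the pointwise identity first, and then observe that the integral statement is either an immediate consequence (read as in the displayed formula) or follows at once by changing variables on a translated fundamental domain.

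For the pointwise identity, let $\gamma = \begin{bmatrix} a & b \\ c & d\end{bmatrix} \in SL_2(\Z)$. The lattice $M = II_{1,25} \oplus II_{1,1}$ has signature $(2,26)$, so $b^+ = 2$, $b^- = 26$, and $F$ is modular of weight $1 - b^-/2 = -12$. Three facts must be assembled: (i) Theorem \ref{thetatransform} applied with $\alpha = \beta = 0$ and $Q = M$, which yields
\[
\Theta_M(\gamma\tau, v) = (c\tau + d)(c\overline{\tau} + d)^{13}\,\Theta_M(\tau, v),
\]
and hence, after complex conjugation,
\[
\overline{\Theta}_M(\gamma\tau, v) = (c\overline{\tau} + d)(c\tau + d)^{13}\,\overline{\Theta}_M(\tau, v);
\]
(ii) the modularity of $F$, which gives $F(\gamma\tau) = (c\tau + d)^{-12} F(\tau)$; and (iii) the standard identity $\Im(\gamma\tau) = y\,|c\tau + d|^{-2} = y\,(c\tau + d)^{-1}(c\overline{\tau} + d)^{-1}$.

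Multiplying these three relations together, every factor of $(c\tau+d)$ and $(c\overline{\tau}+d)$ cancels: the power of $(c\tau+d)$ is $13 - 12 - 1 = 0$ and the power of $(c\overline{\tau}+d)$ is $1 - 1 = 0$. This leaves exactly
\[
\overline{\Theta}_M(\gamma\tau, v)\,F(\gamma\tau)\,\Im(\gamma\tau) = \overline{\Theta}_M(\tau, v)\,F(\tau)\,y,
\]
which is the claimed pointwise identity. The main (very small) obstacle is simply keeping the bookkeeping of weights straight: one must correctly identify the weight of $F$ as $1 - b^-/2 = -12$ and the $b^\pm/2$ exponents in Theorem \ref{thetatransform}, so that everything cancels. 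No hard analysis is involved.

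For the integral statement, there are two ways to read it. If the integrand on the left is taken literally as written, then integrating the pointwise identity over $F_\infty$ against $dx\,dy/y^2$ gives it immediately, since $\Im(\gamma\tau)/y^2 \cdot$ (everything else) on the left becomes $y/y^2 = 1/y$ on the right after applying the identity. If instead the intended reading is that the integral over any translate $\gamma F_\infty$ agrees with the integral over $F_\infty$ (which is the stated motivation—independence of fundamental domain), then one changes variables $\tau \mapsto \gamma\tau$ on $\gamma F_\infty$, using that $dx\,dy/y^2$ is the $SL_2(\R)$-invariant measure on $\He$ and that $dx\,dy/y = y \cdot dx\,dy/y^2$, and then invokes the pointwise identity. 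Either reading reduces to the first part of the proposition with no further work.
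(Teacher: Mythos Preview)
Your proof is correct and follows essentially the same route as the paper: assemble the transformation law for $\overline{\Theta}_M$ from Theorem~\ref{thetatransform}, the weight $-12$ modularity of $F$, and the formula $\Im(\gamma\tau)=y/|c\tau+d|^2$, then check that all automorphy factors cancel. The paper dispatches the integral consequence in one line via the invariance of the hyperbolic measure $dx\,dy/y^2$, which is exactly the change-of-variables argument you sketch.
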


\begin{proof}
One could check independently for $T,S \in SL_2(\Z)$ or directly for an arbitrary matrix $\gamma = \begin{bmatrix}
a & b \\ c & d
\end{bmatrix} \in SL_2(\Z)$.  We do the latter.   We know that $$F(\gamma \tau) = (c \tau + d)^{-12} F(\tau)$$ and that $\Im(\gamma \tau) = \frac{y}{|c \tau + d|^2}$ and we also know how $\Theta_M$ transforms by Theorem \ref{thetatransform}.  Therefore, we obtain
\begin{align*}
&\overline{\Theta}_M (\gamma \tau,v) F(\gamma \tau) \Im(\gamma \tau)   \\
&= (c \overline{\tau} + d) (c \tau + d)^{13} \overline{\Theta}_M (\tau,v) (c \tau + d)^{-12} F(\tau) y / |c \tau + d|^2
\\ &= \overline{\Theta}_M (\tau,v) F(\tau) y
\end{align*}
and the second statement follows from a standard fact on hyperbolic measure.
\end{proof}

We next proceed with important lemmas and theorems.

\begin{lemma} (\cite{Bruinier}, Lemma 2.3, \cite{borcherds3}, Lemma 5.1)
\begin{align*}
\Theta_M (\tau, v) &= \frac{1}{\sqrt{2 y z^2_v}} \sum\limits_{\lambda \in M / z} \sum\limits_{d \in \Z} e(\tau q(\lambda_w) + \overline{\tau} q(\lambda_{w^\perp})) \cdot \\ &\cdot e \left( - \frac{|(\lambda,z) \tau + d|^2}{4 i y z^2_v} - \frac{d(\lambda,z_v - z_{v^\perp})}{2 z^2_v} \right)
\end{align*}
\end{lemma}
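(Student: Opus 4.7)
The plan is to perform Poisson summation along the isotropic direction $z$. I would begin by choosing, for each class $[\mu] \in M/\Z z$, a representative $\mu$ and writing elements of $M$ as $\mu + nz$ for $n \in \Z$. Since $z \in \R z_v \oplus \R z_{v^\perp}$, its projections onto $w$ and $w^\perp$ vanish, so shifting $\mu$ by $nz$ leaves the components $\mu_w$ and $\mu_{w^\perp}$ unchanged while adding $n z_v$ to $\mu_v$ and $n z_{v^\perp}$ to $\mu_{v^\perp}$. Expanding $q((\mu+nz)_v)$ and $q((\mu+nz)_{v^\perp})$ as quadratic polynomials in $n$ and using $z_v^2 + z_{v^\perp}^2 = z^2 = 0$, the $n^2$-coefficient of $\tau q((\mu+nz)_v) + \overline\tau q((\mu+nz)_{v^\perp})$ collapses to $\tfrac{1}{2} z_v^2 (\tau - \overline\tau) = i y z_v^2$, which gives Gaussian decay in $n$ and so justifies Poisson summation. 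The linear-in-$n$ contribution assembles into $n [\tau(\mu, z_v) + \overline\tau(\mu, z_{v^\perp})]$, which one may then rewrite via $(\mu, z) = (\mu, z_v) + (\mu, z_{v^\perp})$.

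Next, I would apply the standard identity
$$\sum_{n \in \Z} e^{-\pi A n^2 + 2\pi i n B} \;=\; A^{-1/2} \sum_{d \in \Z} e^{-\pi (d-B)^2/A}$$
to the sum over $n$ with $A = 2 y z_v^2$ and $B = \tau(\mu, z_v) + \overline\tau(\mu, z_{v^\perp})$. This produces the universal prefactor $1/\sqrt{2 y z_v^2}$, converts the $n$-sum into a $d$-sum, and leaves the contribution $e(\tau q(\mu_w) + \overline\tau q(\mu_{w^\perp}))$ (which is invariant under $n \mapsto n+1$) untouched, explaining the appearance of $q(\lambda_w)$ and $q(\lambda_{w^\perp})$ in the statement.

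The final step is purely algebraic: one must check that the Poisson-dual exponent $-\pi (d - B)^2/(2 y z_v^2)$ coincides, after inserting the $2\pi i$ from the character $e(\cdot)$, with the two-term expression
$$-\,\frac{|(\mu,z)\tau + d|^2}{4 i y z_v^2} \;-\; \frac{d(\mu, z_v - z_{v^\perp})}{2 z_v^2}.$$
The main obstacle is bookkeeping: writing $\tau = x + iy$, expanding $(d-B)^2$, and then organizing the mixed real/imaginary cross-terms so that the modulus $|(\mu,z)\tau + d|^2$ (which involves both $(\mu, z_v + z_{v^\perp}) = (\mu,z)$ and the product with $\tau$) appears cleanly, with the remainder reducing to the explicit linear correction $d(\mu, z_v - z_{v^\perp})/(2 z_v^2)$. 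The identity $z_v^2 = -z_{v^\perp}^2$ is used repeatedly to collapse what initially look like two independent $(\mu, z_v)$- and $(\mu, z_{v^\perp})$-contributions into the single combined term $(\mu, z)\tau$ inside the absolute value, plus the real difference $(\mu, z_v - z_{v^\perp})$ on the outside. Once this algebraic identity is verified, the formula in the lemma follows directly.
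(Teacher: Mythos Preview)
Your proposal is correct and follows essentially the same route as the paper: both split the lattice sum as $M = \bigcup_{\lambda \in M/z}(\lambda + \Z z)$, observe that the summand is a Gaussian in the $\Z z$-variable (thanks to $z_v^2 = -z_{v^\perp}^2$), and apply Poisson summation to that inner sum. The paper simply states ``compute the Fourier transform of the Gaussian functions'' where you have spelled out the explicit $A,B$ in the Poisson identity and the subsequent bookkeeping, but the argument is the same.
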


\begin{proof}
Write 
$$g(\lambda,v;d) = e(\tau q((\lambda + dz)_v) + \overline{\tau}q((\lambda+dz)_{v^\perp}))$$
Then
$$\Theta_M(\tau,v) = \sum\limits_{\lambda \in M/z} \sum\limits_{d \in \Z} g(\lambda,v;d)$$
We apply the Poisson summation formula to obtain
$$\Theta_M(\tau,v) = \sum\limits_{\lambda \in M/z} \sum\limits_{d \in \Z} \hat{g}(\lambda,v;d).$$

So it is enough to compute the fourier transform of the functions $g(\lambda,v;d)$.  Since they are very close to Gaussian functions, this is straightforward and results in
$$\hat{g}(\lambda,v;d) = \frac{1}{\sqrt{2 y z_v^2}} e \left( \tau q(\lambda_w) + \overline{\tau} q(\lambda_{w^\perp}) - \frac{d(\lambda,z_v - z_{v^\perp})}{2 z_v^2} - \frac{|d + (\lambda,z) \tau|^2}{4 i y z_v^2} \right)$$
Which proves the result once we substitute $\hat{g}$ back in.
\end{proof}

We use this lemma to prove the following crucial theorem :

\begin{theorem} (\cite{Bruinier}, Theorem 2.4, \cite{borcherds3}, Theorem 5.2)\label{thetaK}
$$\Theta_M(\tau,v) = \frac{1}{\sqrt{2 y z_v^2}} \sum\limits_{c,d \in \Z} e \left( - \frac{|c \tau + d|^2}{4 i y z^2_v} \right) \Theta_K (\tau,w;d \mu, - c \mu)                                                                                                                                                                                                                                                                                                                                                                                                                                                                                                                                                                                                                                                                                                                                                                                                                                                                                                                                                                                                                                                                                                                                                                                                                                                                                                                                                                                                                                                                                                                                                  $$
\end{theorem}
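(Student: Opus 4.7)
The plan is to derive this identity as a direct consequence of the previous Lemma by parametrising the sum over $M/z$ via the orthogonal decomposition $M = K \oplus II_{1,1}$ with $II_{1,1} = \Z z \oplus \Z z'$. First I would use the decomposition to write any coset representative uniquely as $\lambda = \lambda_K + c z'$ with $\lambda_K \in K$ and $c = (\lambda,z) \in \Z$. The outer sum in the previous Lemma then becomes a double sum $\sum_{c \in \Z} \sum_{\lambda_K \in K}$, and the factor $e(-|(\lambda,z)\tau + d|^2/(4iyz_v^2))$ pulls out of the inner sum since it depends only on $c$. This gives a presentation
\begin{equation*}
\Theta_M(\tau,v) = \frac{1}{\sqrt{2 y z_v^2}} \sum_{c,d \in \Z} e\!\left(-\frac{|c\tau+d|^2}{4iy z_v^2}\right) S_{c,d}(\tau,w),
\end{equation*}
where $S_{c,d}(\tau,w)$ is the inner sum over $\lambda_K \in K$ of the remaining exponential. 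The goal is then to identify $S_{c,d}(\tau,w) = \Theta_K(\tau,w; d\mu, -c\mu)$.

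Next I would expand the definition of $\Theta_K(\tau,w;d\mu,-c\mu)$, which is
\begin{equation*}
\sum_{\lambda_K \in K} e\!\left(\tau\, q((\lambda_K - c\mu)_w) + \overline{\tau}\, q((\lambda_K - c\mu)_{w^\perp}) - (\lambda_K - c\mu/2,\, d\mu)\right),
\end{equation*}
and compare term by term with $S_{c,d}$. The key input is that $\mu = -z' + z_v/(2z_v^2) + z_{v^\perp}/(2z_{v^\perp}^2)$ was engineered precisely to make $\lambda - c\mu$ (viewed inside $M\otimes\R$) have the same $w$- and $w^\perp$-projections as $\lambda_K$ itself, i.e.\ $(\lambda)_w = (\lambda_K - c\mu)_w$ and $(\lambda)_{w^\perp} = (\lambda_K - c\mu)_{w^\perp}$. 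This follows from $(\mu,z) = 0$ together with the fact that the $z_v$- and $z_{v^\perp}$-corrections in $\mu$ cancel the $z_v/z_v^2$- and $z_{v^\perp}/z_{v^\perp}^2$-components produced by $cz'$ when one projects onto $w \subset v \cap z_v^\perp$ and $w^\perp \subset v^\perp \cap z_{v^\perp}^\perp$. I would verify these two projection identities by a direct orthogonal decomposition of $z'$ against the splitting $M\otimes\R = w \oplus \R z_v \oplus w^\perp \oplus \R z_{v^\perp}$.

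Then it remains to match the linear exponent $-d(\lambda, z_v - z_{v^\perp})/(2z_v^2)$ appearing in the previous Lemma with $-(\lambda_K - c\mu/2, d\mu)$ coming from $\Theta_K$. Expanding the latter using the definition of $\mu$ and using that $(\lambda_K, z') = 0$ (since $\lambda_K \in K$), together with $z_v^2 = -z_{v^\perp}^2$ (which holds because $z$ is isotropic), reduces this to an identity in the $z_v, z_{v^\perp}$-components of $\lambda$; the constant term $c\mu/2$ accounts for the quadratic-in-$c$ piece that is absorbed into the $|c\tau+d|^2$ prefactor after separating real and imaginary parts. This linear-exponent bookkeeping is where I expect the main obstacle to lie, since it requires tracking several cancellations simultaneously; the natural way to organise it is to split $d = \tfrac{1}{2}((d-c\tau) + (d+c\tau)) + $ appropriate $\tau, \overline{\tau}$ pieces so that $\tau q(\cdot)_w + \overline{\tau} q(\cdot)_{w^\perp}$ absorbs everything cleanly. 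Once the quadratic and linear exponents are matched, the identity $S_{c,d} = \Theta_K(\tau,w;d\mu,-c\mu)$ is established and the theorem follows.
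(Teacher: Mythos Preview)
Your proposal is correct and follows essentially the same approach as the paper: use the previous Lemma, parametrise $M/z$ as $\lambda_K + cz'$, identify the quadratic exponents via $(\lambda_K + cz')_w = (\lambda_K - c\mu)_w$ and $(\lambda_K + cz')_{w^\perp} = (\lambda_K - c\mu)_{w^\perp}$, and then match the linear exponent. The only remark is that you anticipate more difficulty in the linear-exponent step than actually arises: since the projection identity already handles the quadratic parts completely, there is nothing to ``absorb into $\tau q(\cdot)_w + \overline{\tau} q(\cdot)_{w^\perp}$'' and no need to split $d$ into $\tau,\overline{\tau}$ pieces; the paper simply verifies $-d(\lambda_K + cz', z_v - z_{v^\perp})/(2z_v^2) = -(\lambda_K - c\mu/2, d\mu)$ by a direct expansion using $z_v^2 = -z_{v^\perp}^2$ and $(\lambda_K, z') = 0$.
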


\begin{proof}
Recall that
$$\mu = -z' + \frac{z_v}{2 z_v^2} + \frac{z_v^\perp}{2 z_{v^\perp}^2}.$$
For $\lambda \in M/z$, write $\lambda = \lambda_K + c z'$ for $c \in \Z$.  We can rewrite
\begin{align*}
& \Theta_M(\tau,v) = \frac{1}{\sqrt{2 y z^2_v}} \sum\limits_{\lambda \in M / z} \sum\limits_{d \in \Z} e(\tau q((\lambda_K+cz')_w) + \overline{\tau} q((\lambda_K+cz')_{w^\perp})) \times \\
& e \left( - \frac{|(\lambda_K + cz',z) \tau + d|^2}{4 i y z^2_v} - \frac{d(\lambda_K + cz',z_v - z_{v^\perp})}{2 z^2_v} \right).
\end{align*}
Because $(\lambda_K - c \mu)_w = (\lambda_K + cz')_w$ and $(\lambda_K - c \mu)_{w^\perp} = (\lambda_K + cz')_{w^\perp}$, it is enough to check that
$$- \frac{|(\lambda_K + cz',z) \tau + d|^2}{4 i y z^2_v} - \frac{d(\lambda_K + cz',z_v - z_{v^\perp})}{2 z^2_v} = - \frac{|c \tau + d|^2}{4 i y z^2_v} - (\lambda_K - c \mu/2, d \mu)$$
We can easily see that the first term on each sides are equal, because $(\lambda_K,z) = 0$ and $(z',z) = 1$.  So it remains to check that
$$- \frac{d (\lambda_K + cz', z_v - z_{v^\perp})}{2 z_v^2} = - (\lambda_K - c \mu/2, d \mu)$$
which is a straightforward check from the definition of $\mu$ and  the fact that $z_v^2 = - z_{v^\perp}^2$, i.e.
\begin{align*}
&- (\lambda_K - c \mu/2, d \mu) \\ &= -d (\lambda_K +cz'/2 - c (z_v/4 z_v^2 + z_{v^\perp}/ 4 z_{v^\perp}^2), - z' + z_v/2 z_v^2 + z_{v^\perp} / 2 z_{v^\perp}^2)
\\ &=  -d (\lambda_K + cz'/2, z_v/2 z_v^2 + z_{v^\perp} / 2 z_{v^\perp}^2 + cd/2 (z_v/2 z_v^2 + z_{v^\perp} / 2 z_{v^\perp}^2, -z')
\\ &= -d (\lambda_K + cz'/2, z_v/2 z_v^2 + z_{v^\perp} / 2 z_{v^\perp}^2 - d (c/2 z',z_v/2 z_v^2 + z_{v^\perp} / 2 z_{v^\perp}^2)
\\ &= - \frac{d (\lambda_K + cz', z_v - z_{v^\perp})}{2 z_v^2}
\end{align*}
as desired.
\end{proof}

\begin{lemma} \label{unfoldinglemma} With the notation established in this section,
\begin{align*}
&\frac{1}{\sqrt{2 z_v^2}} \int_{F_{\infty}} \sum\limits_{\substack{c,d \in \Z \\ |c| + |d| \neq 0}} F(\tau) e \left( - \frac{|c \tau + d|^2}{4 i y z_v^2} \right) \overline{\Theta}_K(\tau,w; d\mu, - c\mu) \frac{dx dy}{y^{3/2 + s}}\\
&= 
\frac{\sqrt{2}}{|z_v|} \int\limits_0^\infty \int\limits_0^1 \sum\limits_{n > 0} \exp \left( \frac{- \pi n^2}{2 z^2_v y} \right) F(\tau) \overline{\Theta}_K (\tau, w; - n \mu, 0) \frac{dx dy}{y^{3/2+s}}
\end{align*}
\end{lemma}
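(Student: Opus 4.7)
The plan is to perform a standard unfolding argument, trading the fundamental domain $F_\infty$ for the strip $\{0 < x < 1,\, v > 0\}$, which is a fundamental domain for the stabilizer $\Gamma_\infty = \langle T : \tau \mapsto \tau + 1\rangle \subset PSL_2(\Z)$. A preliminary observation, obtained by substituting $\lambda \to -\lambda$ in the defining sum for $\Theta_K$, is that $\overline{\Theta}_K(\tau, w; d\mu, -c\mu) = \overline{\Theta}_K(\tau, w; -d\mu, c\mu)$, so the whole integrand is invariant under $(c,d)\mapsto(-c,-d)$. This produces the factor $2$ relating the coefficients $1/\sqrt{2 z_v^2}$ and $\sqrt 2/|z_v|$ once nonzero pairs are grouped as $(c,d) = n(c_0, d_0)$ with $n \geq 1$ and $(c_0, d_0)$ primitive up to sign; such pairs are parametrized by $\Gamma_\infty\backslash PSL_2(\Z)$ via the bottom row of a coset representative $\sigma = \bigl(\begin{smallmatrix} a & b \\ c_0 & d_0\end{smallmatrix}\bigr)$.

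The key computation is to combine Theorem \ref{thetatransform} applied to $\Theta_K$ (of signature $(1,25)$, producing factors $(c_0\tau + d_0)^{1/2}$ and $(c_0\bar\tau + d_0)^{25/2}$) with the modular transformation of the weight $-12$ form $F = \eta^{-24}$ to derive
$$F(\tau)\,\overline{\Theta}_K(\tau, w; nd_0\mu, -nc_0\mu) = |c_0\tau + d_0|^{-1}\, F(\sigma\tau)\,\overline{\Theta}_K(\sigma\tau, w; n\mu, 0),$$
where the exponents $12 - 25/2 = -1/2$ in $(c_0\tau + d_0)$ and $-1/2$ in $(c_0\bar\tau + d_0)$ assemble into the modulus $|c_0\tau+d_0|^{-1}$. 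Simultaneously $|c\tau + d|^2 = n^2\, y/\im(\sigma\tau)$, so the Gaussian $\exp(-\pi|c\tau+d|^2/(2yz_v^2))$ turns into $\exp(-\pi n^2/(2\,\im(\sigma\tau)\, z_v^2))$. In summary, the summand at $(nc_0, nd_0)$ equals $|c_0\tau+d_0|^{-1}$ times the summand at $(0,n)$ evaluated at $\sigma\tau$.

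Finally I would change variables $u = \sigma\tau$ in each term. The Jacobian $dx\,dy = |c_0\tau+d_0|^{-4}\,dx'\,dv$ together with $y = v\,|c_0\tau+d_0|^2$ transforms the measure $|c_0\tau+d_0|^{-1}\,dx\,dy/y^{3/2+s}$ into $dx'\,dv/v^{3/2+s}$ at $s = 0$, modulo a factor $|a - c_0 u|^{2s}$ which vanishes there and is controlled by the analytic continuation in $s$ used to regularize $\Phi_M$. As $\sigma$ runs over coset representatives the images $\sigma F_\infty$ tile $\Gamma_\infty\backslash \He$, so summing the integrals produces the single integral over the strip on the right-hand side, with $\overline{\Theta}_K(u, w; n\mu, 0) = \overline{\Theta}_K(u, w; -n\mu, 0)$ again by the $\lambda \to -\lambda$ symmetry. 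The principal obstacle is the bookkeeping of modular weights and Jacobian factors together with the justification of termwise unfolding: the residual $s$-dependence coming from $|a - c_0 u|^{2s}$ is where the regularization scheme for $\Phi_M$ genuinely intervenes, and must be shown to disappear upon passing to the constant term at $s = 0$.
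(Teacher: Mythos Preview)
Your approach is exactly the paper's: group nonzero $(c,d)$ as $n$ times a primitive pair, identify primitive pairs with coset representatives for $\Gamma_\infty\backslash SL_2(\Z)$, use the transformation laws of $F$ and $\Theta_K$ (Theorem~\ref{thetatransform}) to recognize each summand as a function of $\sigma\tau$, and then unfold to the strip; your factor-of-$2$ bookkeeping via the symmetry $(c,d)\mapsto(-c,-d)$ makes explicit what the paper leaves implicit in passing from $1/\sqrt{2z_v^2}$ to $\sqrt{2}/|z_v|$. One minor slip: the Jacobian reads $dx'\,dv=|c_0\tau+d_0|^{-4}\,dx\,dy$, not the reverse, though your subsequent exponent count $-1+4-3-2s=-2s$ comes out right regardless. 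Your caution about the residual factor $|a-c_0u|^{2s}$ is well placed and honest: the paper glosses over exactly this point, simply writing the unfolded measure as $dx\,dy/y^{3/2+s}$ without comment; the identity as stated is really to be read after analytic continuation and passage to the constant term at $s=0$, which is how it is used in Theorem~\ref{maintheorem}.
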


\begin{proof}
\begin{align*}
&\frac{1}{\sqrt{2 z_v^2}} \int_{F_{\infty}} \sum\limits_{\substack{c,d \in \Z \\ |c| + |d| \neq 0}} F(\tau) e \left( - \frac{|c \tau + d|^2}{4 i y z_v^2} \right) \overline{\Theta}_K(\tau,w ; d \mu, - c \mu) \frac{dx dy}{y^{3/2 + s}}\\
&= \frac{1}{\sqrt{2 z_v^2}} \int_{F_{\infty}} \sum\limits_{(c,d) = 1 } \sum\limits_{n \geq 1} F(\tau) e \left( - \frac{n^2 |c \tau + d|^2}{4 i y z_v^2} \right) \overline{\Theta}_K(\tau,w, n d \mu, - n c \mu) \frac{dx dy}{y^{3/2 + s}} \\
&= \frac{1}{\sqrt{2 z_v^2}} \int_{F_{\infty}} \sum\limits_{A \in \Gamma_{\infty} \backslash \Gamma_1} \sum\limits_{n \geq 1} F(A \tau) e \left(\frac{- \pi n^2}{2 z_v^2 \Im( A \tau)} \right) \cdot \\ & \cdot \overline{\Theta}_K(A \tau,w; n \mu, 0) \Im (A \tau)^{1/2} \frac{dx dy}{y^{2+s}}
\end{align*}
The last equality follows from transformation relations of $\Im (\tau), \Theta_K$ and $F(\tau) = \eta^{-24}$.  We now use the so-called "unfolding trick", replacing the fundamental domain with the set $[0,1] \times [0,\infty)$ to obtain : 
\begin{align*}
\frac{\sqrt{2}}{|z_v|} \int\limits_0^\infty \int\limits_0^1 \sum\limits_{n > 0} \exp \left( \frac{- \pi n^2}{2 z^2_v y} \right) F(\tau) \overline{\Theta}_K (\tau, w; - n \mu, 0) \frac{dx dy}{y^{3/2+s}}
\end{align*}
which is the desired result.
\end{proof}
With the help of Theorem \ref{thetaK} we can now relate $\Phi_M$ to $\Phi_K$ :
\begin{theorem} (\cite{Bruinier}, Theorem 2.15, \cite{borcherds3}, Theorem 7.1) \label{maintheorem}
With the notation of this section, $\Phi_M(v,F)$ is given by the constant term of the Laurent expansion at $s=0$ of the analytic continuation of
\begin{align*}
&\frac{1}{\sqrt{2 z^2_{v}}} \Phi_K (w,F) + \\
& \frac{\sqrt{2}}{|z_{v}|} \sum\limits_{n > 0} \sum\limits_{\lambda \in K} e((n \lambda, \mu))  \times \int_{y > 0} c(\lambda^2 / 2) \exp(\pi n^2 / (2 y z_v^2) - 2 \pi y \lambda^2_w) y^{-s-3/2} dy
\end{align*}
\end{theorem}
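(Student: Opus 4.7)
The plan is to substitute the formula from Theorem \ref{thetaK} into the regularized integral defining $\Phi_M(v,F)$ and then split the resulting sum into the $(c,d)=(0,0)$ piece and its complement. Conjugating Theorem \ref{thetaK} is harmless because $-|c\tau+d|^2/(4iyz_v^2)$ is purely imaginary, so the exponential factor is real and invariant under conjugation; therefore
$$\overline{\Theta}_M(\tau,v) = \frac{1}{\sqrt{2yz_v^2}} \sum_{c,d \in \Z} e\!\left(-\frac{|c\tau+d|^2}{4iyz_v^2}\right) \overline{\Theta}_K(\tau,w;\, d\mu, -c\mu).$$
Plugging this into $\int_{F_\infty}\overline{\Theta}_M(\tau,v)F(\tau)y^{-1-s}\,dx\,dy$, the $(c,d)=(0,0)$ term contributes $\frac{1}{\sqrt{2z_v^2}}\int_{F_\infty}\overline{\Theta}_K(\tau,w)F(\tau)y^{-3/2-s}\,dx\,dy$, whose constant term at $s=0$ is exactly $\frac{1}{\sqrt{2z_v^2}}\Phi_K(w,F)$ by the definition of $\Phi_K$.

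For the remaining part, indexed by $(c,d)\neq(0,0)$, I would invoke Lemma \ref{unfoldinglemma}. This "unfolding trick" collapses the $(c,d)$-sum into a single sum over $n>0$ and replaces integration over $F_\infty$ with integration over the strip $[0,1]\times(0,\infty)$, yielding
$$\frac{\sqrt{2}}{|z_v|}\sum_{n>0}\int_0^\infty\!\!\int_0^1 \exp\!\left(-\frac{\pi n^2}{2z_v^2 y}\right) F(\tau)\overline{\Theta}_K(\tau,w;-n\mu,0)\,y^{-3/2-s}\,dx\,dy.$$

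Next I would expand both Fourier series, $F(\tau)=\sum_{m\ge -1}c(m)e(m\tau)$ and, writing $\tau=x+iy$, $\overline{\Theta}_K(\tau,w;-n\mu,0)=\sum_{\lambda\in K}e((n\lambda,\mu))e(-\overline\tau q(\lambda_w)-\tau q(\lambda_{w^\perp}))$, multiply, and perform the $x$-integral. The factor $\int_0^1 e^{2\pi i x(m-q(\lambda))}\,dx$ enforces $m=q(\lambda)=\lambda^2/2$ by orthogonality, so only diagonal terms survive. The remaining $y$-dependence in the exponent simplifies via the identity $q(\lambda_w)-q(\lambda_{w^\perp})+q(\lambda)=2q(\lambda_w)=\lambda_w^2$, collapsing everything to $\exp(-2\pi y\lambda_w^2)$. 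Recombining with the $\exp(-\pi n^2/(2z_v^2 y))$ factor and the measure $y^{-3/2-s}\,dy$ produces precisely the claimed integrand.

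The main obstacle is handling the regularization parameter $s$ correctly. Concretely, one must (a) justify the interchange of summations, Fourier expansions, and $x$-integration in the region $\operatorname{Re}(s)\gg 0$, where everything is absolutely convergent; (b) verify that each piece of the decomposition admits an independent meromorphic continuation to a neighbourhood of $s=0$; and (c) confirm that the Laurent constant terms at $s=0$ add as needed. This is the standard Rankin--Selberg regularization bookkeeping carried out in Borcherds' Section 6; the splitting of $(c,d)=(0,0)$ from the rest is essential because the diagonal piece is the one that carries the genuine singularity and must be left packaged as $\Phi_K(w,F)$, while the off-diagonal piece, after unfolding, yields absolutely convergent integrals for $\operatorname{Re}(s)$ sufficiently large and continues routinely.
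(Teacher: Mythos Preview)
Your proposal is correct and follows essentially the same route as the paper: substitute Theorem~\ref{thetaK} into the defining integral, peel off the $(c,d)=(0,0)$ term as $\frac{1}{\sqrt{2z_v^2}}\Phi_K(w,F)$, apply Lemma~\ref{unfoldinglemma} to the remainder, and then expand $F$ and $\overline{\Theta}_K$ in Fourier series so that the $x$-integral enforces $m=q(\lambda)$ and the $y$-exponent collapses to $-2\pi y\lambda_w^2$. Your added discussion of the regularization bookkeeping (convergence for $\operatorname{Re}(s)\gg 0$, separate analytic continuation of the two pieces) is more explicit than the paper's treatment, which simply points to the standard Rankin--Selberg machinery.
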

\begin{proof}
By Theorem \ref{thetaK},
$$\int\limits_{F_\infty} \overline{\Theta}(\tau;v) F(\tau) dx dx / y^{1 + s}$$
is equal to
\begin{align*}
\frac{1}{\sqrt{2 z_v^2}} \int_{F_{\infty}} \sum\limits_{c,d \in \Z} F(\tau) e \left( - \frac{|c \tau + d|^2}{4 i y z_v^2} \right) \overline{\Theta}_K(\tau,w, d \mu, - c \mu) \frac{dx dy}{y^{3/2 + s}}.
\end{align*}
The term when $c = d = 0$ corresponds exactly to $\frac{1}{\sqrt{2 z_v^2}} \Phi_K(w,F)$.  By Lemma \ref{unfoldinglemma} on the other terms we obtain
$$\frac{\sqrt{2}}{|z_v|} \int\limits_0^\infty \int\limits_0^1 \sum\limits_{n > 0} \exp \left( \frac{- \pi n^2}{2 z^2_v y} \right) F(\tau) \overline{\Theta}_K (\tau, w;n \mu, 0) \frac{dx dy}{y^{3/2+s}}$$

And the result of the theorem is obtained by inserting the definition of $F(\tau)$ and $\Theta_K (\tau, w)$ and carrying the integral over $dx$.  Indeed, recall that if $\tau = x + iy$,
\begin{align*}
\Theta_K (\tau, w ; n \mu, 0) &= \sum\limits_{\lambda \in K} e(\tau  q(\lambda_w) + \overline{\tau} q(\lambda_{w^\perp}) - (\lambda, n \mu) )
\\ &= \sum\limits_{\lambda \in K} e(iy q(\lambda_w) - iy q(\lambda_{w^\perp}) - (\lambda, n \mu) ) e(q(\lambda) x)
\end{align*}
and
$$F(\tau) = \sum\limits_{k \geq -1} c(k) e(k \tau) = \sum\limits_{k \geq -1} c(k) e(k x) e(ik y)$$
Inserting these sums into the expression and integrating over $x$ we see that the only term that survives is when $k = q(\lambda)$ (equality because we are inserting the complex conjugate $\overline{\Theta}_K$) which results after simplifications and term rearrangements in
\begin{align*}
& \frac{\sqrt{2}}{|z_{v}|} \sum\limits_{n > 0} \sum\limits_{\lambda \in K} e((n \lambda, \mu))  \times
\\ & \int_{y > 0} c(\lambda^2 / 2) \exp(- \pi n^2 / (2 y z_v^2) - 2 \pi y \lambda^2_w) y^{-s-3/2} dy
\end{align*}
as desired.  For example, one such simplification is $e(iy q(\lambda_w) - iy q(\lambda_{w^\perp})$ with $e(i k y)$ to result in $\exp(-2 \pi y \lambda^2_w)$.

\end{proof}

\subsection{Lorentzian lattices}

The previous section shows how to reduce the computations to a smaller lattice.  In our cases of interest, the theorem relates $\Phi_M$ to $\Phi_K$, and $K$ is a Lorentzian lattice.  However, it does not say anything about how to compute $\Phi_K$ itself.  This section is dedicated to this solving this problem.

The main theorem is :

\begin{theorem}
Suppose $K$ is Lorentzian.  Then there exists a vector $\rho(K,W,F)$ (which can be constructed) such that
$$\Phi_K(w,F) = 8 \sqrt{2} \pi (\overline{w},\rho(K,W,F))$$
where $\overline{w}$ is a unit vector representing $w$ and $W$ is a Weyl chamber (i.e. connected components of points over which $\Phi_K(w,F)$ is real analytic).
\end{theorem}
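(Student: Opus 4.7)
The plan is to mimic the reduction procedure used in Theorem \ref{maintheorem}, but now with $K$ (a Lorentzian lattice of signature $(1,b^-)$) playing the role of $M$. First I would pick an isotropic vector $z \in K$ together with $z' \in K$ satisfying $(z,z')=1$, and split $K = K' \oplus \Z z \oplus \Z z'$, where $K' = (z,z')^{\perp}$ is now a negative-definite lattice of rank $b^- - 1$. Following the template of Theorem \ref{thetaK}, I would apply Poisson summation in the $z$-direction to $\Theta_K(\tau,w)$, obtaining an expansion indexed by $(c,d) \in \Z^2$ with $\Theta_{K'}(\tau, \cdot\, ;\, d\mu_K, -c\mu_K)$ coefficients (where $\mu_K$ is the analogue of $\mu$ for $K$).

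Next, I would substitute this expansion into the regularized definition of $\Phi_K(w,F)$ and split the integral into the $(c,d) = (0,0)$ contribution and the rest, as in the proof of Theorem \ref{maintheorem}. Because $K'$ is negative definite, the theta series $\Theta_{K'}$ is an honest holomorphic theta function with only nonpositive exponents in the imaginary direction; this forces the integral producing $\Phi_{K'}$ in that term to reduce to an elementary Gamma-type integral and yields a constant (independent of $\overline{w}$) once we normalize $\overline{w}$ to be a unit vector. The remaining $(c,d) \neq (0,0)$ contributions can be handled by the standard unfolding trick from Lemma \ref{unfoldinglemma}: they reduce to a sum over $n > 0$ and $\lambda \in K'$ of explicit integrals in $y$ of the form $\int_0^\infty c(\lambda^2/2) \exp(-\pi n^2/(2yz_v^2) - 2\pi y \lambda_w^2)\, y^{-s-3/2}\, dy$, which can be evaluated as modified Bessel functions.

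At this point I would compute the gradient of $\Phi_K(w,F)$ with respect to $\overline{w}$. The key observation (which is the technical heart) is that each Bessel-function contribution, after differentiation in $\overline{w}$, collapses to a convergent sum indexed by lattice vectors $\lambda$ with $(\lambda, \overline{w})$ of a fixed sign and $\lambda^2$ in the support of the Fourier coefficients of $F$. On the complement of the union of the hyperplanes $\lambda^{\perp}$ (these are exactly the walls cutting $w$ into Weyl chambers, by definition of $W$), the gradient is therefore locally constant. Hence on each Weyl chamber $W$ there is a vector, call it $\rho(K,W,F)$, with $\nabla_{\overline{w}} \Phi_K(w,F) = 8\sqrt{2}\pi\, \rho(K,W,F)$. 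Since $\Phi_K(w,F)$ is homogeneous of degree $1$ in $\overline{w}$ (which can be read off directly from the definition of the theta series, using that $w$ is one-dimensional), Euler's relation gives $\Phi_K(w,F) = 8\sqrt{2}\pi(\overline{w}, \rho(K,W,F))$.

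The main obstacle I expect is the careful justification at the last step: controlling the regularization of the divergent integral at $s=0$ simultaneously with the term-by-term differentiation of the Bessel-function series, and verifying that the resulting sum defining $\rho(K,W,F)$ is indeed a well-defined (not merely formal) vector in $K \otimes \R$. This is where the positivity of $(\lambda, \overline{w})$ inside a fixed Weyl chamber is crucial: it produces the exponential decay that makes the $\lambda$-sum converge absolutely. The homogeneity argument itself, and the identification of walls with vanishing loci of the individual summands, are then relatively mechanical by comparison.
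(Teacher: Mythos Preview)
Your reduction setup (choose isotropic $z,z'\in K$, Poisson-sum, unfold) matches the paper exactly, and you correctly note that $\Phi_{K'}$ is constant because $K'$ is negative definite. But after that your analysis goes off track. In the $K\to K'$ reduction the positive part $u=w\cap z_w^\perp$ is \emph{zero}-dimensional, so $\lambda_u=0$ for every $\lambda\in K'$ and the integrals $\int_0^\infty \exp(-\pi n^2/(2yz_w^2))\,y^{-2-s}\,dy$ contain no $-2\pi y\lambda_w^2$ term at all. They are not Bessel integrals; they evaluate to $\Gamma$-functions times powers of $n^2/z_w^2$. This is crucial, because the resulting $n$-sum is then $\sum_{n\neq 0} e(n(\lambda,\mu))/n^2$, which the paper closes using the Bernoulli identity $\sum_{n\neq 0} e(nx)/n^2 = 2\pi^2(x^2-x+\tfrac16)$. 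That step is what turns the expression into a \emph{finite} sum (only finitely many $\lambda\in K'$ have $c(\lambda^2/2)\neq 0$) of explicit quadratic polynomials in $(\lambda,\mu)$.

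The second gap is in your linearity argument. Your homogeneity claim is not well posed: $\Phi_K(w,F)$ is defined on the Grassmannian of positive lines, not on a cone, so ``homogeneous of degree $1$ in $\overline{w}$'' has no obvious meaning, and Euler's relation does not apply. Even granting a constant gradient on each chamber, you would only get $\Phi_K=8\sqrt{2}\pi(\overline{w},\rho)+C$ with an undetermined constant $C$. The paper's route is different and is the real content of the proof: the explicit formula obtained above is visibly a rational function of $\overline{w}$ with denominator a power of $(z,\overline{w})$; redoing the reduction with $z$ and $z'$ swapped gives the same function with denominator a power of $(z',\overline{w})$. The Wall-Crossing formula (\cite{borcherds3}, Cor.~6.3) forbids $\Phi_K$ from having two distinct singular loci, forcing the apparent poles to cancel. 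What survives is linear in $\overline{w}$, and reading off the coefficients gives $\rho(K,W,F)$ explicitly. Your gradient/homogeneity strategy does not reproduce this cancellation mechanism and, as stated, would not close.
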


One needs to prove Theorem \ref{maintheorem} again for $\Phi_K$ to relate it to $\Phi_L$ where $L = N^-$ is the negative-definite version of a Niemeier lattice (i.e. the same underlying abelian group but replacing the bilinear form $(.,.)$ with $-(.,.)$).  Now, $N^{-}$ can be obtained from $x^\perp / x$ for some $x \in II_{1,25}$.  So we shall use this vector as our new $z$ for this section (where the previous $z$ does not appear).  We shall also choose $z' \in II_{1,25}$ to be an isotropic vector such that $(z,z')=1$.  We write elements of $II_{1,25}$ as $(\lambda,m,n)$ with norm $\lambda^2 - 2mn$ where $\lambda \in N^{-}$.  As before, we have a decomposition,
$$K \otimes \R = u \otimes \R z_{w} \oplus u^\perp \oplus \R z_{v^\perp}$$
however $u = w \cap (z_w)^{\perp} = 0$.  As a result, $\lambda_u = 0$ for all $\lambda \in L$ which simplifies everything dramatically ; indeed, because $N$ is negative-definite, $\Phi_L(u,F)$ can no longer depend on $u$ hence is constant and thus there is no need for further reduction.  We obtain the following equivalent result :

\begin{theorem} \label{lorentziantheorem}
With the notation of this section, $\Phi_K(w,F)$ is given by the constant term of the Laurent expansion at $s=0$ of the analytic continuation of
\begin{align*}
&\frac{1}{\sqrt{2 z^2_{w}}} \Phi_L (F) + \\
& \frac{\sqrt{2}}{|z_{w}|} \sum\limits_{n > 0} \sum\limits_{\lambda \in L} e((n \lambda, \mu))  \times \int_{y > 0} c(\lambda^2 / 2) \exp(- \pi n^2 / (2 y z_w^2) ) y^{-2-s} dy
\end{align*}
\end{theorem}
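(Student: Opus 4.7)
The plan is to imitate the proof of Theorem \ref{maintheorem} essentially verbatim, now with $K$ playing the role of $M$ and the Niemeier lattice $L = N^-$ playing the role of the smaller lattice. The structural simplification is that the positive-definite subspace $w \subset K$ is one-dimensional, so $u = w \cap z_w^\perp = 0$; hence $\lambda_u = 0$ for every $\lambda \in L$, and the Gaussian factor $\exp(-2\pi y \lambda_w^2)$ that appeared in the second summand of Theorem \ref{maintheorem} becomes identically $1$ and disappears. The measure for $\Phi_L$ is $y^{-2-s}\,dx\,dy$ (reflecting the rank of $L$) rather than $y^{-3/2-s}\,dx\,dy$, which is precisely what one reads off from the statement.

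First I would establish the analog of Theorem \ref{thetaK} for the splitting $K = L \oplus II_{1,1}$: the same Poisson-summation computation, applied to $K/\langle z\rangle \cong L$, gives
\begin{equation*}
\Theta_K(\tau,w) = \frac{1}{\sqrt{2yz_w^2}} \sum_{c,d \in \Z} e\!\left(-\frac{|c\tau+d|^2}{4iy z_w^2}\right) \Theta_L(\tau, 0; d\mu, -c\mu),
\end{equation*}
where $\mu$ is built from the new $z, z'$ exactly as before. Substituting into the regularized integral defining $\Phi_K(w,F)$ and separating the term $(c,d) = (0,0)$, the prefactor $(2yz_w^2)^{-1/2}$ converts the measure $y^{-3/2-s}\,dx\,dy$ into $y^{-2-s}\,dx\,dy$, so that this contribution is precisely $\frac{1}{\sqrt{2z_w^2}}$ times the regularized integral defining $\Phi_L(F)$; taking the constant term at $s=0$ yields the first summand.

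For the $(c,d) \neq (0,0)$ terms I would apply the unfolding trick of Lemma \ref{unfoldinglemma} without modification, replacing the sum over coprime $(c,d)$ and the fundamental domain by a sum over $n > 0$ and the strip $[0,1] \times (0,\infty)$. Because $u = 0$, the theta function reduces to $\overline{\Theta}_L(\tau, 0; -n\mu, 0) = \sum_\lambda e(-\tau q(\lambda) + (n\lambda, \mu))$, and combining this with $F(\tau) = \sum_k c(k) e(k\tau)$, the $x$-integration over $[0,1]$ forces $k = q(\lambda)$ and produces the factor $c(\lambda^2/2)$. What remains is precisely the second summand, up to a harmless relabeling $\lambda \leftrightarrow -\lambda$ in the sum over $L$ to fix the sign of $(n\lambda,\mu)$.

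I expect the main obstacle to be bookkeeping rather than genuine difficulty: one must verify that taking the constant term at $s=0$ of the analytic continuation commutes with the separation of the $(c,d)=(0,0)$ term and with the unfolding, and that the Fourier expansion can be interchanged with the $x$-integration. All of these were already justified in the proof of Theorem \ref{maintheorem}, and since the convergence-providing factors $\exp(-\pi n^2/(2yz_w^2))$ and (the now trivial) $\exp(-2\pi y \lambda_w^2)$ have the same structure as in that proof, the same justifications apply with no new analytic input.
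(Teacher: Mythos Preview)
Your proposal is correct and follows essentially the same approach as the paper: the paper simply observes that one repeats the argument of Theorem \ref{maintheorem} with $K$ in place of $M$ and $L=N^-$ in place of $K$, noting that $u = w \cap z_w^\perp = 0$ forces $\lambda_u = 0$ and thus eliminates the Gaussian factor, exactly as you describe. Your write-up supplies more of the bookkeeping (the shift in the power of $y$, the role of the prefactor $(2yz_w^2)^{-1/2}$) than the paper itself does, but the underlying argument is identical.
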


To go further in these computations, we require the following lemma :

\begin{lemma} \cite{borcherds3}, Lemma 7.3
\begin{align}
& \int_{y > 0} c(\lambda^2 / 2) \exp(- \pi n^2 / (2 y z_w^2)) y^{-2-s} dy \\
&= c(\lambda^2/2) \left( \frac{\pi n^2}{2 z_{w}^2} \right)^{-s - 1} \Gamma(s - 1)
\end{align}
Furthermore, the constant term of the Laurent expansion at $s = 0$ of this expression is
$$c(\lambda^2/2) \frac{2 z_w^2}{\pi n^2}$$
\end{lemma}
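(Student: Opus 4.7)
The integral in question is a routine Mellin-type computation, and my plan is to reduce it to the Euler integral representation of the Gamma function via a single substitution. First I would pull the constant $c(\lambda^2/2)$ outside the integral (it has no $y$-dependence), leaving
$$I(s) := \int_0^\infty \exp\!\left(-\frac{\pi n^2}{2 y z_w^2}\right) y^{-2-s}\, dy.$$
The natural substitution is $u = \pi n^2/(2 y z_w^2)$, giving $y = (\pi n^2/(2 z_w^2)) u^{-1}$ and $dy = -(\pi n^2/(2 z_w^2)) u^{-2}\, du$, with the orientation $y : 0^+ \to \infty$ corresponding to $u : \infty \to 0^+$. Two powers of $u$ then cancel between the factor $y^{-2-s}$ and the Jacobian, and after pulling out the overall prefactor one is left with a standard Euler integral
$$I(s) = \left(\frac{\pi n^2}{2 z_w^2}\right)^{-1-s} \int_0^\infty u^{s} e^{-u}\, du = \left(\frac{\pi n^2}{2 z_w^2}\right)^{-1-s} \Gamma(s+1),$$
which yields the first formula of the lemma (with the understanding that the Gamma factor providing the meromorphic continuation in $s$ should be read as $\Gamma(s+1)$, which is regular at $s=0$; the integral itself converges absolutely already for real $s > -1$).

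For the constant term at $s = 0$, I would Taylor expand both factors. Since $\Gamma(s+1) = 1 - \gamma s + O(s^2)$ and $(\pi n^2/(2 z_w^2))^{-1-s} = (2 z_w^2/(\pi n^2))\bigl(1 - s\log(\pi n^2/(2 z_w^2)) + O(s^2)\bigr)$, the product is already holomorphic at $s = 0$ and takes the value $2 z_w^2/(\pi n^2)$. Multiplying through by $c(\lambda^2/2)$ recovers exactly the constant term stated in the lemma.

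There is no real obstacle here: the entire lemma is essentially a one-line substitution dressed up as an analytic continuation. The only subtlety worth flagging is that because the integral already converges on an open neighborhood of $s = 0$, the "analytic continuation at $s=0$" invoked by Theorem \ref{lorentziantheorem} and Theorem \ref{maintheorem} reduces to evaluation of a holomorphic function, so no genuine regularization is needed at this step.
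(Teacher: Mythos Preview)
Your computation is correct and is exactly the standard substitution argument; the paper does not actually supply its own proof here but defers to \cite{borcherds3}, so there is nothing further to compare.

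One point worth making explicit for the record: your parenthetical remark is right that the Gamma factor should be $\Gamma(s+1)$, not $\Gamma(s-1)$ as printed in the statement. With $\Gamma(s-1)$ the expression would have a simple pole at $s=0$ and the constant term would pick up a logarithm of $\pi n^2/(2z_w^2)$ and an Euler--Mascheroni contribution, contradicting the clean value $c(\lambda^2/2)\,2z_w^2/(\pi n^2)$ that the lemma (and the paper's subsequent use of it) requires. The second half of the lemma is consistent only with $\Gamma(s+1)$, so the printed $\Gamma(s-1)$ is a typo, and your derivation confirms this.
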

As a consequence, we can now write $\Phi_K(w,F)$ as
$$\frac{1}{\sqrt{2 z^2_{w}}} \Phi_L (F) +
 \frac{\sqrt{2}}{|z_{w}|} \sum\limits_{n > 0} \sum\limits_{\lambda \in L} e((n \lambda, \mu))  c(\lambda^2/2) \frac{2 z_w^2}{\pi n^2}$$

And we use the following identity (from Bernoulli polynomials) :
$$\sum\limits_{n \neq 0} \frac{e(n (\lambda,\mu))}{n^2} = 2 \pi^2 ((\lambda,\mu)^2 - (\lambda,\mu) + \frac{1}{6})$$
Using the fact that both $\lambda$ and $-\lambda$ appear in $L$ we can rewrite $\Phi_K(w,F)$ as
$$\frac{1}{\sqrt{2} |z_{w}|} \Phi_L (F) +
 2 \sqrt{2} \pi |z_w| \sum\limits_{\lambda \in L} c(\lambda^2/2) ((\lambda,\mu)^2 - (\lambda,\mu) + \frac{1}{6})$$
The sum is finite because only finitely many $\lambda \in L$ satisfy $c(\lambda^2/2) \neq 0$, since $L$ is negative definite.

Before we proceed further, recall that
$$\mu = - z' + \frac{z_v}{2 z_v^2} + \frac{z_{v^\perp}}{2 z^2_{v^\perp}}$$
and we can recover $\overline{w}$ from $\mu$ by
$$\overline{w} = (z, \overline{w}) \mu + (z, \overline{w}) z' + \frac{z}{2 (z, \overline{w})} = |z_w| \mu + |z_w| z' + \frac{z}{2 |z_w|}$$

This shows that $\Phi_K$ is in fact a rational function in $\overline{w}$ with denominator some power of $(z, \overline{w})$.  Now observe that the same reasoning can be followed by exchanging the roles of $z$ and $z'$, which results in a similar formula, but with denominator some power of $(z', \overline{w})$.  If this power turns out to be non-zero, we then obtain different singularities for $\Phi_K$, which is not possible by the Wall-Crossing formula (\cite{borcherds3}, Corollary 6.3).  Therefore, most of these terms must cancel (in fact, they form a so-called vector system).  In particular, we can then obtain the result of Theorem \ref{lorentziantheorem} by reading the linear terms of $\Phi_K$, which results in $\rho(M,W,F) = \rho + \rho_z z + \rho_{z'} z'$ where,
$$\rho = - \frac{1}{2} \sum\limits_{\substack{\lambda \in K \\ (\lambda, W) > 0}} c(\lambda^2/2) \lambda$$
$$\rho_{z'} = \text{ constant term of } \overline{\Theta}_L (\tau) F(\tau) E_2 (\tau) /24$$
$$\rho_{z} = \frac{1}{12} \sum\limits_{\substack{\lambda \in K \\ (\lambda, W) > 0}} c(\lambda^2/2)$$
where $E_n$ are Eisenstein series and $W$ is a Weyl chamber whose closure contains $z$.

Alternatively, because the sum is finite, it is also possible to simply write it out and observe that we indeed obtain the result of the theorem.

\subsection{Completing the proof}

The last thing we need to do is compute the integrals that appear in the statement of Theorem \ref{maintheorem}.  These depend on whether or not $\lambda_w = 0$.  We obtain the following two results.
\begin{lemma}
Suppose $\lambda_w \neq 0$.  Then
\begin{align*}
& \int_{y > 0} c(\lambda^2 / 2) \exp(- \pi n^2 / (2 y z_v^2) - 2 \pi y \lambda^2_w) y^{-2-s} dy \\
&=  c(\lambda^2/2)  \frac{|z_v| \sqrt{2}}{n} \exp \left( - \frac{2 \pi n |\lambda_w|}{|z_v|} \right)
\end{align*}
\end{lemma}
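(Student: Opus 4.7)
My plan is to evaluate this integral directly. For $\lambda_w \neq 0$ (recall $w$ is a positive-definite subspace of $K$, so $\lambda_w^2 > 0$), the integrand is absolutely integrable in a neighborhood of $s = 0$: the factor $\exp(-\pi n^2/(2 z_v^2 y))$ (with $z_v^2 > 0$ and $n \geq 1$) kills the singularity at $y \to 0^+$, while $\exp(-2 \pi y \lambda_w^2)$ controls $y \to \infty$. So, unlike the previous lemma where the $\lambda_w = 0$ case forced an analytic continuation in $s$, here I can simply set $s = 0$ and evaluate the resulting convergent integral.

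Setting $a = \pi n^2 / (2 z_v^2)$ and $b = 2\pi\lambda_w^2$, the problem reduces to computing an integral of the form $\int_0^{\infty} e^{-a/y - b y}\, y^{-3/2}\, dy$. The key step is the symmetrizing substitution $y = \sqrt{a/b}\, e^t$, which turns the exponent into $-2\sqrt{ab}\cosh(t)$ and produces an integral of shape
$$
 (a/b)^{-1/4} \int_{-\infty}^{\infty} e^{-2\sqrt{ab}\,\cosh t - t/2}\, dt.
$$
I would then recognize this as the integral representation of the modified Bessel function of the second kind,
$$
 K_\nu(z) = \tfrac{1}{2} \int_{-\infty}^{\infty} e^{-z\cosh t - \nu t}\, dt,
$$
with $\nu = \tfrac{1}{2}$ and $z = 2\sqrt{ab}$.

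The decisive observation is that $K_{1/2}$ is elementary, $K_{1/2}(z) = \sqrt{\pi/(2z)}\, e^{-z}$, which is exactly why the specific exponent on $y$ in the integrand yields a clean closed form rather than a special function. Substituting this in and simplifying reduces the integral to $\sqrt{\pi/a}\, e^{-2\sqrt{ab}}$, and inserting the explicit values of $a$ and $b$ (so that $\sqrt{\pi/a} = |z_v|\sqrt{2}/n$ and $2\sqrt{ab} = 2\pi n |\lambda_w|/|z_v|$) recovers the claimed formula.

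There is really no substantive obstacle: everything rests on identifying the Bessel integral and using the closed form for $K_{1/2}$. The only point worth double-checking is that the exponent on $y$ in the integrand is consistent with the one appearing in Theorem \ref{maintheorem}, since a half-integer shift would replace $K_{1/2}$ by $K_1$ and destroy the elementary closed form. This special coincidence at the critical weight is ultimately what makes the whole Borcherds-product machinery collapse to an infinite product rather than to an expression involving Bessel functions in the signature $(2,26)$ setting.
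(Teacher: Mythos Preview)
Your argument is correct and is the standard route: the paper does not actually prove this lemma, stating it as a computational fact in the spirit of \cite{borcherds3} (see also \cite{Bruinier}), where precisely this Bessel-$K_{1/2}$ evaluation is used. Your caveat about the exponent is well taken: the $y^{-2-s}$ in the lemma as printed is a typo for the $y^{-s-3/2}$ appearing in Theorem~\ref{maintheorem}; with the former exponent one would land on $K_1$, which is not elementary and does not reproduce the claimed right-hand side, so you are right to compute with $y^{-3/2}$.
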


\begin{lemma}
Suppose $\lambda_w = 0$.  Then
\begin{align*}
& \int_{y > 0} c(\lambda^2 / 2) \exp(- \pi n^2 / (2 y z_v^2) ) y^{-2-s} dy \\
&= c(\lambda^2/2) \left(\frac{\pi n^2}{2 z_{v}^2} \right)^{-5/2} \Gamma(1/2)
\end{align*}
\end{lemma}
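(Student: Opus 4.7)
The statement is a one-line computation of an elementary integral, so the plan is short. First I would pull the constant factor $c(\lambda^2/2)$ outside the integral sign, since it depends only on $\lambda$ and not on $y$. This reduces the problem to evaluating
\[
I(s) \;=\; \int_0^\infty \exp\!\left(-\frac{\pi n^2}{2 y z_v^2}\right) y^{-2-s}\, dy,
\]
which is a standard Mellin-type representation of the Gamma function. I would introduce the abbreviation $A = \pi n^2/(2 z_v^2)$ to keep the formulas readable.

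Next I would make the substitution $u = A/y$, so that $y = A/u$ and $dy = -A\, u^{-2}\, du$. Under this change of variables the limits reverse and the minus sign absorbs, and $y^{-2-s} = A^{-2-s} u^{2+s}$. Assembling the factors,
\[
I(s) \;=\; A^{-1-s}\int_0^\infty e^{-u}\, u^{s}\, du \;=\; A^{-1-s}\, \Gamma(1+s),
\]
where the integral is recognized as the defining integral for $\Gamma(1+s)$, valid for $\mathrm{Re}(s) > -1$ (which in particular covers $s=0$). This is the desired closed form in the variable $A$, and substituting back $A = \pi n^2/(2z_v^2)$ and reinserting the factor $c(\lambda^2/2)$ yields the formula in the statement (up to recording the correct exponent and argument of $\Gamma$, which the substitution dictates).

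The hard part is essentially clerical: one must be careful with the sign conventions inherited from the regularization used to define $\Phi_M$ and $\Phi_K$ in the previous subsection (which is why we worked with $y^{-2-s}$ rather than $y^{-1}\, dy/y$ and then continued analytically in $s$), and one must check that the integral is genuinely absolutely convergent for $\mathrm{Re}(s)>-1$ so that the continuation to $s=0$ is legitimate. Both of these points are immediate from the explicit form of $I(s)$, so no obstacle arises beyond careful bookkeeping of the substitution.
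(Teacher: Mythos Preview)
Your substitution $u=A/y$ with $A=\pi n^2/(2z_v^2)$ is the correct and standard approach, and your evaluation
\[
\int_0^\infty e^{-A/y}\,y^{-2-s}\,dy \;=\; A^{-1-s}\,\Gamma(1+s)
\]
is right. The paper does not actually supply a proof of this lemma, so there is nothing to compare on the level of method; the Gamma-integral substitution you use is exactly what one would do.

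What you should be aware of is that the formula \emph{as printed in the paper} does not match your (correct) answer: the paper records the right-hand side as $c(\lambda^2/2)\,A^{-5/2}\,\Gamma(1/2)$, which is not what the integral equals for the integrand written there. In fact the surrounding text is internally inconsistent: Theorem~\ref{maintheorem} has the power $y^{-s-3/2}$ in the integrand, whereas both lemmas in this subsection write $y^{-2-s}$; and the companion lemma quoting \cite{borcherds3}, Lemma~7.3 records $\Gamma(s-1)$ where the same substitution gives $\Gamma(s+1)$ (note that its stated constant term $2z_w^2/(\pi n^2)=A^{-1}$ agrees with $A^{-1-s}\Gamma(1+s)$ at $s=0$, not with $\Gamma(s-1)$). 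So your hedge ``up to recording the correct exponent and argument of $\Gamma$'' is well placed: your computation is sound, and the discrepancy is a typographical issue in the paper rather than an error on your part.
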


Once these integrals are computed, one can replace the terms in Theorem \ref{maintheorem} with some additional simplification to obtain
\begin{align*}
& 8 \pi (Y, \rho) +
\\ & 2 c(0) \Gamma(s+1/2) \pi^{-s-1/2} (2 z^{2}_v)^s \sum\limits_{n > 0} \frac{1}{n^{2s+1}} +
\\ & 2 \sum\limits_{\substack{\lambda \in K \\ \lambda \neq 0}} e((n \lambda X))  \exp(- 2 \pi n |(n,Y)|) \frac{1}{n} c (q(\lambda))
\end{align*}

Taking the constant term at $s=0$ with the help of \cite{borcherds3}, Lemma 13.2 we obtain
\begin{align*}
& 8 \pi (Y, \rho) +
\\ & c(0) (\log(z^2_{v}) - \Gamma'(1) - \log(2\pi)) +
\\& 4 \sum\limits_{\substack{\lambda \in K \\ \lambda \neq 0}}
- c(q(\lambda)) \log(1 - e((\lambda,Z))
\end{align*}
This proves statement 2-3 of the theorem by examination of the terms.

\subsection{Examples}
In this section, we present two examples of Borcherds products, and an extra example obtained by twisting the first example.

\paragraph{Fake Monster Lie algebra :} For the lattice $II_{1,25}$ and a choice of Weyl vector $\rho$, \cite{borcherds1}
$$e^\rho \prod\limits_{r \in \Pi^+} (1 - e^r)^{p_{24} (1 - r^2/2)} = \sum\limits_{\substack{ w \in W \\ n \in \Z}} \det(w) \tau(n) e^{w(n \rho)}$$
where $\rho$ is a choice of Weyl vector, $\Pi^+$ is the set of elements of $II_{25,1}$ that have negative inner product with a element of $II_{25,1}$ of negative norm, and $\tau(n)$ is Ramanujan's Tau function.  We will elaborate on this character in the next chapter.

\paragraph{Fake Monster Superalgebra :}  \cite{scheit}, Corollary 5.27, originally from Borcherds products in \cite{borcherds3}, Example 13.7 (level 2 cusp).
$$\prod\limits_{\alpha \in \Delta_+} \frac{(1 - e(\alpha))^{c(- \frac{1}{2} \alpha^2)}}{(1 + e(\alpha))^{c(- \frac{1}{2} \alpha^2)}} = 1 + \sum \alpha(\lambda) e(\lambda)$$
where $\alpha(\lambda)$ is the coefficient of $q^n$ of
$$\prod\limits_{m > 0} \left( \frac{1 - q^m}{1 + q^m} \right)^8$$
if $\lambda$ is $n$ times a primitive norm zero vector in the closure of the positive cone and zero otherwise, and $c(n)$ are given by
\begin{align*}
\sum c(n) q^n &= 8 \prod\limits_{m > 0} \left( \frac{1 + q^m}{1 - q^m} \right)^8 = 8 \eta( 2 \tau)^8 / \eta(\tau)^{16}
\end{align*}

\paragraph{Niemann's algebra $G_{23}$ :} \cite{niemann}  By twisting the first example, a BKM was constructed with the following character :  
\begin{align*}
e^{\rho} \prod\limits_{r \in L^+} (1 - e^r)^{p_{\sigma}(1 - r^2/2)} \prod\limits_{r \in 23 L^{*+}} (1-e^r)^{p_{\sigma} (1 - r^2/46)} \\ = \sum\limits_{w \in W^\sigma} \det(w) w \left( e^{\rho} \prod\limits_{ i > 0} (1 - e^{i \rho}) (1 - e^{23 i \rho} \right)
\end{align*}
where $L^+$ are positive roots, $\rho = (0,0,1) \in \Lambda \oplus II_{1,1}$, $\sigma \in \aut(\Lambda)$ of order $23$, $W^\sigma$ are elements of $W$ that commute with $\sigma$ and $p_\sigma(q) = q \eta(q)^{-1} \eta(23 q)^{-1}$.
There are many other similar examples in \cite{niemann}.

\section{Characters of the fake monster Lie algebra}
\subsection{From the Borcherds-Weyl-Kac denominator formula} \label{subsectioncharacter1}

In Section \ref{sectionoghost}, we constructed a Lie algebra $g_L$ based on an even lattice $L$.  We will now show following \cite{borcherds1} that a special choice of $g_L$ is a BKM-algebra, using Theorem \ref{generalizedkacmoodycharacterization}.  Indeed, let $L = II_{25,1}$, the unimodular rank $26$ lattice of signature $25,1$.  Then we may write
$$L = II_{25,1} = N \oplus II_{1,1} = N \oplus \Z \rho \oplus \Z \rho'$$
such that $\rho$ and $\rho'$ are isotropic and $(\rho,\rho') = 1$ and $N$ is a Niemeier lattice.  To show that $g_L$ is a BKM algebra, we fix $N = \Lambda$, the Leech lattice.  Write $U = \Z \rho \oplus \Z \rho'$.  For $\lambda \in II_{25,1}$, define
$$\deg \lambda = (\lambda,-2\rho+\rho').$$
Then $\deg \lambda$ defines a $\Z$-grading on $g_{L}$, i.e.
$$g_{L} = \bigoplus\limits_{i \in \Z} \bigoplus\limits_{\deg \lambda = i} (g_{L})_{\lambda}.$$
Next, suppose $\lambda = \lambda' - m \rho + n \rho'$, $\lambda \in \Lambda, m \in \Z$ is a root of $(g_L)_0$.  Then by definition we must have $m + 2n = 0$.  But then either $m = n = 0$ or $m \rho + n \rho ' = n(2\rho + \rho')$.  In the second case,
$$\langle \lambda' + n (2 \rho + \rho'), \lambda' + n(2 \rho + \rho') \rangle = \langle \lambda', \lambda' \rangle + 4n^2 \geq 4.$$
Assume $v \in (g_L)_0 \subseteq P^1$.  Then write
$$v = \prod\limits_{i=1}^N s_i(-n_i) \otimes e^{\lambda'}$$
and observe that $L_0v = v$ is only possible if $\langle \lambda', \lambda' \rangle = 0$ or $\langle \lambda',\lambda' \rangle = 2$ hence in the case of the Leech lattice, $\lambda' = \lambda = 0$ because the Leech lattice has no real roots.  Therefore, all elements of $(g_L)_0$ are in fact of the form $h_i(-1)$ for an orthogonal basis $h_1,...,h_{26}$ of $II_{25,1} \otimes \C$.  Furthermore, because $g_L$ is defined to be the quotient of $II_{25,1}$ by the radical of the bilinear form $(.,.)$, each of these elements survive in the quotient.  Therefore, we conclude that
$$(g_L)_0 \simeq II_{25,1} \otimes \C.$$

From there it is easy to see that the first two conditions of Theorem \ref{generalizedkacmoodycharacterization} are satisfied.  Let $\theta$ be the Cartan automorphism defined in Section \ref{sectionoghost}.  It is clear that $\theta$ acts as $-1$ on $(g_{L})_0$.  The remaining conditions of Theorem \ref{generalizedkacmoodycharacterization} are a technicality : indeed we may define a new bilinear form $(.,.)$ by way of $\theta$, i.e.
$$(x,y)_{new} = -(x,\theta(y))_{old}$$
and the condition follows from the fact that $(.,.)_{old}$ is positive-definite.  Therefore, we have proven :
\begin{corollary}
The Lie algebra $g_{L}$ is a generalized Kac-Moody algebra.
\end{corollary}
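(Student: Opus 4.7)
The plan is to verify each of the four conditions of Theorem \ref{generalizedkacmoodycharacterization} for $g_L$, using the $\Z$-grading $g_L = \bigoplus_{i \in \Z}(g_L)_i$ with $(g_L)_i = \bigoplus_{\deg \lambda = i}(g_L)_\lambda$ introduced just above the corollary.

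For condition (1), I would note that Corollary \ref{dimensionfakemonster} gives $\dim (g_L)_\lambda = p_{24}(1-(\lambda,\lambda)/2) < \infty$, and since $(g_L)_0 \simeq II_{25,1}\otimes \C$ is abelian and acts on each $(g_L)_\lambda$ by the character $\lambda$, every $(g_L)_\lambda$ is automatically a finite-dimensional $(g_L)_0$-module. For condition (2), for each norm-$2$ vector $\lambda \in II_{25,1}$ the element $e^\lambda$ lies in $P^1$, and a direct residue computation with $Y(e^\lambda, z)$ gives $[e^\lambda, -e^{-\lambda}] = \lambda(-1)$ in $g_L$ (up to cocycle signs). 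Since $II_{25,1}$ is spanned over $\Z$ by its norm-$2$ vectors, these brackets span $(g_L)_0$.

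For condition (3), the Cartan involution $\theta$ of Section \ref{subsectionbilinearform} satisfies $\theta((V_L)_\alpha) = (V_L)_{-\alpha}$, hence sends $(g_L)_i$ to $(g_L)_{-i}$, and $\theta(h(-1))=-h(-1)$ shows it acts as $-1$ on $(g_L)_0$. For condition (4), define $(x,y)_{\mathrm{new}} := -(x,\theta(y))_{\mathrm{old}}$. It is bilinear, symmetric, $\theta$-invariant, and invariant under the Lie bracket because $(.,.)_{\mathrm{old}}$ is invariant and $\theta$ is a Lie automorphism. Since $(.,.)_{\mathrm{old}}$ pairs $(V_L)_\alpha$ only with $(V_L)_\alpha$, the new form pairs $(g_L)_i$ only with $(g_L)_{-i}$, giving the orthogonality $G_i \perp G_j$ for $i+j \neq 0$.

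The main obstacle is the positivity requirement $-(g,\theta(g))_{\mathrm{new}} > 0$ for nonzero $g \in (g_L)_i$, $i \neq 0$. Using $\theta^2 = \mathrm{id}$, this reduces to $(g,g)_{\mathrm{old}} > 0$. Decomposing $g = \sum_\lambda g_\lambda$ with $\deg \lambda = i$ and using orthogonality of distinct $(g_L)_\lambda$'s, one has $(g,g)_{\mathrm{old}} = \sum_\lambda (g_\lambda, g_\lambda)_{\mathrm{old}}$, so it suffices that each $(g_\lambda, g_\lambda)_{\mathrm{old}} > 0$ whenever $g_\lambda \neq 0$. Because $i \neq 0$ forces $\lambda \neq 0$, this is precisely the positive-definiteness of $(.,.)_{\mathrm{old}}$ on $(g_L)_\lambda = T(\lambda, c)$ provided by Lemma \ref{independentlemma}(a) and the no-ghost theorem, combined with the fact that $g_L$ is the quotient of $P^1$ by the radical of the form. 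With all four conditions verified, Theorem \ref{generalizedkacmoodycharacterization} delivers the corollary.
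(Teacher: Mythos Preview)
Your proof is correct and follows the same approach as the paper: verify the four conditions of Theorem~\ref{generalizedkacmoodycharacterization} using the $\Z$-grading by $(\lambda,-2\rho+\rho')$, the Cartan involution $\theta$, and the twisted form $(x,y)_{\mathrm{new}}=-(x,\theta(y))_{\mathrm{old}}$. The paper's argument is in fact sketchier than yours---it simply asserts that conditions (1) and (2) are ``easy to see'' and calls condition (4) a ``technicality''---whereas you supply the concrete mechanism for (2) (brackets $[e^\lambda,-e^{-\lambda}]=\lambda(-1)$ for norm-$2$ vectors $\lambda$, which span $II_{25,1}\otimes\C$) and pin the positivity in (4) to Lemma~\ref{independentlemma}(a) via the no-ghost identification $(g_L)_\lambda\simeq T(\lambda,c)$.
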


Because $g_L$ is graded by $II_{25,1}= \Lambda \oplus U$, it is also graded by $U$.  In particular, we are interested in the following character
$$\ch_U (g_L) = \sum\limits_{m > 0} \sum\limits_{n \in \Z} \dim (g_L)_{m,n} p^m q^n$$
where $(g_L)_{m,n}$ is the space of all elements of $g_L$ with root $(\lambda,m,n)$ for some $\lambda$.  This character was described explicitly in the proof of \cite{borcherds1}, Section 4, Lemma 2.  We repeat the proof here.

\begin{lemma} \label{leechgrading}
$\ch_U (g_L) = \sum\limits_{m > 0} \sum\limits_{n \in \Z} c(mn) p^m q^n$
where $c(mn)$ is the coefficient of $$\Theta_{\Lambda} (q) \sum\limits_{l \geq -1} p_{24}(1+l) q^l$$ with
$$\Theta_{\Lambda}(q) = \sum\limits_{\lambda \in \Lambda} q^{\lambda^2/2}.$$
\end{lemma}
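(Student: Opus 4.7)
The plan is to unpack the definition of $\ch_U(g_L)$ and apply the no-ghost theorem, then recognize the resulting sum as the stated product of theta and partition generating functions.

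First I would decompose a general root $\alpha$ of $g_L$ according to the splitting $II_{25,1} = \Lambda \oplus U$, writing $\alpha = (\lambda, m, n)$ with $\lambda \in \Lambda$. By the bilinear form on $II_{25,1}$ this has norm $\alpha^2 = \lambda^2 - 2mn$. Corollary \ref{dimensionfakemonster} (the consequence of the no-ghost theorem for $L = II_{25,1}$) then gives
\begin{equation*}
\dim (g_L)_{(\lambda,m,n)} = p_{24}\!\left(1 - \tfrac{\alpha^2}{2}\right) = p_{24}\!\left(1 + mn - \tfrac{\lambda^2}{2}\right).
\end{equation*}
By definition of $(g_L)_{m,n}$, I then sum over $\lambda \in \Lambda$ to obtain
\begin{equation*}
\dim (g_L)_{m,n} \;=\; \sum_{\lambda \in \Lambda} p_{24}\!\left(1 + mn - \tfrac{\lambda^2}{2}\right).
\end{equation*}

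Next I would identify this sum as a coefficient in the claimed generating series. Expanding the product,
\begin{equation*}
\Theta_{\Lambda}(q)\sum_{l \geq -1} p_{24}(1+l)\, q^l \;=\; \sum_{\lambda \in \Lambda}\sum_{l \geq -1} p_{24}(1+l)\, q^{\lambda^2/2 + l},
\end{equation*}
the coefficient of $q^{k}$ is $\sum_{\lambda \in \Lambda} p_{24}(1 + k - \lambda^2/2)$, where the sum is implicitly finite (nonzero contributions only when $1 + k - \lambda^2/2 \geq 0$, consistent with the lower bound $l \geq -1$ and $p_{24}(r) = 0$ for $r < 0$). Setting $k = mn$ matches the formula for $\dim(g_L)_{m,n}$ derived above, so $\dim(g_L)_{m,n} = c(mn)$ and summing over $m > 0$, $n \in \Z$ gives the stated identity for $\ch_U(g_L)$.

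The only subtle point is making sure all relevant roots are accounted for and none are double-counted. Since $m > 0$ forces $(\lambda,m,n) \neq 0$, we are genuinely indexing over roots rather than the Cartan $(g_L)_0$, and each triple $(\lambda,m,n)$ corresponds to a unique root of $II_{25,1}$ with $\deg = -2m + n$; there is no overlap across different $(\lambda,m,n)$. Thus the main (and essentially only) ingredient is the no-ghost multiplicity formula, and the rest is a matter of rearranging the generating function.
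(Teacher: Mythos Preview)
Your proof is correct and follows essentially the same approach as the paper: both invoke the no-ghost multiplicity formula $\dim (g_L)_{(\lambda,m,n)} = p_{24}(1 + mn - \lambda^2/2)$, sum over $\lambda \in \Lambda$, and then identify the result as the coefficient of $q^{mn}$ in $\Theta_\Lambda(q)\sum_{l \ge -1} p_{24}(1+l)q^l$. Your remark that the restriction $l \ge -1$ is automatic from $p_{24}(r)=0$ for $r<0$ is exactly the observation the paper closes with.
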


\begin{proof}
Our goal is to count all elements of $g_L$ with root which projects on $m \rho' + n \rho$.  For a fixed root $(\lambda,m,n)$, we know there are $p_{24}(1 - \lambda^2/2 + mn)$ such linearly independent elements.  Summing over all $\lambda \in \Lambda$, we find that
$$c(mn) = \sum\limits_{\lambda \in \Lambda} p_{24}(1 - \lambda^2/2 + mn)$$
which is the same as the coefficient of $q^{mn}$ in the sum
$$\sum\limits_{\lambda \in \Lambda} q^{\lambda^2/2} p_{24}(1 - \lambda^2/2 + mn) q^{- \lambda^2/2 + mn}$$
but also in the sum
$$\sum\limits_{\lambda \in \Lambda} q^{\lambda^2/2} \sum\limits_{l \in \Z} p_{24}(1 - \lambda^2/2 + l) q^{- \lambda^2/2 + l} = \sum\limits_{\lambda \in \Lambda} q^{\lambda^2/2} \sum\limits_{l \in \Z} p_{24}(1 + l) q^{l}.$$
Clearly, $\sum\limits_{\lambda \in \Lambda} q^{\lambda^2}/2 = \Theta_{\Lambda}(q)$ and the fact that $l \geq -1$ is merely a consequence of the fact that $p_{24}(1+l) = 0$ for all $l < -1$.  This completes the proof.
\end{proof}

As it turns out, we may relate this character with the elliptic $j$ function : $$\Theta_{\Lambda}(q) \sum\limits_{l \geq -1} p_{24}(1+n)q^n = j(q) - 720 = q^{-1} + 24 + 196884q + ...$$

Since we have found that $g_{L}$ is a generalized Kac-Moody algebra, it is natural to ask about its simple roots and its BKM matrix.  These results can be found in \cite{borcherds1}, which we summarize here.

\begin{lemma}
The real simple roots of of $g_L$ are the norm $2$ vectors $r$ of $II_{25,1}$ such that $(r,\rho) = -1$.
\end{lemma}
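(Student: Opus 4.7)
The plan is to characterize the real roots first and then handle the two directions of the equivalence separately, with the converse depending on a deep theorem of Conway. Since $\dim(g_L)_\alpha = p_{24}(1 - \alpha^2/2)$, the real roots of $g_L$ are in bijection with the norm-$2$ vectors of $II_{25,1}$, each occurring with multiplicity one. With positivity defined via the $\Z$-grading $\deg$ from Lemma \ref{leechgrading}, a simple real root is a positive real root $r$ admitting no decomposition $r = \beta + \gamma$ into two positive roots.

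For the forward direction, I first check that any norm-$2$ vector $r$ with $(r,\rho) = -1$ is positive. Writing such an $r$ as $\lambda - \rho' + (\lambda^2/2 - 1)\rho$ for some $\lambda \in \Lambda$, direct computation yields $\deg r = 1 + \lambda^2/2 \geq 1$. I then analyze the constraints on a positive root $\alpha = \mu + m\rho + n\rho'$ (with $\mu \in \Lambda$) coming from $\alpha^2 \in \{2\} \cup \Z_{\leq 0}$ and $\deg \alpha > 0$; using that the Leech lattice has no norm-$2$ vectors, this shows every positive real root satisfies $(\alpha,\rho) < 0$, while every positive imaginary root satisfies $(\alpha,\rho) \leq 0$, with equality only when $\alpha = m\rho$ with $m > 0$. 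If $r = \beta + \gamma$ with both summands positive, pairing with $\rho$ gives $(\beta,\rho) + (\gamma,\rho) = -1$ with each term non-positive, forcing one of them (say $(\beta,\rho)$) to vanish; then $\beta = m\rho$ with $m \geq 1$, and the other summand $\gamma = r - m\rho$ has norm $2 + 2m > 2$, too large to be a root. Hence $r$ is a simple real root.

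For the converse, suppose $r$ is any simple real root. By the analysis of positive real roots above, $(r,\rho) = -k$ for some integer $k \geq 1$, and the content of the lemma is that $k = 1$. The main obstacle is ruling out $k \geq 2$: this is precisely the content of Conway's theorem on the fundamental domain for the reflection group of $II_{25,1}$, which states that the hyperplanes $r^\perp$ for norm-$2$ vectors with $(r,\rho) = -1$ bound a fundamental Weyl chamber having $\rho$ as a ``Weyl vector'' on its isotropic boundary. Translated into root-theoretic language, any norm-$2$ vector with $(r,\rho) = -k$ for $k \geq 2$ decomposes as $r = r_1 + \cdots + r_k$ into $k$ simple real roots; the existence of such decompositions reduces via the identity $(\mu - k\nu)^2 = 2(k^2 - k + 1)$ to the covering-radius-$\sqrt{2}$ bound for the Leech lattice. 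Granting Conway's theorem, the converse direction is then immediate.
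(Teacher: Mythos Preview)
Your proof is correct and more detailed than what the paper provides: the paper does not prove this lemma at all, instead citing Borcherds' original paper \cite{borcherds1} and simply stating the result. Your argument is essentially the standard one from that source.

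A few minor remarks. The $\Z$-grading $\deg$ you use is defined in the text preceding the corollary that $g_L$ is a GKM, not in Lemma \ref{leechgrading} itself. Your forward direction is clean and self-contained; the key observation that the only positive roots with $(\alpha,\rho)=0$ are positive multiples of $\rho$, together with the norm calculation $(r-m\rho)^2 = 2+2m$, is exactly right. For the converse you correctly identify that one cannot avoid Conway's theorem on the fundamental domain of the reflection group of $II_{25,1}$ (equivalently, the covering radius $\sqrt{2}$ of the Leech lattice); this is genuinely the nontrivial input, and Borcherds relies on it as well. The displayed identity $(\mu-k\nu)^2 = 2(k^2-k+1)$ is a bit cryptic as written (you have not said what $\nu$ is), but the intended reduction to the covering-radius bound is the right idea.
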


\begin{lemma}
The imaginary simple roots of $g_L$ are the positive multiples of $\rho$, with multiplicity $24$.
\end{lemma}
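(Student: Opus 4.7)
The strategy is to apply the Weyl-Kac-Borcherds denominator formula to $g_L$. The BKM structure is guaranteed by the preceding corollary, the real simple roots are identified in the preceding lemma, and the root multiplicities are $\dim (g_L)_\alpha = p_{24}(1-\alpha^2/2)$ by Corollary \ref{dimensionfakemonster}. Reading off the imaginary simple roots then becomes an exercise in matching the two sides of the denominator formula.

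Concretely, the plan is to express the product side of that formula using the Borcherds product identity
$$e^\rho \prod_{r \in \Pi^+}(1-e^r)^{p_{24}(1-r^2/2)} \;=\; \sum_{\substack{w\in W\\ n\in\Z}}\det(w)\,\tau(n)\,e^{w(n\rho)},$$
listed among the examples following Theorem \ref{borcherdsproduct} (it arises from $F(\tau)=\Delta(\tau)=\eta(\tau)^{-24}$; the same conclusion can be reached from Lemma \ref{leechgrading} together with the $W$-alternation of both sides). Since $\sum_n \tau(n)\,e^{n\rho}$ is supported on positive multiples of $\rho$, all of which lie in the closure of the fundamental Weyl chamber containing $\rho$, it can be identified with the function $S_0$ of the denominator formula. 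The Euler-Jacobi product $\eta(\tau)^{24}=q\prod_{m\geq 1}(1-q^m)^{24}$ then yields
$$S_0 \;=\; e^\rho \prod_{m\geq 1}(1-e^{m\rho})^{24}.$$

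Next I would read off the imaginary simple roots from this factorization. Because $(\rho,\rho)=0$, any two multiples of $\rho$ are mutually orthogonal, so the orthogonality condition in the definition of $S_0$ is automatic. Assuming for the moment that all imaginary simple roots are supported on $\mathbb{R}\rho$ with multiplicities $(m_n)_{n\geq 1}$, the combinatorial sum defining $S_0$ collapses to $e^\rho\prod_{n\geq 1}(1-e^{n\rho})^{m_n}$ (with the sign convention for positivity matched to the author's); comparison with the formula above forces $m_n=24$ for every $n\geq 1$.

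The main obstacle is ruling out imaginary simple roots off the line $\mathbb{R}\rho$. Any such $\alpha$ must satisfy $\alpha^2\leq 0$ and lie in the closure of the fundamental Weyl chamber, else some real simple reflection would carry $\alpha$ to another positive root, contradicting simplicity. Controlling this chamber for $L=II_{25,1}$ is the delicate part: one needs Conway's theorem on the Weyl chamber of $II_{25,1}$ (equivalently, on the deep holes of the Leech lattice), which asserts that the only isotropic vectors in the closure of the chamber cut out by the norm-$2$ vectors with $(r,\rho)=-1$ are the non-negative multiples of $\rho$, and every timelike vector in that closure admits a non-trivial decomposition $n\rho+\beta$ with $\beta$ a sum of positive roots and $n>0$, hence cannot be simple. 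Combining this geometric input with the denominator-formula matching above pins down the imaginary simple roots as exactly the positive multiples of $\rho$, each with multiplicity $24$.
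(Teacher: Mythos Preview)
Your strategy is sound but runs opposite to the paper's logic. The paper does not supply its own argument here; it simply summarizes \cite{borcherds1}, where the proof is \emph{direct}: Conway's description of the fundamental domain of the reflection group of $II_{25,1}$ (via the Leech lattice) shows that the only positive roots lying in the closure of the Weyl chamber and not decomposable as a sum of two smaller positive roots are the real simple roots together with the positive multiples of $\rho$, and the multiplicity $24$ is then read off as $\dim(g_L)_{n\rho}=p_{24}(1)=24$. The product--sum identity is presented in the paper as a \emph{consequence} of the two lemmas on simple roots, not as an input.

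You invert this, taking the identity as given and extracting the imaginary simple roots by matching against the general BKM denominator formula. Two points deserve care. First, watch for circularity: the example you cite after Theorem~\ref{borcherdsproduct} is attributed in the text to \cite{borcherds1}, i.e.\ to the very argument you are replacing, so you must genuinely invoke the analytic Borcherds lift of $\Delta^{-1}$ (or the route through Lemma~\ref{leechgrading} and a Koike--Norton--Zagier manipulation) to obtain the sum side independently---neither of which the paper carries out in full. Second, your identification of $S_0$ with $\sum_n\tau(n)e^{n\rho}$ from the $W$-alternated equality is not automatic, since a priori the support of $S_0$ need not lie in the open chamber and $W$-antisymmetrization has a kernel. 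The cleanest repair is to note that once the real simple roots (hence $W$) and all root multiplicities are fixed, the imaginary simple roots are determined \emph{uniquely} by the denominator identity, so it suffices to check that the candidate set $\{n\rho:n\ge1\}$ with multiplicity $24$ makes the identity hold---which is precisely the Borcherds product formula. Framed this way, Conway's theorem is needed only upstream for the preceding lemma on real simple roots, not for the step you label the ``main obstacle.''
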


\begin{remark}
An explicit description of the imaginary simple roots can be seen with no-ghost theorem.  It is enough to describe the spaces $T(m \rho,\rho')$ :
$$T(m \rho,\rho') = \spn \{ h_i(-1) e^{m \rho} | 1 \leq i \leq 24 \}$$
where the $h_i$'s form an orthogonal basis for $\Lambda \otimes \C$.
\end{remark}

From these two lemmas, \cite{borcherds2} then uses the Borcherds-Weyl-Kac denominator formula to find :

\begin{theorem}
The Weyl-Kac denominator formula for $g_L$ is
$$\Phi = e^{\rho} \prod\limits_{r \in \Pi^+} (1-e^r)^{p_{24}(1 - r^2/2)} = \sum\limits_{\substack{w \in W \\ n \in \Z}} \det(w) \tau(n) e^{w(n \rho)}$$
where $\tau(n)$ is Ramanujan's tau function.
\end{theorem}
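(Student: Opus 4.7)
The plan is to specialize the Weyl-Kac-Borcherds denominator formula from Section 2 to $g_L$, substituting in the structural data that has already been assembled in this chapter. From Corollary \ref{dimensionfakemonster}, we have $\dim(g_L)_\alpha = p_{24}(1-\alpha^2/2)$, so the product side of the Borcherds formula immediately yields the left-hand side $e^\rho\prod_{r\in\Pi^+}(1-e^r)^{p_{24}(1-r^2/2)}$. The nontrivial content is therefore the computation of the sum $\sum_{w\in W}\det(w)\,w(S_0)$, and everything reduces to identifying $S_0$.

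Next I would exploit the fact that all imaginary simple roots are concentrated on a single isotropic ray: by the second lemma above, $\Pi^{im}=\{n\rho:n\ge 1\}$, with each $n\rho$ carrying multiplicity $24$. Since $(\rho,\rho)=0$, any two imaginary simple roots are automatically orthogonal, so the ``pairwise orthogonal and distinct'' condition in the definition
\[
S_0=\sum_{k,\,i_1,\dots,i_k}(-1)^k e^{\rho-(\alpha_{i_1}+\cdots+\alpha_{i_k})}
\]
places no geometric restriction. Treating the $24$ copies of each $n\rho$ as labeled and summing over all finite subsets of the labeled collection converts the alternating sum into a product, giving
\[
S_0 \;=\; e^{\rho}\prod_{n\ge 1}(1-e^{n\rho})^{24}
\]
(the sign in the exponent is fixed by the convention for which multiples of $\rho$ are positive simple roots, which in turn is tied to the choice of Weyl vector in Section 9.1).

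Finally I would identify the product with a modular form: setting $q=e^{\rho}$ and recalling that $\eta(\tau)^{24}=q\prod_{n\ge1}(1-q^n)^{24}=\sum_{n\ge 1}\tau(n)q^n$, one obtains
\[
S_0 \;=\; e^{\rho}\cdot e^{-\rho}\sum_{n\ge 1}\tau(n)e^{n\rho} \;=\; \sum_{n\in\Z}\tau(n)e^{n\rho},
\]
since $\tau(n)=0$ for $n\le 0$. Applying $\sum_{w\in W}\det(w)\,w(\cdot)$ term-by-term and interchanging the two summations (working in the appropriate formal completion of the group algebra) produces exactly $\sum_{w,n}\det(w)\tau(n)e^{w(n\rho)}$. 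The main obstacle is the rigorous justification of the multiplicity-weighted combinatorial identity for $S_0$, together with the interchange of the Weyl-group sum and the $n$-sum in a context where $W$ is infinite; the latter is handled by observing that, after fixing a fundamental domain for $W$ acting on the positive cone, only finitely many $w\in W$ contribute to each coefficient of the formal expansion in the positive roots.
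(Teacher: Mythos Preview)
Your proposal is correct and follows exactly the approach the paper indicates: the paper does not give a detailed argument here but simply says that from the two lemmas identifying the real and imaginary simple roots one applies the Weyl--Kac--Borcherds denominator formula, deferring to \cite{borcherds2}. Your write-up fills in precisely those details---the product side from Corollary~\ref{dimensionfakemonster}, the factorization of $S_0$ over the $24$ labeled copies of each $n\rho$, and the identification with $\eta^{24}$---so there is nothing to add.
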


$\Phi$ has been widely studied \cite{borcherds2}, \cite{borcherds3}, \cite{gritsenko} and many more.

So far we have extracted a lot of information from $g_L$ by setting $N = \Lambda$, the Leech lattice.  However we may still examine $g_L$ from the perspective of different Niemeier lattices.  Recall that
$$R(N) = \{ \lambda \in N, (\lambda,\lambda) = 2 \}$$
is the set of real roots of $N$.  Since $II_{25,1} = N \oplus II_{1,1}$, again let $\rho,\rho' \in II_{25,1}$ such that $II_{25,1} = N \oplus \Z \rho \oplus \Z \rho'$ and $(\rho,\rho') = 1$. Then write $L_n = \{ \alpha \in II_{25,1} | (\alpha,\rho) = n\}$ and $g_{L_n} = \bigoplus\limits_{\alpha \in L_n} (g_L)_{\alpha}$.  We obtain from this grading the following theorem :

\begin{theorem}
Suppose $N$ is a Niemeier lattice other than the Leech lattice and $\rho$ an isotropic vector of $II_{25,1}$ such that $\rho^\perp / \rho = N$.  Write $\widehat{g}(N) = \widehat{g}(R(N))$.  Then 
$$g_{L_0} \simeq \widehat{g}(N),$$
as Lie algebras, $g_L = \bigoplus\limits_{n \in \Z} g_{L_n}$ is a graded $\widehat{g}(N)$-module and
$$g_{L_1} \simeq g_{L_{-1}} \simeq V_N$$
linearly.  Furthermore, $(g_{L_{-1}})^* = g_{L_1}$ with respect to the bilinear form $(.,.)$ on $V_L$.  
\end{theorem}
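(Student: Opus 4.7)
The plan is to establish the four claims in sequence, with the affine Lie algebra isomorphism carrying most of the substance. Throughout I will use the decomposition $II_{25,1} = N \oplus \Z\rho \oplus \Z\rho'$ with $(\rho,\rho') = 1$ and $(\rho,\rho) = (\rho',\rho') = 0$, so that an element $\alpha = \lambda + a\rho + b\rho'$ lies in $L_n$ precisely when $b = n$. The no-ghost theorem (Theorem \ref{noghost}) and the DDF operators from Section \ref{sectionoghost} will do most of the heavy lifting.

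For the isomorphism $g_{L_0} \simeq \widehat{g}(N)$, I would construct an explicit map from generators by sending the affine real root vector $e_\alpha \otimes t^m$ to $e^{\alpha + m\rho}$ for each $\alpha \in R(N)$ and $m \in \Z$, sending $h \otimes t^m \mapsto h(-1)e^{m\rho}$ for $h$ in the Cartan of $g(N)$ and $m \in \Z$, and sending the central element $c$ and derivation $d$ to $\rho(-1)$ and $\rho'(-1)$ respectively. The bracket identities listed just before Theorem \ref{vertexrepresentation} then yield immediately the affine Lie algebra commutation relations: the vanishing $(\rho,\rho) = (\rho,R(N)) = 0$ means the $m\rho$-factors contribute only through $\rho'(0)$-eigenvalues, and the computation of $[h(-1)e^{m\rho}, h'(-1)e^{-m\rho}]$ via skew-symmetry and the commutator formula produces the central term $m(h,h')\rho(-1)$ at level one. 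Surjectivity and injectivity both follow from dimension counts using the no-ghost multiplicities: for $\alpha \in L_0$ with $(\alpha,\alpha) = 2$ (necessarily $\alpha = \lambda + m\rho$ with $\lambda \in R(N)$) we get $p_{24}(0)=1$, matching affine real root multiplicity; for $\alpha = m\rho$ with $m \neq 0$ we get $p_{24}(1) = 24 = \rk g(N)$; and the zero weight space is $26$-dimensional as already established in Section \ref{subsectioncharacter1}, matching $\dim g_0(N) + 2$.

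The graded module structure in claim two is immediate from the fact that the Lie bracket on $V_L$ is additive in weights, so $[g_{L_m}, g_{L_n}] \subseteq g_{L_{m+n}}$ and in particular each $g_{L_n}$ is a $g_{L_0}$-module under the adjoint action. For the linear isomorphism $g_{L_1} \simeq V_N$, I would use the no-ghost theorem with the choice $c = \rho$, which is legitimate because $(\rho, \alpha) = 1$ for every $\alpha \in L_1$. The space of transverse directions $\{a \in II_{25,1} \otimes \R : (a,\alpha) = (a,\rho) = 0\}$ is $24$-dimensional and canonically isomorphic to $N \otimes \R$ via the projection $a \mapsto a + (\mathrm{component})\rho$. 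For $\alpha = \lambda + a\rho + \rho'$ the no-ghost theorem then exhibits $(g_L)_\alpha$ as the span of DDF monomials $A^{\mu_1}_{-m_1}\cdots A^{\mu_k}_{-m_k} e^{\alpha + M\rho}$ with $\sum m_i = M = 1 - (\alpha,\alpha)/2$. Setting $n = 1-a$ so that $M = n - (\lambda,\lambda)/2$, I send this monomial to $\mu_1(-m_1)\cdots \mu_k(-m_k) \otimes e^\lambda \in (V_N)_\lambda$ of weight $n$. This gives a linear bijection $g_{L_1} \simeq V_N$ piece by piece, since the dimensions $p_{24}(n - (\lambda,\lambda)/2)$ match exactly, and the analogous map works for $g_{L_{-1}}$ using $c = -\rho$ or $c = \rho$ with the sign convention $(-\alpha, \rho) = -1$ (one negates $\alpha$ first or equivalently uses $-\rho'$). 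Finally, the duality $(g_{L_{-1}})^* = g_{L_1}$ follows because the form $(\cdot,\cdot)$ on $V_L$ satisfies $((V_L)_\alpha, (V_L)_\beta) = 0$ unless $\alpha + \beta = 0$, and after quotienting by its radical it is non-degenerate on $g_L$, inducing a perfect pairing between $\bigoplus_{\alpha \in L_{-1}}(g_L)_\alpha$ and $\bigoplus_{\alpha \in L_1}(g_L)_\alpha$.

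The main obstacle I anticipate is step one, specifically verifying the central term in the affine bracket with the correct level normalization: computing $(h(-1)e^{m\rho})_0 \cdot h'(-1)e^{-m\rho}$ requires carefully combining the $z$-expansions of $Y(e^{m\rho},z)$ and $h(z)$, using $(\rho,\rho)=0$ to prevent anomalous terms, and then reading off the coefficient of $z^{-1}$ to recover $m(h,h')\rho(-1)$ plus derivative corrections that vanish modulo the radical. The linear isomorphism in step three is conceptually clean but requires checking that the explicit map respects the natural gradings, which amounts to book-keeping on the weight decomposition. Steps two and four are essentially formal once the previous steps are in place.
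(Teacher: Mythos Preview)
Your proposal is correct and aligns closely with the paper's proof; steps two through four are essentially identical (graded module via additivity of the bracket in the $\rho'$-coefficient, the DDF isomorphism with $c=\pm\rho$ for $g_{L_{\pm 1}}\simeq V_N$, and the pairing argument for duality).

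The one genuine difference is in step one. You build the map on the loop-algebra basis $e_\alpha\otimes t^m\mapsto e^{\alpha+m\rho}$, $h\otimes t^m\mapsto h(-1)e^{m\rho}$, $c\mapsto\rho(-1)$, $d\mapsto\rho'(-1)$, and then must verify all affine brackets directly, including the central term you flag as the main obstacle. The paper instead works with Chevalley generators: it takes the affine simple-root system $\widehat\Pi=\{\rho-\theta\}\cup\Pi$ (with $\theta$ a highest root of $R(N)$), observes that its Dynkin diagram has only simple lacing, and invokes Theorem~\ref{vertexrepresentation} to get a Lie algebra homomorphism $\widehat g(N)\to P^1/L_{-1}P^0$ via $e_i\mapsto e^{\alpha_i}$, $f_i\mapsto -e^{-\alpha_i}$, $h_i\mapsto\alpha_i(-1)$. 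This buys something concrete: the Serre relations and the level computation are already packaged inside Theorem~\ref{vertexrepresentation} and its affine variant, so the central-term calculation you anticipate never has to be done by hand. Surjectivity then follows by listing the possible shapes of elements in $g_{L_0}$ (only $e^\alpha$ for real $\alpha$ and $\beta(-1)e^{m\rho}$ for isotropic $m\rho$), exactly as you argue via the no-ghost multiplicities. Your route is perfectly valid and arguably more transparent about where $c$ and $d$ land, but the paper's generator approach sidesteps your anticipated obstacle entirely.
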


\begin{proof}
Let $\theta$ be a highest root in $R(N)$.  If $\Pi$ is a root basis for $R(N)$ then the set $\widehat{\Pi} = \{ \rho - \theta \} \cup \Pi$ is a root basis for $\widehat{R}(N)$, which we may enumerate as $\alpha_0,...,\alpha_d$.  Furthermore, the Dynkin diagram for $\widehat{\Pi}$ satisfies the conditions of Theorem \ref{vertexrepresentation}.  Fix $\alpha \in II_{25,1}$ and $v \in (g_L)_{\alpha}$.  Then write $\alpha = \alpha' + m \rho + n \rho'$, $\alpha \in L$.  We first consider the case $n=0$ :

All elements of $g_L$ satisfying $n = 0$ must be a linear combination of multiples of elements of the form
$$1 \otimes e^{\alpha}, \beta_i(-1) \otimes e^{m \rho}$$
where $\beta_i \in \widehat{\Pi} \otimes \C$, and depending on whether or not $(\alpha,\alpha) = 2$ or $(\alpha,\alpha) = 0$, respectively.  Note that we include in $m\rho$ the possibility that $\alpha = 0$.  If we define a map on generators of $\widehat{g}(N)$ into $P^1$ by
\begin{align*}
e_i &\rightarrow 1 \otimes e^{\alpha_i} \\
f_i &\rightarrow 1 \otimes e^{-\alpha_i} \\
h_i &\rightarrow \alpha_i(-1) \otimes 1
\end{align*}
then we see that by Theorem \ref{vertexrepresentation} and the Lie bracket defined on $P^1/ L_{-1} P^0$, we have $g_{L_0} \simeq \widehat{g}(N)$ as Lie algebras.

To see why $g_L$ is a graded $\widehat{g}(N)$-module, it is enough to observe that $\widehat{g}(N)$ is a subalgebra of $g_L$ and that the action by Lie bracket of the elements $e_i,f_i,h_i$ on an element $v \in g_{L_n}$ with $n \neq 0$ does not modify the coefficient of $\rho'$ hence they each preserve $g_{L_n}$.

Finally, we examine the cases $n = \pm 1$.  From the spectrum-generating algebra, with $c = \pm \rho$, $$(g_L)_{\alpha} = \spn \left\{ \prod\limits_{i=1}^k A^{a_{j_i}}_{-n_i} \otimes e^{\alpha'+ M\rho \pm \rho'} \right\}$$
 and $\left( \sum\limits_{i=1}^k n_i \right) + \frac{(\alpha',\alpha')}{2} \pm M = 1$. Then for any element $\prod\limits_{i=1}^k s_i(-n_i) e^{\alpha'} \in V_{N}$ we may define an isomorphism via $s_i(-n_i) \to A_{-n_i}^i$, $e^{\alpha'} \to e^{\alpha' + M \rho \pm \rho'}$, with $M$ being uniquely determined.
\end{proof}

\subsection{From Borcherds products}

The previous section provides an algebraic perspective to the following theorem by \break Gritsenko \cite{gritsenko}.
\begin{theorem}
Let $N$ be a Niemeier lattice other than the Leech lattice.  Then the first non-zero Fourier-Jacobi coefficient of $\Phi$ in this cusp, is, up to a sign, the Weyl-Kac denominator function of the affine Lie algebra $\widehat{g}(N)$ : 
$$\Phi(\tau,z,\omega) = \pm \eta(\tau)^{24} \prod\limits_{v \in \Delta_{+}} \frac{\theta(\tau,(v,z))}{\eta(\tau)} e^{2 \pi i h(R(N)) \omega}$$
\end{theorem}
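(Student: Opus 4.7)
The plan is to Fourier--Jacobi expand the Borcherds--Weyl--Kac denominator formula for $g_L$ at the zero-dimensional cusp associated to $\rho$. Write $Z = z + \tau\rho + \omega\rho'$ with $z \in N \otimes \C$, set $p = e^{2\pi i \omega}$, $q = e^{2\pi i \tau}$, $\zeta^v = e^{2\pi i(v,z)}$, so that a positive root $\alpha = v + m\rho + n\rho'$ contributes $e^\alpha = p^n q^m \zeta^v$ and the Fourier--Jacobi expansion is an expansion in nonnegative powers of $p$.

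First I would group $\Pi^+$ according to $k = (\alpha,\rho) \geq 0$ and factor the product as $\Phi = e^{\rho_W}\prod_{k \geq 0}\Phi_k(\tau,z;p)$, where $\Phi_k$ involves only roots of $\rho$-grading $k$ and hence only powers $p^j$ with $j \geq k$. The lowest power of $p$ appearing in $\Phi$ is therefore $p^{(\rho_W,\rho)}$, with coefficient $\Phi_0(\tau,z)$ times the $(\tau,z)$-monomial $e^{2\pi i((\rho_W^N,z)+(\rho_W,\rho')\tau)}$ that comes from the Weyl-vector prefactor.

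Second, I would identify $\Phi_0$ using the preceding theorem: the positive roots of $g_L$ with $(\alpha,\rho) = 0$ are exactly the positive real roots of $R(N)$, each with multiplicity one, together with the multiples $n\rho$, $n \geq 1$, each of multiplicity $\dim N = 24$ (the latter follows from the no-ghost description of $T(n\rho,\rho')$). Hence
\begin{align*}
\Phi_0 = \prod_{n\geq 1}(1-q^n)^{24}\prod_{v \in \Delta_+}(1-\zeta^v)\prod_{n \geq 1}(1-q^n\zeta^v)(1-q^n\zeta^{-v}),
\end{align*}
and the Jacobi triple product converts each $v$-factor into $\theta(\tau,(v,z))/\eta(\tau)$, leaving behind precisely the factor $\eta(\tau)^{24}$ plus explicit shifts in $q$ and $\zeta$ that need to be reconciled with the Weyl-vector prefactor.

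The main obstacle is matching these residual shifts: the $\zeta^{1/2}$ contributions from the triple product force the $N$-component $\rho_W^N$ to coincide with the classical Weyl vector of $\Delta_+$, while the accumulated $q$-shifts together with $(\rho_W,\rho')\tau$ must cancel, and the $\rho'$-coefficient of $\rho_W$ must equal $h(R(N))$, yielding the stated leading factor $e^{2\pi i h(R(N))\omega}$. This last identity I would verify using the description of the real simple roots of $g_L$ relative to this cusp (the affine simple roots of each component of $\widehat{g}(N)$, together with one extra root of the form $\rho - \theta_i$ per component) and the standard Freudenthal--de Vries calculation for affine Weyl vectors. The sign ambiguity $\pm$ reflects the choice of positive system entering each $\theta(\tau,(v,z))$.
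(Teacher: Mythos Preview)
Your proposal is correct and follows essentially the same route as the paper: isolate the factor of the product corresponding to roots with $(\alpha,\rho)=0$, observe that these are precisely the positive roots of $\widehat{R}(N)$ (real roots of $N$ with multiplicity one and the $n\rho$ with multiplicity $24$), and recognise the resulting product as the affine Weyl--Kac denominator via the Jacobi triple product. The only point where you and the paper diverge is the bookkeeping of the Weyl-vector prefactor: the paper imports the explicit components $(A,B,C)$ of $\rho_W$ from the Borcherds-product machinery of the preceding chapter, so the identity $C=h(R(N))$ and the $q,\zeta$-shifts come for free, whereas you propose to recover them Lie-theoretically from the affine simple roots and a Freudenthal--de Vries computation; both arguments work and yield the same constants.
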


Despite our algebraic description, let us repeat the proof of this theorem here as it provides an example of how we may reach the same conclusion, and slightly more, using the theory of Borcherds products.  Our first step is to use Theorem \ref{borcherdsproduct}, to obtain the following corollary :

\begin{corollary}
Let $N$ be a Niemeier lattice and consider $N \oplus II_{1,1} \oplus II_{1,1} \simeq II_{26,2}$. $F = \Delta(\tau)^{-1}$.  Replacing the bilinear form $(.,.)$ on $II_{26,2}$ by $-(.,.)$, the meromorphic function $\Psi_Z(Z,F)$ of Theorem \ref{borcherdsproduct} simplifies to
$$\Psi_Z(Z,\Delta^{-1}) = \exp(Z,\rho) \prod\limits_{\lambda \in \Pi^+} (1 - \exp((\lambda,Z)))^{p_{24}\left( 1 - \frac{(\lambda,\lambda)}{2} \right)}$$
where a Weyl chamber can be chosen such that $\Pi^+$ is described as
$$\Pi^+ = \{\lambda' + m \rho + n \rho' | m \geq 0 \text { or } m = 0, n > 0 \text { or } m = n = 0, \lambda' < 0  \}.$$
where $\lambda' > 0$ signifies that $(\lambda',\rho_f) > 0$ for a Weyl vector $\rho_f$ of $R(N)$.
\end{corollary}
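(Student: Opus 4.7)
The plan is to specialize Theorem \ref{borcherdsproduct} to $F = \Delta^{-1} = \eta^{-24}$ and then identify the Weyl chamber in the resulting infinite product.

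First, I would record the Fourier expansion of $F$. By the definition of $\Delta$ recalled earlier,
$$\eta^{-24}(\tau) = q^{-1}\prod_{n \ge 1}(1 - q^n)^{-24} = \sum_{n \ge -1} p_{24}(n+1)\, q^n,$$
so its Fourier coefficients are $c(n) = p_{24}(n+1)$, and $F$ is a nearly holomorphic modular form with a simple pole at the cusp. Under the sign flip $(.,.) \mapsto -(.,.)$, the lattice $II_{26,2}$ acquires signature $(2,26)$, so that $b^+ = 2$, $b^- = 26$, and the weight condition $1 - b^-/2 = -12$ matches the weight of $\eta^{-24}$. Thus all the hypotheses of Theorem \ref{borcherdsproduct} are fulfilled.

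Second, Theorem \ref{borcherdsproduct}(3) gives directly
$$\Psi_Z(Z,\Delta^{-1}) = \exp(Z,\rho)\prod_{\substack{\lambda \in K \\ (\lambda, W) > 0}}\bigl(1 - \exp((\lambda,Z))\bigr)^{c(q(\lambda))},$$
where $K = N \oplus II_{1,1}$ is the Lorentzian factor obtained after splitting off the second hyperbolic plane via the vectors $z, z'$. Because $q(\lambda)$ is computed in the sign-flipped form, one has $q(\lambda) = -(\lambda,\lambda)/2$ in the original bilinear form, whence
$$c(q(\lambda)) = p_{24}\!\left(1 - \frac{(\lambda,\lambda)}{2}\right),$$
which is the exponent displayed in the corollary.

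Third, I would identify the Weyl chamber. The chambers are the connected components of the complement, inside a positive cone of $K$, of the walls $\lambda^\perp$ indexed by $\lambda \in K$ with $c(-(\lambda,\lambda)/2) > 0$; the codimension-one walls correspond to the real roots of $K$. I would choose $W$ so that the isotropic vector $\rho$ lies in its closure and so that the Weyl vector $\rho_f$ of $R(N)$ singles out a positive system in $N$. Writing $\lambda = \lambda' + m \rho + n \rho'$ with $\lambda' \in N$, a lexicographic argument — the pairing $(\lambda,W)$ is dominated by the $m\rho + n\rho'$ contribution, which orders first by $m$ and then by $n$ — yields that $(\lambda,W) > 0$ precisely when $m > 0$, or $m = 0$ and $n > 0$, or $m = n = 0$ and $\lambda'$ lies in the chosen positive system for $R(N)$.

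The main obstacle will be this third step: the explicit identification of $W$ and the translation of $(\lambda, W) > 0$ into the lexicographic conditions on $(m, n, \lambda')$. This relies on the theory of hyperbolic reflection groups on Lorentzian lattices — in particular Conway's description of the fundamental Weyl chamber of $II_{25,1}$ in the Leech case, and its analogues for the other Niemeier lattices via Vinberg-type arguments. The first two steps are essentially bookkeeping given Theorem \ref{borcherdsproduct} and the Fourier expansion of $\eta^{-24}$.
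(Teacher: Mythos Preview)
Your first two steps --- reading off the Fourier coefficients of $\eta^{-24}$, checking the weight, and substituting into Theorem~\ref{borcherdsproduct}(3) --- are exactly what the paper does, and you correctly identify that the only real content of the corollary is the Weyl chamber identification.

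On that third step your route diverges from the paper's. You invoke lexicographic ordering and the Conway/Vinberg description of Weyl chambers in Lorentzian lattices, and you flag this as the main obstacle. The paper does not use any reflection-group theory here. Instead it produces, by hand, a single vector $r \in II_{25,1}$ of the form
\[
r = \rho_f - \tfrac{24C}{D}\rho - \bigl(\tfrac{24C}{D}+1\bigr)\rho',
\]
where $C$ bounds $|(\rho_f,x_i)|$ for an orthonormal basis $x_i$ of $N\otimes\C$ and $D$ comes from the constraint $(\lambda,\lambda)\le 2$ (which is forced since otherwise $p_{24}(1-(\lambda,\lambda)/2)=0$). One then checks directly that the sign of $(\lambda'+m\rho+n\rho',r)$ is governed first by $m$, then by $n$, then by $(\rho_f,\lambda')$, which is precisely your lexicographic intuition made explicit. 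So your heuristic ``$(\lambda,W)$ is dominated by the $m\rho+n\rho'$ contribution'' is correct, but the paper realizes it by a concrete choice of $r$ rather than by appealing to the structure theory of hyperbolic reflection groups. The paper's argument is more elementary and uniform across all Niemeier lattices; yours would work in principle but imports more machinery than is needed.
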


\begin{proof}
Note that the only dependence on $N$ is on the choice of Weyl chamber.  We need to find a Weyl chamber $W$ such that $(\lambda,W) > 0$ correspond to the set $\Pi^+$.  It is enough to find a vector $r$ in $II_{25,1}$ such that the set $\Pi^+$ is obtained as the vectors $x$ such that $(x,r) > 0$.

Note that if $\lambda' \neq 0$ and $m,n < 0$, we must have $(\lambda,\lambda) \leq 2$ otherwise \break $p_{24}\left( 1 - \frac{(\lambda,\lambda)}{2}\right) = 0$, because $N$ is positive definite.  If instead $\lambda' = 0$ the only possibilities for $m,n$ are $m = -1, n = 1$ or $m = 1, n = -1$.  This will be important when we define $r$.

Let $\{x_i | i = 1,...,24 \}$ be an orthonormal basis $N \otimes \C$. We consider the numbers $a_i = (\rho_f,x_i)$.  It is clear that $|a_i|_{i=1}^{24} \leq C$ for some $C \geq 1$.  Now suppose $x = \sum\limits_{i=1}^{24} b_i x_i$ and write $B = \max\{ |b_i| | i=1,...,24 \}$.  Then $$|(\rho_i,x)| \leq \sum\limits_{i=1}^{24} |a_i b_i| \leq \sum\limits_{i=1}^{24} |b_i| C \leq 24BC$$
and
$$\sqrt{(x,x)} \geq \sqrt{\sum\limits_{i=1}^{24} b_i^2} \geq B.$$  Now observe once again that we must have $$(x,x) - 2mn \leq 2$$ otherwise $p_{24}\left( 1 - \frac{(\lambda,\lambda)}{2}\right) = 0$.  Assume that $m,n > 0$.  Then there exists a constant $D > 0$ such that either $m$ or $n$ is greater than $DB$.  On the contrary if $m,n < 0$ there exists a constant $CD>0$ such that either $m$ or $n$ is less than $-DB$.  We let $D$ be the smaller of these two constants.

Now define $r = \rho_f - \frac{24C}{D} \rho - (\frac{24C}{D} + 1) \rho'$.  It is clear that if $\lambda' = 0$ and $m > 0$ or $m = 0$, $n > 0$, $(\lambda,r) > 0$ and also that $(\rho - \rho',r) > 0$.  Now we compute for $\lambda' \neq 0$ :
$$(\lambda' + m \rho + n \rho', r) = (\rho_f,x) + \frac{24Cn}{D} + \left( \frac{24C}{D}+1 \right)m$$
and we see that by the choice of $B,C,D$ that the sign of this expression is determined by the signs of $m,n$ whenever $m$ and $n$ are non-zero.  Furthermore, if $m,n = 0$ then the sign of this equation is determined by $(\rho_f,x)$.  This proves the assertion.
\end{proof}

Write $Z = (\tau,z,\omega)$, $q = \exp(2 \pi i \tau)$, $r^l = \exp(2 \pi i (l,z))$, $s = \exp(2 \pi i \omega)$.  In \cite{gritsenko}, they use the explicit Weyl vector $\rho$ to be $\rho = (A,B,C)$ where
$$A = 1 + \frac{1}{24} \sum\limits_{l \in L} 1, B = \frac{1}{2} \sum\limits_{l > 0 \in L} l, C = \frac{1}{24} \sum\limits_{l \in L} 1 \\
$$
As a result, we can rewrite $\Psi_Z$ again to
$$\Psi_Z(Z,\Delta^{-1}) = q^A r^B s^C \prod\limits_{\substack{m,n \in \Z, l \in L \\(n,l,m) > 0}} \left(1 - q^n r^l s^m \right)^{p_{24}\left(1 - \frac{l^2 - 2mn}{2}\right)}.$$
Write
$$\psi_{N;C}(Z) = \eta(\tau)^{24} \prod\limits_{l > 0} \left(\frac{\theta(\tau,(l,z))}{\eta(\tau)} \right)^{p_{24}\left(1 - \frac{l^2}{2} \right)}.$$
This term correspond to the product in $\Psi_Z$ where $m = 0$ but also :
\begin{proposition}
The function $\psi_{N;C}(Z)$ corresponds to the right side of the Weyl-Kac denominator formula of $\widehat{R}(N)$ with specialization $p = r^{-l}$.
\end{proposition}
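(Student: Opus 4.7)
The plan is to reduce both sides to infinite products in $q = e^{2\pi i \tau}$ and $r^l = e^{2\pi i (l,z)}$, and match them term by term with the specialization $p = r^l$ of the affine denominator formula~(1) in Section~2. First I would use that $p_{24}(n)$ vanishes for $n < 0$ and equals $1$ at $n = 0$. Since $N$ is positive definite, the only $l > 0$ with nonzero exponent $p_{24}(1-l^2/2)$ in the product defining $\psi_{N;C}$ are those with $l^2 = 2$, i.e.\ the positive real roots in $R(N)^+$, each with multiplicity one. This collapses $\psi_{N;C}(Z)$ to
$$\psi_{N;C}(Z) \;=\; \eta(\tau)^{24} \prod_{l \in R(N)^+} \frac{\theta(\tau,(l,z))}{\eta(\tau)}.$$

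Next I would apply the Jacobi triple product to each $\theta(\tau,(l,z))$ together with the Euler product for $\eta(\tau)$ to obtain
$$\frac{\theta(\tau,(l,z))}{\eta(\tau)} \;=\; i\, q^{1/12} r^{-l/2} \prod_{n \geq 0}(1-q^n r^l)\prod_{n \geq 1}(1-q^n r^{-l}),$$
together with $\eta(\tau)^{24} = q \prod_{m \geq 1}(1-q^m)^{24}$. The two $n$-products coincide precisely with the per-root factor $\prod_{n \geq 1}(1-q^{n-1}p)(1-q^n p^{-1})$ in equation~(1) under the specialization $p = r^l$, and the factor $\prod_{m \geq 1}(1-q^m)^{24}$ matches the first product in~(1) for rank $d = \rk R(N) = 24$. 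Hence, up to a monomial prefactor, $\psi_{N;C}$ already equals the specialized product side of the Weyl-Kac denominator formula for $\widehat{R}(N)$.

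It remains to match the monomial prefactor with $e^\rho$, where $\rho = (A,B,C)$ is Gritsenko's explicit Weyl vector. Using the Niemeier identity $|R(N)^+| = 12 h$ (with $h$ the common Coxeter number of the ADE components of $R(N)$, a feature peculiar to Niemeier lattices), the accumulated exponent of $q$ collects into $q^{1+h}$, which agrees with $A = 1 + \tfrac{1}{24}|R(N)| = 1 + h$. The product $\prod_{l \in R(N)^+} r^{-l/2}$ equals $r^{-\sum_{l>0} l / 2}$, matching $B = \tfrac{1}{2}\sum_{l > 0} l$ term by term, while the $\omega$-component $C$ is untouched by the specialization. The overall sign coming from the factor $i$ in the triple product, raised to the power $|R(N)^+|$, produces the $\pm$ in the theorem.

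The main obstacle is precisely this prefactor bookkeeping, especially the identity $|R(N)^+| = 12 h$ with a single Coxeter number $h$ valid uniformly across the $23$ non-Leech Niemeier lattices (a property that fails for arbitrary semisimple root systems). Once this common Coxeter number is in hand, matching the $q$-power to $A$ and the root half-shift to $B$ is automatic, and the proposition reduces to a formal comparison of infinite products.
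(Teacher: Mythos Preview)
Your argument is correct, but it takes a somewhat different route from the paper's. The paper does not expand $\theta/\eta$ via the Jacobi triple product at all. Instead, it uses the fact (stated just before the proposition) that $\psi_{N;C}$ is by definition the $m=0$ slice of the full Borcherds product
\[
\Psi_Z = q^A r^B s^C \prod_{(n,l,m)>0}(1-q^n r^l s^m)^{p_{24}(1-(l^2-2mn)/2)},
\]
and then reads off the affine denominator directly from that product: the $l=0$ terms give $\prod_{m\geq 1}(1-q^m)^{24}$ because $p_{24}(1)=24$, and for $l\neq 0$ only real roots $l^2=2$ survive with exponent $1$, giving exactly the factors $(1-q^{n-1}p)(1-q^n p^{-1})$ under $p=r^{-l}$. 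The positivity condition $(n,l,0)>0$ accounts for the asymmetric factor $(1-p)$ at $n=0$. The prefactor $q^A r^B$ is already sitting in front of $\Psi_Z$, so no separate bookkeeping is needed.

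Your approach, by contrast, starts from the closed form $\eta^{24}\prod_{l>0}(\theta/\eta)$ and unwinds it via the triple product. This is logically equivalent---the triple product is precisely what identifies $\psi_{N;C}$ with the $m=0$ slice of $\Psi_Z$---but it forces you to reassemble the monomial prefactor by hand, which is where you bring in the Niemeier identity $|R(N)^+|=12h$ and the explicit formulas for $A$ and $B$. The paper sidesteps this entirely by inheriting $q^A r^B$ from $\Psi_Z$. Your version is more self-contained about the prefactor (and the Coxeter-number identity is a nice observation), while the paper's is shorter because it leverages the Borcherds product expansion that was already written down. One small slip: you wrote $p=r^l$ where the statement and the paper both use $p=r^{-l}$; this is just an orientation choice and does not affect the argument, since positive and negative roots are paired.
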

\begin{proof}
Recall that the Weyl-Kac denominator formula for $\widehat{R}(N)$ is given by 
$$e^\rho \prod\limits_{m \geq 1}(1 - q^m)^d \prod\limits_{\alpha \in \Delta_{fin}^+} \prod\limits_{n \geq 1} (1 - q^{n-1} p) (1 - q^n p^{-1}).$$

This is our objective.  When $m = 0$ we have the product
$$\prod\limits_{\substack{n \in \Z, l \in N \\(n,l) > 0}} \left(1 - q^n r^l \right)^{p_{24}\left(1 - \frac{l^2}{2}\right)}.$$
From there we have two cases.  If $l = 0$, then $p_{24}(1) = 24$ hence we can collect the term $\prod\limits_{m \geq 1} (1 - q^m)^d$.

In the second case, $l \neq 0$ is a real root and then $p_{24}(0) = 1$.  For each real root $l$, we know that $-l$ is also a real root.  Hence we obtain terms $1 - q^n r^l = 1 - q^n p^{-1}$ and $1 - q^n r^{-l} = 1 - q^n p$.  However $(n,l,0) > 0$ if $n \geq 1$ or $n = 0$ and $l < 0$.  This leads to the presence of the term $1-p$, but not $1 - p^{-1}$ in the product.

Collecting all the terms when $m=0$ we then recover the specialized Weyl-Kac denominator formula for $\widehat{R}(N)$.
\end{proof}

With these methods we may also acquire additional information for situations where $m \neq 0, \pm 1$.  It is difficult to approach the problem algebraically, but Hecke operators provide us with at least some formulas.

Write $N(n,l,m) = 1 - \frac{l^2 - 2mn}{2}$.  For the terms where $m > 0$ we take $\log$ of $\psi_Z$ to find
\begin{align*}
& \log \left( \prod\limits_{\substack{(n,l,m)\\ m > 0}} (1 - q^n r^l s^m)^{p_{24} (N(n,l,m))} \right) \\ &= - \sum\limits_{(n,l,m) > 0} p_{24}(N(n,l,m)) \sum\limits_{i \geq 1} \frac{1}{i} q^{in} r^{il} s^{im}
\\ &= - \sum\limits_{(a,b,c) > 0} \left(\sum\limits_{d | (a,b,c)} d^{-1} p_{24}(N(a/d, b/d, c/d))\right)q^a r^b s^c \\
&= -\sum\limits_{m \geq 1} m^{-1} \phi|T_{m} e^{2 m \pi i w} 
\end{align*}
where $\phi = \Delta(\tau)^{-1} \Theta_N (\tau,z)$.  Note that the last equality makes use of Theorem \ref{hecketransformation}.  Taking exponentials again, we find
$$\Psi_Z = \psi_{N;C}(Z) \exp \left( - \sum\limits_{m \geq 1} m^{-1} \phi |T_{m} e^{2 m \pi i w} \right).$$
In particular, we can use this description to compute the Fourier coefficients of $\Psi_Z$ with respect to $\omega$.  It is enough to expand the exponential and use properties of Hecke operators to evaluate how the coefficient of $w$ changes.

\begin{corollary}
The first few terms of the Fourier-Jacobi expansion of $\Psi_Z(Z,F)$ defined by $N$ are
\begin{align*}
\psi_{L;C}(\tau,z) e^{2 \pi i C w} \left( 1 - \phi(\tau,z) e^{2 \pi i w}  + \frac{1}{2} (\phi^2(\tau,z) - \phi(\tau,z)|T_{2} e^{4 \pi i w} + ... \right).
\end{align*}
\end{corollary}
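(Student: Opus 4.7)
The plan is to start directly from the identity
\[
\Psi_Z(Z,F) \;=\; \psi_{L;C}(\tau,z)\, e^{2\pi i C w} \exp\!\left(-\sum_{m\geq 1} m^{-1}\,\phi\!\mid\!T_m\; e^{2m\pi i w}\right)
\]
which is the combination of the factorization $\Psi_Z = \psi_{N;C}(Z)\cdot(\text{product over } m>0)$ and the logarithm computation immediately preceding the corollary. Since the first factor $\psi_{L;C}(\tau,z)e^{2\pi i Cw}$ already appears outside the bracket in the target expression, the corollary reduces to a purely formal task: expand the exponential as a Taylor series in powers of $e^{2\pi i w}$ and read off the first three coefficients.

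Writing $x = -\sum_{m\geq 1} m^{-1}\,\phi\!\mid\!T_m\; e^{2m\pi i w}$ and using $\exp(x) = 1 + x + \tfrac{1}{2}x^2 + \cdots$, I will group contributions by the total power of $e^{2\pi i w}$. The constant term is $1$. The only contribution to $e^{2\pi i w}$ comes from the $m=1$ summand of the linear term $x$; this yields $-\phi\!\mid\!T_1$, and since $T_1$ acts as the identity on Jacobi forms (immediate from the Hecke-operator formula in Theorem \ref{hecketransformation}, where only the identity coset appears when $l=1$), this equals $-\phi(\tau,z)$. For the $e^{4\pi i w}$ coefficient, two sources contribute: the $m=2$ summand of $x$ gives $-\tfrac{1}{2}\phi\!\mid\!T_2$, while the $\tfrac{1}{2}x^2$ term contributes the square of the $m=1$ summand, namely $\tfrac{1}{2}(\phi\!\mid\!T_1)^2 = \tfrac{1}{2}\phi^2$. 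Adding these produces $\tfrac{1}{2}\bigl(\phi^2 - \phi\!\mid\!T_2\bigr)e^{4\pi i w}$, matching the stated expansion.

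There is essentially no obstacle in this argument beyond bookkeeping: the analytic content (existence of $\Psi_Z$, its product formula, and the reorganization as a Hecke-series in the exponential) has already been established, and what remains is formal expansion of $\exp$. The only subtle point worth flagging, and which I would highlight in the writeup, is the identification $\phi\!\mid\!T_1 = \phi$; this is what lets the $\phi^2$ term appear in the $e^{4\pi i w}$ coefficient with the correct normalization. All higher Fourier--Jacobi coefficients are obtained by the same procedure, with the coefficient of $e^{2n\pi i w}$ being a polynomial expression in the $\phi\!\mid\!T_k$ indexed by partitions of $n$.
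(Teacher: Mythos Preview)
Your proposal is correct and follows exactly the approach the paper intends: the paper establishes the identity $\Psi_Z = \psi_{N;C}(Z)\exp\bigl(-\sum_{m\geq 1} m^{-1}\phi|T_m\, e^{2m\pi i w}\bigr)$ immediately before the corollary and states that one need only ``expand the exponential and use properties of Hecke operators,'' which is precisely the bookkeeping you carry out. Your identification $\phi|T_1 = \phi$ and the resulting coefficients match the statement exactly.
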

These methods lead to explicit characters for $g_{L_m}$ where $|m| \geq 2$.
\newpage

\begin{singlespace}

\end{singlespace}

\end{document}